\def\a        {{\boldsymbol a}}
\def\x        {{\boldsymbol x}}
\def\dx     {{\rm d}{\boldsymbol x}}
\newcommand{\detector}[1][]{\alpha_{#1}}
\newcommand{\nodes}{\mathcal{N}_h}
\newcommand{\sym}{{\rm sym}}
\def\ij{{ij}}
\newcommand{\np}{{n+1}}
\newcommand{\rr}{\boldsymbol{r}}
\newcommand{\gradient}{\nabla}
\newcommand{\jump}[1]{\left\llbracket #1 \right\rrbracket}
\newcommand{\mean}[1]{%
	\sbox0{%
		\mathsurround=0pt 
		$\left\{\vphantom{#1}\right.\kern-\nulldelimiterspace$%
	}%
	\sbox2{\{}%
	\ifdim\ht0=\ht2
	\{\kern-.625\wd2 \{#1\}\kern-.625\wd2 \}%
	\else
	\left\{\kern-.7\wd0\left\{#1\right\}\kern-.7\wd0\right\}%
	\fi
}
\newtheorem{theorem}{Theorem}[section]
\newtheorem{lemma}{Lemma}[section]
\newtheorem{remark}{Remark}[section]
\begin{document}

\begin{acronym}
	\acro{fe}[FE]{finite element}
	\acro{dg}[dG]{discontinuous Galerkin}
	\acro{cg}[cG]{continuous Galerkin}
	\acro{dof}[DOF]{degrees of freedom}
	\acro{ssp}[SSP]{strong stability preserving}
	\acro{rk}[RK]{Runge Kutta}
	\acro{be}[BE]{Backward Euler}
	\acro{dmp}[DMP]{discrete maximum principle}
	\acro{mp}[MP]{maximum principle}
	\acro{afc}[AFC]{algebraic flux correction}
	\acro{fct}[FCT]{flux corrected transport}
	\acro{led}[LED]{local extremum diminishing}
	\acro{dled}[DLED]{discrete local extremum diminishing}
\end{acronym}

\title[Approximation of the Keller--Segel equations ]{Bound-preserving finite element approximations of the Keller-Segel equations}
\author[S. Badia]{Santiago Badia$^\dag$}
\address{$\dag$ School of Mathematics, Monash University, Clayton, Victoria, 3800, Australia \& CIMNE, Centre Internacional de Mètodes Numèrics a l’Enginyeria, Barcelona, 08034, Spain. \newline E-mail: {\tt \href{mailto:santiago.badia@monash.edu}{santiago.badia@monash.edu}}}

\author[J. Bonilla]{Jesús Bonilla$^\ddag$}
\address{$\ddag$ Los Alamos National Laboratory, Los Alamos, NM 87545, USA. E-mail: {\tt \href{mailto:jbonilla@lanl.gov}{jbonilla@lanl.gov}}}
\thanks{$\ddag$ Los Alamos National Laboratory, an affirmative action/equal opportunity employer, is operated by Triad National Security, LLC for the National Nuclear Security
Administration of U.S. Department of Energy under contract 89233218CNA000001. Los Alamos National Laboratory strongly supports academic freedom and a researcher's right to publish; as an institution, however, the Laboratory does not endorse the viewpoint of a publication or guarantee its technical correctness. LA-UR-22-27083}

\author[J. V. Gutiérrez-Santacreu]{Juan Vicente Gutiérrez-Santacreu$^\S$}
\address{$\S$Dpto. de Matemática Aplicada I\\
         E. T. S. I. Informática\\
         Universidad de Sevilla\\
         Avda. Reina Mercedes, s/n.\\
         E-41012 Sevilla\\
         Spain\\
         E-mail: {\tt \href{mailto:juanvi@us.es}{juanvi@us.es}}}

\thanks{JVGS was partially supported by the Spanish Grant No. PGC2018-098308-B-I00 from Ministerio de Ciencias e Innovación - Agencia Estatal de Investigación with the participation of FEDER and by the Andalusian Grant No. P20\_01120 from Junta de Andalucía (Consejería de Economía, Conocimiento, Empresas y Universidad)}
\date{\today}
\begin{abstract} This paper aims to develop numerical approximations of the Keller--Segel equations that mimic at the discrete level the lower bounds and the energy law of the continuous problem. We solve these equations for two unknowns: the organism (or cell) density, which is a positive variable, and the chemoattractant density, which is a nonnegative variable.
We propose two algorithms, which combine a stabilized finite element method and a semi-implicit time integration. The stabilization consists of a nonlinear artificial diffusion that employs a graph-Laplacian operator and a shock detector that localizes local extrema. As a result, both algorithms turn out to be nonlinear.
Both algorithms can generate cell and chemoattractant numerical densities fulfilling lower bounds. However, the first algorithm requires a suitable constraint between the space and time discrete parameters, whereas the second one does not. We design the latter to attain a discrete energy law on acute meshes. 
We report some numerical experiments to validate the theoretical results on blowup and non-blowup phenomena. In the blowup setting, we identify a \textit{locking} phenomenon that relates the $L^\infty(\Omega)$-norm to the $L^1(\Omega)$-norm limiting the growth of the singularity when supported on a macroelement.      
\end{abstract}

\maketitle

{\bf 2010 Mathematics Subject Classification.} 35K20, 35K55, 65N30.

{\bf Keywords.} Keller--Segel equations; nonlinear parabolic equations; stabilized finite-element approximation; shock detector; lower bounds; energy law.

\tableofcontents

\section{Introduction}
\subsection{The Keller--Segel equations} 
\emph{Dictyostelium} is an ameba, a unicellular eukaryotic organism that lives in the soil, feeds on bacteria, and reproduces by bipartition. However, what truly sticks out about its life cycle is its behavior as a social ameba. When amebas lack food and therefore cannot divide, Dictyostelium opts for an alternate life cycle and joins with its congeners to lead to a stage of cell development and differentiation. Thus Dictyostelium amebas are capable of movement, thanks to a molecular mechanism, in response to certain chemicals released by themselves. This aggregation phenomenon is known as chemotaxis.

Keller and Segel \cite{Keller_Segel_1970,Keller_Segel_1971}  were the pioneers in deriving the first mathematical model to predict aggregation phenomena in populations of Dictyostelium discoideum or E. coli. The model they proposed reads as follows. Let $\Omega\subset \mathds{R}^d$, $d=2$ or $3$, be a bounded domain, with $\boldsymbol{n}$ being its outward-directed unit normal vector to $\Omega$, and let $[0,T]$ be a time interval. Take $Q=(0,T]\times \Omega$ and $\Sigma=(0,T]\times\partial\Omega$. Then, the boundary-value problem for the Keller--Segel equations consists of finding $u: \bar Q\to (0,\infty)$, the organism (or cell) density, and $v:\bar Q \to [0,\infty)$, the chemoattractant density,  satisfying 
\begin{equation}\label{KS}
\left\{
\begin{array}{rcll}
\partial_tu-\Delta u&=&-\nabla\cdot(u\nabla v)&\mbox{ in } Q,
\\
\partial_t v -\Delta v&=&u-v&\mbox{ in }Q,
\end{array}
\right.
\end{equation}
subject to the initial conditions
\begin{equation}\label{IC}
u(0)=u_0\quad\mbox{ and }\quad v(0)=v_0\quad\mbox{ in }\quad \Omega,
\end{equation}
and the no-flux boundary conditions
\begin{equation}\label{BC}
\nabla u\cdot \boldsymbol{n}=0\quad\mbox{ and }\quad \nabla v\cdot\boldsymbol{n}=0\quad\mbox{ on }\quad \Sigma.
\end{equation}

The dynamics of solutions to \eqref{KS}--\eqref{BC} is governed by an energy law, which can be formally deduced as follows. Suppose we have a classical solution $(u,v)$ to system \eqref{KS}--\eqref{BC}  on $Q$; multiply $\eqref{KS}_1$ by $\log u -v$ and  $\eqref{KS}_2$ by $\partial_t u$ and integrate over $Q$ and, finally, add  the two resulting to get:
\begin{equation}\label{Energy_law}
\frac{d}{{\rm d}t}\mathcal{E}(u(t),v(t))=-\mathcal{D}(u(t),v(t))\quad\forall t\in[0,T],
\end{equation}
where
$$
\mathcal{E}(u,v)\doteq\frac{1}{2}\int_{\Omega} |\nabla v(\x)|^2\, \dx+\frac{1}{2}\int_\Omega v^2(\x)\, \dx-\int_\Omega u(\x) v(\x)\, \dx+\int_\Omega u(\x) \ln u(\x)\, \dx
$$
and
$$
\mathcal{D}(u,v)\doteq\int_\Omega |\Delta v(\x)-v(\x)+u(\x)|^2\, \dx+\int_\Omega|\frac{\nabla u(\x)}{\sqrt{u(\x)}}-\sqrt{u(\x)}\nabla v(\x) |^2\, \dx.
$$
We note that the energy law \eqref{Energy_law} is only well-posed on the condition that $u>0$. 

The importance of \eqref{Energy_law} is twofold. On the one hand, existence theory \cite{Nagai_Senba_Yoshida_1997} of two-dimensional bounded solutions draws heavily on \eqref{Energy_law} and  a Morse-Trudinger--like inequality, together with the condition  $\int_\Omega u_0(\x)\,\dx\in (0, 4\pi)$; if $\Omega\subset\mathds{R}^2$ is a ball and $(u_0, v_0)$ are radially symmetric, one needs $\int_\Omega u_0(\x)\,\dx\in (0, 8\pi)$. In dimension three, smallness conditions for $(u_0, v_0)$ are required to prove bounded solutions \cite{Winkler_2010}.  On the other hand, solutions, which may blow up either in finite or infinite time  \cite{Horstmann_Wang_2001, Winkler_2010}, are also present in system \eqref{KS}--\eqref{BC}. These unbounded solutions emerge from careful asymptotic analysis of \eqref{Energy_law}, i. e. if $T_{\textrm{max}}\in(0,\infty]$ is a blowup time, then 
\begin{equation}\label{E-blowup}
\mathcal{E}(u(t), v(t))\to -\infty\quad\mbox{ as }\quad t\to T_{\textrm{max}}.
\end{equation}
Another essential fact \cite{Horstmann_2001} is that there exist times when both the cell and chemoattractant densities should blow up at such times.  

Furthermore, a sufficient condition \cite{Winkler_2013} for initial data whose corresponding solutions blow up within finite time stems from \eqref{Energy_law} with $\Omega\subset\mathds{R}^3$ being a ball and $(u_0, v_0)$ being radially symmetric. For the two-dimensional radially symmetric case, it is well-known that such a blowup occurs in finite time \cite{Herrero_Velazquez_1997}.

In the light of the above discussion, it is of great interest to construct numerical solutions that satisfy lower bounds --positivity for the organism density and nonnegativity for the chemoattractant density-- and a discrete counterpart of \eqref{Energy_law}. A direct numerical simulation is a simple approach to gaining insight into the issues of potential singularity formations for system \eqref{KS}--\eqref{BC}. Nevertheless, this is a rudimentary approach because an artificial blowup might arise from purely numerical oscillations. After all, the numerical scheme does not guarantee lower bounds. On the contrary, adding numerical diffusion ensuring lower bounds might prevent the detection of a singularity. These are subtle issues for system \eqref{KS}--\eqref{BC}  because specific solutions always have singular rapid-growth behavior and it is difficult to distinguish them from blowup formation.    

Several numerical discretizations of  \eqref{KS}--\eqref{BC} have been proposed to satisfy lower bounds. Among them, we can find geometrical conditions \cite{Saito_2012, GS_RG_2021}  on meshes being weakly acute in two dimensions and acute in three dimensions, flux limiters \cite{Strehl_Sokolov_Kuzmin_Turek_2010, Strehl_Sokolov_Kuzmin_Horstmann_Turek_2013}, discontinuous Galerkin methods, \cite{Li_Shu_Yang_2017} and finite volume methods \cite{Chertock_Epshteyn_Hu_Kurganov_2018, Chertock_Kurganov_2008}.

As for numerical solutions satisfying \eqref{Energy_law} at the discrete level, very few numerical algorithms are available. For example, a discontinuous Galerkin method was proposed \cite{Guo_Li_Yang_2019}, but without proving lower bounds. On the other hand, a standard finite element method \cite{GS_RG_2021} has recently been proved to hold both lower bounds and a discrete energy law for acute meshes.          

This paper aims to construct finite element approximations satisfying lower bounds (as at the continuous level) without any requirement on the mesh in addition to quasi-uniformity. We shall provide two stabilized finite element methods \cite{Badia_Bonilla_2017}  consisting of adding a diffusion artifact. The artificial diffusion combines a graph-Laplacian operator and a shock detector, which minimizes the amount of numerical diffusion introduced in the system. For the first algorithm, we can prove lower bounds for both unknowns. The second algorithm enjoys lower bounds and, on acute meshes, a discrete energy law. In doing so, some terms are lumped, and the algorithm uses a new discretization of the chemotaxis term. Nevertheless, lower bounds might be attained, even without lumping as for the first algorithm. Besides, the stabilization term added in the discrete chemoattractant equation to preserve lower bounds on general meshes impede obtaining a discrete energy inequality. On acute meshes, we can switch off this stabilization keeping the lower bounds and prove a discrete energy law. In any case, both algorithms are nonlinear.

When using both algorithms in the context of approximating solutions that might blow up in finite time, we find a relation between the $L^1(\Omega)$- and  $L^\infty(\Omega)$-norm leading to a \textit{locking} structure in the growth of Dirac-like potential singularities. This limitation appears when the singularity at hand is supported on a macroelement because the $L^\infty(\Omega)$-norm is controlled by the $L^1(\Omega)$-norm, which remains globally bounded.

\subsection{Notation} Here $L^p(\Omega)$ and $W^{1,p}(\Omega)$, $p\in[1,\infty]$, are the Lebesgue and Sobolev spaces endowed with the usual norm $\|\cdot\|_{L^p(\Omega)}$ and $\|\cdot\|_{W^{1,p}(\Omega)}$.  In the particular case of $p=2$, it is denoted as $H^1(\Omega)$. 
The inner product of $L^2(\Omega)$ is denoted by $(\cdot,\cdot)$. Finally, by $C$ we mean a constant which may vary with context but is always independent of the discrete parameters.

\subsection{Layout} Section $2$ states the two numerical algorithms proposed in this work. We firstly give the features needed for constructing the finite element spaces and then announce our two algorithms, where we describe the stabilizing terms in detail. In particular, we motivate an expression of the shock detector for each algorithm. Moreover, the second algorithm incorporates a new discretization of the chemotaxis term. Then, in section $3$, we prove lower bounds for both algorithms and a discrete energy law for the second one. Finally, in section $4$, we end up with some numerical experiments to test our two algorithms, especially on lower bounds.

\section{Construction of numerical schemes}
The section presents our two stabilized finite element methods jointly with the hypotheses and notation required for developing the stabilizing terms.

\subsection{Hypotheses}

Let $\Omega$ be bounded domain in $\mathds{R}^d$, with $d=2$ or $3$. Its boundary is assumed to be polygonal or polyhedral. For $\Omega$, consider $\{\mathcal{K}_h\}_{h>0}$ to be a quasi-uniform family of conforming subdivisions of $\bar\Omega$ into closed, convex subsets $K$,  triangles or quadrilaterals ($d=2$) and tetrahedra or hexahedra  ($d=3$), with $h_K:={\rm diam} (K)$ and $h:=\max_{K\in\mathcal{K}_h} h_K$, so that $\bar\Omega=\sum_{K\in\mathcal{K}_h} $. Moreover, let $\nodes=\{\boldsymbol{a}_i\}_{i=1}^I$ be the coordinates of the nodes of $\mathcal{K}_h$.  Associated to $\mathcal{K}_h$ is the finite element space
$$
X_h=\{x_h\in C^0(\bar\Omega)\,:\, x_h|_K\in \mathds{P}_1\mbox{ or }\mathds{Q}_1\quad \forall K\in\mathcal{K}_h\},
$$ 
where $\mathds{P}_1$ is the set of linear polynomials on $K$, with $K$ being a triangle or tetrahedron, and $\mathds{Q}_1$ is the set of bilinear polynomials on $K$, with $K$ being a quadrilateral or hexahedron. Let $\{\varphi_{\a_i}\}_{i=1}^I$ be the global shape functions for $X_h$, that is, $\varphi_{\a_i}(\boldsymbol{a}_i)=\delta_{ij}$ for $i,j=1,\cdots, I$, for which $\Omega_{\a_i}={\rm supp }\,\varphi_{\a_i}$ denotes the macroelement associated to each $\varphi_{\a_i}$ and $\nodes(\Omega_{\a_i})$ is the set of the nodes belonging to $\Omega_{\a_i}$. Furthermore, one defines the set of indices $I(\Omega_{\a_i})=\{j\in I\,:\, \a_j\in \Omega_{\a_i}\}$. 

Let us introduce $i_h: C(\bar\Omega)\to X_h$ the linear interpolation operator such that $i_h x(\boldsymbol{a}_i)=x_h(\boldsymbol{a}_i)$ for $i=1,\cdots, I$. 
A discrete inner product is defined as
$$
(x_h,\bar x_h)_h=\int_\Omega i_h(x_h(\x)\bar x_h(\x))\, \dx. 
$$
The nodal values $\{x_h(\a_i)\}_{i=1}^I$ are denoted $\{x_i\}_{i=1}^I$.

We now recall a well-known inverse estimate \cite[Lem. 4.5.3]{Brenner_Scott_2008} concerning $X_h$. There exists $C_{\rm inv}>0$, independent of $h$, such that 
\begin{equation}\label{inv_ineq}
\|x_h\|_{W^{1,r}(\Omega)}\le C_{\rm inv} h^{-1+d \min\{\frac{1}{r}-\frac{1}{p},0\}} \|x_h\|_{L^p(\Omega)} \quad \forall x_h\in X_h. 
\end{equation}
Let us finally consider the following averaged interpolation operator defined as follows. Take, for each node $\a_i\in\mathcal{N}_h$, an  element  $K_{\a_i}$ such that $\a_i\in K_{\a_i}$. Thus we have:  
$$
\mathcal{I}_h\psi =\sum_{i\in I} \left(\frac{1}{|K_{\a_i}|}\int_{K_{\a_i}} \phi(\x)\,\dx\right)\varphi_{\a_i}.
$$  
We know \cite{Girault_Lions_2001, Scott_Zhang_1990} that there exists $C_{\rm sta}>0$, independent of $h$, such that  
\begin{equation}\label{I-stability}
\|\mathcal{I}_h \psi\|_{W^{s,p}(\Omega)}\le C_{\rm sta} \| \psi \|_{W^{s,p}(\Omega)}\quad \mbox{for }  s=0,1\mbox{ and } 1\le p\le\infty,
\end{equation}
and 
\begin{equation}\label{I-lower-bounds}
\mathcal{I}_h \psi \ge \mbox{or} > 0\quad\mbox{ if }\quad \psi\ge \mbox{or} >0. 
\end{equation}

\subsection{Finite element approximation}
It is assumed that $(u_0, v_0)\in L^1(\Omega)\times H^1(\Omega)$ with $u_0>0$ and $v_0\ge0$. Then it is considered $u_{0h}=\mathcal{I}_h (u_0)$ and $ v_{0h}=\mathcal{I}_h(v_0)$ satisfying \eqref{I-stability} and \eqref{I-lower-bounds}; namely,
\begin{equation}\label{Bounds:u_0h}
\|u_{0h}\|_{L^1(\Omega)}\le C_{\rm sta} \|u_0\|_{L^1(\Omega)}\quad\mbox{ and }\quad u_{0h}>0,
\end{equation}
and 
\begin{equation}\label{Bounds:v_0h}
\|v_{0h}\|_{H^1(\Omega)}\le C_{\rm sta} \|v_0\|_{H^1(\Omega)}\quad\mbox{ and }\quad v_{0h}\ge0.
\end{equation}

The discretization of problem \eqref{KS} will be based on its variational formulation. As a time integration we use an implicit Euler time-stepping method lagging in time the value of $v$ in $\eqref{KS}_1$ in order to decouple the computation between the unknowns. Given $N\in\mathds{N}$, we let $0 = t_0 < t_1 < ... < t_{N-1} < t_N = T$ be a uniform partitioning of [0,T] with  time step $k=\frac{T}{N}$. Moreover, we denote $\delta_t w_h^\np \doteq k^{-1} (w_h^\np - w_h^n)$. 
\subsection{Algorithm 1} The first proposed algorithm reads as follows.

\vskip0.5cm
\begin{center}
\noindent\fbox{
\begin{minipage}{0.9\textwidth}
\textbf{Algorithm 1}: Let $u_h^0=u_{0h}$ and $v^0_h=v_{0h}$.
\end{minipage}
}
\noindent\fbox{
\begin{minipage}{0.9\textwidth}
\noindent {\bf Step $(n+1)$:} Known $(u^n_h,v^n_h)\in X_h\times X_h$, find $(u^{n+1}_h, v^{n+1}_h)\in X_h\times X_h$ such that, for all $x_h\in X_h$,  
\begin{equation}\label{eq_alg1:u_h}
(\delta_t u^{n+1}_h, x_h)+(\nabla u^{n+1}_h, \nabla x_h)-(u^{n+1}_h\nabla v^n_h, \nabla x_h)+(B^u_1(u^{n+1}_h) u^{n+1}_h, x_h)=0
\end{equation}
and
\begin{equation}\label{eq_alg1:v_h}
\begin{array}{rcl}
(\delta_t v^{n+1}_h, x_h)&+&(\nabla v^{n+1}_h,\nabla x_h)+(v^{n+1}_h, x_h)+(B^v_1(v^{n+1}_h) v^{n+1}_h, x_h)=(u^{n+1}_h, x_h).
\end{array}
\end{equation}

\end{minipage}
}
\end{center}
\vskip0.5cm

The stabilizing terms above are defined as follows. For $\Xi=u$ or $v$, we consider   
\begin{equation}\label{B_1}
(B^\Xi_1(w_h)u_h, x_h)=\sum_{i\in I}\sum_{j\in I(\Omega_{\a_i})}\nu^\Xi_{ij}(w_h, v_h) u_j x_i \ell(i,j)
\end{equation}
with
$$
\nu_{ij}^\Xi(w_h, v_h)=\max\{\alpha_i(w_h) f^\Xi_{ij}, \alpha_j(w_h)f^\Xi_{ji}, 0\}\quad\mbox{ for }\quad i\not= j
$$
and
$$
\nu_{ii}^\Xi(w_h)=\sum_{j\in I(\Omega_{\a_i})\backslash\{i\}}\nu_{ij}^\Xi(w_h),
$$
where $f^\Xi_{ij}$ is given by 
$$
f^u_{ij}=(\nabla \varphi_{\a_j}, \nabla \varphi_{\a_i})-(\varphi_{\a_j}\nabla v_h, \nabla \varphi_{\a_i}) + k^{-1}(\varphi_{\a_j}, \varphi_{\a_i})
$$
and
$$
f^v_{ij}=(\nabla\varphi_{\a_j},\nabla\varphi_{\a_i}) + (k^{-1} + 1)(\varphi_{\a_j}, \varphi_{\a_i}),
$$
respectively. Additionally, $\ell(i,j)\doteq 2\delta_{ij}-1$ is the graph-Laplacian operator, where $\delta_{ij}$ is the Kronecker delta.

The shock detector  $\detector[i](w_h)$ modulates the action of the stabilizing terms. In the present work we use the shock detector proposed in \cite{Badia_Bonilla_2017}.  Let $\a_i\in\nodes$ and $\a_j\in\nodes(\Omega_{\a_i})\backslash\{\a_i\}$. Define $\rr_{ij} = \a_j - \a_i$ as being the vector pointing from nodes $\a_i$ to $\a_j$, with $\hat{\rr}_{ij} \doteq \frac{\rr_{ij}}{|\rr_{ij}|}$ being its normalized vector. Construct $\a_\ij^\sym$ as the point at the intersection between the line that passes through $\a_i$ and $\a_j$ and $\partial\Omega_{\a_i}$ that is not $\a_j$ (see Fig. \ref{fig:usym}). The set of all symmetric nodes with respect to node $\a_i$ is represented with $\nodes^\sym(\Omega_{\a_i})$. Finally, define $\rr_\ij^\sym \doteq \a_\ij^\sym - \a_i$, and $u_j^\sym \doteq u_h(\a_\ij^\sym)$. Then one can define the gradient's linear approximation of the jump and the mean at node $\a_i$ in direction $\rr_\ij$ as
\begin{align}
\jump{\gradient w_h}_\ij &\doteq \frac{w_j - w_i}{|\rr_\ij|} + \frac{w_j^\sym - w_i}{|\rr_\ij^\sym|}, 
\\ 
\mean{|\gradient w_h\cdot \hat{\rr}_\ij|}_{ij} &\doteq \frac{1}{2}
\left(\frac{|w_j-w_i|}{|\rr_\ij|}+\frac{|w_j^\sym-w_i|}{|\rr_\ij^\sym|}\right).
\end{align}
\begin{figure}
	\centering
	\includegraphics[width=0.25\textwidth]{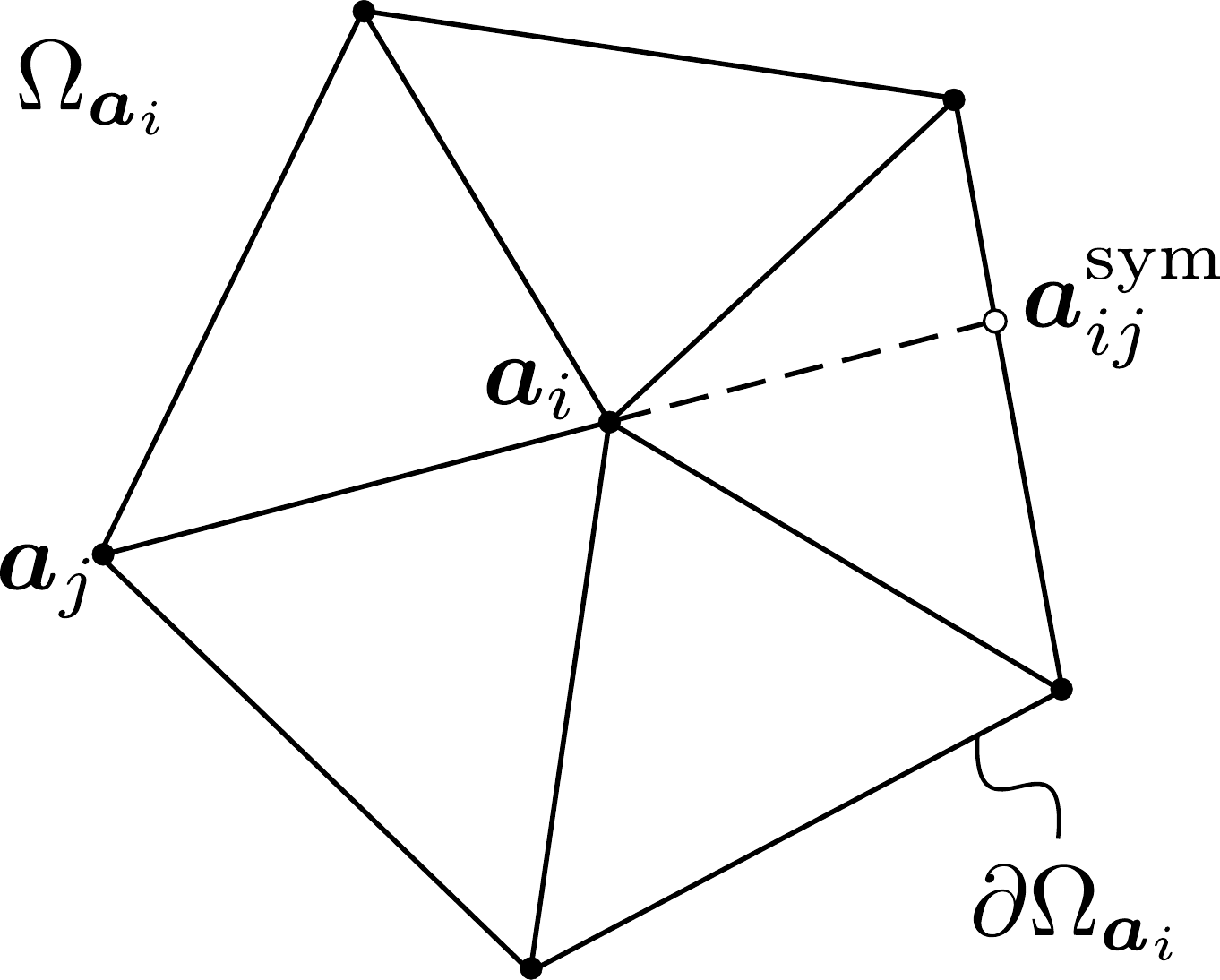}
	\hspace{5em}
	\includegraphics[width=0.21\textwidth]{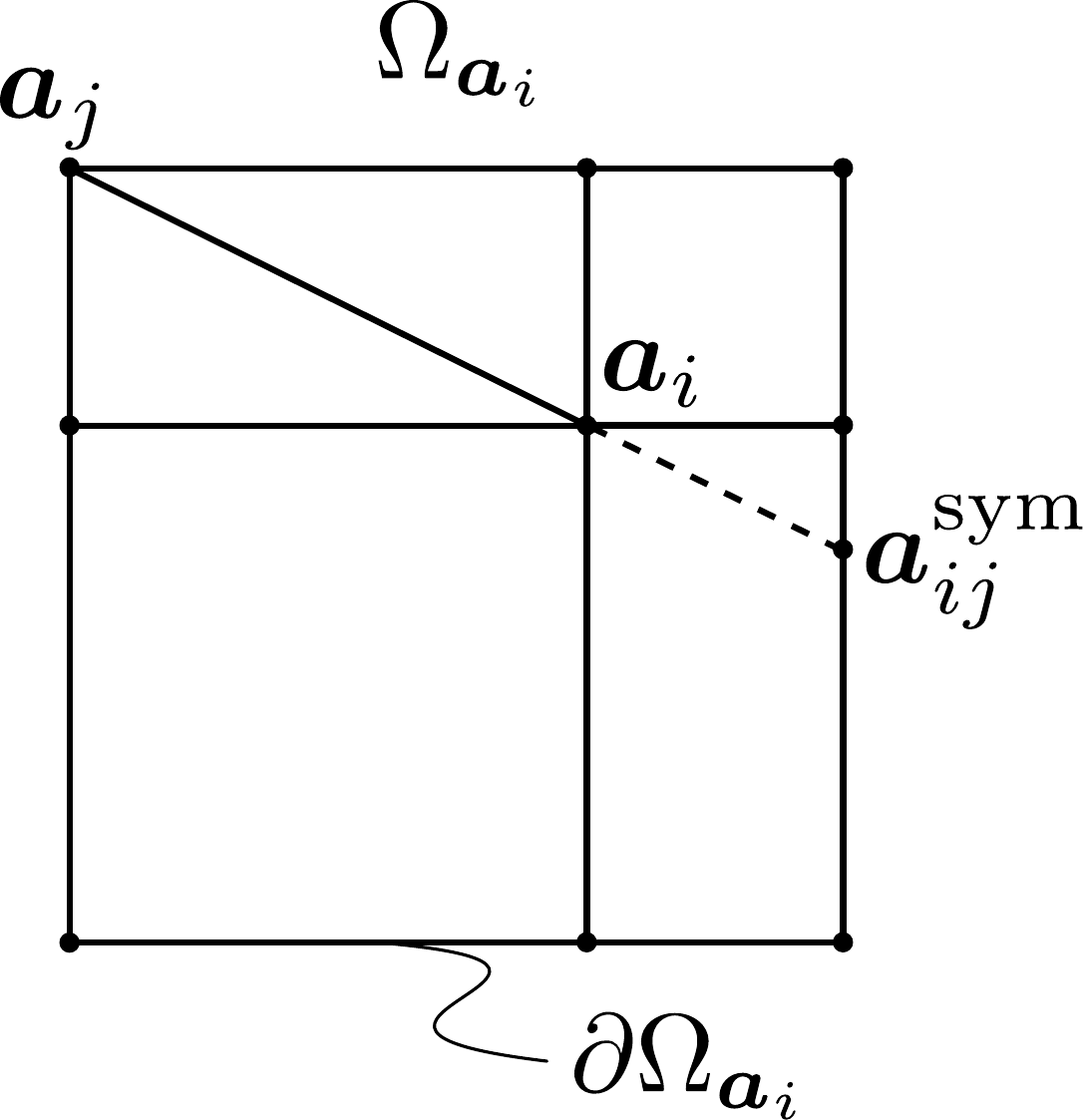}
	\caption{Representation of the symmetric node $\a_{ij}^{\sym}$ of $\a_j$ concerning $\a_i$ in a macroelement of triangles (left) and right quadrilaterals (right).}
	\label{fig:usym}
\end{figure}

Making use of these definitions, the shock detector at node $\boldsymbol{a}_i \in \nodes$ for $w_h$ reads:
\begin{equation}\label{def:alpha_min_max}
\alpha_i(w_h) \doteq \left\{\begin{array}{cc}  \left[\frac{\left|{\sum_{j\in I(\Omega_{\a_i})} \jump{\gradient w_h}_{ij}}\right|}{\sum_{j\in I(\Omega_{\a_i})} 2\mean{\left|\gradient w_h \cdot \hat{\rr}_{ij}\right|}_{ij}} \right]^q & \text{if } \sum_{j\in I(\Omega_{\a_i})} \mean{\left|\gradient w_h\cdot \hat{\rr}_{ij}\right|}_{ij} \neq 0, \\
0 & \text{otherwise},
\end{array}\right. 
\end{equation}
for some $q \in \mathbb{R}^+$. 

One key property of the shock detector is that it localizes local extremes. See    \cite[Lemma 3.1]{Badia_Bonilla_2017} for a proof.
\begin{lemma}\label{lm: alpha_i_alg1}
It follows that $0\leq \alpha_i(w_h)\leq 1$ and that $\detector[i](w_h)=1$ for any extreme value at $\a_i$.
\end{lemma}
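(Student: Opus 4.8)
The plan is to read off both claims directly from the algebraic structure of \eqref{def:alpha_min_max}, exploiting that the numerator is the absolute value of a sum while the denominator is the corresponding sum of absolute values. First I would unfold the denominator: since $2\mean{|\gradient w_h\cdot\hat{\rr}_\ij|}_{ij}=\frac{|w_j-w_i|}{|\rr_\ij|}+\frac{|w_j^\sym-w_i|}{|\rr_\ij^\sym|}$, the denominator equals $D_i\doteq\sum_{j\in I(\Omega_{\a_i})}\big(\frac{|w_j-w_i|}{|\rr_\ij|}+\frac{|w_j^\sym-w_i|}{|\rr_\ij^\sym|}\big)$, while the numerator is $N_i\doteq\big|\sum_{j}\big(\frac{w_j-w_i}{|\rr_\ij|}+\frac{w_j^\sym-w_i}{|\rr_\ij^\sym|}\big)\big|$. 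The triangle inequality immediately gives $0\le N_i\le D_i$. Hence whenever $D_i\ne0$ the bracket in \eqref{def:alpha_min_max} lies in $[0,1]$, and raising a number in $[0,1]$ to a power $q>0$ keeps it in $[0,1]$; in the complementary case $\alpha_i(w_h)=0$ by definition. This establishes $0\le\alpha_i(w_h)\le1$.

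For the second claim, suppose $\a_i$ is a (discrete) local maximum, i.e.\ $w_i\ge w_j$ for every $\a_j\in\nodes(\Omega_{\a_i})$ (the minimum case is symmetric). The key additional input is that $w_h$ is piecewise linear or bilinear on the macroelement $\Omega_{\a_i}$, so it attains its maximum over $\Omega_{\a_i}$ at a vertex; since that maximum is $w_i$, we get $w_h(\x)\le w_i$ for all $\x\in\Omega_{\a_i}$, and in particular $w_j^\sym=w_h(\a_\ij^\sym)\le w_i$ at the symmetric boundary points. Consequently both fractions making up each summand $\jump{\gradient w_h}_\ij$ are nonpositive, so every summand shares the same sign. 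The absolute value may then be pulled inside the sum, $N_i=\sum_{j}\big(\frac{|w_j-w_i|}{|\rr_\ij|}+\frac{|w_j^\sym-w_i|}{|\rr_\ij^\sym|}\big)=D_i$, whence the bracket equals $1$ and $\alpha_i(w_h)=1^q=1$.

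The only delicate point, and the step I would treat most carefully, is the sign control at the symmetric nodes $\a_\ij^\sym$: these lie on $\partial\Omega_{\a_i}$ rather than at mesh vertices, so the extremum hypothesis at the vertices does not bound $w_j^\sym$ directly. It is precisely the (bi)linearity of $w_h$ on each element, which forces vertex-attained extrema over the whole macroelement, that closes this gap. I would also record the degenerate case $D_i=0$: then $w_h$ is constant on $\Omega_{\a_i}$, the convention in \eqref{def:alpha_min_max} sets $\alpha_i(w_h)=0$, and this is consistent with the statement since such a flat node carries no extremum to be detected. All remaining manipulations are elementary, so no genuine analytic difficulty arises beyond this geometric bookkeeping.
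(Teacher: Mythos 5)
Your argument is correct, and it is essentially the intended one: the paper does not prove this lemma itself but defers to \cite[Lemma 3.1]{Badia_Bonilla_2017}, whose proof rests on exactly the same two observations — the triangle inequality giving numerator $\le$ denominator for the bound $0\le\alpha_i\le1$, and the fact that a (bi)linear $w_h$ attains its extremum over $\Omega_{\a_i}$ at the vertex $\a_i$, so that every summand $\jump{\gradient w_h}_{ij}$ (including the contributions from the symmetric points $\a_{ij}^{\sym}\in\partial\Omega_{\a_i}$) has the same sign and the ratio saturates at $1$. Your explicit handling of the symmetric nodes and of the degenerate constant case is exactly the bookkeeping the cited proof performs.
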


\subsection{Algorithm 2} It is not obvious at all whether discrete solutions to Algorithm $1$ defined by equations  \eqref{eq_alg1:u_h}--\eqref{eq_alg1:v_h} have the discrete counterpart of \eqref{Energy_law}. Thus our next task is designing a new algorithm that leads to discrete solutions fulfilling \eqref{Energy_law}.  

The second algorithm is a variant of Algorithm 1 in which the chemotaxis and stabilizing terms are modified as follows. Let $x_h, \tilde x_h, \bar x_h\in X_h$ and assume $x_h>0$. Then, as $\sum_{i\in I} \nabla \varphi_{\a_i}=0$, one finds 
$$
\begin{array}{rcl}
( x_h\nabla \tilde x_h,\nabla \bar x_h)&=&\displaystyle
\sum_{k,j,i\in I} x_k \tilde x_j  \bar x_i (\varphi_{\a_k} \nabla \varphi_{\a_j}, \nabla\varphi_{\a_i}) 
\\
&=& \displaystyle
\sum_{k,j,i\in I} x_k(\tilde x_j-\tilde x_j)  \bar x_i (\varphi_{\a_k} \nabla \varphi_{\a_j}, \nabla\varphi_{\a_i})
\\
&=&2 \displaystyle
\sum_{k,j,i\in I} x_k(\tilde x_j-\tilde x_j)  (\bar x_i-\bar x_j) (\varphi_{\a_k} \nabla \varphi_{\a_j}, \nabla\varphi_{\a_i})
\\
&=&\displaystyle
\sum_{\tiny\begin{array}{c}k\in I\\i<j\in I\end{array}} x_k (\tilde x_j- \tilde x_i)  (\bar x_i-\bar x_j) (\varphi_{\a_k} \nabla \varphi_{\a_j}, \nabla\varphi_{\a_i}).
\end{array}
$$

From the simple observation that 
$$
\lim_{x_j\to x_i} \frac{x_j-x_i}{\log x_j - \log x_i} =x_i
$$
is an approximation of the identity, we now approximate $x_k$ by $x_{ij}$ where
\begin{equation}\label{x_ij}
x_{ij}=\left\{
\begin{array}{ccl}
\displaystyle
\frac{x_j-x_i}{\log x_j-\log x_i}&\mbox{ if }& x_j\not=x_i,
\\
x_i&\mbox{ if }& x_j=x_i.
\end{array}
\right.
\end{equation}
Observe as well that $\a_k, \a_j,\a_i$ must belong to $\Omega_{\a_k}\cap\Omega_{\a_j}\cap \Omega_{\a_i}$ so that $(\varphi_{\a_k} \nabla\varphi_{\a_j}, \nabla\varphi_{\a_i})\not=0$; 
therefore, as $h\to 0$, one has $\a_i, \a_j\to \a_k$, and hence $x_i:=x(\a_i)\to x_k:=x(\a_k)$. This way $x_{ij}\to x_k$ as $h\to 0$.  Thus
$$
( x_h\nabla \tilde x_h,\nabla \bar x_h)\approx\sum_{i<j\in I} x_{ji} (\tilde x_j- \tilde x_i)  (\bar x_i-\bar x_j) (\nabla \varphi_{\a_j}, \nabla\varphi_{\a_i}).
$$

Since we do not know \emph{a priori} whether the discrete solution will be positive, we need to construct an auxiliary function that somehow extends the logarithmic function to a non-positive value so that the coefficient \eqref{x_ij} makes sense. Let $\varepsilon>0$ and
$$
g_\varepsilon(s)=\left\{
\begin{array}{ccl}
s\log s-s&\mbox{ if }& s>\varepsilon,
\\
\frac{s^2-\varepsilon^2}{2\varepsilon}+(\log \varepsilon-1)s&\mbox{ if }& s\le \varepsilon,
\end{array}
\right.
$$
and hence
$$
g'_\varepsilon(s)=\left\{
\begin{array}{ccl}
\log s&\mbox{ if }& s>\varepsilon,
\\
\frac{s}{\varepsilon}+\log \varepsilon-1&\mbox{ if }& s\le \varepsilon. 
\end{array}
\right.
$$
Let us thus define 
\begin{equation}\label{KS_term_new}
(x_h\nabla \tilde x_h, \nabla \bar x_h)_*=\sum_{i<j\in I}\gamma_{ji}(x_h) ( \tilde x_j-\tilde x_i)(\bar x_i-\bar x_j)(\nabla \varphi_{\a_j},\nabla\varphi_{\a_i}),
\end{equation}
with
\begin{equation}\label{def:gamma_ij}
\gamma_{ji}(x_h)=\left\{
\begin{array}{ccl}
\displaystyle
\frac{x_j-x_i}{g'_\varepsilon(x_j)-g'_\varepsilon(x_i)}&\mbox{ if }& x_j\not=x_i,
\\
\left[x_i\right]_+&\mbox{ if }& x_j=x_i,
\end{array}
\right.
\end{equation}
where the operator $[x]_+=\max\{0,x\}$ stands for the positivity part. It should be noted that $g'_\varepsilon$ is bijective so $g'_\varepsilon(x_j)=g'_\varepsilon(x_i)$ implies $x_j=x_i$. 

\vskip0.5cm
Algorithm $2$ reads as follows. 
\vskip0.5cm
\begin{center}
\noindent\fbox{
\begin{minipage}{0.95\textwidth}
\textbf{Algorithm $2$}: Let $u_h^0=u_{0h}$ and $v^0_h=v_{0h}$.

\end{minipage}
}
\noindent\fbox{
\begin{minipage}{0.95\textwidth}
\noindent {\bf Step $(n+1)$:} Known $(u^n_h,v^n_h)\in X_h\times X_h$, find $(u^{n+1}_h, v^{n+1}_h)\in X_h\times X_h$ such that, for all $x_h\in X_h$,  
\begin{equation}\label{eq_alg2_eps:u_h}
\begin{array}{rcl}
(\delta_t u^{n+1}_h, x_h)_h&+&(\nabla u^{n+1}_h, \nabla x_h)-(u^{n+1}_h\nabla v^n_h, \nabla x_h)_*
\\
&+&(B^u_2(u^{n+1}_h, v_h^n)u^{n+1}_h, x_h)=0,
\end{array}
\end{equation}
and
\begin{equation}\label{eq_alg2_eps:v_h}
(\delta_t v^{n+1}_h, x_h)_h+(\nabla v^{n+1}_h,\nabla x_h)+(v^{n+1}_h, x_h)_h+\gamma(B_2^v(v^{n+1}_h) v^{n+1}_h,x_h)=(u^{n+1}_h, x_h)_h,
\end{equation}
with $\gamma\in\{0,1\}$.
\end{minipage}
}

\end{center}
\vskip0.5cm
Here the stabilizing term in \eqref{eq_alg2_eps:u_h} is given by
\begin{equation}\label{Bu_2}
(B^u_2(w_h, v_h)u_h, x_h)=\sum_{i<j\in I}\nu_{ji}^u(w_h,v_h) (u_j- u_i) (x_j-x_i),
\end{equation} 
where
$$
\nu_{ji}^u(w_h, v_h)=\max\{\bar\alpha_i(w_h) f_{ij}, \bar\alpha_j(w_h)f_{ji}, 0\}\quad\mbox{ for }\quad i\not= j,
$$
with
$$
f^u_{ij}=\left\{
\begin{array}{rcl}
\displaystyle
\left(1-\frac{v_j-v_i}{g'_\varepsilon(w_j)-g'_\varepsilon(w_i)}\right)(\nabla \varphi_{\a_j}, \nabla \varphi_{\a_i})&\mbox{ if }& w_j\not=w_i,
\\
0&\mbox{ if }&w_j=w_i.
\end{array}
\right.
$$
and
$$
\nu_{ii}^u(w_h, v_h)=\sum_{j\in I(\Omega_{\a_i})\backslash\{i\}}\nu_{ij}^u(w_h, v_h).
$$
The stabilizing term $B_2^v$ becomes
$$
f^v_{ij}=(\nabla\varphi_{\a_j},\nabla\varphi_{\a_i}).
$$
Now the shock detector at node $\boldsymbol{a}_i \in \nodes$ for $w_h$ is written as:
\begin{equation}\label{def:alpha_min}
\bar\alpha_i(w_h) \doteq \left\{\begin{array}{cc}  \left[\frac{\left[{\sum_{j\in I(\Omega_{\a_i})} \jump{\gradient w_h}_{ij}}\right]_+}{\sum_{j\in I(\Omega_{\a_i})} 2\mean{\left|\gradient w_h \cdot \hat{\rr}_{ij}\right|}_{ij}} \right]^q & \text{if } \sum_{j\in I(\Omega_{\a_i})} \mean{\left|\gradient w_h\cdot \hat{\rr}_{ij}\right|}_{ij} \neq 0, \\
0 & \text{otherwise},
\end{array}\right. 
\end{equation}
for some $q \in \mathbb{R}^+$. 

This shock detector is a modification of the one in \eqref{def:alpha_min_max}; in the numerator, we consider $[\cdot]_+$ instead of the absolute value. This way, the shock detector \eqref{def:alpha_min} only acts on minima. We have:

\begin{lemma}\label{lm: alpha_i_alg2}
It follows that $0\leq \bar\alpha_i(w_h)\leq 1$ and that $\bar\alpha_i(w_h)=1$ for any minimum value at $\a_i$.
\end{lemma}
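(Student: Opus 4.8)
The plan is to piggy-back on Lemma \ref{lm: alpha_i_alg1}, since $\bar\alpha_i$ differs from $\alpha_i$ only by replacing the absolute value in the numerator with the positive part $[\cdot]_+$, while the denominator and the exponent $q$ are left unchanged. The whole argument reduces to the two scalar facts $0\le[x]_+\le|x|$ and $[x]_+=x$ for $x\ge0$.

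First, for the two-sided bound I would apply $0\le[x]_+\le|x|$ with $x=\sum_{j\in I(\Omega_{\a_i})}\jump{\gradient w_h}_{ij}$. This shows that the numerator of $\bar\alpha_i$ is nonnegative and dominated by the numerator of $\alpha_i$. Because the denominators coincide and $t\mapsto t^q$ is nondecreasing on $[0,\infty)$ for $q\in\mathds{R}^+$, it follows that $0\le\bar\alpha_i(w_h)\le\alpha_i(w_h)\le1$, the last inequality being exactly Lemma \ref{lm: alpha_i_alg1}. When the denominator vanishes, $\bar\alpha_i=0$ by definition, so the bound holds trivially. This settles the first assertion with no further computation.

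The substantive part is the value at a minimum, where the plan is to show that the positive part acts as the identity so that the quotient is exactly one. If $w_h$ attains a minimum at $\a_i$, then $w_j-w_i\ge0$ for every $\a_j\in\nodes(\Omega_{\a_i})$ and, by the construction of the symmetric nodes on $\partial\Omega_{\a_i}$ in Fig.\ \ref{fig:usym} together with the piecewise (bi)linearity of $w_h$, also $w_j^\sym-w_i\ge0$. Hence each linearized jump obeys $\jump{\gradient w_h}_{ij}=\tfrac{w_j-w_i}{|\rr_{ij}|}+\tfrac{w_j^\sym-w_i}{|\rr_{ij}^\sym|}\ge0$, and since the absolute values in $\mean{|\gradient w_h\cdot\hat{\rr}_{ij}|}_{ij}$ may be removed, one gets the pointwise identity $\jump{\gradient w_h}_{ij}=2\mean{|\gradient w_h\cdot\hat{\rr}_{ij}|}_{ij}$. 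Summing over $j\in I(\Omega_{\a_i})$ makes the argument of $[\cdot]_+$ equal to the nonnegative denominator, so $[\cdot]_+$ leaves it untouched, the bracketed quotient equals $1$, and therefore $\bar\alpha_i(w_h)=1^q=1$.

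I expect the only delicate point to be the inequality $w_j^\sym\ge w_i$, i.e.\ that the value of $w_h$ at the symmetric boundary point on the ray opposite to $\a_j$ is also at least $w_i$; this is precisely where the meaning of \emph{minimum value at $\a_i$} (a local minimum over the whole macroelement $\Omega_{\a_i}$, not merely over the vertex neighbours) and the geometry of $\a_{ij}^\sym$ come into play. I would also record the degenerate case in which $w_h$ is constant on $\Omega_{\a_i}$: then both numerator and denominator vanish and $\bar\alpha_i=0$ via the ``otherwise'' branch, consistently with the convention already adopted for $\alpha_i$.
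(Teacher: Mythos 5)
Your argument is correct; note that the paper itself states this lemma without proof (the analogous Lemma \ref{lm: alpha_i_alg1} is deferred to the cited reference), and your derivation — bounding the positive part by the absolute value to inherit $\bar\alpha_i\le\alpha_i\le 1$, and observing that at a minimum every term $\frac{w_j-w_i}{|\rr_{ij}|}$ and $\frac{w_j^\sym-w_i}{|\rr_{ij}^\sym|}$ is nonnegative (since $\a_{ij}^\sym\in\partial\Omega_{\a_i}$ and $w_h\ge w_i$ on the macroelement), so that the numerator coincides with the denominator — is exactly the intended reasoning. Your flag about the locally constant case, where the ``otherwise'' branch gives $\bar\alpha_i=0$, is a legitimate caveat shared by the original detector and is harmless since the stabilizing term vanishes there anyway.
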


Here it is worthwhile pointing out the contrasts between Algorithms $1$ and $2$. Basically, Algorithm~$1$ is a standard finite element method for which the nonlinear stabilizing terms $B_1^u$ and $B_1^v$ in \eqref{B_1} have been added to obtain \eqref{eq_alg1:u_h} and \eqref{eq_alg1:v_h}, respectively. Nevertheless, Algorithm 2 does use a mass lumping technique for some terms in equations \eqref{eq_alg2_eps:u_h} and \eqref{eq_alg2_eps:v_h}. The stabilization of equation \eqref{eq_alg2_eps:u_h} is based on the new discretization \eqref{KS_term_new} of the chemotaxis term, which obligates to redesign the stabilizing term $B_2^u$ in \eqref{Bu_2}  and the shock detector $\bar\alpha_i$ in \eqref{def:alpha_min} as well. 
\begin{remark}
In principle, there is no drawback with applying the shock detector $\alpha_i$ defined in \eqref{def:alpha_min_max}, which acts on both maxima and minima to limit the action of the stabilizing term \eqref{Bu_2}. The reason for not doing so is that the nonlinear solver used to obtain the solution of each time step does not work properly for some numerical tests. It is more natural to act only on minima, seeing that some solutions to the Keller-Segel equations present blowup phenomena at maxima. Therefore, introducing numerical diffusion at maxima might impact the values of the $L^\infty(\Omega)$-norm of discrete solutions to determine blowup configurations since maxima might not grow sufficiently. The effect of using \eqref{def:alpha_min}   
for defining \eqref{B_1} is innocuous in numerical examples.  
\end{remark}
\begin{remark} Another critical remark is concerned with the coefficients $f^u_{ij}$ and $f^v_{ij}$ in Algorithm~$2$. If one regards the mass-matrix entries in the definition of both coefficients; namely,  
$$
f^u_{ij}=\left\{
\begin{array}{rcl}
\displaystyle
k^{-1}(\varphi_{\a_j},\varphi_{\a_i})+\left(1-\frac{v_j-v_i}{g'_\varepsilon(w_j)-g'_\varepsilon(w_i)}\right)(\nabla \varphi_{\a_j}, \nabla \varphi_{\a_i})&\mbox{ if }& w_j\not=w_i,
\\
k^{-1}(\varphi_{\a_j},\varphi_{\a_i})&\mbox{ if }&w_j=w_i,
\end{array}
\right.
$$
and 
$$
f^v_{ij}=(\nabla\varphi_{\a_j},\nabla\varphi_{\a_i}) + (k^{-1} + 1)(\varphi_{\a_j}, \varphi_{\a_i}),
$$
mass lumping might be avoided. But, as for algorithm $1$, we cannot obtain a discrete version of \eqref{Energy_law} with this choice of the coefficients. 
\end{remark}

\section{Lower and $L^1(\Omega)$-bounds}\label{sec.low-l1-bounds}
\subsection{Algorithm 1}
To start with, we prove that Algorithm $1$ enjoys a discrete maximum principle under certain conditions on the discrete parameters $(h,k)$.
\begin{lemma}[Lower bounds]\label{lm:lower_bounds_alg1} Let $q\in (1,+\infty)$ for $d=2$ and $q=6$ for $d=3$.  Assume that $u_h^n>0$ and $v_h^n\ge0$  such that 
\begin{equation}\label{L2_bound:v^n_h}
\|v^n_h\|_{L^2(\Omega)}\le\|v_{0h}\|_{L^2(\Omega)}+ C \frac{T^\frac{1}{2}}{h^{\frac{d}{q}}}\|u_{0h}\|_{L^1(\Omega)}.
\end{equation}
Moreover, assume that $(k,h)$ are such that 
\begin{equation}\label{Restriction_(h,k)}
1-C \frac{k}{h^{2+\frac{d}{2}}} \Big(\|v_{0h}\|_{L^2(\Omega)}+ C\frac{T^\frac{1}{2}}{h^\frac{d}{q}}\|u_{h0}\|_{L^1(\Omega)}\Big)>0.
\end{equation}
Then it follows that the discrete solution pair $(u^{n+1}_h, v^{n+1}_h)$ provided by \eqref{eq_alg1:u_h}--\eqref{eq_alg1:v_h} is such that 
\begin{equation}\label{lower_bounds_alg1}
u^{n+1}_h>0\quad\mbox { and }\quad v^{n+1}_h\ge0
\end{equation}
hold.
\end{lemma}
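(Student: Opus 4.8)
The plan is to argue nodewise at the point where each discrete unknown attains its global minimum, exploiting that the graph-Laplacian stabilization is switched fully on there. Since the two equations are decoupled (the chemotaxis term in \eqref{eq_alg1:u_h} lags $v^n_h$), I would first establish $u^{n+1}_h>0$ using only $u^n_h>0$ together with the bounds \eqref{L2_bound:v^n_h} and \eqref{Restriction_(h,k)}, and then deduce $v^{n+1}_h\ge 0$ from the already-proven positivity of $u^{n+1}_h$ and from $v^n_h\ge 0$.

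For the cell density, let $\a_i$ be a node where $u^{n+1}_h$ attains its minimum $m\doteq u_i^{n+1}$ and test \eqref{eq_alg1:u_h} with $x_h=\varphi_{\a_i}$. Writing $M_{ij}=(\varphi_{\a_j},\varphi_{\a_i})\ge 0$, recalling $f^u_{ij}=(\nabla\varphi_{\a_j},\nabla\varphi_{\a_i})-(\varphi_{\a_j}\nabla v^n_h,\nabla\varphi_{\a_i})+k^{-1}M_{ij}$, and using \eqref{B_1} with $\ell(i,j)=2\delta_{ij}-1$ and $\nu^u_{ii}=\sum_{j\ne i}\nu^u_{ij}$, the nodal equation reads $\sum_j f^u_{ij}u^{n+1}_j+\sum_{j\ne i}\nu^u_{ij}(u^{n+1}_i-u^{n+1}_j)=k^{-1}\sum_j M_{ij}u^n_j$. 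The two structural facts I would rely on are: (i) by the partition of unity $\sum_j\varphi_{\a_j}\equiv 1$, the stiffness and chemotaxis rows telescope, so $\sum_j f^u_{ij}=k^{-1}m_i-(\nabla v^n_h,\nabla\varphi_{\a_i})$ with $m_i\doteq\int_\Omega\varphi_{\a_i}\,\dx>0$; and (ii) at the minimum $\detector[i](u^{n+1}_h)=1$ by Lemma~\ref{lm: alpha_i_alg1}, whence $\nu^u_{ij}\ge\detector[i](u^{n+1}_h)f^u_{ij}=f^u_{ij}$, i.e.\ $f^u_{ij}-\nu^u_{ij}\le 0$ for $j\ne i$.

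Subtracting $m\sum_j f^u_{ij}$ and using $u^{n+1}_i-m=0$, the nodal identity rearranges to $m\big(k^{-1}m_i-(\nabla v^n_h,\nabla\varphi_{\a_i})\big)=k^{-1}\sum_j M_{ij}u^n_j-\sum_{j\ne i}(f^u_{ij}-\nu^u_{ij})(u^{n+1}_j-m)$. On the right, the first sum is strictly positive because $M_{ii}>0$, $M_{ij}\ge 0$ and $u^n_h>0$, while each summand of the second is a nonpositive factor $(f^u_{ij}-\nu^u_{ij})$ times a nonnegative one $(u^{n+1}_j-m)$, so subtracting it adds a nonnegative quantity; the whole right-hand side is therefore strictly positive. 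It remains to show the bracket on the left is positive, which is the crux. By the inverse estimate \eqref{inv_ineq} with $r=p=2$ and quasi-uniformity one has $\|\nabla v^n_h\|_{L^2(\Omega)}\le C h^{-1}\|v^n_h\|_{L^2(\Omega)}$ and $\|\nabla\varphi_{\a_i}\|_{L^2}\le C h^{d/2-1}$, hence $|(\nabla v^n_h,\nabla\varphi_{\a_i})|\le C h^{d/2-2}\|v^n_h\|_{L^2(\Omega)}$, while $k^{-1}m_i\ge c\,k^{-1}h^{d}$. Thus the bracket is positive as soon as $1>C k h^{-(2+d/2)}\|v^n_h\|_{L^2(\Omega)}$, and bounding $\|v^n_h\|_{L^2(\Omega)}$ through \eqref{L2_bound:v^n_h} reduces this precisely to the assumed constraint \eqref{Restriction_(h,k)}, so $m>0$.

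Finally, for the chemoattractant I would repeat the min-node argument on \eqref{eq_alg1:v_h}. Here $f^v_{ij}=(\nabla\varphi_{\a_j},\nabla\varphi_{\a_i})+(k^{-1}+1)M_{ij}$ has row sum $\sum_j f^v_{ij}=(k^{-1}+1)m_i>0$ with no chemotaxis contribution, so no mesh constraint is needed; the analogue of the identity above reads $m\,(k^{-1}+1)m_i=\sum_j M_{ij}u^{n+1}_j+k^{-1}\sum_j M_{ij}v^n_j-\sum_{j\ne i}(f^v_{ij}-\nu^v_{ij})(v^{n+1}_j-m)$, whose right-hand side is nonnegative (indeed positive) since $u^{n+1}_h>0$ and $v^n_h\ge 0$, giving $v^{n+1}_h\ge 0$. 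The main obstacle is the bracket positivity in the $u$-step: it is the only place where the chemotaxis term resists the telescoping and so forces the inverse-estimate bound, and hence the coupling \eqref{Restriction_(h,k)} between $h$ and $k$; everything else is sign-chasing enabled by $\detector[i]=1$ at extrema and the nonnegativity of the mass matrix.
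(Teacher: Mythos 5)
Your proposal is correct and follows essentially the same route as the paper's proof: test at a minimum node, invoke $\detector[i]=1$ from Lemma~\ref{lm: alpha_i_alg1} to get $f^u_{ij}-\nu^u_{ij}\le 0$ off the diagonal, telescope the row sums via the partition of unity, and control the surviving chemotaxis term $(\nabla v^n_h,\nabla\varphi_{\a_i})$ with the inverse estimate together with \eqref{L2_bound:v^n_h} and \eqref{Restriction_(h,k)}. The only differences are cosmetic (a direct argument at the global minimum rather than a contradiction at a local one, and bounding $|(\nabla v^n_h,\nabla\varphi_{\a_i})|$ outright instead of the paper's case split on its sign).
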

\begin{proof} We proceed by contradiction. Suppose that there exists $\boldsymbol{a}_i\in\mathcal{N}_h$ being a local minimum such that $u^{n+1}_i:=u^{n+1}_h(\boldsymbol{a}_i)\le0$, and choose $x_h=\varphi_{\a_i}$ in \eqref{eq_alg1:u_h} to get
$$
\begin{array}{rcl}
\displaystyle
\sum_{j\in I(\Omega_{\a_i})} u_j^{n+1}\Big[k^{-1}(\varphi_{\a_j},\varphi_{\a_i})+(\nabla\varphi_{\a_j},\nabla\varphi_{\a_i}) -(\varphi_{\a_j}\nabla v^n_h, \nabla\varphi_{\a_i})&& 
\\
\displaystyle
+(B^u_1(u^{n+1}_h)\varphi_{\a_j},\varphi_{\a_i})\Big] -k^{-1}\sum_{j\in I(\Omega_{\a_i})} u_j^{n}(\varphi_{\a_j},\varphi_{\a_i}) &=& 0.
\end{array}
$$
From Lemma \ref{lm: alpha_i_alg1},  it is known that $\detector[i](u^{n+1}_h)=1$. Then, noting that, for $j\not= i$,
$$
\Big[k^{-1}(\varphi_{\a_j},\varphi_{\a_i}) + (\nabla\varphi_{\a_j},\nabla\varphi_{\a_i})-(\varphi_{\a_j}\nabla v^n_h,\nabla\varphi_{\a_i})+(B^u_1(u^{n+1}_h)\varphi_{\a_j},\varphi_{\a_i})\Big]\le0,
$$
we have
\begin{equation}\label{lm3.1-lab1}
\begin{array}{r}
\displaystyle
\sum_{j\in I(\Omega_{\a_i})} u_i^{n+1}\Big[k^{-1}(\varphi_{\a_j},\varphi_{\a_i})+(\nabla\varphi_{\a_j},\nabla\varphi_{\a_i}) -(\varphi_{\a_j}\nabla v^n_h,  \nabla\varphi_{\a_i})
\\ 
\displaystyle
+(B^u_1(u^{n+1}_h)\varphi_{\a_j},\varphi_{\a_i})\Big] -k^{-1}\sum_{j\in I(\Omega_{\a_i})} u_j^n(\varphi_{\a_j},\varphi_{\a_i}) \ge 0.
\end{array}
\end{equation}
Using the fact that 
\begin{equation}
\begin{array}{rcl}
0=(\nabla 1, \nabla \varphi_{\a_i}) &= & (\nabla v^n_h,\nabla\varphi_{\a_i})-k^{-1}(1, \varphi_{\a_i}) 
\\ 
& &\displaystyle
+\sum_{j\in I(\Omega_{\a_i})} \Big[ k^{-1}(\varphi_{\a_j}, \varphi_{\a_i}) + (\nabla \varphi_{\a_j}, \nabla \varphi_{\a_i} )- (\varphi_{\a_j}\nabla v^n_h, \nabla\varphi_{\a_i})\Big],
\end{array}
\end{equation}
we compute 
\begin{equation}\label{lm3.1-lab2}
\begin{array}{rcl}
k^{-1}(\varphi_{\a_i}, \varphi_{\a_i}) &+& (\nabla\varphi_{\a_i}, \nabla\varphi_{\a_i})-(\varphi_{\a_i}\nabla v^n_h,\nabla\varphi_{\a_i}) 
\\
&=& \displaystyle
k^{-1}(1,\varphi_{\a_i})-(\nabla v^n_h,\nabla\varphi_{\a_i})
\\
&&\displaystyle
-\sum_{j\in I(\Omega_{\a_i})\backslash\{i\}} k^{-1}(\varphi_{\a_j}, \varphi_{\a_i}) + (\nabla \varphi_{\a_j}, \nabla \varphi_{\a_i} )- (\varphi_{\a_j}\nabla v^n_h,\nabla\varphi_{\a_i}).
\end{array}
\end{equation}
Thus, from \eqref{B_1} for $\Xi=u$ and \eqref{lm3.1-lab2}, we can write 
$$
\begin{array}{c}
k^{-1}(\varphi_{\a_i}, \varphi_{\a_i}) + (\nabla\varphi_{\a_i}, \nabla \varphi_{\a_i})-(\varphi_{\a_i} \nabla v^n_h, \nabla \varphi_{\a_i})+(B^u_1(u^{n+1}_h)\varphi_{\a_i},\varphi_{\a_i})
\\
=k^{-1}(\varphi_{\a_i}, \varphi_{\a_i}) + (\nabla\varphi_{\a_i}, \nabla \varphi_{\a_i})-(\varphi_{\a_i} \nabla v^n_h, \nabla \varphi_{\a_i})+\nu_{ii}^u(u^{n+1}_h)
\\
\displaystyle
=k^{-1}(\varphi_{\a_i}, \varphi_{\a_i})+(\nabla\varphi_{\a_i}, \nabla \varphi_{\a_i})-(\varphi_{\a_i} \nabla v^n_h, \nabla \varphi_{\a_i})+\sum_{j\in I(\Omega_{\a_i})\backslash\{i\}} \max\{ f^u_{ij}, \alpha_j(u^{n+1}_h)f^u_{ji}, 0\}
\\
\displaystyle
=k^{-1}(\varphi_{\a_i}, \varphi_{\a_i})+(\nabla\varphi_{\a_i}, \nabla \varphi_{\a_i})-(\varphi_{\a_i} \nabla v^n_h, \nabla \varphi_{\a_i})-\sum_{j\in I(\Omega_{\a_i})\backslash\{i\}} \max\{ f^u_{ij}, \alpha_j(u^{n+1}_h)f^u_{ji}, 0\}\varphi_{\a_j}\varphi_{\a_i}\ell(i,j)
\\
\\
\displaystyle
=k^{-1}(\varphi_{\a_i}, \varphi_{\a_i})+(\nabla\varphi_{\a_i}, \nabla \varphi_{\a_i})-(\varphi_{\a_i} \nabla v^n_h, \nabla \varphi_{\a_i})-\sum_{j\in I(\Omega_{\a_i})\backslash\{i\}} ( B^u_1(u^{n+1}_h) \varphi_{\a_j},\varphi_{\a_i})
\\
\displaystyle
=k^{-1}(1, \varphi_{\a_i})-(\nabla v^n_h,\nabla\varphi_{\a_i})-\sum_{j\in I(\Omega_{\a_i})\backslash\{i\}} k^{-1}(\varphi_{\a_j}, \varphi_{\a_i}) + (\nabla\varphi_{\a_j}, \nabla \varphi_{\a_i})-(\varphi_{\a_j} \nabla v^n_h, \nabla \varphi_{\a_i})
\\
\displaystyle
-\sum_{j\in I(\Omega_{\a_i})\backslash\{i\}} (B^u_1(u^{n+1}_h) \varphi_{\a_j},\varphi_{\a_i})
\end{array}
$$
and hence
\begin{equation}\label{lm3.1-lab3}
\begin{array}{r}
\displaystyle
\sum_{j\in I(\Omega_{\a_i})} \Big[k^{-1}(\varphi_{\a_j}, \varphi_{\a_i}) + (\nabla\varphi_{\a_j}, \nabla \varphi_{\a_i})-(\varphi_{\a_j} \nabla v^n_h, \nabla \varphi_{\a_i})+(B^u_1(u^{n+1}_h)\varphi_{\a_j},\varphi_{\a_i}) \Big] \\ 
 = k^{-1}(1 , \varphi_{\a_i})-(\nabla v^n_h,\nabla \varphi_{\a_i}).
\end{array}
\end{equation}

Substituting \eqref{lm3.1-lab3} back into \eqref{lm3.1-lab1}, we get 
$$
0\le \|\varphi_{\a_i}\|_{L^1(\Omega)} u_i^{n+1}- u_i^\np k(\nabla v^n_h, \nabla\varphi_{\a_i}) - \sum_{j\in I(\Omega_{\a_i})} (\varphi_{\a_j}, \varphi_{\a_i}) u_j^n
$$
or, equivalently,
$$
- u^{n+1}_i\le - \sum_{j\in I(\Omega_{\a_i})} \frac{(\varphi_{\a_j}, \varphi_{\a_i})}{\|\varphi_{\a_i}\|_{L^1(\Omega)} } u_j^n -\frac{k}{\|\varphi_{\a_i}\|_{L^1(\Omega)} } (\nabla v^n_h, \nabla\varphi_{\a_i}) u^{n+1}_i
$$
If $(\nabla v^n_h, \nabla\varphi_{\a_i})\le0$ holds, then it follows that 
$$
0\le- u^{n+1}_i \left(1-\frac{k}{\|\varphi_{\a_i}\|_{L^1(\Omega)} } (\nabla v^n_h, \nabla\varphi_{\a_i})\right)\le - \sum_{j\in I(\Omega_{\a_i})} \frac{(\varphi_{\a_j}, \varphi_{\a_i}) }{\|\varphi_{\a_i}\|_{L^1(\Omega)} } u_j^n <0,
$$
which is a contradiction since $u^{n+1}_i\le 0$. Otherwise, if $(\nabla v^n_h, \nabla\varphi_{\a_i}) \geq 0$ holds, we have, by \eqref{inv_ineq}, that  
$$
\begin{array}{rcl}
- u^{n+1}_i&\le&\displaystyle
- \sum_{j\in I(\Omega_{\a_i})} \frac{(\varphi_{\a_j}, \varphi_{\a_i}) }{\|\varphi_{\a_i}\|_{L^1(\Omega)} } u_j^n + \frac{k}{\|\varphi_{\a_i}\|_{L^1(\Omega)} } (\nabla v^n_h, \nabla\varphi_{\a_i}) (-u^{n+1}_i) 
\\
&\le&\displaystyle
- \sum_{j\in I(\Omega_{\a_i})} \frac{(\varphi_{\a_j}, \varphi_{\a_i}) }{\|\varphi_{\a_i}\|_{L^1(\Omega)} } u_j^n + \frac{k}{\|\varphi_{\a_i}\|_{L^1(\Omega)}}\|\nabla v^n_h\|_{L^2(\Omega)} \|\nabla\varphi_{\a_i}\|_{L^2(\Omega)}(-u^{n+1}_i)
\\
&\le&\displaystyle
- \sum_{j\in I(\Omega_{\a_i})} \frac{(\varphi_{\a_j}, \varphi_{\a_i}) }{\|\varphi_{\a_i}\|_{L^1(\Omega)} } u_j^n + C\frac{k}{h^{2+\frac{d}{2}}}\|v^n_h\|_{L^2(\Omega)} (-u^{n+1}_i).
\end{array}
$$
As a result, under conditions \eqref{L2_bound:v^n_h} and \eqref{Restriction_(h,k)},
$$
(1-\frac{k}{h^{2+\frac{d}{2}}}(\|v_{0h}\|_{L^2(\Omega)}+ C \frac{T^\frac{1}{2}} {h^{\frac{d}{q}}}\|u_{0h}\|_{L^1(\Omega)}))(-u^{n+1}_i)\le - \sum_{j\in I(\Omega_{\a_i})} \frac{(\varphi_{\a_j}, \varphi_{\a_i})}{\|\varphi_{\a_i}\|_{L^1(\Omega)}} u_j^n\le0,
$$
again contradicting our assumption on $u^{n+1}_i\le 0$.

The proof of $v^{n+1}_h\ge0$ is easier. Choose $x_h = \varphi_{\a_i} $ in \eqref{eq_alg1:v_h} to get
\begin{align*}
\sum_{j\in I(\Omega_{\a_i})} v_j^\np\Big[(1+k^{-1})(\varphi_{\a_j},\varphi_{\a_i}) +& (\nabla\varphi_{\a_j},\nabla\varphi_{\a_i}) + (B^v_1(v_h^\np)\varphi_{\a_j},\varphi_{\a_i}) \Big]  = \\
&\sum_{j\in I(\Omega_{\a_i})} (\varphi_{\a_j},\varphi_{\a_i}) (k^{-1}v_j^n + u_j^\np).
\end{align*}
By \eqref{B_1}, for $\Xi=v$ and  $j\neq i$, we have 
$$
\Big[(1+k^{-1})(\varphi_{\a_j},\varphi_{\a_i}) +(\nabla\varphi_{\a_j},\varphi_{\a_i}) + (B^v_1(v_h^\np)\varphi_{\a_j},\varphi_{\a_i})\Big] \leq 0.
$$
Thus, 
$$
\begin{array}{rcl}
\displaystyle
\sum_{j\in I(\Omega_{\a_i})} v_i^\np\Big[(1+k^{-1})(\varphi_{\a_j},\varphi_{\a_i}) +(\nabla\varphi_{\a_j},\nabla\varphi_{\a_i}) + (B^v_1(v_h^\np)\varphi_{\a_j},\varphi_{\a_i}) \Big]  \geq &&
\\
\displaystyle
\sum_{j\in I(\Omega_{\a_i})} (\varphi_{\a_j},\varphi_{\a_i}) (k^{-1}v_j^n + u_j^\np) &\geq& 0.
\end{array}
$$
Since $(\nabla1,\nabla \varphi_{\a_j})=0$ and $(B^v_1(v_h^\np) 1,\varphi_{\a_i})=0$ by \eqref{B_1} for $\Xi=v$ again, we see that 
$$
v_i^\np (1+k^{-1})\|\varphi_{\a_i}\|_{L^1(\Omega)} \geq \sum_{j\in I(\Omega_{\a_i})} (\varphi_{\a_j},\varphi_{\a_i}) (k^{-1}v_j^n + u_j^\np),
$$
where $\|\varphi_{\a_i}\|_{L^1(\Omega)} = \sum_{j\in I(\Omega_{\a_i})} (\varphi_{\a_j},\varphi_{\a_i})$. Therefore,
$$
0> v_i^\np (1+k^{-1}) \|\varphi_{\a_i}\|_{L^1(\Omega)} \geq \sum_{j\in I(\Omega_{\a_i})} (\varphi_{\a_j}, \varphi_{\a_i}) (k^{-1}v_j^n + u_j^\np) > 0,
$$
which is a contradiction. It closes the proof.
\end{proof}
In light of condition \eqref{L2_bound:v^n_h} of Lemma \ref{lm:lower_bounds_alg1}, $L^1(\Omega)$-bounds are the crux of positivity in \eqref{eq_alg1:u_h}. We prove these bounds in the following lemma. 

\begin{lemma}[$L^1(\Omega)$-bounds]\label{lm:L1-alg1} Suppose  $(u^{n+1}_h, v_h^{n+1})\in X_h^2$ solves  \eqref{eq_alg1:u_h} and \eqref{eq_alg1:v_h} with $u^{n+1}_h>0$ and $v^{n+1}_h\ge0$. Then the following estimates hold:
\begin{equation}\label{L1-Bound-uh_alg1}
\|u^{n+1}_h\|_{L^1(\Omega)}=\|u_h^0\|_{L^1(\Omega)}
\end{equation}
and
\begin{equation}\label{L1-Bound-vh_alg1}
\|v^{n+1}_h\|_{L^1(\Omega)}\le \|v_{0h}\|_{L^1(\Omega)}+\|u_{0h}\|_{L^1(\Omega)}.
\end{equation}

\end{lemma}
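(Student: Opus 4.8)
The plan is to test both discrete equations against the constant function $x_h=1=\sum_{i\in I}\varphi_{\a_i}\in X_h$ and to exploit that every term except the time derivative collapses. First I would use $\nabla 1=0$, which immediately annihilates $(\nabla u^{n+1}_h,\nabla 1)$ and $(u^{n+1}_h\nabla v^n_h,\nabla 1)$ in \eqref{eq_alg1:u_h}, as well as $(\nabla v^{n+1}_h,\nabla 1)$ in \eqref{eq_alg1:v_h}. The only contribution whose vanishing is not immediate is that of the stabilizing terms $B_1^u$ and $B_1^v$, so the heart of the argument is to show $(B^\Xi_1(w_h)\,y_h,1)=0$ for all $w_h,y_h\in X_h$.

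To establish this identity I would start from \eqref{B_1} with $x_i=1$ and $\ell(i,j)=2\delta_{ij}-1$, so that $(B^\Xi_1(w_h)y_h,1)=\sum_{i\in I}\big(\nu_{ii}^\Xi y_i-\sum_{j\neq i}\nu_{ij}^\Xi y_j\big)$. Inserting the row-sum definition $\nu_{ii}^\Xi=\sum_{j\neq i}\nu_{ij}^\Xi$ rewrites this as $\sum_{i\in I}\sum_{j\neq i}\nu_{ij}^\Xi(y_i-y_j)$, and since $\nu_{ij}^\Xi=\nu_{ji}^\Xi$ is symmetric while $(y_i-y_j)$ is antisymmetric, the double sum cancels pairwise and is zero. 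This is the one step that needs genuine care: it is precisely the graph-Laplacian structure together with the symmetry of the artificial-diffusion coefficients that guarantees the stabilization does not pollute the conservation of mass. The (non-symmetric) chemotaxis term, by contrast, drops out for the trivial reason that $\nabla 1=0$.

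With that identity in hand, testing \eqref{eq_alg1:u_h} against $1$ leaves only $(\delta_t u^{n+1}_h,1)=0$, i.e. $(u^{n+1}_h,1)=(u^n_h,1)$. Telescoping this equality back to the initial step and using $u^{n+1}_h>0$ to identify $\|u^{n+1}_h\|_{L^1(\Omega)}=(u^{n+1}_h,1)$, together with $u^0_h=u_{0h}>0$, yields \eqref{L1-Bound-uh_alg1}.

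Finally, for the chemoattractant I would test \eqref{eq_alg1:v_h} against $1$, which leaves $(\delta_t v^{n+1}_h,1)+(v^{n+1}_h,1)=(u^{n+1}_h,1)$. Using $v^{n+1}_h\ge 0$, $v^n_h\ge 0$ to replace the pairings by $L^1(\Omega)$-norms, and the conservation $\|u^{n+1}_h\|_{L^1(\Omega)}=\|u_{0h}\|_{L^1(\Omega)}$ just proven, this becomes $(1+k)\|v^{n+1}_h\|_{L^1(\Omega)}=\|v^n_h\|_{L^1(\Omega)}+k\|u_{0h}\|_{L^1(\Omega)}$. Here $\|v^{n+1}_h\|_{L^1(\Omega)}$ appears as a convex combination of $\|v^n_h\|_{L^1(\Omega)}$ and $\|u_{0h}\|_{L^1(\Omega)}$ with weights $\tfrac{1}{1+k}$ and $\tfrac{k}{1+k}$, so a one-line induction (or solving the scalar recursion explicitly as $\|v^n_h\|_{L^1(\Omega)}=(1+k)^{-n}\|v_{0h}\|_{L^1(\Omega)}+(1-(1+k)^{-n})\|u_{0h}\|_{L^1(\Omega)}$) gives the uniform bound \eqref{L1-Bound-vh_alg1}. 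I expect the only nontrivial step to be the vanishing of the stabilization term; everything else is testing against constants and elementary manipulation of a scalar recursion.
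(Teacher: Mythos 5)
Your proposal is correct and follows essentially the same route as the paper: test both equations with $x_h=1$, observe that the gradient and chemotaxis terms vanish and that $(B_1^\Xi(w_h)y_h,1)=0$, then telescope the $u$-identity and solve the scalar recursion $(1+k)\|v^{n+1}_h\|_{L^1(\Omega)}=\|v^n_h\|_{L^1(\Omega)}+k\|u_{0h}\|_{L^1(\Omega)}$. The only difference is that you spell out why the stabilization term annihilates constants (symmetry of $\nu_{ij}^\Xi$ together with the row-sum definition of $\nu_{ii}^\Xi$ and the graph-Laplacian structure), a step the paper simply asserts.
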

\begin{proof} First of all, observe that 
\begin{equation}\label{lm3.2_lab1}
(B^u_1(u^{n+1}_h) u^{n+1}_h, 1)=0
\end{equation}
and 
\begin{equation}\label{lm3.2_lab2}
(B^v_1(v^{n+1}_h)v^{n+1}_h, 1)=0.
\end{equation}
Substitute $x_h=1$ into \eqref{eq_alg1:u_h} to get 
\begin{equation}\label{lm3.2-lab3}
\int_\Omega u^{n+1}_h(\boldsymbol{x})\,\dx=\int_\Omega u^0_h(\x)\,\dx,
\end{equation}
which implies \eqref{L1-Bound-uh_alg1}, owing to $u_{0h}>0$, $u^{n+1}_h>0$ and \eqref{lm3.2_lab1}. Now let $x_h=1$ in \eqref{eq_alg1:v_h} to get 
$$
\int_\Omega v^{n+1}_h(\x)\,\dx+k \int_\Omega v^{n+1}_h(\x)\,\dx=\int_\Omega v^n_h(\x)\,\dx+k\int_\Omega u^{n+1}_h(\x)\,\dx
$$
in view of \eqref{lm3.2_lab2}. A straightforward calculation shows that 
$$
\int_\Omega v^{n+1}_h(\x)\,\dx=\frac{1}{(1+k)^{n+1}} \int_\Omega v^0_h(\x)\,\dx+\left(\int_\Omega u^0_h(\x)\,\dx\right) \sum_{j=1}^{n+1}\frac{k}{(1+k)^j},
$$
where we have used \eqref{lm3.2-lab3}. Inequality \eqref{L1-Bound-vh_alg1} is then proved by invoking $v^0_h\ge0$ and $v^{n+1}_h\ge0$.
\end{proof}

Now, we can prove \eqref{L2_bound:v^n_h} for $n+1$. 
\begin{lemma}\label{lm:L2-bound} Assume that $u^{n+1}_h>0$ and $v^{n+1}_h\ge0$ hold. Then the discrete solution $v^{n+1}_h$ generated by \eqref{eq_alg1:v_h} satisfies  
\begin{equation}\label{L2_bound:v^{n+1}_h}
\|v^{n+1}_h\|^2_{L^2(\Omega)}\le C \frac{T}{h^{\frac{2d}{q}}} \|u_{0h}\|^2_{L^1(\Omega)}+\|v_{0h}\|_{L^2(\Omega)}^2.
\end{equation}
\end{lemma}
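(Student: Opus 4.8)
The plan is to test the discrete chemoattractant equation \eqref{eq_alg1:v_h} against $x_h=v^{n+1}_h$ and turn it into an energy-type estimate for $\|v^{n+1}_h\|_{L^2(\Omega)}^2$. First I would isolate the three coercive contributions on the left. For the discrete time derivative the polarization identity gives $(\delta_t v^{n+1}_h, v^{n+1}_h)\ge \tfrac{1}{2k}\big(\|v^{n+1}_h\|_{L^2(\Omega)}^2-\|v^{n}_h\|_{L^2(\Omega)}^2\big)$, while the diffusion and reaction terms assemble into the full norm $\|\nabla v^{n+1}_h\|_{L^2(\Omega)}^2+\|v^{n+1}_h\|_{L^2(\Omega)}^2=\|v^{n+1}_h\|_{H^1(\Omega)}^2$. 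The structural fact I would exploit is that the stabilization term is nonnegative on its own argument: since $\nu^v_{ij}=\nu^v_{ji}\ge0$ and $\nu^v_{ii}=\sum_{j\neq i}\nu^v_{ij}$, the graph-Laplacian quadratic form in \eqref{B_1} collapses to
\[
(B^v_1(v^{n+1}_h)v^{n+1}_h, v^{n+1}_h)=\tfrac12\sum_{i\neq j}\nu^v_{ij}\,(v^{n+1}_i-v^{n+1}_j)^2\ge0,
\]
so it may simply be discarded.

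After these reductions I am left with $\tfrac{1}{2k}\big(\|v^{n+1}_h\|_{L^2(\Omega)}^2-\|v^{n}_h\|_{L^2(\Omega)}^2\big)+\|v^{n+1}_h\|_{H^1(\Omega)}^2\le (u^{n+1}_h, v^{n+1}_h)$, and the heart of the argument is to control the cross term $(u^{n+1}_h,v^{n+1}_h)$ by the coercive $H^1$-norm already present on the left, paying only a power of $h$ and the conserved mass $\|u_{0h}\|_{L^1(\Omega)}$. The idea is to handle this term asymmetrically: with $q'$ the conjugate exponent of $q$, Hölder gives $(u^{n+1}_h,v^{n+1}_h)\le \|u^{n+1}_h\|_{L^{q'}(\Omega)}\|v^{n+1}_h\|_{L^{q}(\Omega)}$; then all the roughness is pushed onto $u$ via the inverse estimate \eqref{inv_ineq} (in its zeroth-order $L^{q'}$–$L^1$ form), $\|u^{n+1}_h\|_{L^{q'}(\Omega)}\le C h^{-d/q}\|u^{n+1}_h\|_{L^1(\Omega)}$, while $v$ is kept in its natural norm through the Sobolev embedding $\|v^{n+1}_h\|_{L^{q}(\Omega)}\le C\|v^{n+1}_h\|_{H^1(\Omega)}$. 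The latter holds precisely for $q\in(1,\infty)$ when $d=2$ and for $q=6$ when $d=3$, which is exactly the hypothesis on $q$. A Young inequality then splits
\[
C h^{-d/q}\|u^{n+1}_h\|_{L^1(\Omega)}\|v^{n+1}_h\|_{H^1(\Omega)}\le \tfrac{C}{2} h^{-2d/q}\|u^{n+1}_h\|_{L^1(\Omega)}^2+\tfrac12\|v^{n+1}_h\|_{H^1(\Omega)}^2,
\]
and the $H^1$-piece is absorbed on the left.

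Invoking the mass conservation $\|u^{n+1}_h\|_{L^1(\Omega)}=\|u_{0h}\|_{L^1(\Omega)}$ from Lemma~\ref{lm:L1-alg1}, I obtain the one-step bound $\|v^{n+1}_h\|_{L^2(\Omega)}^2-\|v^{n}_h\|_{L^2(\Omega)}^2\le C\,k\,h^{-2d/q}\|u_{0h}\|_{L^1(\Omega)}^2$. Since the same argument and the same mass conservation hold at every step $m\le n+1$, I would sum telescopically and use $(n+1)k\le Nk=T$ to reach
\[
\|v^{n+1}_h\|_{L^2(\Omega)}^2\le \|v_{0h}\|_{L^2(\Omega)}^2+C\,\frac{T}{h^{2d/q}}\,\|u_{0h}\|_{L^1(\Omega)}^2,
\]
which is exactly \eqref{L2_bound:v^{n+1}_h}.

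The main obstacle is not the energy manipulation, which is routine once the stabilization is seen to be dissipative, but the right-hand-side estimate: one must recognize that the coupling $(u,v)$ should be treated asymmetrically, spending an inverse estimate together with the globally conserved $L^1$-mass on the cell density while leaving the chemoattractant in the coercive $H^1$-norm available on the left. The admissible range of $q$ is then forced by the Sobolev embedding $H^1(\Omega)\hookrightarrow L^q(\Omega)$ in dimension $d$, and matching the factor $h^{-d/q}$ produced by the inverse estimate with the $h^{-2d/q}$ in the statement follows automatically upon squaring in Young's inequality.
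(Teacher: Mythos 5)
Your proposal is correct and follows essentially the same route as the paper: testing \eqref{eq_alg1:v_h} with $x_h=v^{n+1}_h$, discarding the nonnegative stabilization term, estimating $(u^{n+1}_h,v^{n+1}_h)$ by H\"older, the inverse estimate on $u$ in $L^1$, the Sobolev embedding on $v$, and Young's inequality, then absorbing, invoking mass conservation, and telescoping with $(n+1)k\le T$. Your explicit verification that $(B^v_1(v^{n+1}_h)v^{n+1}_h,v^{n+1}_h)=\tfrac12\sum_{i\neq j}\nu^v_{ij}(v^{n+1}_i-v^{n+1}_j)^2\ge0$ is a small welcome addition the paper leaves implicit.
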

\begin{proof} We insert $x_h=v_h^{n+1}$ into \eqref{eq_alg1:v_h} to get 
\begin{equation}\label{lm3.3-lab1}
\begin{array}{rcl}
\|v^{n+1}_h\|^2_{L^2(\Omega)}&+& \|v^{n+1}_h-v^n_h\|^2_{L^2(\Omega)}+ 2 k \|v^{n+1}_h\|^2_{H^1(\Omega)}
\\
&+&(B^v_1(v^{n+1}_h)v^{n+1}_h,v^{n+1}_h)=2\, k(u^{n+1}_h, v^{n+1}_h)+ \|v^n_h\|^2_{L^2(\Omega)}.
\end{array}
\end{equation}
The first term on the right-hand side of \eqref{lm3.3-lab1} is estimated as follows. Let $q\in (1,\infty)$ for $d=2$ and $q=6$ for $d=3$ with $p$ being its conjugate, i. e., $\frac{1}{p}+\frac{1}{q}=1$. Then, Hölder's, Young's, and Sobolev's inequalities yield 
$$
\begin{array}{rcl}
2\, k(u^{n+1}_h, v^{n+1}_h)&\le&2k \|u^{n+1}_h\|_{L^p(\Omega)}\|v^{n+1}_h\|_{L^q(\Omega)} 
\\
&\le& C k \|u^{n+1}_h\|^2_{L^p(\Omega)}+k \|v^{n+1}_h\|^2_{H^1(\Omega)}.
\end{array}
$$
On applying \eqref{inv_ineq} and recalling \eqref{L1-Bound-uh_alg1}, it is straightforward to deduce that 
\begin{equation}\label{lm3.3-lab2}
2\, k(u^{n+1}_h, v^{n+1}_h)\le C \frac{k}{h^{\frac{2d}{q}}} \|u_{0h}\|^2_{L^1(\Omega)}+k \|v^{n+1}_h\|^2_{H^1(\Omega)}.
\end{equation}
If we compile \eqref{lm3.3-lab1} and \eqref{lm3.3-lab2}, we find, after summing adequately, that \eqref{L2_bound:v^{n+1}_h} holds.
\end{proof} 

An induction argument on $n$ applied to \eqref{lower_bounds_alg1}, \eqref{L1-Bound-uh_alg1}, \eqref{L1-Bound-vh_alg1}, and \eqref{L2_bound:v^{n+1}_h}  leads to the following. 
\begin{theorem} Assume that $(h,k)$ are such that \eqref{Restriction_(h,k)} holds. Then it follows that the sequence of the discrete solution pair $\{(u^n_h, v^n_h)\}_{n=0}^N$ defined by Algorithm 1 satisfies 
\begin{itemize}
\item[a)] Lower bounds:
$$ u^n_h>0\quad\mbox{ and }\quad v^n_h\ge 0.$$
\item[b)]$L^1(\Omega)$-bounds:
$$
\|u^n_h\|_{L^1(\Omega)}=\|u_{0h}\|_{L^1(\Omega)}
$$
and
$$
\|v^{n}_h\|_{L^1(\Omega)}\le \|v_{0h}\|_{L^1(\Omega)}+\|u_{0h}\|_{L^1(\Omega)}.
$$
\end{itemize}
for all $n=0, \cdots, N$.
\end{theorem}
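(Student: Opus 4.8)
The plan is to run an induction on $n$ that threads together the three preceding lemmas, the crucial point being that the hypothesis of Lemma~\ref{lm:lower_bounds_alg1} at one time level is precisely the conclusion of Lemma~\ref{lm:L2-bound} at the previous level. Because of this coupling I would \emph{strengthen} the induction hypothesis so that, in addition to the lower and $L^1(\Omega)$-bounds claimed in a) and b), it also carries the $L^2(\Omega)$-estimate \eqref{L2_bound:v^n_h} on $v^n_h$. Concretely, let $P(n)$ be the assertion that $u^n_h>0$, $v^n_h\ge0$, that the identities \eqref{L1-Bound-uh_alg1} and \eqref{L1-Bound-vh_alg1} hold, and that $v^n_h$ satisfies \eqref{L2_bound:v^n_h}.

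For the base case $P(0)$, everything follows directly from the construction of the initial data: by \eqref{Bounds:u_0h} and \eqref{Bounds:v_0h} one has $u^0_h=u_{0h}>0$ and $v^0_h=v_{0h}\ge0$; the $L^1(\Omega)$-statements reduce to $\|u_{0h}\|_{L^1(\Omega)}=\|u_{0h}\|_{L^1(\Omega)}$ and $\|v_{0h}\|_{L^1(\Omega)}\le\|v_{0h}\|_{L^1(\Omega)}+\|u_{0h}\|_{L^1(\Omega)}$; and \eqref{L2_bound:v^n_h} holds trivially since its right-hand side dominates $\|v_{0h}\|_{L^2(\Omega)}$.

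For the inductive step I would assume $P(n)$ and argue strictly in the order dictated by the dependencies. First, since $v^n_h$ satisfies \eqref{L2_bound:v^n_h} and $(h,k)$ satisfy \eqref{Restriction_(h,k)}, Lemma~\ref{lm:lower_bounds_alg1} applies and yields the lower bounds \eqref{lower_bounds_alg1}, that is $u^{n+1}_h>0$ and $v^{n+1}_h\ge0$. With these in hand, Lemma~\ref{lm:L1-alg1} delivers \eqref{L1-Bound-uh_alg1} and \eqref{L1-Bound-vh_alg1} at level $n+1$, and Lemma~\ref{lm:L2-bound} delivers the quadratic estimate \eqref{L2_bound:v^{n+1}_h}. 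Taking square roots in \eqref{L2_bound:v^{n+1}_h} and using $\sqrt{a+b}\le\sqrt{a}+\sqrt{b}$ recovers exactly \eqref{L2_bound:v^n_h} with $n$ replaced by $n+1$ (absorbing the resulting constant into $C$), which establishes $P(n+1)$ and closes the induction.

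The main obstacle is organisational rather than analytical: one must resist phrasing the induction purely in terms of a) and b), because Lemma~\ref{lm:lower_bounds_alg1} cannot be invoked at level $n+1$ without the $L^2(\Omega)$-control of $v^n_h$, while that control is itself only guaranteed by Lemma~\ref{lm:L2-bound}, whose hypotheses are in turn the lower bounds at level $n+1$. Carrying \eqref{L2_bound:v^n_h} inside the induction hypothesis is exactly what breaks this apparent circularity; once the hypothesis is set up correctly, the remainder is a direct citation of the three lemmas in the appropriate order.
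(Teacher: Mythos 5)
Your proposal is correct and follows essentially the same route as the paper: an induction on $n$ in which the $L^2(\Omega)$-estimate \eqref{L2_bound:v^n_h} is carried along with the lower and $L^1(\Omega)$-bounds, and Lemmas \ref{lm:lower_bounds_alg1}, \ref{lm:L1-alg1} and \ref{lm:L2-bound} are applied sequentially at each step. Your explicit remark about taking square roots in \eqref{L2_bound:v^{n+1}_h} to recover \eqref{L2_bound:v^n_h} at the next level is a detail the paper leaves implicit, but it is exactly the right bookkeeping.
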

\begin{proof} The induction argument requires to verify that \eqref{L2_bound:v^n_h} holds for $n=0$. Then, according to Lemma \ref{lm:lower_bounds_alg1}, the lower bounds \eqref{lower_bounds_alg1} are satisfied for $n=0$ from \eqref{Bounds:u_0h} and\eqref{Bounds:v_0h}. In Lemma \ref{lm:L1-alg1}, we see that  the $L^1(\Omega)$-bounds \eqref{L1-Bound-uh_alg1} and \eqref{L1-Bound-vh_alg1} hold for $n=0$. Finally Lemma \ref{lm:L2-bound} provides the $L^2(\Omega)$-bound \eqref{L2_bound:v^{n+1}_h} for $n=0$.  The general case follows from sequentially  applying Lemmas \ref{lm:lower_bounds_alg1}, \ref{lm:L1-alg1} and \ref{lm:L2-bound}, respectively.    
\end{proof}

\subsection{Algorithm 2}
We now address the question of lower bounds for Algorithm 2. 
\begin{lemma}[Lower bounds]\label{lm:lower_bounds_alg2} Let $\gamma=1$.  Assume that $u_h^n>0$ and $v^n_h\ge0$. Then it follows that the discrete solution $(u^{n+1}_h, v^{n+1}_h)$ to \eqref{eq_alg2_eps:u_h}--\eqref{eq_alg2_eps:v_h} satisfies 
\begin{equation}\label{lower_bounds_alg2}
u^{n+1}_h>0\quad\mbox{ and }\quad v^{n+1}_h\ge 0.
\end{equation} 
\end{lemma}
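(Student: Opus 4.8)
The plan is to follow the nodal contradiction argument of Lemma~\ref{lm:lower_bounds_alg1}, testing the scheme against a single hat function $\varphi_{\a_i}$ at a minimizing node, but now exploiting that the chemotaxis discretization \eqref{KS_term_new} and the stabilization \eqref{Bu_2} are both organized around the \emph{same} divided difference $g'_\varepsilon(u_j^\np)-g'_\varepsilon(u_i^\np)$. I expect these to cancel edge by edge, so that no constraint of the type \eqref{Restriction_(h,k)} is needed.

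I would first establish $u_h^\np>0$. Assuming the contrary, continuity and piecewise (bi)linearity place the minimum at a node $\a_i$ with $u_i^\np\le 0<u_i^n$. Setting $x_h=\varphi_{\a_i}$ in \eqref{eq_alg2_eps:u_h}, mass lumping reduces the time term to $k^{-1}(u_i^\np-u_i^n)\|\varphi_{\a_i}\|_{L^1(\Omega)}$; the identity $\sum_j\nabla\varphi_{\a_j}=0$ turns the Laplacian term into $\sum_{j\ne i}(u_j^\np-u_i^\np)(\nabla\varphi_{\a_j},\nabla\varphi_{\a_i})$; the symmetry $\gamma_{ji}=\gamma_{ij}$ collapses \eqref{KS_term_new} to $\sum_{j\ne i}\gamma_{ji}(u_h^\np)(v_j^n-v_i^n)(\nabla\varphi_{\a_j},\nabla\varphi_{\a_i})$; and \eqref{Bu_2} yields $\sum_{j\ne i}\nu_{ji}^u(u_i^\np-u_j^\np)$.

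The heart of the argument is the edge-by-edge identity. On each edge with $u_j^\np\ne u_i^\np$ I would insert $\gamma_{ji}(u_h^\np)=(u_j^\np-u_i^\np)/(g'_\varepsilon(u_j^\np)-g'_\varepsilon(u_i^\np))$ to factor out $u_j^\np-u_i^\np$ and recognize that the combined diffusion, chemotaxis, and stabilization contributions telescope into
$$
(u_j^\np-u_i^\np)\Big[\Big(1-\tfrac{v_j^n-v_i^n}{g'_\varepsilon(u_j^\np)-g'_\varepsilon(u_i^\np)}\Big)(\nabla\varphi_{\a_j},\nabla\varphi_{\a_i})-\nu_{ji}^u\Big]=(u_j^\np-u_i^\np)\big(f^u_{ij}-\nu^u_{ji}\big).
$$
At the minimum $\bar\alpha_i(u_h^\np)=1$ by Lemma~\ref{lm: alpha_i_alg2}, hence $\nu^u_{ji}=\max\{f^u_{ij},\bar\alpha_j f^u_{ji},0\}\ge f^u_{ij}$, while $u_j^\np-u_i^\np\ge0$; each contribution is therefore $\le0$. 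The degenerate edges $u_j^\np=u_i^\np$ cause no difficulty: there $g'_\varepsilon(u_j^\np)-g'_\varepsilon(u_i^\np)=0$, and both the chemotaxis term (where $\gamma_{ji}=[u_i^\np]_+=0$ since $u_i^\np\le0$) and the combined term vanish. The equation then forces $k^{-1}(u_i^\np-u_i^n)\|\varphi_{\a_i}\|_{L^1(\Omega)}\ge0$, i.e. $u_i^\np\ge u_i^n>0$, a contradiction.

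With $u_h^\np>0$ in hand, the bound $v_h^\np\ge0$ follows by the same lumped contradiction argument applied to \eqref{eq_alg2_eps:v_h}, now simpler because no chemotaxis term is present: at a minimizing node the diffusion-minus-stabilization part is again $\le0$ (since $\bar\alpha_i=1$ gives $\nu^v_{ji}\ge f^v_{ij}=(\nabla\varphi_{\a_j},\nabla\varphi_{\a_i})$), the reaction and time terms combine into $((k^{-1}+1)v_i^\np-k^{-1}v_i^n)\|\varphi_{\a_i}\|_{L^1(\Omega)}$, and the right-hand side equals $u_i^\np\|\varphi_{\a_i}\|_{L^1(\Omega)}>0$; a negative $v_i^\np$ with $v_i^n\ge0$ is then incompatible with these signs. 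I expect the only genuinely delicate point to be the algebraic telescoping in the displayed identity together with the careful bookkeeping of the degenerate edges; everything else is sign-checking that, notably, is free of any mesh or step-size restriction.
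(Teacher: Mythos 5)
Your argument is correct and is essentially the paper's own proof: test with $\varphi_{\a_i}$ at a nonpositive minimizing node, use $\bar\alpha_i(u_h^{n+1})=1$ so that each edge contributes $(u_j^{n+1}-u_i^{n+1})(f^u_{ij}-\nu^u_{ji})\le 0$ (the paper writes $f^u_{ij}$ as $a_{ji}$ and splits the edges into the sets $I^*$ and $I^*_c$ exactly as you handle the degenerate edges via $[u_i^{n+1}]_+=0$), and conclude $u_i^{n+1}\ge u_i^n>0$; the $v$-bound then follows by the same sign-checking. No discrepancies worth noting.
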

\begin{proof} Suppose that there exists $\boldsymbol{a}_i\in\mathcal{N}_h$ being the minimum such that $u^{n+1}_i:=u^{n+1}_h(\boldsymbol{a}_i)\le 0$. Moreover, without loss of generality,  we may assume that $i<j$ for all $ j\in I(\Omega_{\a_i})$; otherwise one should only take $ j\in I(\Omega_{\a_i})$ such that $i<j$. Let $I^*(\Omega_{\a_i})=\{j\in I(\Omega_{\a_i}): u^{n+1}_j=u^{n+1}_i\}$ and let  $I^*_c(\Omega_{\a_i})$ be  its complementary.  Then choose $x_h=\varphi_{\a_i}$ in \eqref{eq_alg2_eps:u_h} to get, using
$$
(u^{n+1}_h\nabla  v^n_h, \nabla \varphi_i)_*=\sum_{j\in I^*_c(\Omega_{\a_i})} \frac{v^n_j-v^n_i}{g'_\varepsilon(u^{n+1}_j)-g'_\varepsilon(u^{n+1}_i)}(\nabla \varphi_{\a_j},\nabla\varphi_{\a_i}) (u^{n+1} _j- u^{n+1}_i)(\varphi_{\a_i}(\a_i)-\varphi_{\a_i}(\a_j)),
$$
that 
\begin{align*}
k^{-1}(1,\varphi_{\a_i}) u^{n+1}_i&+\sum_{j\in I^*_c(\Omega_{\a_i})} a_{ji}(u^{n+1}_h,v^n_h)  (u_j^{n+1}-u_i^{n+1}) (\varphi_i(\a_i)-\varphi_i(\a_j)  )
\\
&+\sum_{j\in I^*_c(\Omega_{\a_i})}
\nu_{ji}^u(u^{n+1}_h,v^n_h) (u^{n+1}_j- u^{n+1}_i)(\varphi_{\a_i}(\a_j)-\varphi_{\a_i}(\a_i))
\\
= k^{-1} u_i^{n} (1,\varphi_{\a_i})&,
\end{align*}
where we used
$$
\sum_{j\in I(\Omega_{\a_i})} u_j^{n+1}(\nabla \varphi_{\a_j},\nabla\varphi_{\a_i})=\sum_{j\in I_c(\Omega_{\a_i})} (\nabla\varphi_{\a_j},\nabla\varphi_{\a_i})(u_j^{n+1}-u^{n+1}_i)(\varphi_i(\a_i)-\varphi_i(\a_j)  ),
$$
and defined
$$
a_{ji}(u^{n+1}_h,v^n_h)=\left(1-\frac{v^n_j-v^n_i}{g'_\varepsilon(u^{n+1}_j)-g'_\varepsilon(u^{n+1}_i)}\right)(\nabla\varphi_{\a_j},\nabla\varphi_{\a_i}).
$$
Therefore,
\begin{equation}\label{lm3.4-lab1}
\begin{array}{rll}
\displaystyle
k^{-1}(1, \varphi_{\a_i}) u^{n+1}_i&\displaystyle
+\sum_{j\in I^*_c(\Omega_{\a_i})}\Big[a_{ji}(u^{n+1}_h,v^n_h)
-\nu^u_{ji}(u^{n+1}_h, v^n_h)\Big] (u_j^{n+1}-u_i^{n+1})
\\
&= k^{-1}(1,\varphi_{\a_i})u^n_i.
\end{array}
\end{equation}
Since $\bar
\alpha_i(u^{n+1}_h)=1$ from Lemma \ref{lm: alpha_i_alg2}, we have
\begin{equation}\label{lm3.4-lab2}
a_{ji}(u^{n+1}_h,v^n_h)-\nu^u_{ji}(u^{n+1}_h, v^n_h)\le 0\quad \mbox{ for all }\quad j\in I^*_c(\Omega_{\a_i}).
\end{equation}

Applying \eqref{lm3.4-lab2} to \eqref{lm3.4-lab1} yields
$$
0<u^n_i\le  u^{n+1}_i,
$$
which contradicts our assumption.

An argument similar to the one in the proof of 
 Lemma \ref{lm:lower_bounds_alg1} shows that $v^{n+1}_h\ge0$. 
\end{proof}

\begin{remark} Once we proved that the discrete solution $u^{n+1}_h$ is positive, the use of the truncating operator in \eqref{def:gamma_ij} can be neglected. 
\end{remark}

The following lemma is the counterpart of Lemma \ref{lm:L1-alg1}.  
\begin{lemma}[$L^1(\Omega)$-bounds] Under the conditions of Lemma \ref{lm:lower_bounds_alg2}, the discrete solution pair \linebreak $(u^{n+1}_h, v_h^{n+1})\in X_h^2$ computed via \eqref{eq_alg2_eps:u_h}--\eqref{eq_alg2_eps:v_h} fulfills 
\begin{equation}\label{L1-Bound-uh_alg2_eps}
\|u^{n+1}_h\|_{L^1(\Omega)}=\|u_h^0\|_{L^1(\Omega)}
\end{equation}
and
\begin{equation}\label{L1-Bound-vh_alg2_eps}
\|v^{n+1}_h\|_{L^1(\Omega)}\le \|v^0_h\|_{L^1(\Omega)}+\|u^0_h\|_{L^1(\Omega)}.
\end{equation}

\end{lemma}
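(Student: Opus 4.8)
The plan is to mirror the proof of Lemma \ref{lm:L1-alg1}, testing both discrete equations against the constant function $x_h=1$ and exploiting Lemma \ref{lm:lower_bounds_alg2}, which guarantees $u^\np_h>0$ and $v^\np_h\ge0$, so that the $L^1(\Omega)$-norms coincide with the integrals over $\Omega$. The preliminary observation I would record is that the lumped inner product against a constant reduces to the plain integral: for any $w_h\in X_h$ one has $(w_h,1)_h=\int_\Omega i_h(w_h)\,\dx=\int_\Omega w_h\,\dx$, since $w_h\in X_h$ is fixed by $i_h$. In particular $(\delta_t w^\np_h,1)_h=k^{-1}\big(\int_\Omega w^\np_h\,\dx-\int_\Omega w^n_h\,\dx\big)$, so the lumping of the mass term is harmless for this computation.

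The key point—and the only place where the redesign of Algorithm~2 must actually be checked—is that every term apart from the mass and zeroth-order reaction terms is annihilated by the constant test function. Indeed $(\nabla w^\np_h,\nabla 1)=0$; the new chemotaxis form \eqref{KS_term_new} gives $(u^\np_h\nabla v^n_h,\nabla 1)_*=0$ because each summand carries the antisymmetric factor $(\bar x_i-\bar x_j)$, which vanishes when the test function is constant; and both stabilizations satisfy $(B^u_2(u^\np_h,v^n_h)u^\np_h,1)=0$ and $(B^v_2(v^\np_h)v^\np_h,1)=0$, since by \eqref{Bu_2} and its $v$-analogue each summand carries the factor $(x_j-x_i)$, again zero for $x_h=1$. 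Thus the constant test function only sees the conservative skeleton of the scheme.

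With these reductions, testing \eqref{eq_alg2_eps:u_h} with $x_h=1$ collapses to the exact mass balance $\int_\Omega u^\np_h\,\dx=\int_\Omega u^n_h\,\dx$, and applying this recursively yields \eqref{L1-Bound-uh_alg2_eps}, where positivity turns the integral into the $L^1(\Omega)$-norm. Testing \eqref{eq_alg2_eps:v_h} with $x_h=1$ and multiplying by $k$ gives the recursion $(1+k)\int_\Omega v^\np_h\,\dx=\int_\Omega v^n_h\,\dx+k\int_\Omega u^\np_h\,\dx$; substituting the mass conservation $\int_\Omega u^m_h\,\dx=\int_\Omega u^0_h\,\dx$ and unfolding the linear recursion produces
\[
\int_\Omega v^\np_h\,\dx=\frac{1}{(1+k)^\np}\int_\Omega v^0_h\,\dx+\Big(\int_\Omega u^0_h\,\dx\Big)\sum_{j=1}^{n+1}\frac{k}{(1+k)^j}.
\]
Since the geometric sum is bounded by $1$ and $(1+k)^{-(n+1)}\le1$, the nonnegativity of $v^\np_h,v^0_h$ together with the positivity of $u^0_h$ turns this identity into the bound \eqref{L1-Bound-vh_alg2_eps}.

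Because there is no CFL-type constraint to manage here and no sign-indefinite chemotaxis contribution survives the constant test, I do not anticipate a genuine obstacle. The entire content lies in the verification of the second paragraph: that the lumped mass term, the new chemotaxis form $(\cdot,\cdot)_*$, and the stabilizations $B^u_2$ and $B^v_2$ are all assembled from node-difference factors that vanish on constants. Once this is granted, mass conservation for $u$ and the contractive recursion for $v$ follow exactly as in Lemma \ref{lm:L1-alg1}.
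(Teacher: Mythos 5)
Your proposal is correct and follows essentially the same route as the paper, which likewise tests both equations with $x_h=1$, observes that $(B^u_2(u^{n+1}_h,v^n_h)u^{n+1}_h,1)=0$ and $(B^v_2(v^{n+1}_h)v^{n+1}_h,1)=0$, and then reuses the recursion argument of Lemma \ref{lm:L1-alg1}. Your additional verifications—that the lumped product against a constant reduces to the plain integral and that the new chemotaxis form $(\cdot,\cdot)_*$ vanishes on constants via the $(\bar x_i-\bar x_j)$ factor—are exactly the details the paper leaves implicit.
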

\begin{proof} Using the reasoning of Lemma \ref{lm:L1-alg1} proves \eqref{L1-Bound-uh_alg2_eps} and \eqref{L1-Bound-vh_alg2_eps} from testing \eqref{eq_alg2_eps:u_h} and \eqref{eq_alg2_eps:v_h} by $x_h=1$ and noting
$$
(B^u_2(u^{n+1}_h,v^n_h) u^{n+1}_h, 1)=0
$$
and 
$$
(B^v_2(v^{n+1}_h)v^{n+1}_h, 1)=0. 
$$
 
\end{proof}

It is now shown that system \eqref{eq_alg2_eps:u_h}-\eqref{eq_alg2_eps:v_h} for $\gamma=0$ possesses a discrete energy law associated with
$$
\mathcal{E}_h(u_h,v_h)\doteq\frac{1}{2}\|\nabla v_h\|^2+\frac{1}{2}\|v_h\|^2_h-(u_h, v_h)_h+(u_h, \log u_h)_h.
$$

\begin{lemma}[A discrete energy law]\label{lm:energy_law_alg2} Let $\gamma=0$. Assume that $\mathcal{T}_h$ is a weakly acute triangulation for $d=2$ and acute triangulation for $d=3$ and define $I^*=\{(i,j)\in I\times I : u^{n+1}_i= u^{n+1}_j\}$ with $I^*_c$ being its complement. Then the discrete solution $(u^{n+1}_h, v^{n+1}_h)\in X_h^2$ computed via \eqref{eq_alg2_eps:u_h} and \eqref{eq_alg2_eps:v_h} satisfies 
\begin{align}
\mathcal{E}_h(u^{n+1}_h,v^{n+1}_h)-\mathcal{E}_h(u^{n}_h,v^{n}_h)&+\mathcal{ND}( u^{n+1}_h,v^{n+1}_h)+k\|\delta_t v^{n+1}_h\|^2_h
\nonumber
\\
-k\displaystyle
\sum_{i<j\in I^*_c}&(1-\bar\alpha_\#(u^{n+1}_h))\left|\left(\frac{g'_\varepsilon(u^{n+1}_j)-g'_\varepsilon(u^{n+1}_i)}{u^{n+1}_j-u^{n+1}_i}\right)^{-\frac{1}{2}}(u^{n+1}_j-u^{n+1}_i)\right.
\nonumber
\\
&\left.-\left(\frac{g'_\varepsilon(u^{n+1}_j)-g'_\varepsilon(u^{n+1}_i)}{u^{n+1}_j-u^{n+1}_i}\right)^{\frac{1}{2}}(v^n_j-v^n_i)\right|^2(\nabla\varphi_{\a_j},\nabla\varphi_{\a_i})
\label{Local-Energy-Law}
\\
-k\sum_{i<j\in I^*}&u^{n+1}_i(v^n_j-v^n_i)^2 (\nabla\varphi_{\a_j},\nabla\varphi_{\a_i})
=0,
\nonumber
\end{align}
where
$$
\begin{array}{rcl}
\mathcal{ND}(u^{n+1}_h,v^{n+1}_h)&=&\displaystyle
\|\nabla(v^{n+1}_h-v^n_h)\|^2_{L^2(\Omega)}+\|v^{n+1}_h-v^n_h\|^2_h+k (g''_\varepsilon(u^{n+\theta}_h),(\delta_t u^{n+1}_h)^2)_h,
\end{array}
$$ and  $\bar\alpha_\#\in\{\alpha_i, \alpha_j, 0\}$ is chosen such that 
$$\nu^u_{ji}(u^{n+1}_h,v^n_h)=\bar\alpha_\#(u^{n+1}_h)(1-\frac{v^n_{\bar \#}-v^n_\#}{g'_\varepsilon( u^{n+1}_{\bar \#})-g'_\varepsilon(u^{n+1}_\#)}(\nabla\varphi_{\a_{\bar \#}},\nabla\varphi_{\a_\#}),$$
with ${\bar \#}=j$ when $\#=i$ and ${\bar \#}=i$ when $\#=j$.
\end{lemma}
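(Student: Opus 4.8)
The plan is to reproduce at the discrete level the continuous derivation of \eqref{Energy_law}: test the chemoattractant equation \eqref{eq_alg2_eps:v_h} (with $\gamma=0$) by $\delta_t v^\np_h$ and the cell equation \eqref{eq_alg2_eps:u_h} by the discrete analogue of $\log u-v$, namely $x_h=i_h g'_\varepsilon(u^\np_h)-v^n_h\in X_h$, then multiply both identities by $k$ and add. I work in the regime $u^\np_h>\varepsilon$ (positivity holds on acute meshes as in Lemma~\ref{lm:lower_bounds_alg2}, and $\varepsilon$ may be taken small), so that $g'_\varepsilon(u^\np_h)=\log u^\np_h$ and $g''_\varepsilon(u^\np_h)=1/u^\np_h>0$, making the test function and the entropy $(u_h,\log u_h)_h$ consistent with $g_\varepsilon$.

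The two parabolic pieces are routine. Testing \eqref{eq_alg2_eps:v_h} by $\delta_t v^\np_h$ and using $a(a-b)=\tfrac12(a^2-b^2)+\tfrac12(a-b)^2$ converts $k(\nabla v^\np_h,\nabla\delta_t v^\np_h)$ and $k(v^\np_h,\delta_t v^\np_h)_h$ into the increments of $\tfrac12\|\nabla v_h\|^2$ and $\tfrac12\|v_h\|_h^2$ plus the nonnegative remainders that form the first two terms of $\mathcal{ND}$, together with $k\|\delta_t v^\np_h\|_h^2$ and the cross term $(u^\np_h,v^\np_h-v^n_h)_h$. On the cell side the lumped term $k(\delta_t u^\np_h,g'_\varepsilon(u^\np_h))_h$ is \emph{diagonal}, so node by node the convexity of $g_\varepsilon$ (a discrete chain rule) yields the increment of $(u_h,\log u_h)_h$ — the additive linear part of $g_\varepsilon$ drops by the mass conservation \eqref{L1-Bound-uh_alg2_eps} — plus the nonnegative Taylor remainder $\propto(g''_\varepsilon(u^{n+\theta}_h),(\delta_t u^\np_h)^2)_h$ completing $\mathcal{ND}$. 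The remaining cross term $-(u^\np_h-u^n_h,v^n_h)_h$ combines with $(u^\np_h,v^\np_h-v^n_h)_h$ to form exactly the increment of $-(u_h,v_h)_h$. Collecting these reconstructs $\mathcal{E}_h(u^\np_h,v^\np_h)-\mathcal{E}_h(u^n_h,v^n_h)+\mathcal{ND}+k\|\delta_t v^\np_h\|_h^2$.

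The heart of the argument is the spatial part of \eqref{eq_alg2_eps:u_h} tested against $g'_\varepsilon(u^\np_h)-v^n_h$. Using $\sum_i\nabla\varphi_{\a_i}=0$ I rewrite the graph-Laplacian diffusion, the chemotaxis form $(\cdot)_*$ of \eqref{KS_term_new}, and the stabilization $B^u_2$ of \eqref{Bu_2} as single sums over edges $i<j$ weighted by $(\nabla\varphi_{\a_j},\nabla\varphi_{\a_i})$. Setting $\beta_{ij}\doteq\frac{g'_\varepsilon(u^\np_j)-g'_\varepsilon(u^\np_i)}{u^\np_j-u^\np_i}$ (so $\gamma_{ji}(u^\np_h)=\beta_{ij}^{-1}$ on $I^*_c$), two cancellations are decisive: in the chemotaxis term the designed factor $\gamma_{ji}$ annihilates the $g'_\varepsilon$-difference, so that both the diffusion and the chemotaxis cross contributions reduce to $(\nabla\varphi_{\a_j},\nabla\varphi_{\a_i})(u^\np_j-u^\np_i)(v^n_j-v^n_i)$, their sum producing the factor $2$ that mirrors the continuous $2(\nabla u,\nabla v)$. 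Completing the square edge by edge, diffusion, chemotaxis and these cross terms assemble into $-(\nabla\varphi_{\a_j},\nabla\varphi_{\a_i})\,\beta_{ij}(u^\np_j-u^\np_i)^2\tau_{ij}^2$ with $\tau_{ij}\doteq 1-\frac{v^n_j-v^n_i}{\beta_{ij}(u^\np_j-u^\np_i)}$, which is precisely the weighted square of \eqref{Local-Energy-Law}. Finally, writing $\nu^u_{ji}=\bar\alpha_\#(u^\np_h)f^u_{\#\bar\#}$ with $f^u_{\#\bar\#}=\tau_{ij}(\nabla\varphi_{\a_j},\nabla\varphi_{\a_i})$ shows that the stabilization contributes $+\bar\alpha_\#$ times the same square, whence the factor $(1-\bar\alpha_\#)$; on the edges $I^*$ (where $u^\np_i=u^\np_j$) every term except the quadratic-in-$v$ part of $(\cdot)_*$ vanishes, and $\gamma_{ji}=[u^\np_i]_+=u^\np_i$ yields the last sum $-k\sum_{i<j\in I^*}u^\np_i(v^n_j-v^n_i)^2(\nabla\varphi_{\a_j},\nabla\varphi_{\a_i})$.

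I expect the edge-wise completion of squares to be the main obstacle: one must check that the designed coefficients $\gamma_{ji}$ in \eqref{def:gamma_ij}, $f^u_{ij}$ in \eqref{Bu_2}, and the one-sided detector $\bar\alpha_i$ of \eqref{def:alpha_min} conspire so that diffusion, chemotaxis and stabilization collapse \emph{exactly} into the single weighted squares, leaving no residual indefinite cross terms. Acuteness enters precisely and only here: it guarantees $(\nabla\varphi_{\a_j},\nabla\varphi_{\a_i})\le 0$ for $i\neq j$, which — together with $0\le\bar\alpha_\#\le 1$ from Lemma~\ref{lm: alpha_i_alg2}, $u^\np_i>0$, and $g''_\varepsilon>0$ — makes $\mathcal{ND}$ and all square terms nonnegative, so the identity is a genuine dissipation law. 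The bookkeeping of lumped versus consistent inner products (only the $\|\nabla v_h\|$ contribution is consistent) must be tracked carefully so that the remainders land in $\mathcal{ND}$ with the stated structure.
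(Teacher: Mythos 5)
Your proposal follows essentially the same route as the paper: the same test functions $i_h g'_\varepsilon(u^{n+1}_h)-v^n_h$ and $\delta_t v^{n+1}_h$, the same pairing of the cross terms into the increment of $-(u_h,v_h)_h$, the same Taylor/convexity argument for the lumped entropy term, and the same edge-wise rewriting and completion of squares in which the chemotaxis coefficient $\gamma_{ji}$ and the stabilization $\nu^u_{ji}$ collapse with the diffusion into the $(1-\bar\alpha_\#)$-weighted squares, with acuteness used only for the sign of $(\nabla\varphi_{\a_j},\nabla\varphi_{\a_i})$. Your explicit remark that the linear part of $g_\varepsilon$ drops by mass conservation (reconciling $(g_\varepsilon(u_h),1)_h$ with the $(u_h,\log u_h)_h$ term of $\mathcal{E}_h$) is a point the paper leaves implicit, but it is the same proof.
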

\begin{proof}  Select $x_h= i_h g'_\varepsilon(u^{n+1}_h)-v^n_h$ in \eqref{eq_alg2_eps:u_h} and $x_h=\delta_t v^{n+1}_h$ in \eqref{eq_alg2_eps:v_h} to get
\begin{equation}\label{lm4.2-lab1}
\begin{array}{rcl}
(\delta_t u^{n+1}_h, i_h g'_\varepsilon(u^{n+1}_h)-v^n_h)_h&+&(\nabla u^{n+1}_h, \nabla(i_h g'_\varepsilon(u^{n+1}_h)-v^n_h))
\\
&&-(u^{n+1}_h\nabla v^{n}_h, \nabla (i_h g'_\varepsilon(u^{n+1}_h)-v_h^n))_*
\\
&&+(B^u_2(u^{n+1}_h,v^n_h) u^{n+1}_h,  i_h g'_\varepsilon(u^{n+1}_h)-v^n_h)=0
\end{array}
\end{equation}
and
\begin{equation}\label{lm4.2-lab2}
\begin{array}{l}
\displaystyle
\frac{1}{2 k}\|\nabla v^{n+1}_h\|^2_{L^2(\Omega)}+\frac{1}{2 k}\|v^{n+1}_h\|^2_h-\frac{1}{2 k}\|\nabla v^{n}_h\|^2_{L^2(\Omega)}-\frac{1}{2 k}\|v^n_h\|^2_h
\\[1.5ex]
\displaystyle
+\frac{1}{2k}\|\nabla(v^{n+1}_h-v^n_h)\|^2_{L^2(\Omega)}+\frac{1}{2k}\|v^{n+1}_h-v^n_h\|^2_h+\|\delta_t v^{n+1}_h\|^2_h-(u_h^{n+1}, \delta_t v^{n+1}_h)_h=0.
\end{array}
\end{equation}

We next pair some terms from \eqref{lm4.2-lab1} and \eqref{lm4.2-lab2} in order to handle them together. It is not hard to see that  
\begin{equation}\label{lm4.2-lab3}
-(\delta_t u^{n+1}_h, v^n_h)_h-(u_h^{n+1}, \delta_t v^{n+1}_h)_h=-\frac{1}{k}(u^{n+1}_h, v^{n+1}_h)_h+\frac{1}{k}(u^n_h, v^n_h)_h.
\end{equation}

Now write 
$$
\begin{array}{rcl}
(\nabla u^{n+1}_h, \nabla(i_h g_\varepsilon'(u^{n+1}_h)-v_h^n))&=&\displaystyle
-\sum_{i<j\in I^*_c}\frac{g'_\varepsilon(u^{n+1}_j)-g'_\varepsilon(u^{n+1}_i)}{u^{n+1}_j-u^{n+1}_i} (u^{n+1}_j-u^{n+1}_i)^2(\nabla \varphi_{\a_j},\nabla\varphi_{\a_i})
\\
&&\displaystyle
+\sum_{i<j\in I^*_c}(u^{n+1}_j-u^{n+1}_i)(v^n_j-v^n_i)(\nabla\varphi_{\a_j},\nabla\varphi_{\a_i})
\end{array}
$$
and
$$
\begin{array}{rcl}
(u^{n+1}_h\nabla v^{n}_h, \nabla (i_h g'_\varepsilon(u^{n+1}_h)-v_h^n))_*&=&\displaystyle
-\sum_{i<j\in I^*_c}(u^{n+1}_j-u^{n+1}_i)(v^n_j-v^n_i)(\nabla\varphi_{\a_j},\nabla\varphi_{\a_i})
\\
&&\displaystyle
+\sum_{i<j\in I^*_c}\frac{u^{n+1}_j-u^{n+1}_i}{g'_\varepsilon(u^{n+1}_j)-g'_\varepsilon(u^{n+1}_i)}(v^n_j-v^n_i)^2(\nabla\varphi_{\a_j},\nabla\varphi_{\a_i})
\\
&&\displaystyle
+\sum_{i<j\in I^*}u^{n+1}_i(v^n_j-v^n_i)^2(\nabla\varphi_{\a_j},\nabla\varphi_{\a_i}).
\end{array}
$$
Therefore,
\begin{align*}
(\nabla u^{n+1}_h, \nabla(i_h g'_\varepsilon(u^{n+1}_h)-v_h^n))-(u^{n+1}_h\nabla v^{n}_h, &\nabla (i_h g_\varepsilon'(u^{n+1}_h)-v_h^n))_*
\\
=\displaystyle
-\sum_{i<j\in I^*_c}&\left|\left(\frac{u^{n+1}_j-u^{n+1}_i}{g'_\varepsilon(u^{n+1}_j)-g'_\varepsilon(u^{n+1}_i)}\right)^{\frac{1}{2}}(u^{n+1}_j-u^{n+1}_i)\right.
\\
&\left.
-\left(\frac{g'_\varepsilon(u^{n+1}_j)-g'_\varepsilon(u^{n+1}_i)}{u^{n+1}_j-u^{n+1}_i}\right)^{\frac{1}{2}}(v^n_j-v^n_i)\right|^2(\nabla\varphi_{\a_j},\nabla\varphi_{\a_i})
\\
\displaystyle
-\sum_{i<j\in I^*}&u^{n+1}_i(v^n_j-v^n_i)^2(\nabla\varphi_{\a_j},\nabla\varphi_{\a_i})>0,
\end{align*}
since $(\nabla\varphi_{\a_j},\nabla\varphi_{\a_i})\le0$ due to the acuteness of $\mathcal{T}_h$.
Moreover, 
\begin{align}
(B_2(u^{n+1}_h,v^n_h) u^{n+1}_h, &i_h g'_\varepsilon(u^{n+1}_h)-v^n_h)
\nonumber
\\
&=\displaystyle
\sum_{i<j\in I^*}\nu^u_{ji}(u^{n+1}_h,v^n_h)(u^{n+1}_i-u^{n+1}_j)(g'_\varepsilon(u^{n+1}_j)-g'_\varepsilon(u^{n+1}_i)-(v^n_i-v^n_j)).
\nonumber
\end{align}
Let us assume that 
$$\nu^u_{ji}(u^{n+1}_h,v^n_h)=\bar\alpha_i(u^{n+1}_h)(1-\frac{v^n_j-v^n_i}{g'_\varepsilon( u^{n+1}_j)-g'_\varepsilon(u^{n+1}_i)}(\nabla\varphi_{\a_j},\nabla\varphi_{\a_i}).$$
Then 
\begin{align*}
(1-\frac{v^n_j-v^n_i}{g'_\varepsilon(u^{n+1}_j)-g_\varepsilon(u^{n+1}_i)})(u^{n+1}_j-u^{n+1}_i)(g'_\varepsilon(u^{n+1}_j)-g'_\varepsilon(u^{n+1}_i)-(v^n_i-v^n_j))
\\
=\frac{g'_\varepsilon(u^{n+1}_j)-g'_\varepsilon(u^{n+1}_i)}{u^{n+1}_j-u^{n+1}_i} (u^{n+1}_j-u^{n+1}_i)^2-2(v^n_j-v^n_i)(u^{n+1}_j-u^{n+1}_i)
\\
+\frac{u^{n+1}_j-u^{n+1}_i}{g'_\varepsilon(u^{n+1}_j)-g'_\varepsilon(u^{n+1}_i)}(v^n_j-v^n_i)^2.
\\
=\left|\left(\frac{u^{n+1}_j-u^{n+1}_i}{g'_\varepsilon(u^{n+1}_j)-g'_\varepsilon(u^{n+1}_i)}\right)^{\frac{1}{2}}(u^{n+1}_j-u^{n+1}_i)-\left(\frac{g'_\varepsilon(u^{n+1}_j)-g'_\varepsilon(u^{n+1}_i)}{u^{n+1}_j-u^{n+1}_i}\right)^{\frac{1}{2}}(v^n_j-v^n_i)\right|^2.
\end{align*}
As a result, we have
\begin{align}
(\nabla u^{n+1}_h, &\nabla(i_h g'_\varepsilon(u^{n+1}_h)-v_h^n))-(u^{n+1}_h\nabla v^{n}_h, \nabla (i_h g'_\varepsilon(u^{n+1}_h)-v_h^n))_*
\nonumber
\\
&+(B^u_2(u^{n+1}_h,v^n_h) u^{n+1}_h, i_h g'_\varepsilon(u^{n+1}_h)-v^n_h)
\nonumber
\\
=\displaystyle
-\sum_{i<j\in I^*}&(1-\bar\alpha_i(u^{n+1}_h))\left|\left(\frac{g'_\varepsilon(u^{n+1}_j)-g'_\varepsilon(u^{n+1}_i)}{u^{n+1}_j-u^{n+1}_i}\right)^{-\frac{1}{2}}(u^{n+1}_j-u^{n+1}_i)\right.
\label{lm4.2-lab4}
\\
&\left.-\left(\frac{g'_\varepsilon(u^{n+1}_j)-g'_\varepsilon(u^{n+1}_i)}{u^{n+1}_j-u^{n+1}_i}\right)^{\frac{1}{2}}(v^n_j-v^n_i)\right|^2(\nabla\varphi_{\a_j},\nabla\varphi_{\a_i})>0,
\nonumber
\end{align}
since $0\le\bar\alpha_i(u^{n+1}_h)\le1$. Analogously, when
$$\nu^u_{ji}(u^{n+1}_h,v^n_h)=\bar\alpha_j(u^{n+1}_h)\left(1-\frac{v^n_i-v^n_j}{g'_\varepsilon(u^{n+1}_i)-g'_\varepsilon(u^{n+1}_j)}\right)(\nabla\varphi_{\a_i},\nabla\varphi_{\a_j})
$$ 
holds, a Taylor polynomial of $g_\varepsilon$ round $u^{n+1}_h$ evaluated at $u^{n}_h$ yields
$$
g_\varepsilon(u^n_h)=g_\varepsilon(u^{n+1}_h)-g'_\varepsilon(u^{n+1}_h)(u^{n+1}_h-u^n_h)+\frac{g''_\varepsilon(u^{n+\theta}_h)}{2}(u^{n+1}_h-u^n_h)^2,
$$
where $\theta\in (0,1)$ such that $u^{n+\theta}_h=\theta u^{n+1}_h+(1-\theta) u^n_h$. Hence, 
$$
(\delta_t u^{n+1}_h, g'_\varepsilon(u^{n+1}_h))_h=\frac{1}{k}(g_\varepsilon(u^{n+1}_h),1)_h- \frac{1}{k}(g_\varepsilon(u^{n}_h),1)_h+ \frac{k}{2} (g''_\varepsilon(u^{n+\theta}_h),(\delta_t u^{n+1}_h)^2)_h.
$$

The equality \eqref{Local-Energy-Law} follows by adding \eqref{lm4.2-lab1} and \eqref{lm4.2-lab2} and invoking \eqref{lm4.2-lab3} and \eqref{lm4.2-lab4}. 

\end{proof}

\begin{remark} Lemma \ref{lm:energy_law_alg2} can readily be (at least) extended to meshes with right quadrilaterals ($d=2$) or hexahedra ($d=3$) when bilinear finite elements are used, since $(\nabla\varphi_{\a_i},\nabla\varphi_{\a_j})\le0$ for all $i\not= j\in I$. 
\end{remark}

\begin{theorem} Let $\gamma=0$. Assume that $\mathcal{T}_h$ is a weakly acute triangulation for $d=2$ and acute triangulation for $d=3$. Then it follows that the sequence of the discrete solution pairs $\{(u^n_h, v^n_h)\}_{n=0}^N$ defined by Algorithm 2 satisfies 
\begin{itemize}
\item[a)] Lower bounds:
$$ u^n_h>0\quad\mbox{ and }\quad v^n_h\ge 0$$
\item[b)]$L^1(\Omega)$-bounds:
$$
\|u^n_h\|_{L^1(\Omega)}=\|u_{0h}\|_{L^1(\Omega)}
$$
and
$$
\|v^{n}_h\|_{L^1(\Omega)}\le \|v_{0h}\|_{L^1(\Omega)}+\|u_{0h}\|_{L^1(\Omega)}.
$$
\item [c)] Energy law: 
\begin{align}
\mathcal{E}_h(u^{n}_h,v^{n}_h)-\mathcal{E}_h(u^{0}_h,v^{0}_h)&+\mathcal{ND}( v^{n+1}_h)+k\|\delta_t v^{n+1}_h\|^2_{L^2(\Omega)}
\nonumber
\\
-k\displaystyle\sum_{m=0}^{n-1}
\sum_{i<j\in I^*_c} &(1-\bar\alpha_\#(u^{m+1}_h))\left|\left(\frac{g'_\varepsilon(u^{m+1}_j)-g'_\varepsilon(u^{m+1}_i)}{u^{m+1}_j-u^{m+1}_i}\right)^\frac{1}{2}(u^{m+1}_j-u^{m+1}_i)\right.
\nonumber
\\
&\left.-\left(\frac{u^{m+1}_j-u^{m+1}_i}{g'_\varepsilon(u^{m+1}_j)-g'_\varepsilon(u^{m+1}_i)}\right)^{-\frac{1}{2}}(v^m_j-v^m_i)\right|^2(\nabla\varphi_{\a_j},\nabla\varphi_{\a_i})
\label{Global-Energy-Law}
\\
-k\sum_{i<j\in I^*}&u^{n+1}_i(v^n_j-v^n_i)^2 (\nabla\varphi_{\a_j},\nabla\varphi_{\a_i})
=0,
\nonumber
\end{align}
\end{itemize}
for all $n=1, \cdots, N$.
\end{theorem}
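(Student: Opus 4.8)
The plan is to argue by induction on $n$, chaining together the three conclusions established in the lemmas already proved for Algorithm~2. For the base case $n=0$, the lower bounds $u^0_h>0$ and $v^0_h\ge0$ are exactly \eqref{Bounds:u_0h} and \eqref{Bounds:v_0h}, the $L^1(\Omega)$-bounds hold trivially, and the energy identity is vacuous. For the inductive step I would assume $u^n_h>0$, $v^n_h\ge0$ together with the stated $L^1(\Omega)$-bounds at level $n$, and propagate all three properties to level $n+1$.

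The first subtlety, and what I expect to be the main obstacle, is that the lower-bound Lemma~\ref{lm:lower_bounds_alg2} was proved for $\gamma=1$, whereas the energy law forces $\gamma=0$. I would resolve this by noting that the positivity argument for $u^{n+1}_h$ uses only the cell equation \eqref{eq_alg2_eps:u_h}, in which $\gamma$ never appears; testing with $\varphi_{\a_i}$ at a putative nonpositive minimum and invoking $\bar\alpha_i(u^{n+1}_h)=1$ from Lemma~\ref{lm: alpha_i_alg2} yields $0<u^n_i\le u^{n+1}_i$ verbatim as before, independently of $\gamma$. For the nonnegativity of $v^{n+1}_h$ with the stabilization switched off, the key point is that the mass terms in \eqref{eq_alg2_eps:v_h} are lumped: testing with $\varphi_{\a_i}$ at a negative minimum, the only off-diagonal contributions to the resulting algebraic equation come from the stiffness entries $(\nabla\varphi_{\a_j},\nabla\varphi_{\a_i})$, which are nonpositive precisely because $\mathcal{T}_h$ is (weakly) acute. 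Hence the discrete maximum principle for the chemoattractant holds without $B^v_2$, so $\gamma=0$ preserves $v^{n+1}_h\ge0$, replicating the final argument of Lemma~\ref{lm:lower_bounds_alg1}.

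With the lower bounds in hand, the $L^1(\Omega)$-bounds \eqref{L1-Bound-uh_alg2_eps}--\eqref{L1-Bound-vh_alg2_eps} follow from the same reasoning as in the corresponding lemma: testing \eqref{eq_alg2_eps:u_h} and \eqref{eq_alg2_eps:v_h} with $x_h=1$ and using $(B^u_2(u^{n+1}_h,v^n_h)u^{n+1}_h,1)=0$; with $\gamma=0$ the $B^v_2$ contribution is simply absent, so the mass balance for $v$ is even cleaner, and the geometric-series computation of Lemma~\ref{lm:L1-alg1} applies unchanged. Finally, the global energy identity \eqref{Global-Energy-Law} is obtained by summing the local energy law of Lemma~\ref{lm:energy_law_alg2} over the time levels $m=0,\dots,n-1$: the differences $\mathcal{E}_h(u^{m+1}_h,v^{m+1}_h)-\mathcal{E}_h(u^{m}_h,v^{m}_h)$ telescope to $\mathcal{E}_h(u^{n}_h,v^{n}_h)-\mathcal{E}_h(u^{0}_h,v^{0}_h)$, while the per-step dissipation $\mathcal{ND}$ and the detector-weighted terms accumulate into the displayed sums.

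The genuinely delicate part is therefore the lower bound for $v$ at $\gamma=0$, i.e. certifying that mass lumping together with acuteness renders $B^v_2$ redundant for the discrete maximum principle; once this is secured, everything else is bookkeeping, since Lemmas~\ref{lm:lower_bounds_alg2} and \ref{lm:energy_law_alg2}, and the $L^1(\Omega)$-bound lemma, were each established for a single step and need only be threaded through the induction.
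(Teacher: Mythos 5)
Your proposal is correct and follows the same route the paper intends: an induction that chains the single-step lemmas for Algorithm~2 and telescopes the local energy identity of Lemma~\ref{lm:energy_law_alg2} over the time levels. You also correctly identify and repair the one point the paper leaves implicit, namely that Lemma~\ref{lm:lower_bounds_alg2} is stated for $\gamma=1$ while the theorem needs $\gamma=0$: the positivity of $u^{n+1}_h$ never touches $\gamma$, and the nonnegativity of $v^{n+1}_h$ without $B^v_2$ follows from mass lumping plus $(\nabla\varphi_{\a_j},\nabla\varphi_{\a_i})\le0$ on (weakly) acute meshes.
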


\begin{remark} Observe that
$$
\lim_{\varepsilon\to 0} g_{\varepsilon}(s)=\log s\quad\mbox{ for all }\quad s>0.
$$
Therefore, if we take the limit as $\varepsilon\to 0$,  we formally obtain, from \eqref{lower_bounds_alg2}, Algorithm $2$ with the following definitions. The stabilizing term is now given by
$$
(B^u_2(u^{n+1}_h, v^n_h)u^{n+1}_h, x_h)=\sum_{i<j\in I}\nu_{ji}^u(u^{n+1}_h,v^n_h) (u^{n+1}_j- u^{n+1}_i) (x_j-x_i),
$$
where
$$
\nu_{ji}^u(u^{n+1}_h, v^n_h)=\max\{\bar\alpha_i(u^{n+1}_h) f_{ij}, \bar\alpha_j(u^{n+1}_h)f_{ji}, 0\}\quad \mbox{ for }\quad i\not= j,
$$
with
$$
f^u_{ij}=\left\{
\begin{array}{rcl}
\displaystyle
\left(1-\frac{v^n_j-v^n_i}{\log(u^{n+1}_j)-\log(u^{n+1}_i)}\right)(\nabla \varphi_{\a_j}, \nabla \varphi_{\a_i})&\mbox{ if }& u^{n+1}_j\not=u^{n+1}_i,
\\
0&\mbox{ if }&u^{n+1}_j=u^{n+1}_i,
\end{array}
\right.
$$
and the chemotaxis term is approximated as 
$$
(x_h\nabla \tilde x_h, \nabla \bar x_h)_*=\sum_{i<j\in I}\gamma_{ji}(x_h) ( \tilde x_j-\tilde x_i)(\bar x_i-\bar x_j)(\nabla \varphi_{\a_j},\nabla\varphi_{\a_i}),
$$
where
$$
\gamma_{ji}(x_h)=\left\{
\begin{array}{ccl}
\displaystyle
\frac{x_j-x_i}{\log(x_j)-\log(x_i)}&\mbox{ if }& x_j\not=x_i,
\\
x_i&\mbox{ if }& x_j=x_i.
\end{array}
\right.
$$
\end{remark}

\section{Numerical results}
Two new algorithms have been proposed and analyzed in the previous sections for approximating problem \eqref{KS}-\eqref{BC}. Discrete solutions to Algorithms $1$ and Algorithm $2$ for $\gamma=1$ have lower bounds and preserve mass, whereas discrete solutions to Algorithm $2$ for $\gamma=0$ satisfy a discrete energy law as well provided that the mesh be weakly acute in dimension two or acute in dimension three.   

In this section, we present numerical results for two test problems using both algorithms to validate the theoretical results: The first one has a smooth solution and is used to investigate the effect of the numerical diffusion stemming from the stabilizing terms. The second example concerns blowup phenomena; thus, we assess if the two stabilized algorithms can deal with solutions that might develop singularities in finite time.

As Algorithms $1$ and $2$ are nonlinear, Picard's method with backtracking is used to carry out the iterations for solving the nonlinear system at each time step. The two stopping criteria used are to require the $L^\infty(\Omega)$-norm of the residual to be less than $10^{-6}$ or the increment to be less than $10^{-16}$. Moreover, we take $q=2$ for Algorithms $1$ and $2$ in \eqref{def:alpha_min_max} and \eqref{def:alpha_min}, respectively. Recall that the shock detector \eqref{def:alpha_min_max} for Algorithm 1 acts on both maxima and minima, whereas the shock detector $\eqref{def:alpha_min}$ acts on minima only. In addition, for Algorithm $2$, we set $\gamma=1$ and $\varepsilon=10^{-6}$. 

\subsection{Smooth coalescence}
In this test we consider the approximation of the Keller-Segel model \eqref{KS}-\eqref{BC} for $\Omega=(-\pi,\pi)^2$ and the initial conditions
$$
    u_0 = \sin^2 (x) \sin^2 (y) \quad \text{and} \quad v_0 = \cos(x) + \cos(y) + 2.
$$
For both algorithms, we consider $X_h$ to be a bilinear finite element space constructed over a uniformly structured $40\times40$ grid having mesh size $h =0.11107$. The time step size is $k =0.02$.

Since $\int_\Omega u_0(\x)\,\dx=\pi^2\in(0, 4\pi)$, the expected dynamics is a smooth coalescence of the cell density because the chemoattractant density initially concentrates around the center of the domain.

The figref{fig.e1-conservation} shows the evolution in time of the $L^1(\Omega)$-norm for the chemoattractant and cell densities. In particular, the mass conservation is numerically verified, as Lemmata \ref{lm:lower_bounds_alg1} and \ref{lm:lower_bounds_alg2} predict for the cell density. It also shows the competition between production and degradation of chemoattractant, where the latter prevails over the former, which implies a mass loss.
\begin{figure}
    \begin{subfigure}[b]{0.32\textwidth}
        \centering
        \includegraphics[width=1.0\textwidth]{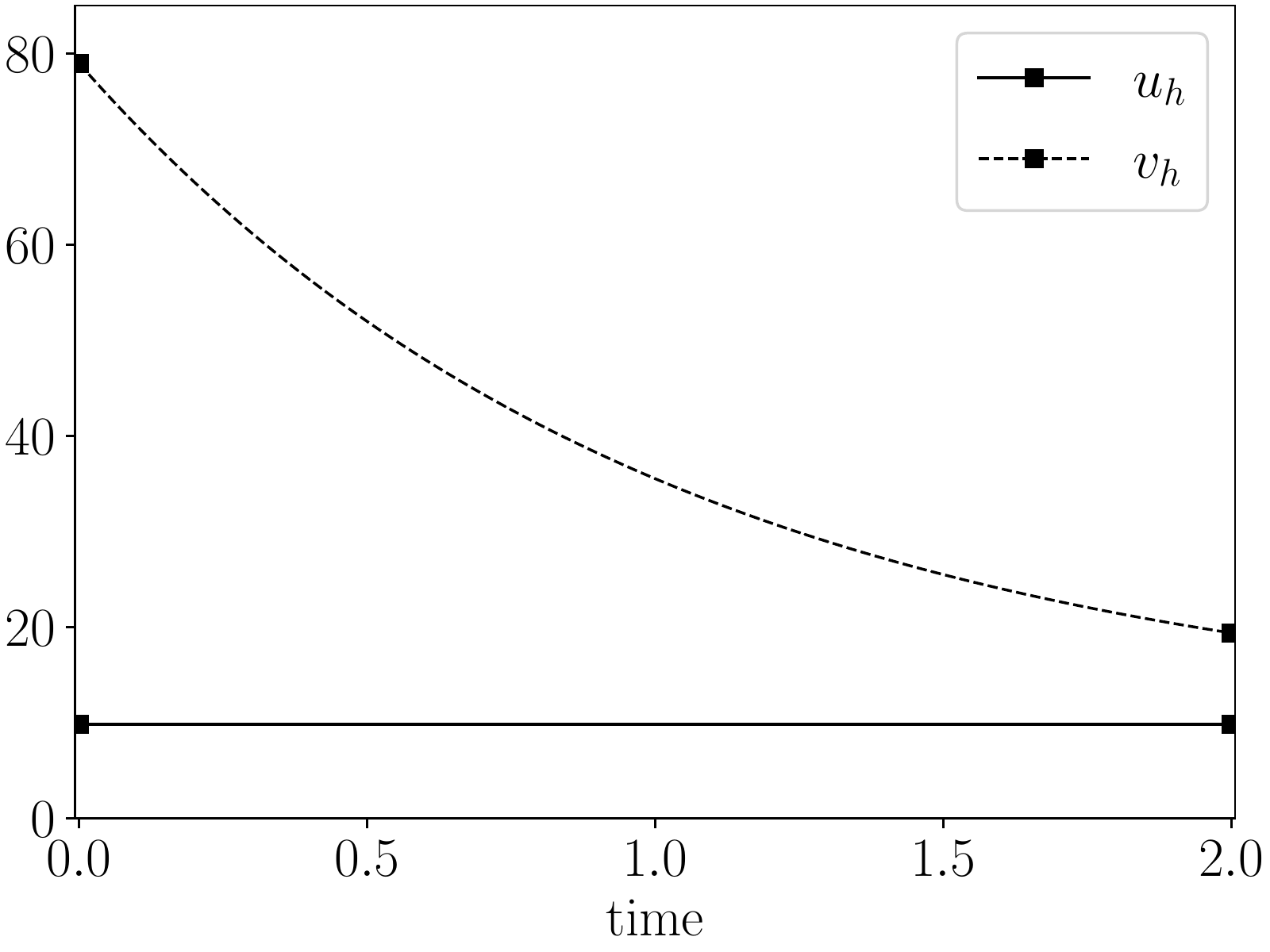}
	    \caption*{Algorithm 1}
    \end{subfigure}
    \begin{subfigure}[b]{0.32\textwidth}
        \centering
        \includegraphics[width=1.0\textwidth]{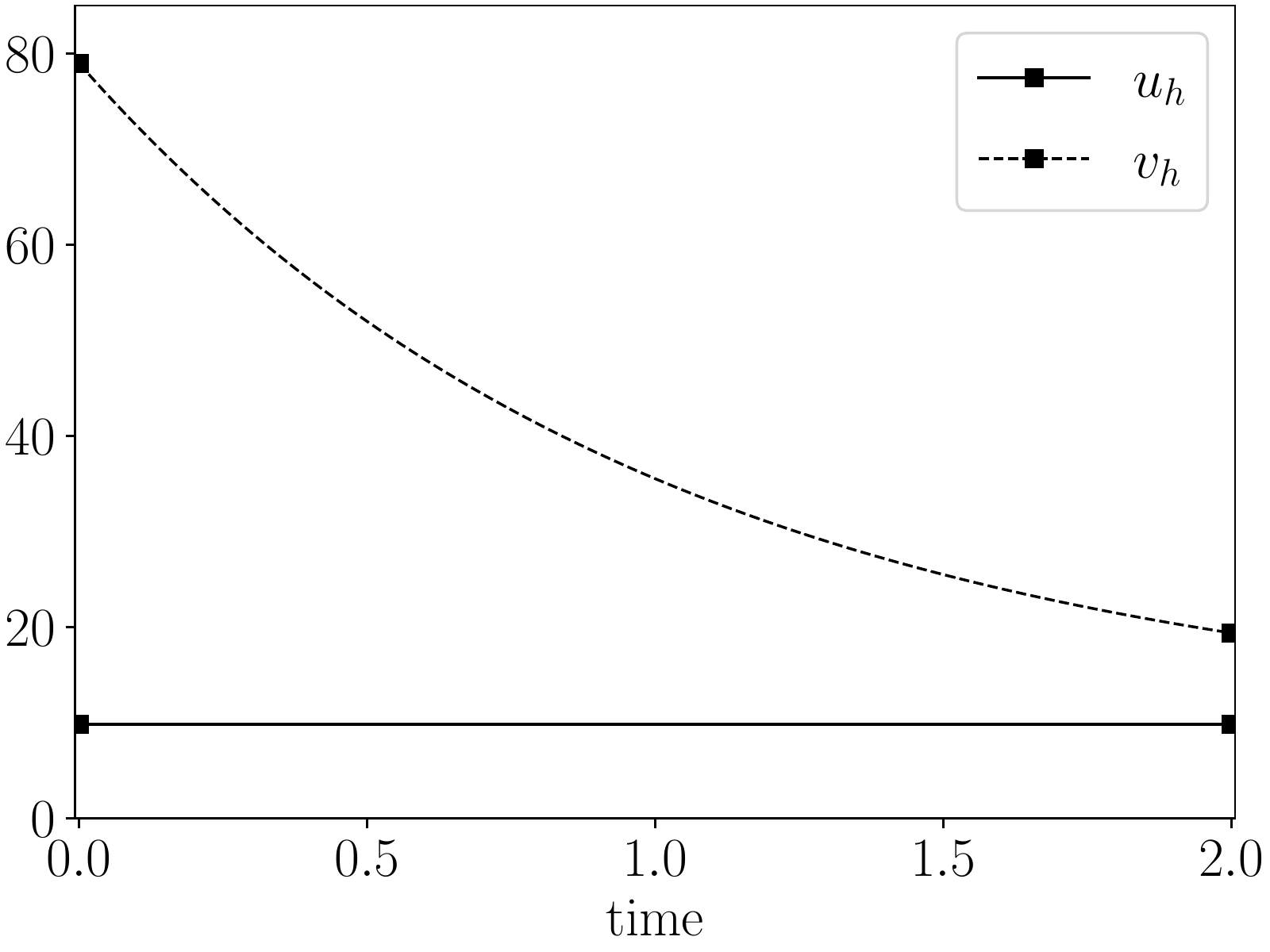}
	    \caption*{Algorithm 2}
    \end{subfigure}
    \begin{subfigure}[b]{0.32\textwidth}
        \centering
        \includegraphics[width=1.0\textwidth]{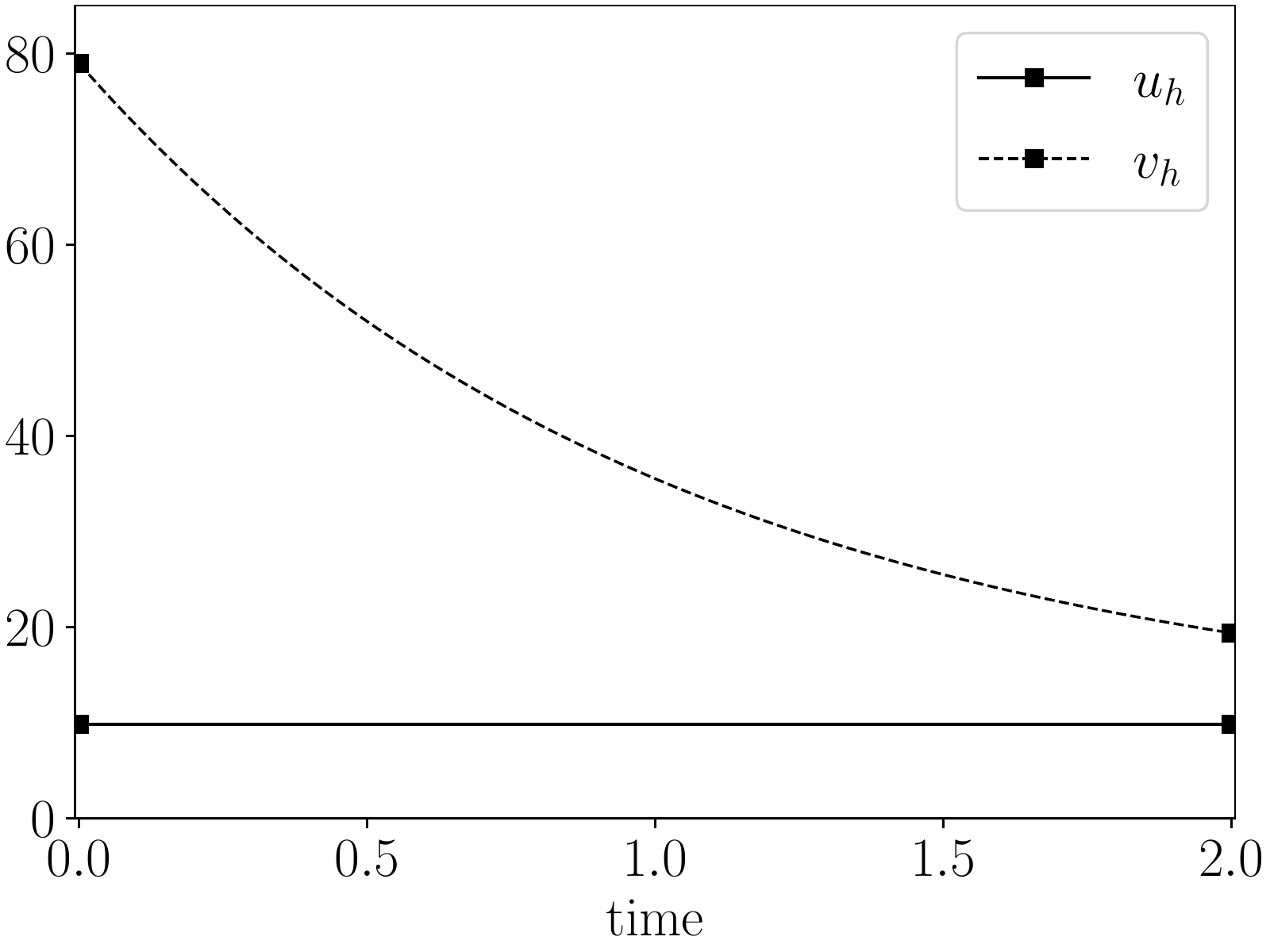}
	    \caption*{No stabilization}
    \end{subfigure}
    \caption{Evolution of $\|u_h\|_{L^1(\Omega)}$ and  $\|v_h\|_{L^1(\Omega)}$.}
    \label{fig.e1-conservation}
\end{figure}

As can be seen from Figure \ref{fig.e1-evolution-uh} (first column), cells are initially distributed in four groups with the highest concentrations in the center of each quadrant. Instead, the highest concentration of chemoattractant is located at  the origin as shown in Figure\ref{fig.e1-evolution-vh}. Then the four groups of cells move toward the origin attracted by the chemoattractant while being diffused as displayed in Figures \ref{fig.e1-evolution-uh} and \ref{fig.e1-evolution-vh} at times $t=1.3$ and $2$. Furthermore, the profile along the diagonal of the domain and the evolution of the energy functional $\mathcal{E}_h(u_h, v_h)$ can be found in Figures \ref{fig.e1-profiles} at times $t=0$ and $2$, and Figure \ref{fig.e1-energy}, respectively. It is evident that $\mathcal{E}_h(u_h,v_h)$ decays over time; therefore, the system evolves toward a steady-state solution.    
\begin{figure}
    \begin{subfigure}[b]{0.3\textwidth}
        \centering
        \includegraphics[width=1.0\textwidth]{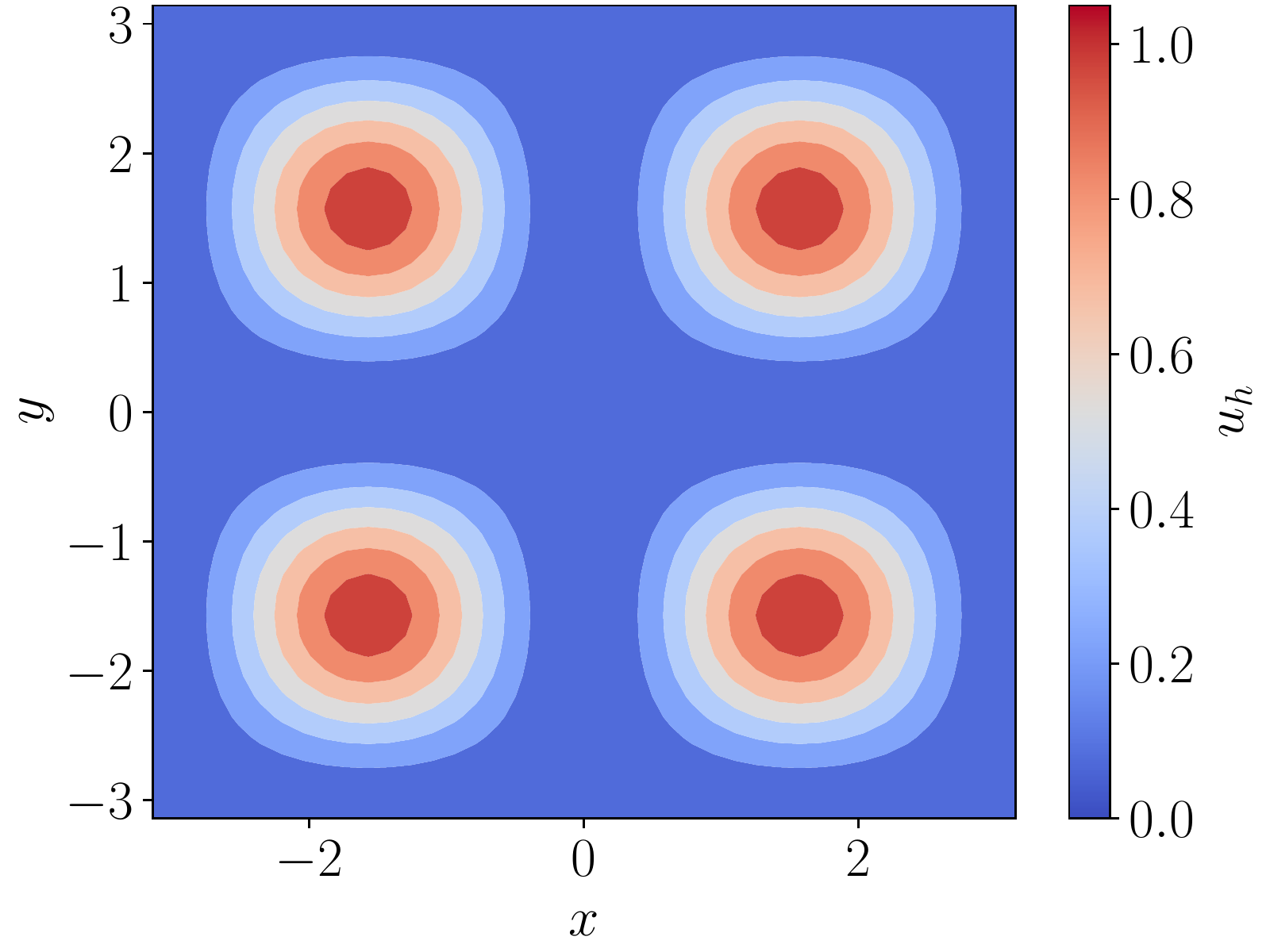}
    \end{subfigure}
    \begin{subfigure}[b]{0.3\textwidth}
        \centering
        \includegraphics[width=1.0\textwidth]{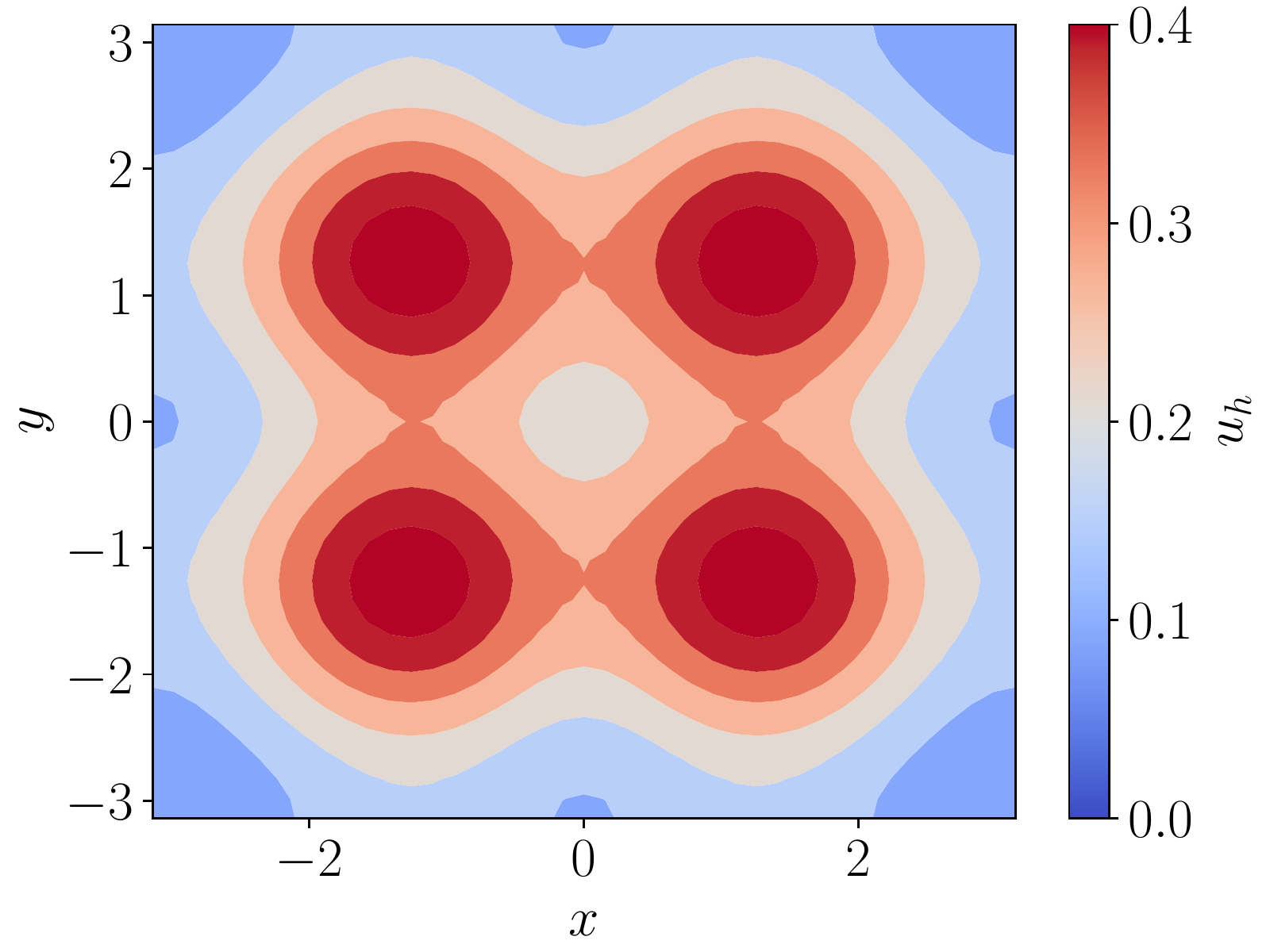}
    \end{subfigure}
    \begin{subfigure}[b]{0.3\textwidth}
        \centering
        \includegraphics[width=1.0\textwidth]{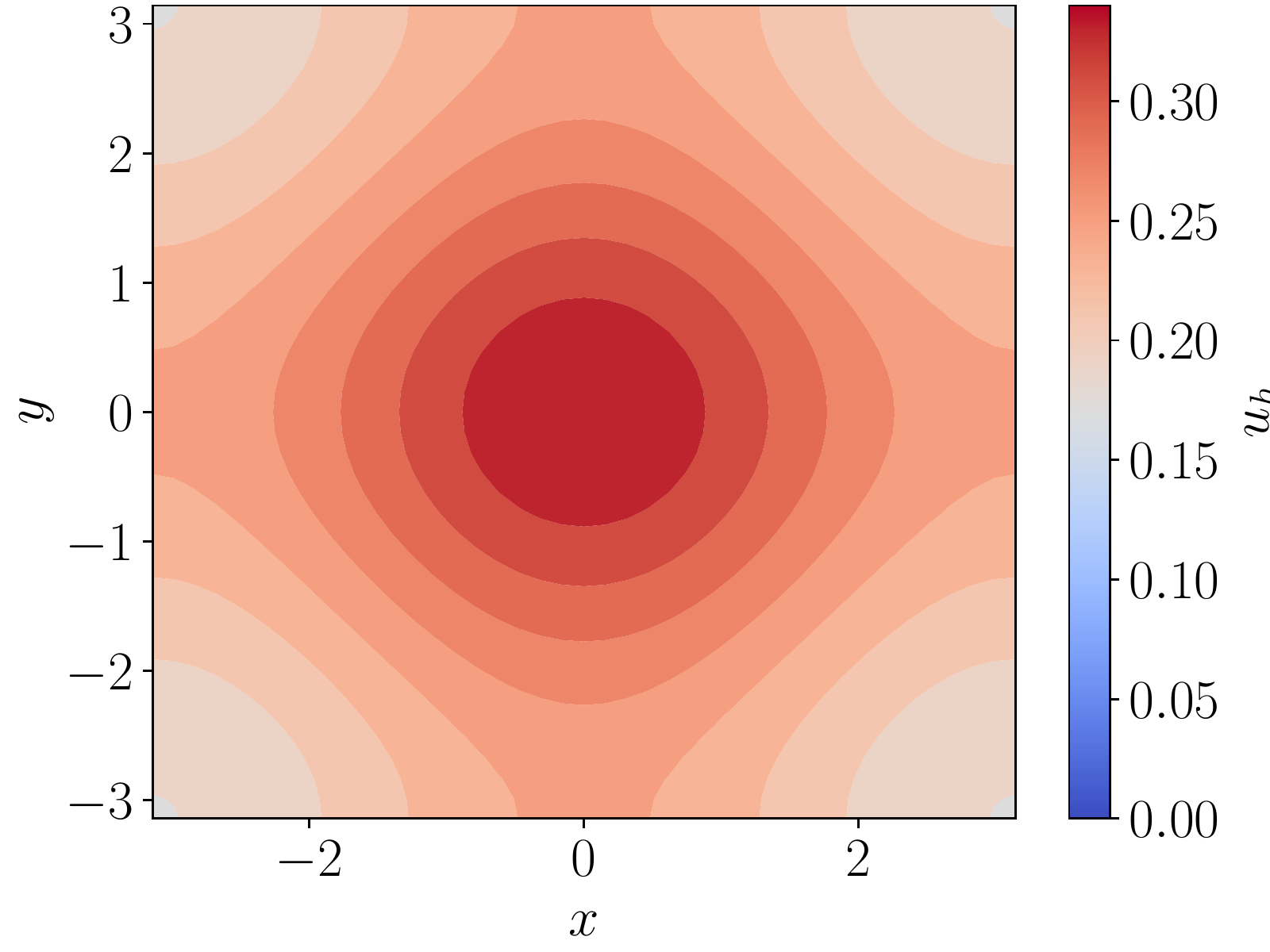}
    \end{subfigure}
            \caption*{Algorithm 1}
            \vspace{0.05\textwidth}
     \begin{subfigure}[b]{0.3\textwidth}
        \centering
        \includegraphics[width=1.0\textwidth]{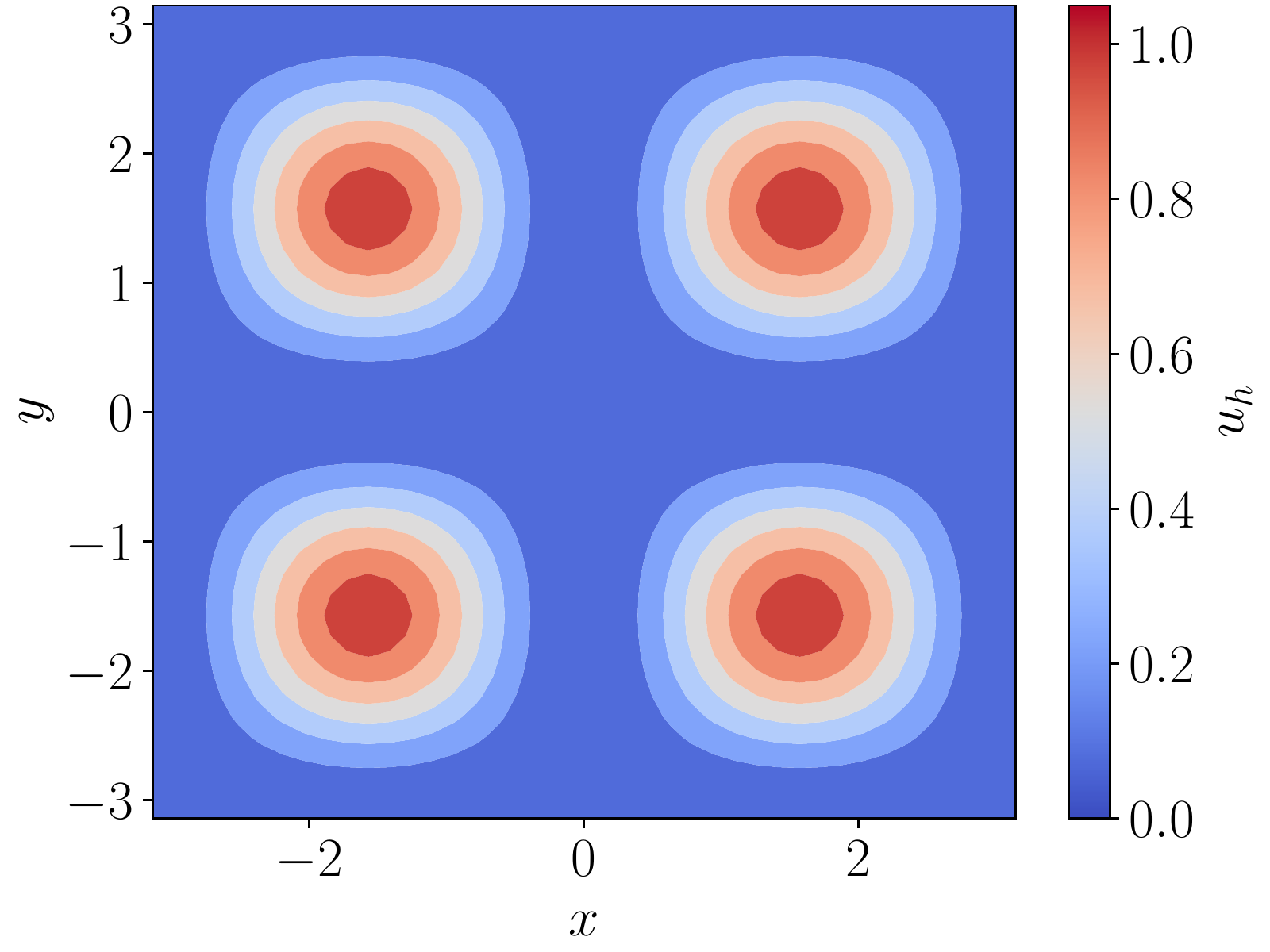}
    \end{subfigure}
    \begin{subfigure}[b]{0.3\textwidth}
        \centering
        \includegraphics[width=1.0\textwidth]{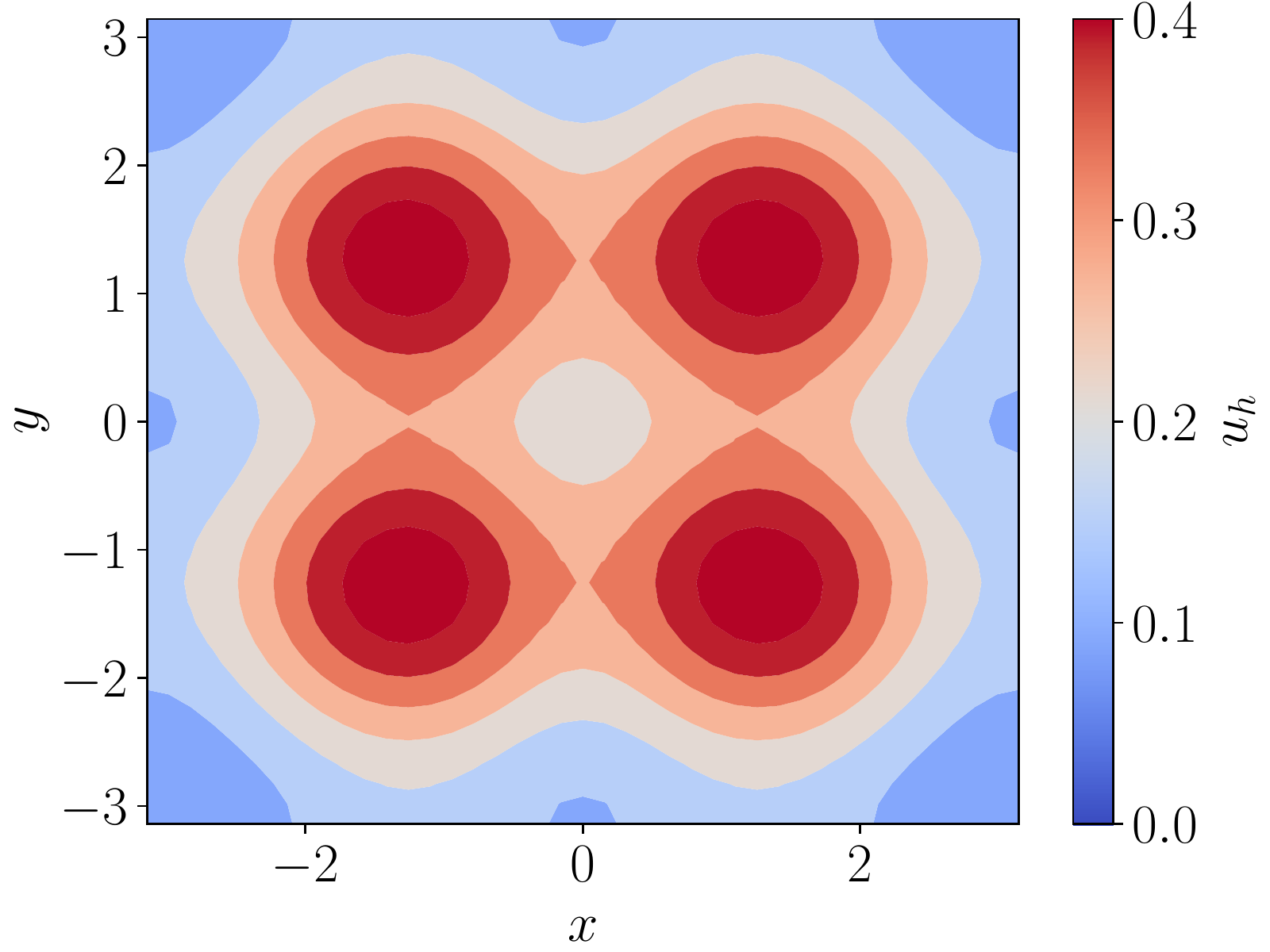}
    \end{subfigure}
    \begin{subfigure}[b]{0.3\textwidth}
        \centering
        \includegraphics[width=1.0\textwidth]{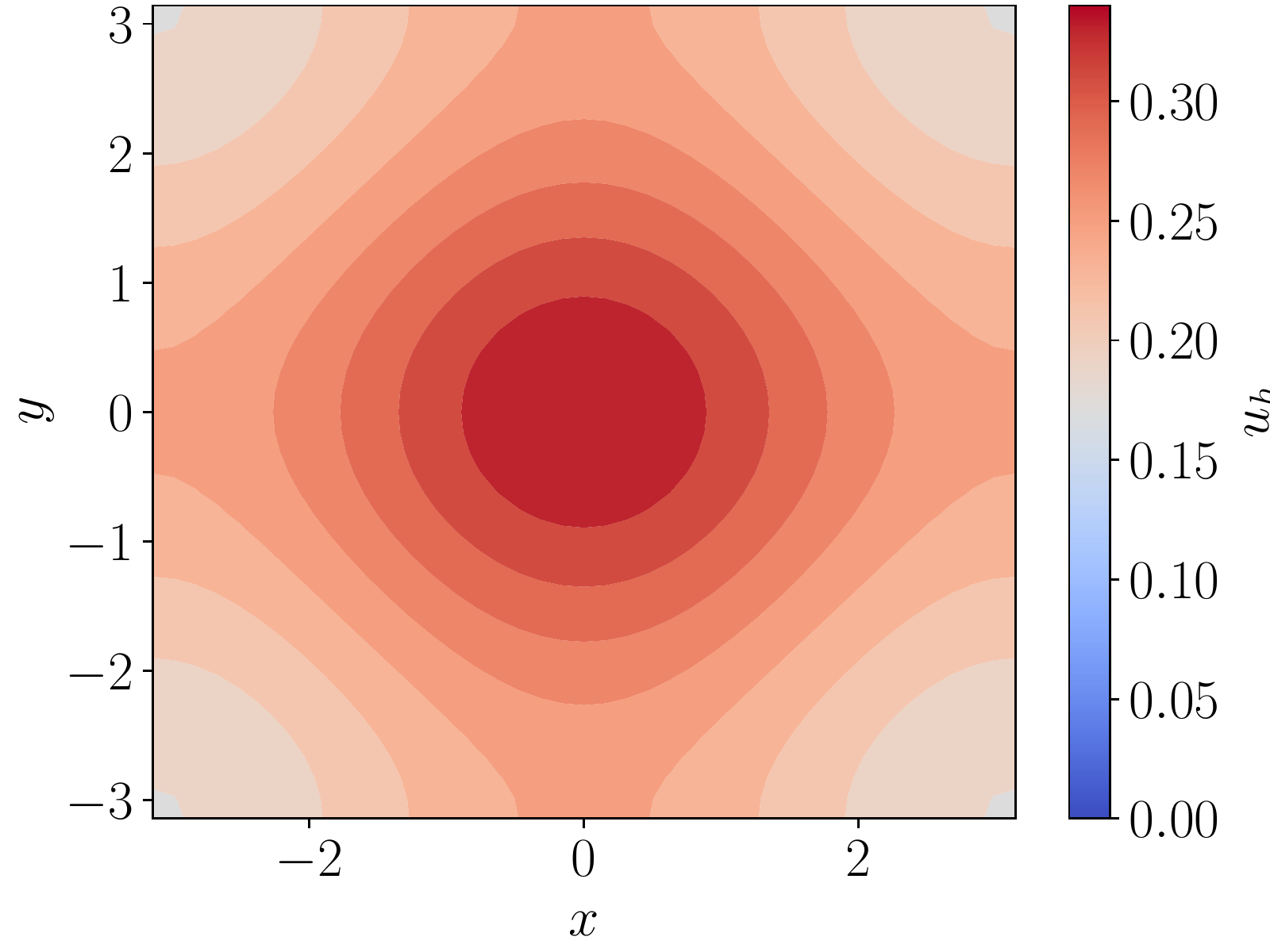}
    \end{subfigure}
            \caption*{Algorithm 2}
            \vspace{0.05\textwidth}
    \begin{subfigure}[b]{0.3\textwidth}
        \centering
        \includegraphics[width=1.0\textwidth]{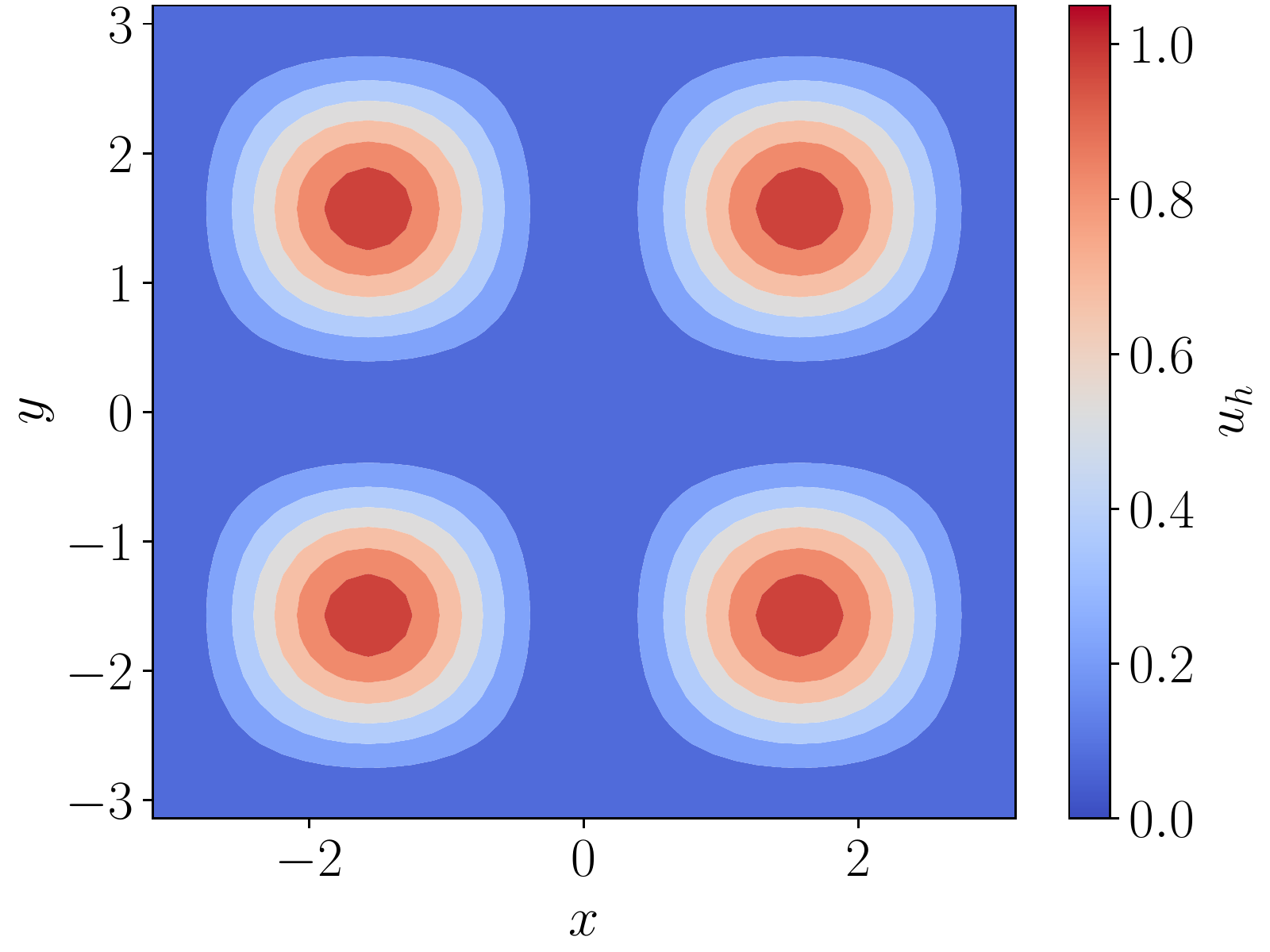}
    \end{subfigure}
    \begin{subfigure}[b]{0.3\textwidth}
        \centering
        \includegraphics[width=1.0\textwidth]{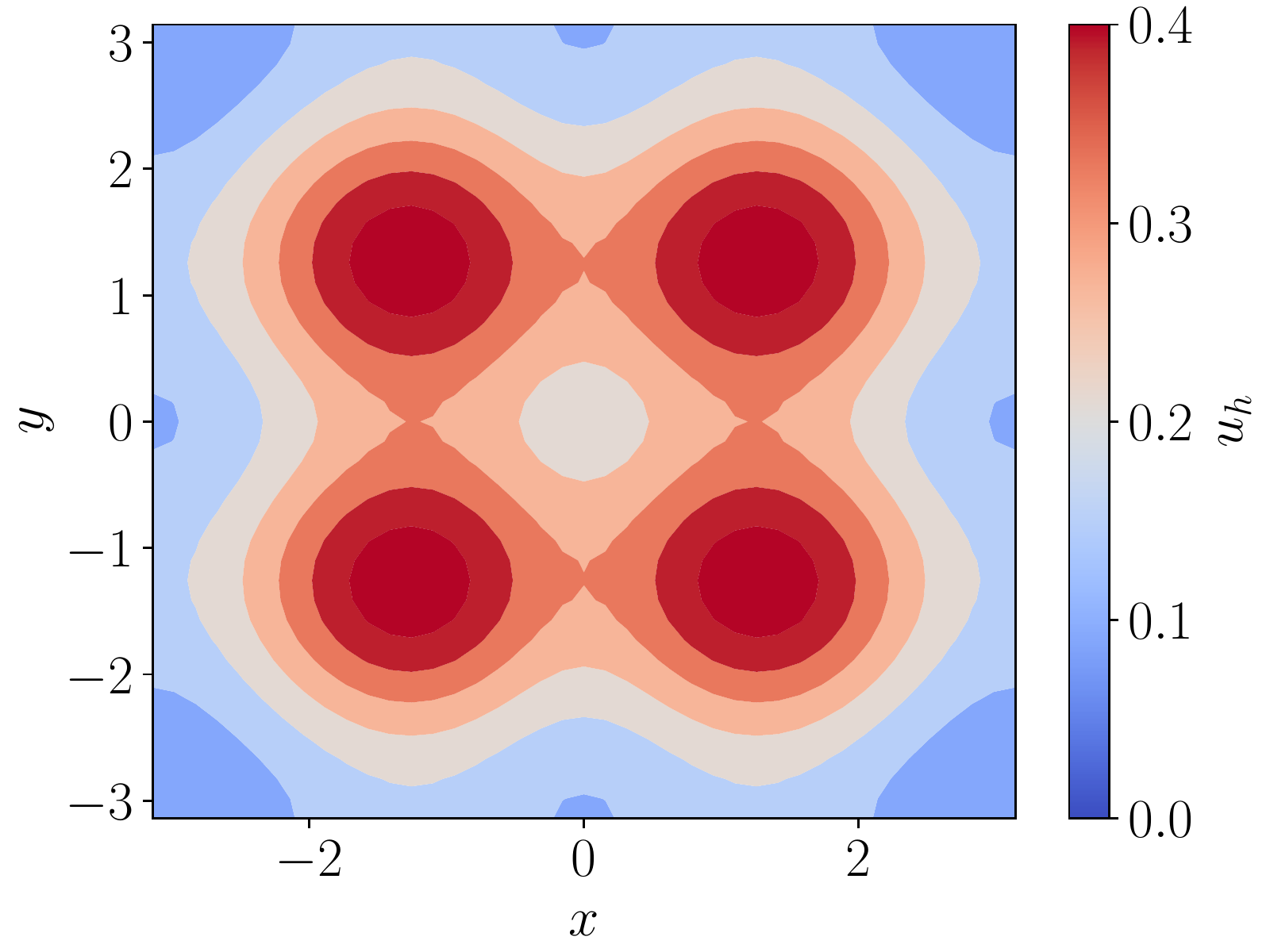}
    \end{subfigure}
    \begin{subfigure}[b]{0.3\textwidth}
        \centering
        \includegraphics[width=1.0\textwidth]{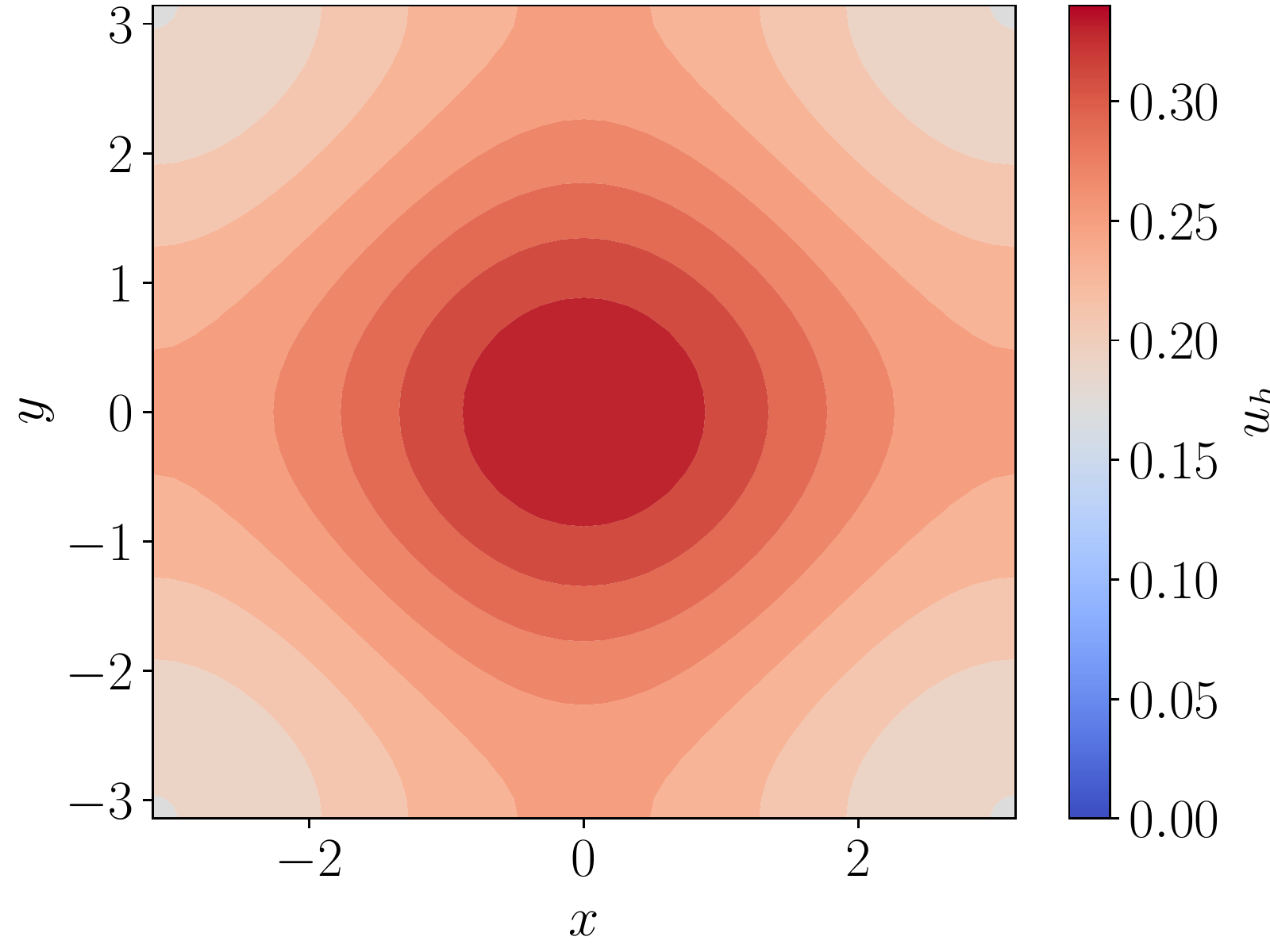}
    \end{subfigure}
            \caption*{Non-stabilized Algorithm}
        \caption{Colormaps of $u_h$ at times $t=0, 0.3$, and $2$.}\label{fig.e1-evolution-uh}
\end{figure}
\begin{figure}
    \begin{subfigure}[b]{0.3\textwidth}
        \centering
        \includegraphics[width=1.0\textwidth]{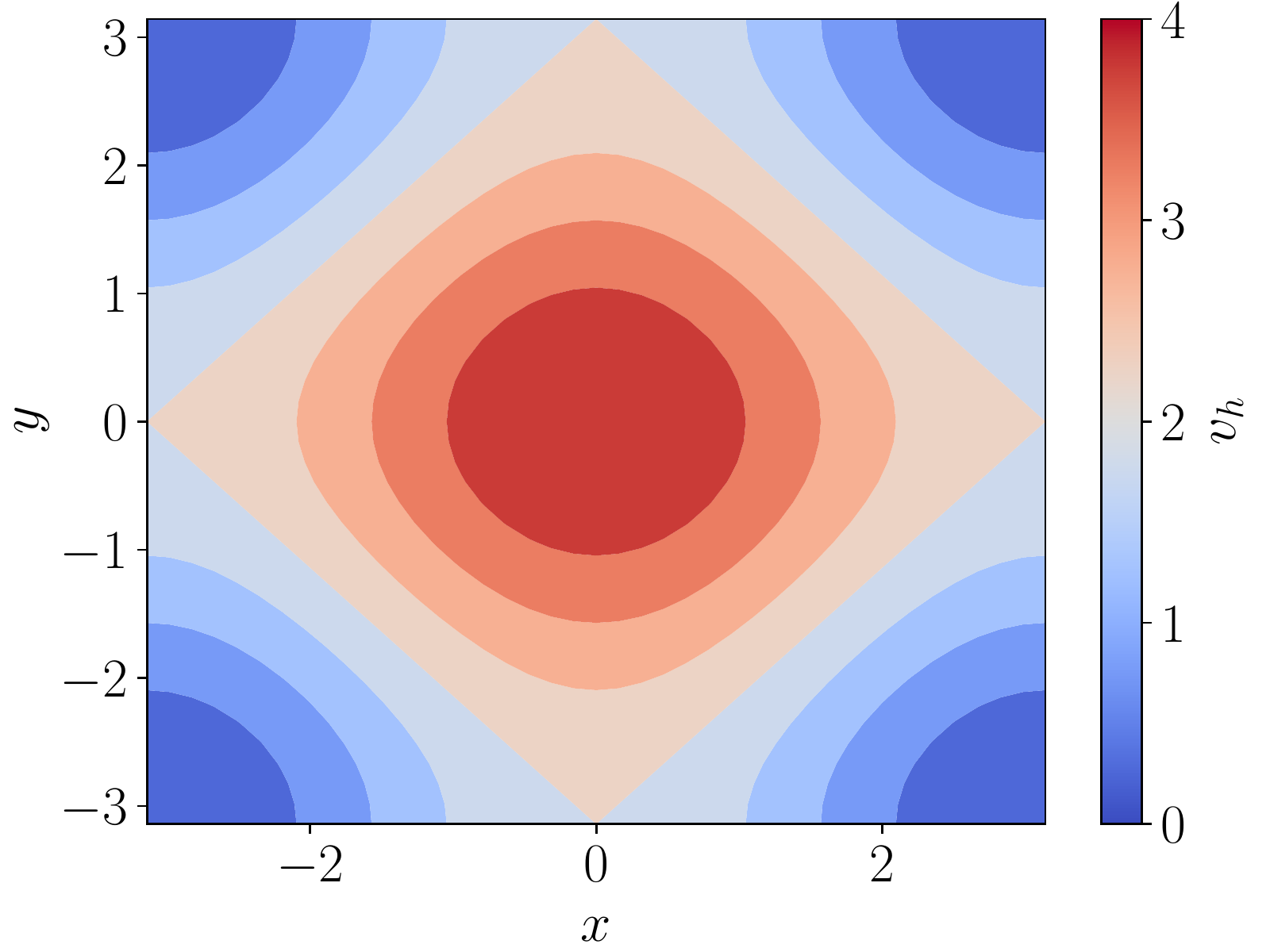}
    \end{subfigure}
    \begin{subfigure}[b]{0.3\textwidth}
        \centering
        \includegraphics[width=1.0\textwidth]{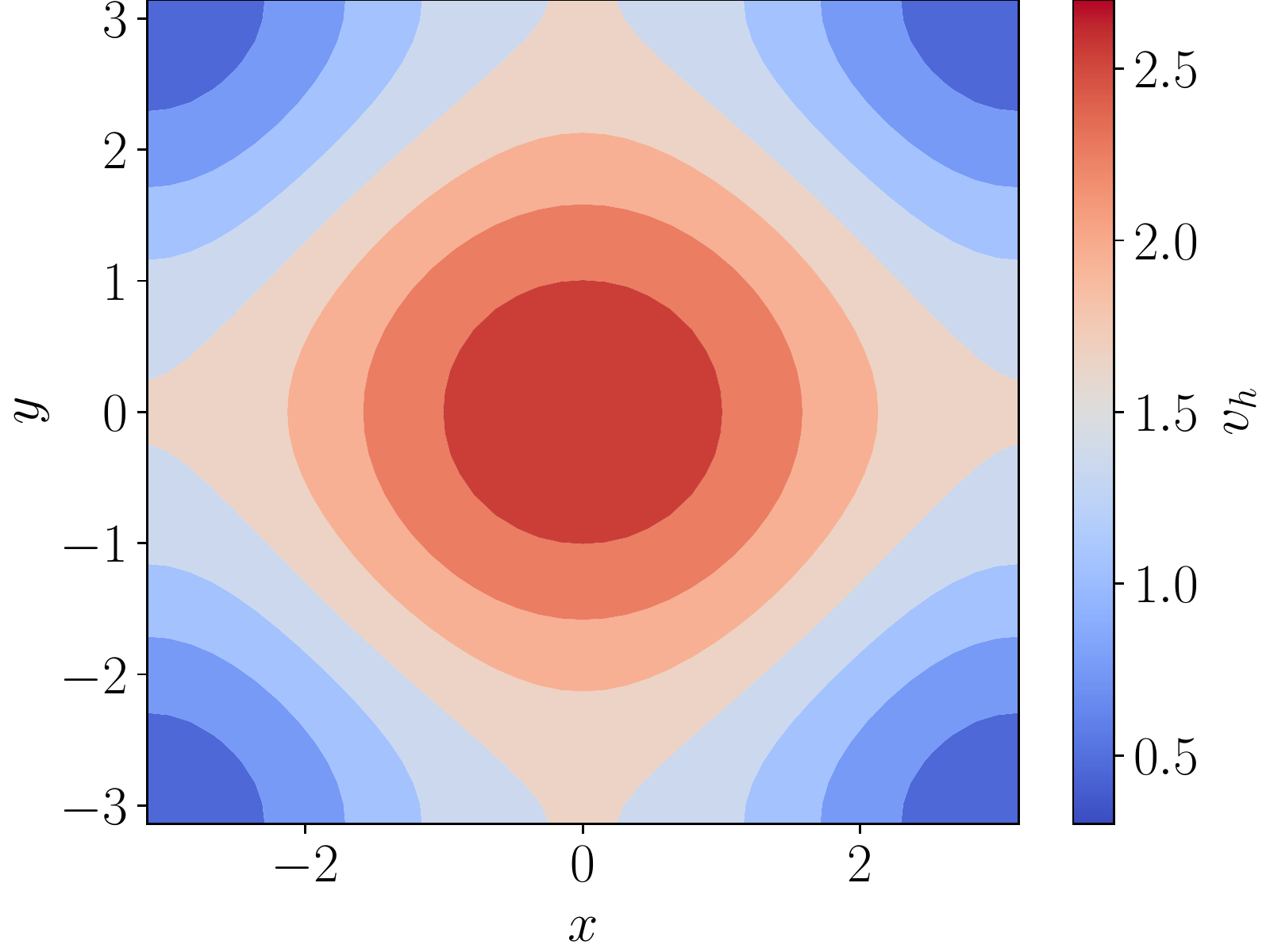}
    \end{subfigure}
    \begin{subfigure}[b]{0.3\textwidth}
        \centering
        \includegraphics[width=1.0\textwidth]{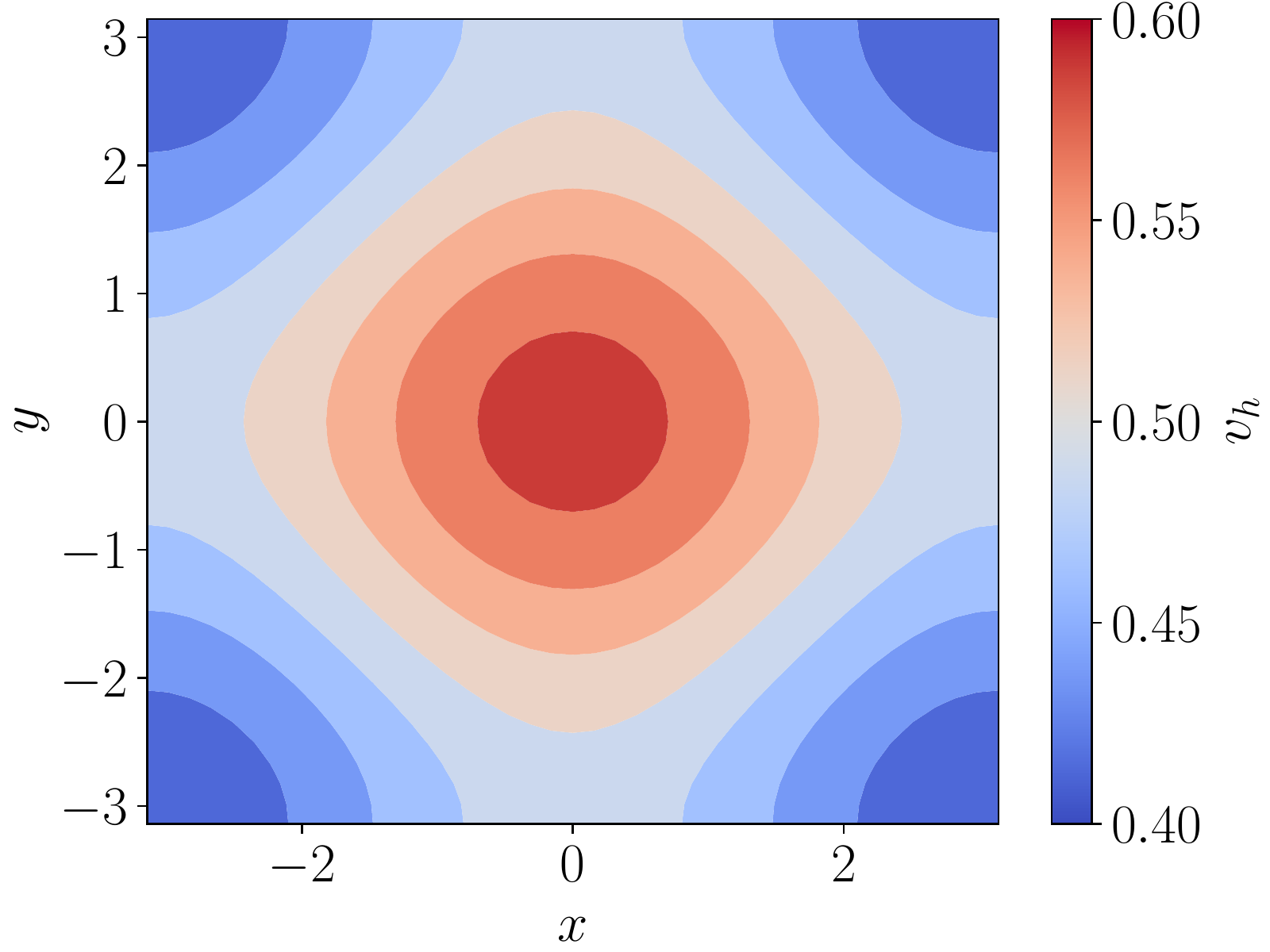}
    \end{subfigure}
        \caption*{Algorithm $1$}
        \vspace{0.05\textwidth}
    \begin{subfigure}[b]{0.3\textwidth}
        \centering
        \includegraphics[width=1.0\textwidth]{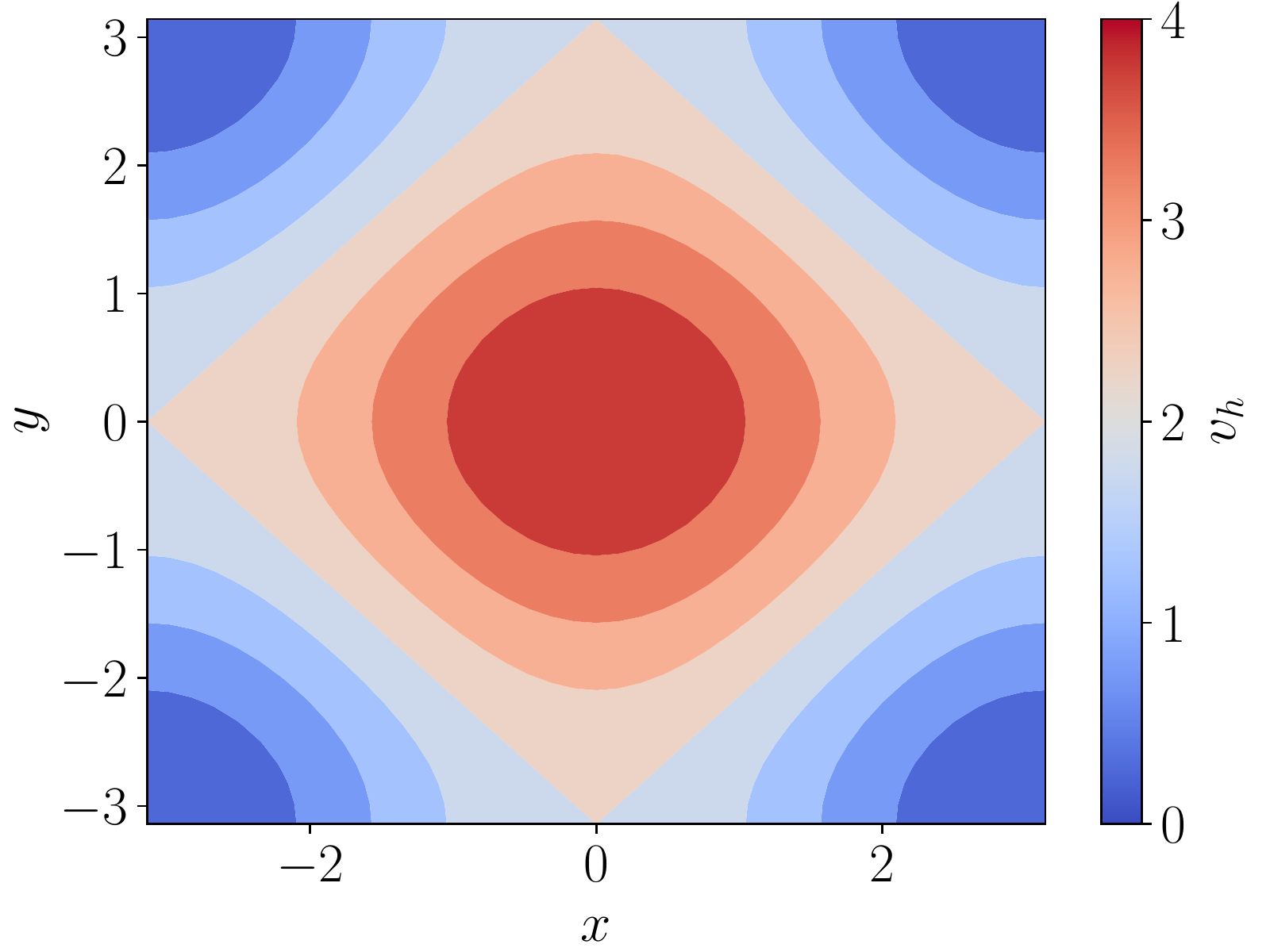}
    \end{subfigure}
    \begin{subfigure}[b]{0.3\textwidth}
        \centering
        \includegraphics[width=1.0\textwidth]{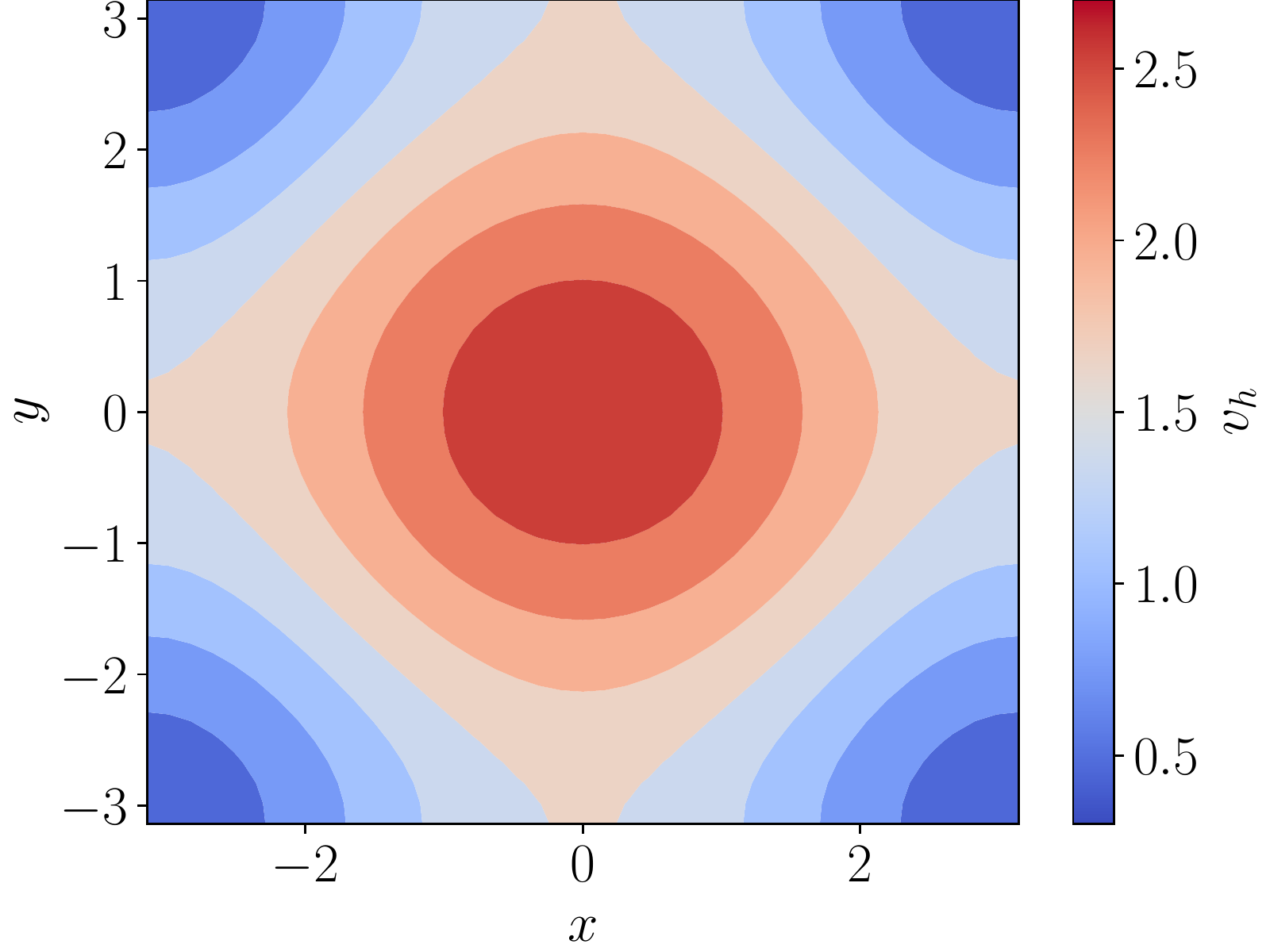}
    \end{subfigure}
    \begin{subfigure}[b]{0.3\textwidth}
        \centering
        \includegraphics[width=1.0\textwidth]{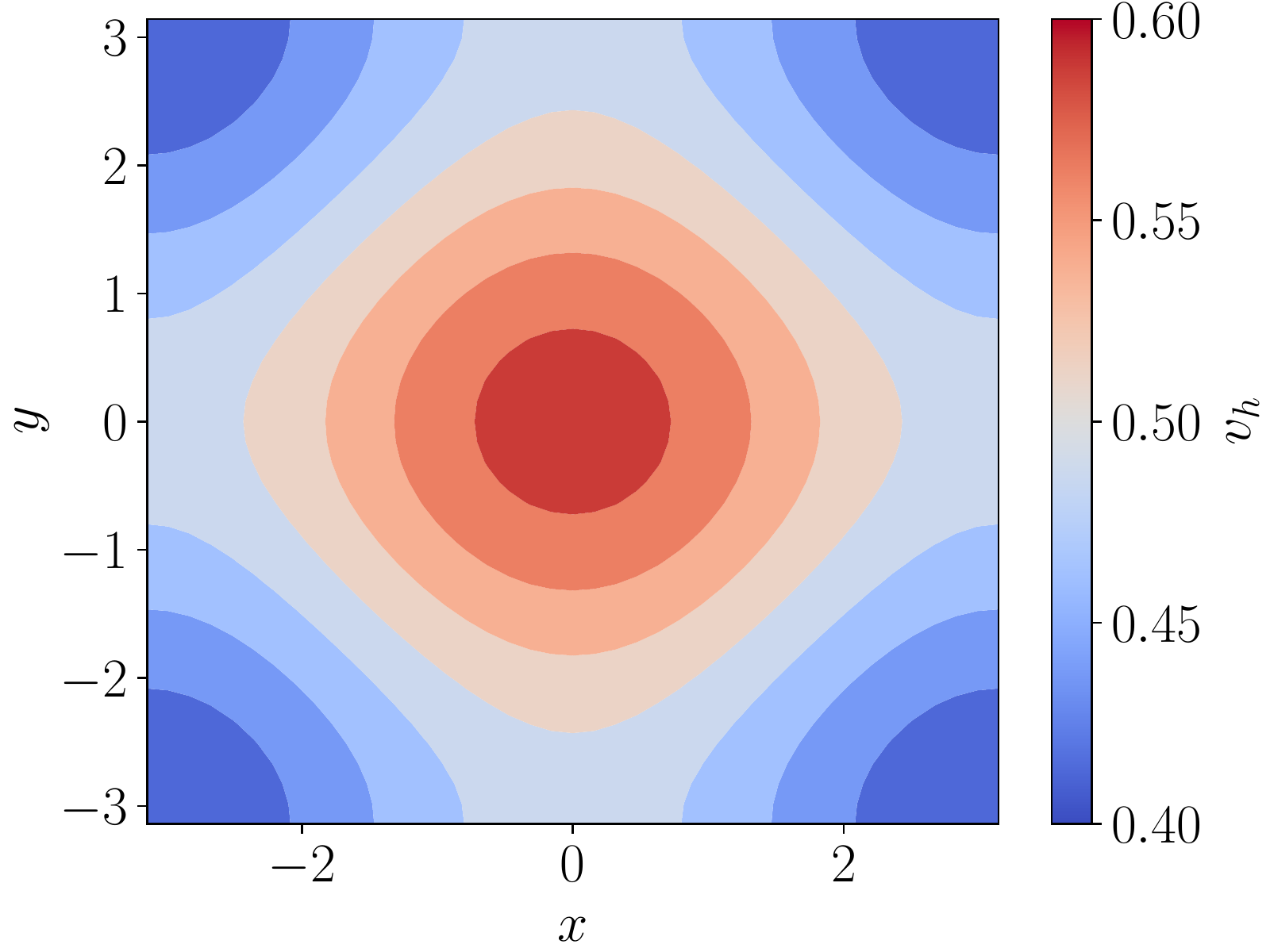}
    \end{subfigure}
    \caption*{Algorithm $2$}
    \vspace{0.05\textwidth}
    \begin{subfigure}[b]{0.3\textwidth}
        \centering
        \includegraphics[width=1.0\textwidth]{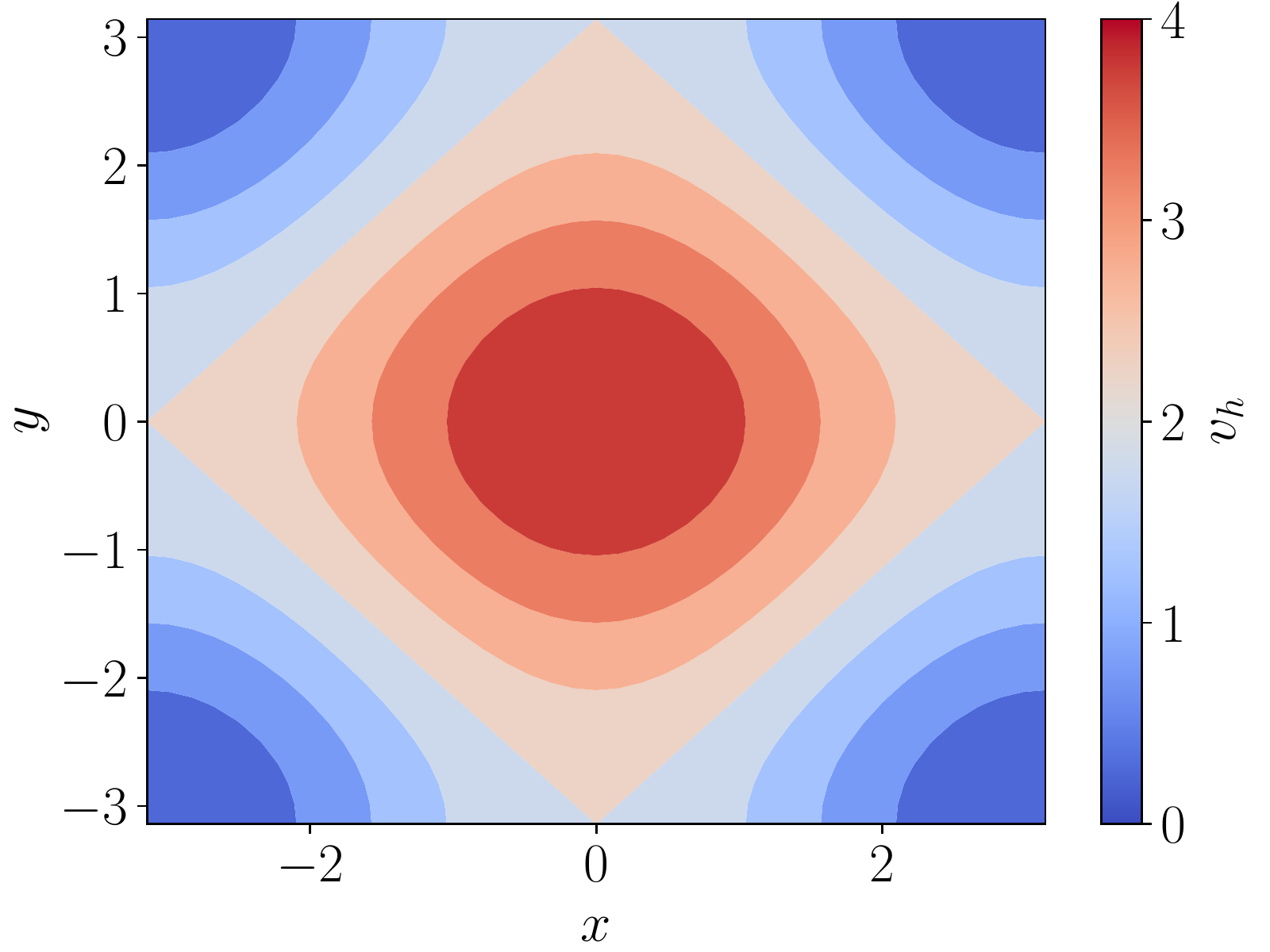}
    \end{subfigure}
    \begin{subfigure}[b]{0.3\textwidth}
        \centering
        \includegraphics[width=1.0\textwidth]{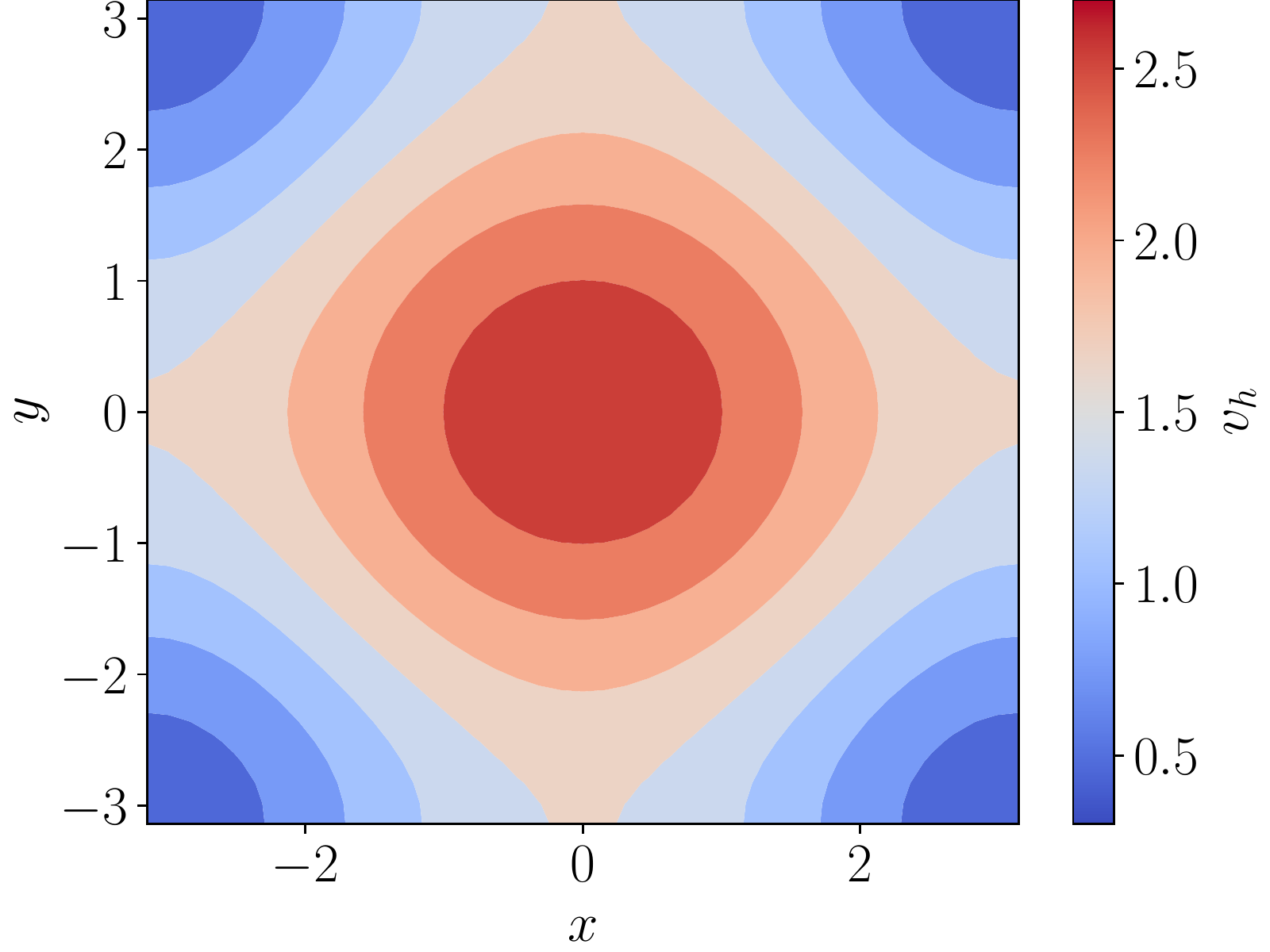}
    \end{subfigure}
    \begin{subfigure}[b]{0.3\textwidth}
        \centering
        \includegraphics[width=1.0\textwidth]{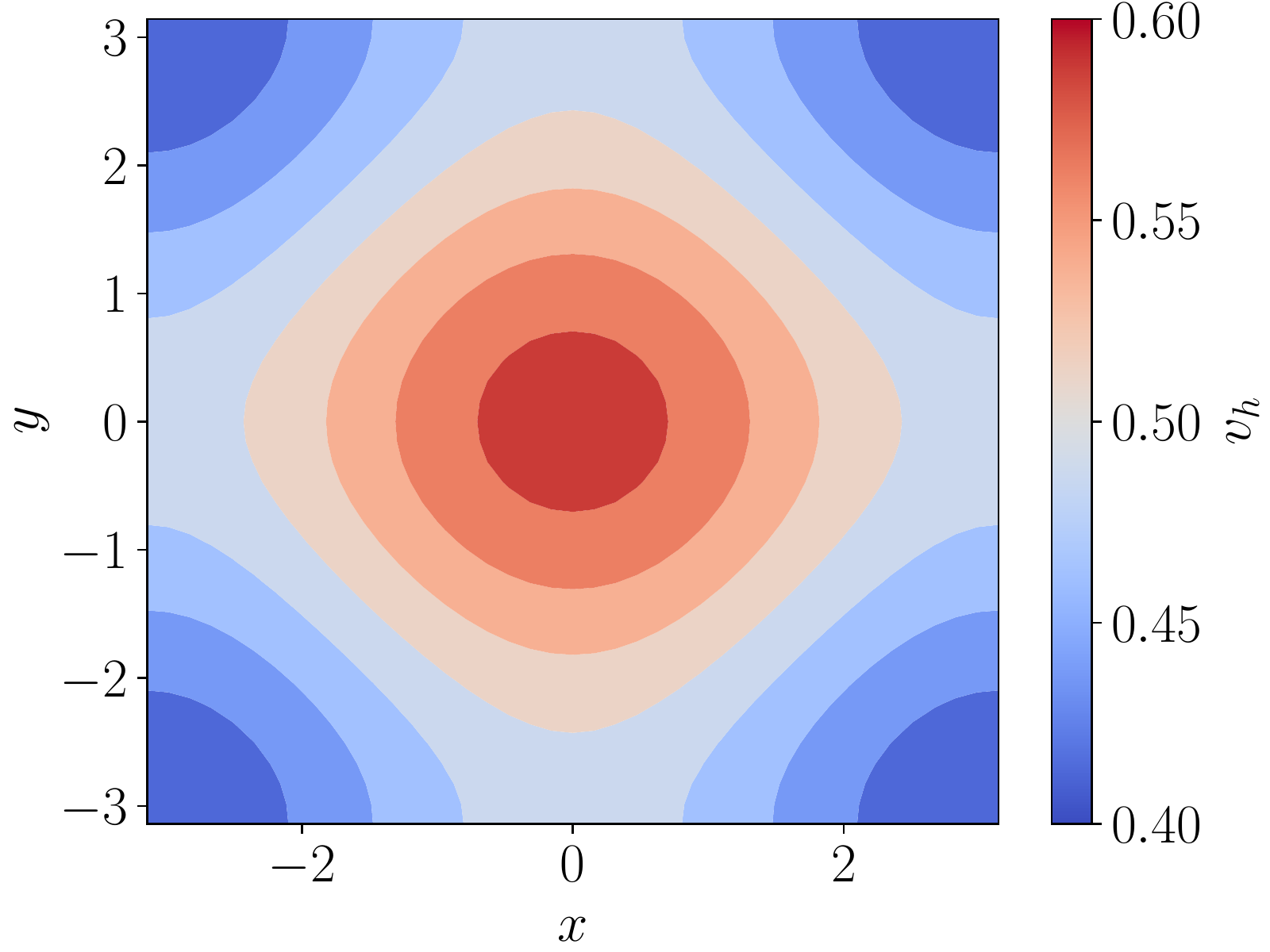}
    \end{subfigure}
        \caption*{Non-stabilized Algorithm}
    \caption{Colormaps of $v_h$ at times $t=0, 0.3$, and $2$.}\label{fig.e1-evolution-vh}
\end{figure}

To somehow ascertain the numerical diffusion introduced by the stabilizing terms, we carry out the same numerical simulation via the algorithm proposed in \cite{GS_RG_2021}, which comprises a standard finite method and a semi-implicit/implicit Euler time-stepping; that is, Algorithms $1$ without the stabilizing terms. No significant differences are highlighted in Figures \ref{fig.e1-evolution-uh} and \ref{fig.e1-evolution-vh} (bottom), implying that the shock-capturing works only in the presence of steep gradients. Figure \ref{fig.e1-profiles} (bottom) shows the profiles at times $t=0$ and $2$ for the non-stabilized discrete solution.   
\begin{figure}
    \begin{subfigure}[b]{0.3\textwidth}
        \includegraphics[width=1.0\textwidth]{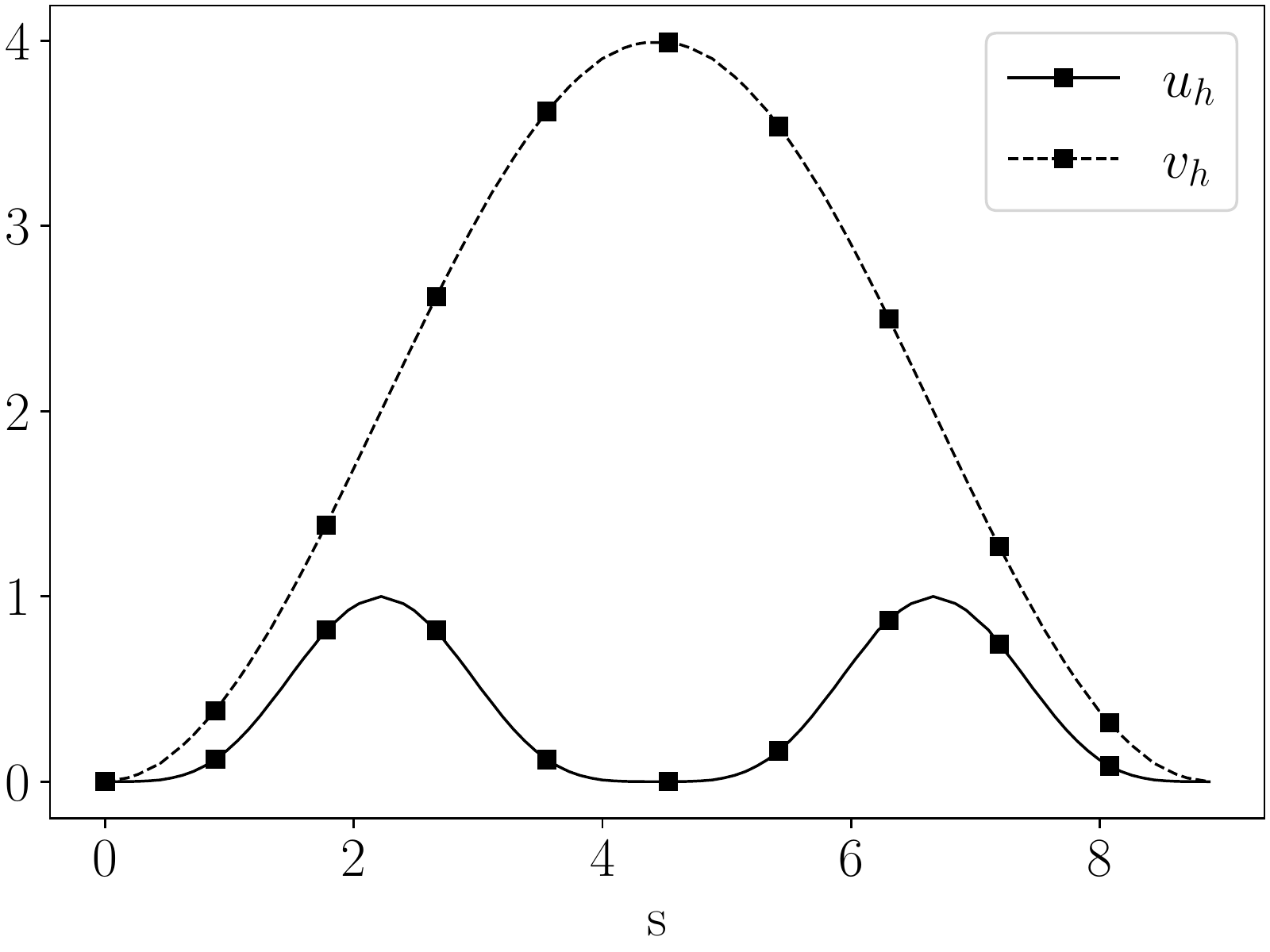}
    \end{subfigure}
    \begin{subfigure}[b]{0.3\textwidth}
        \includegraphics[width=1.0\textwidth]{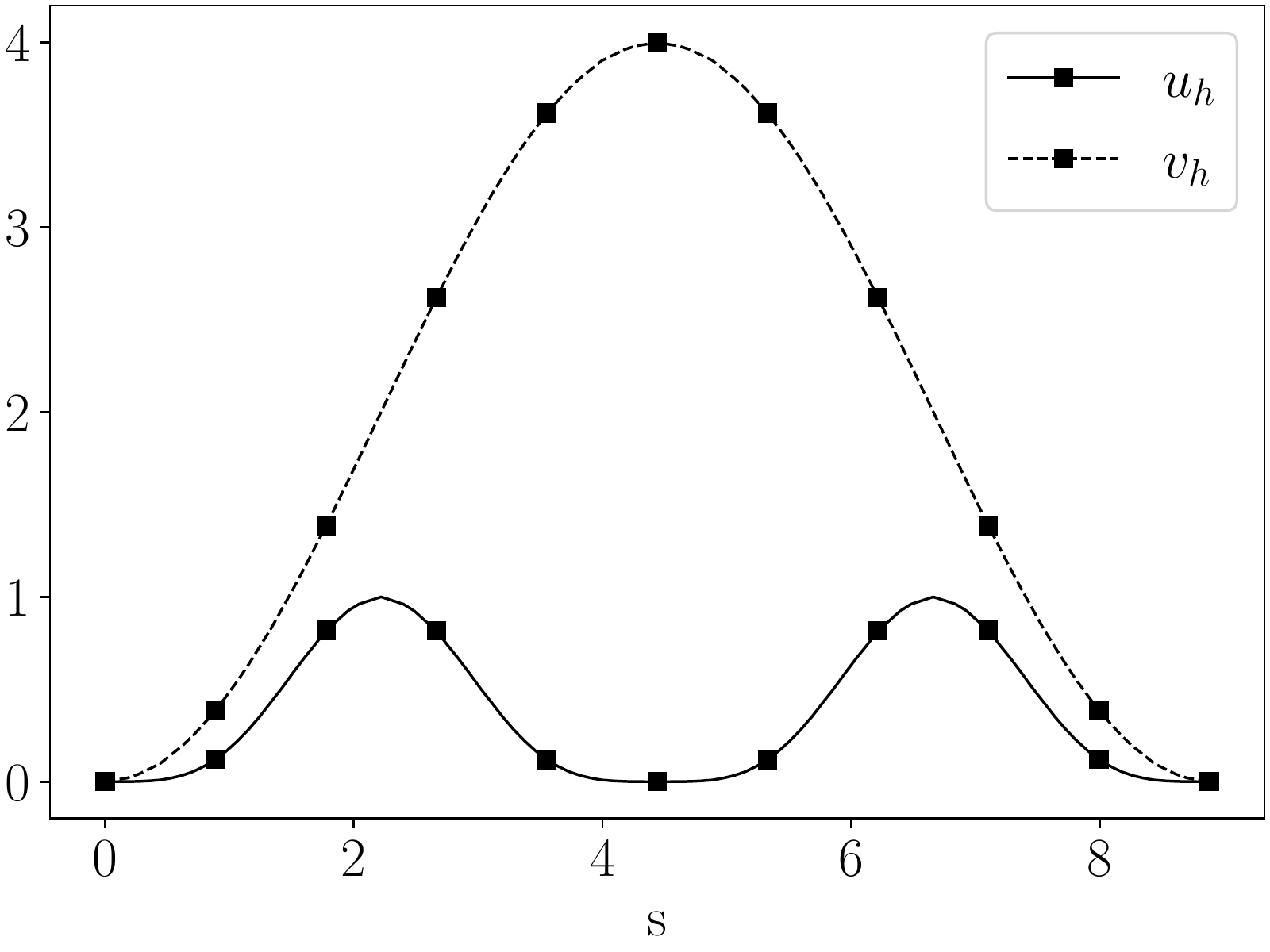}
    \end{subfigure}
    \begin{subfigure}[b]{0.3\textwidth}
        \includegraphics[width=1.0\textwidth]{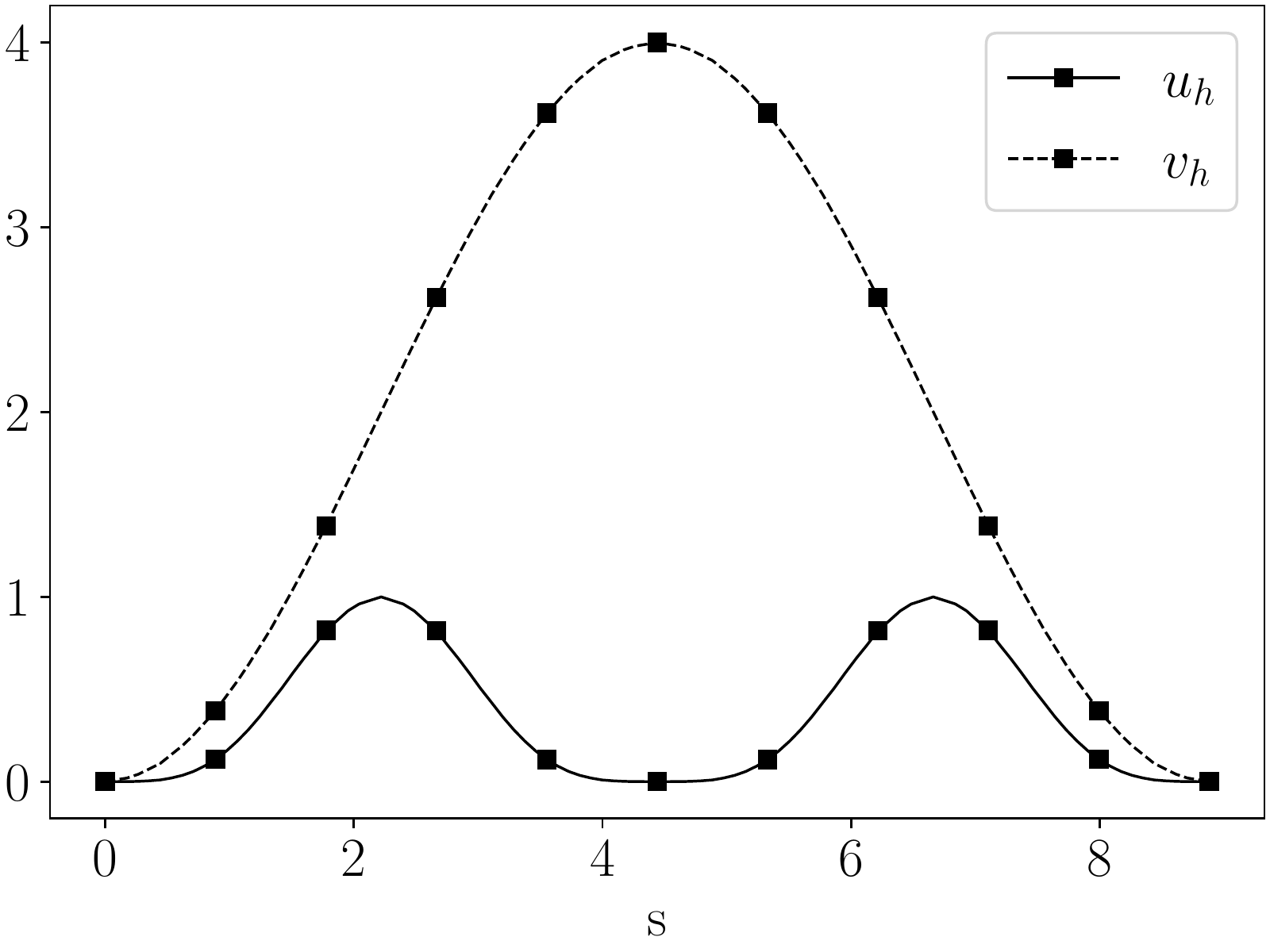}
    \end{subfigure}
    \begin{subfigure}[b]{0.3\textwidth}
        \includegraphics[width=1.0\textwidth]{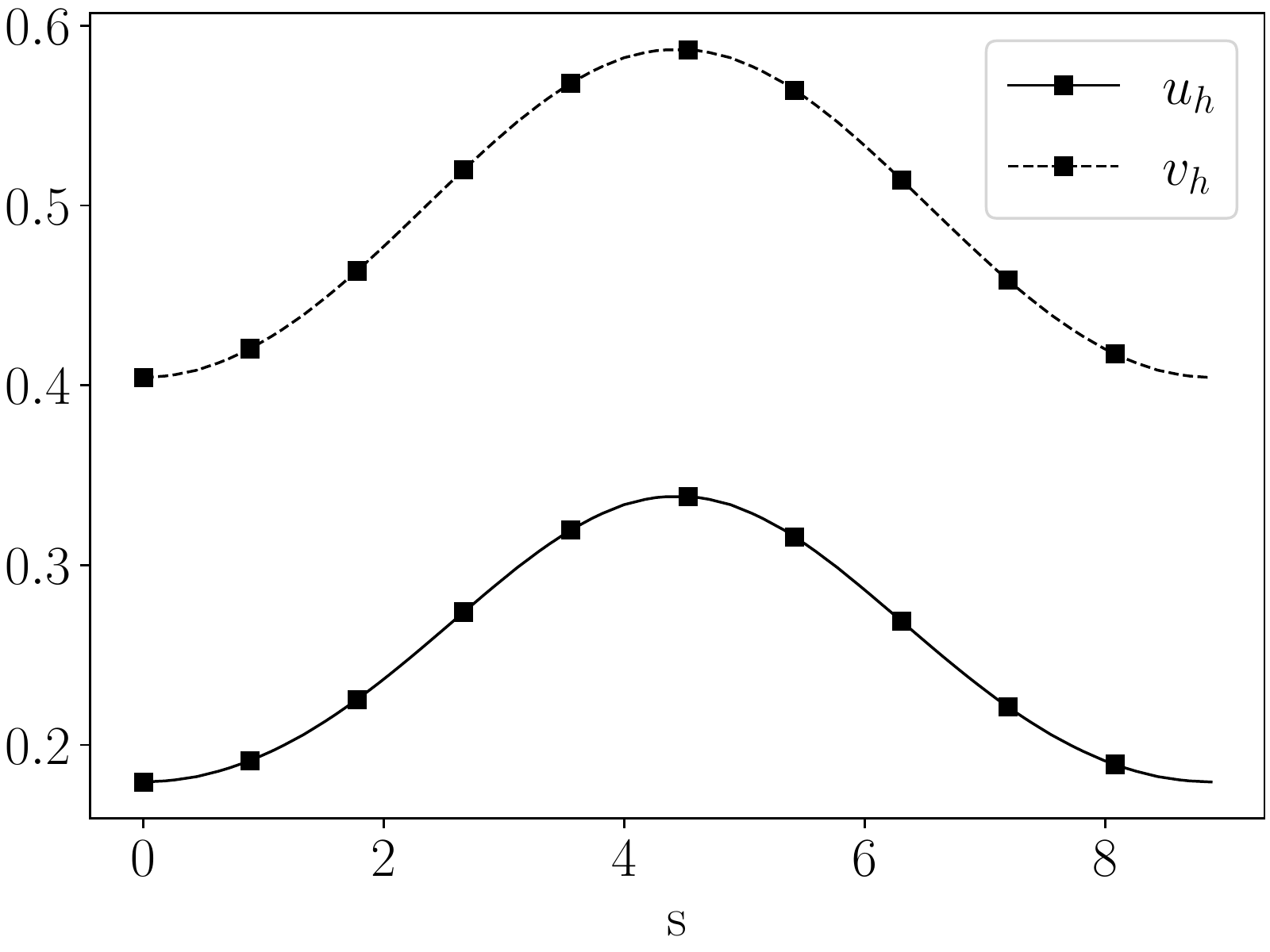}
	    \caption*{Algorithm $1$}
    \end{subfigure}
    \begin{subfigure}[b]{0.3\textwidth}
        \includegraphics[width=1.0\textwidth]{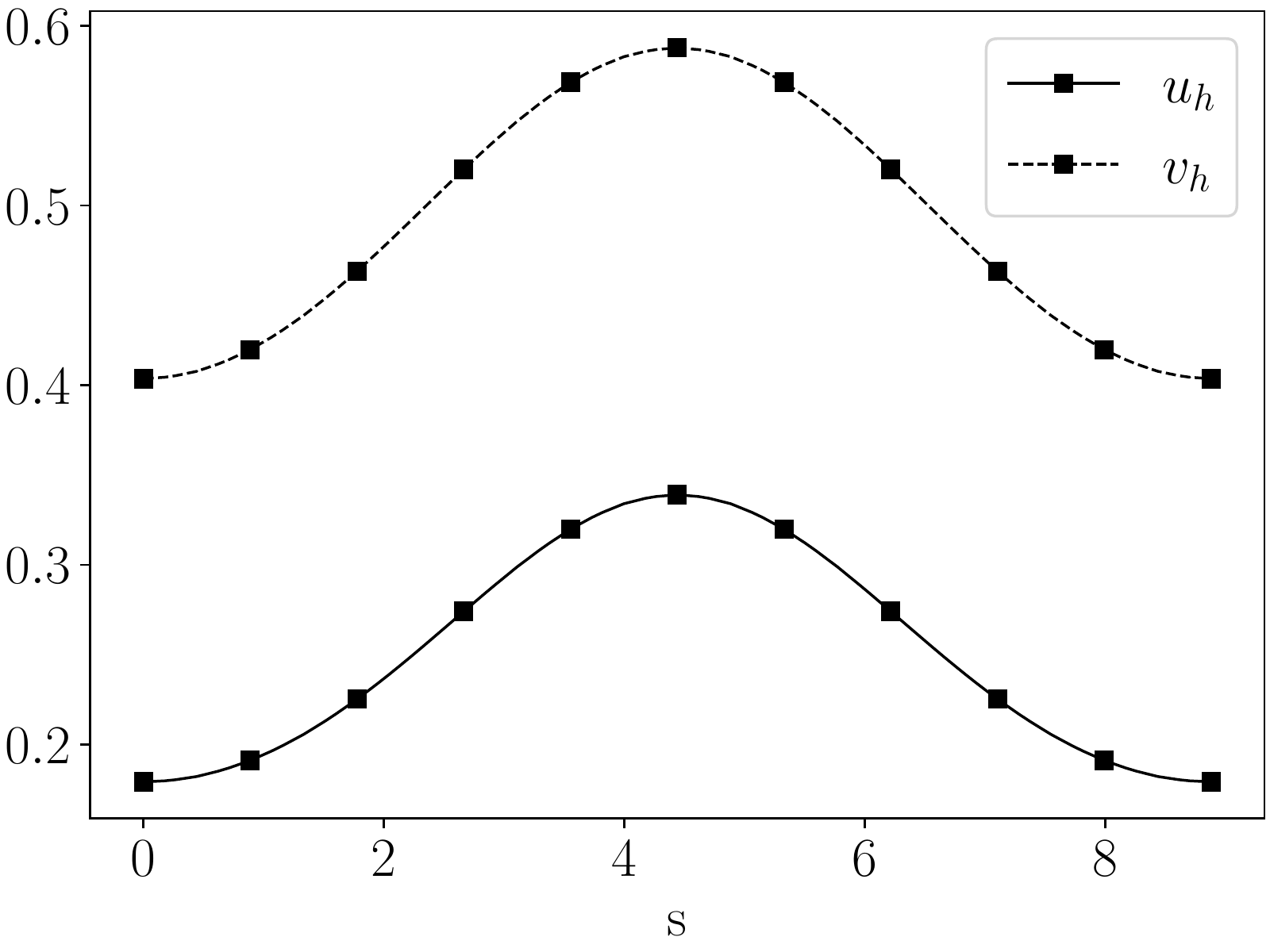}
	    \caption*{Algorithm $2$}
    \end{subfigure}
    \begin{subfigure}[b]{0.3\textwidth}
        \includegraphics[width=1.0\textwidth]{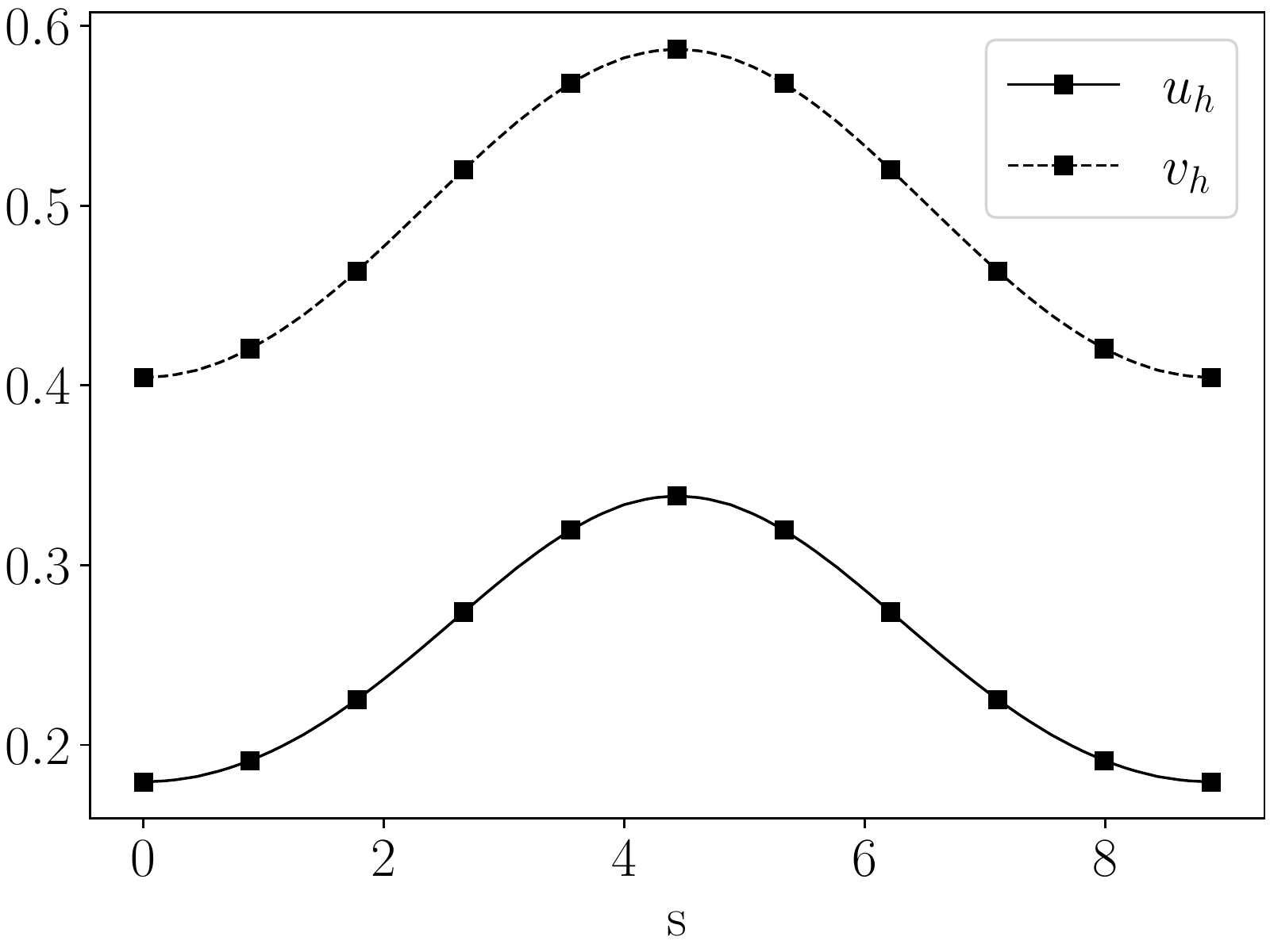}
	    \caption*{Non-stabilized algorithm}
    \end{subfigure}
    \caption{ Profiles of $u_h$ and $v_h$ across the diagonal at times $t=0$ and $2$.}\label{fig.e1-profiles}
\end{figure}
\begin{figure}
        \centering
        \includegraphics[width=0.5\textwidth]{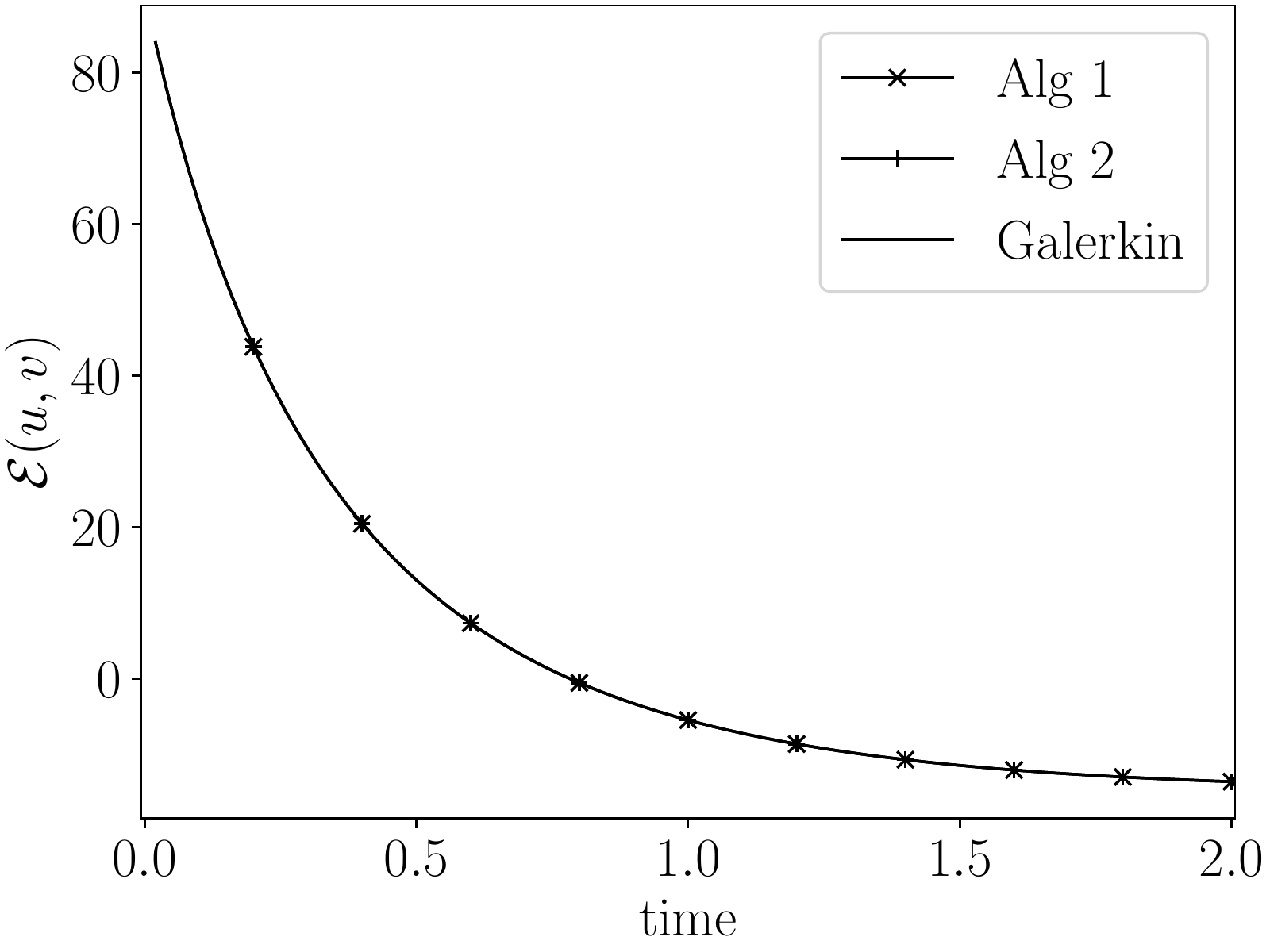}
	    \caption{Evolution of $\mathcal{E}_h(u_h,v_h)$}\label{fig.e1-energy}
\end{figure}
\subsection{Blowup phenomena} In the following two numerical tests, we assess our two algorithms with initial data which generate solutions to the Keller-Segel problem \eqref{KS}-\eqref{BC} that blow up in finite time. In this fashion, the robustness and reliability of discrete solutions computed by Algorithms $1$ and $2$ are examined. The need of using stabilizing terms is a matter of the utmost importance. This was made manifest \cite{GS_RG_2021} when a non-stabilized finite element method failed to prevent discrete solutions from being negative.  In this case, the usage of a positivity-preserving scheme is mandatory. Otherwise, the high gradients yield an oscillatory behavior of discrete solutions taking negative density values. 
\subsubsection{Blowup in the center}
We consider the Keller-Segel equation \eqref{KS}-\eqref{BC} on $\Omega=(0,1)^2$. As opposed to the previous test, both initial conditions $u_0$ and $v_0$ are now concentrated around about the point $(0.5, 0.5)$. In particular, we use initial conditions based on \cite[Example 5.2]{Li_Shu_Yang_2017}, i.e.,
$$
 u_0 = 840\exp\left(-84\left((x-0.5)^2+(y-0.5)^2\right)\right)
 \quad\mbox{ and }\quad
 v_0 = 420\exp\left(-42\left((x-0.5)^2+(y-0.5)^2\right)\right).
$$
As $\int_\Omega u_0(\x)\,\dx\approx31.415926>4\pi$, the continuous solution to the Keller-Segel problem \eqref{KS}-\eqref{BC} should develop a blowup according to the mathematical analysis of \cite{Horstmann_Wang_2001}. 

All of the tests in this example are computed by using a bilinear finite element space for $X_h$ associated with a nonuniformly structured  $80\times 80$ quadrangulation (see Figure \ref{fig.e2-mesh}), which is refined around the point $(0.5,0.5)$, with a minimum mesh size $h_{\rm min}=0.003359$. The time step is $k=10^{-5}$.
\begin{figure}
    \centering
    \includegraphics[width=0.4\textwidth]{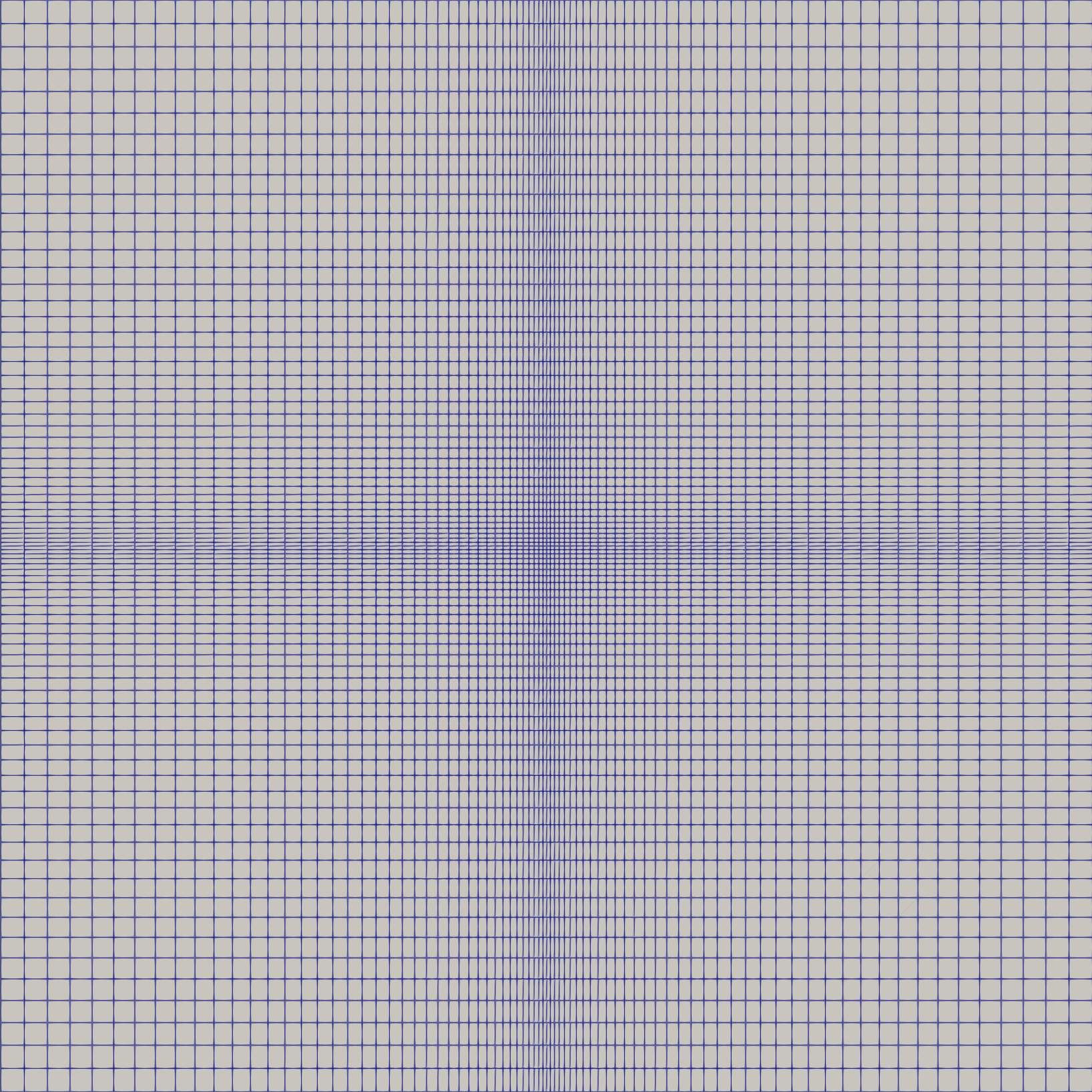}
    \caption{Mesh used to discretize $\Omega=(0,1)^2$.}
    \label{fig.e2-mesh}
\end{figure}

The singularity formation is reported in Figures \ref{fig.e2-evolution} and \ref{fig.e2-evolution-alg2} for Algorithms $1$ and $2$, respectively, at times $t=0, 1.3\cdot10^{-4}$ and $2.5\cdot 10^{-4}$. We observe that the discrete solutions computed by both algorithms evolve, as expected, to a blowup of the cell density in the center of the domain, which results in a dwindling support for the cell density as its $L^\infty(\Omega)$-norm becomes larger and larger. 

Despite the finite-time singularity, Algorithms $1$ and $2$ can preserve positivity over the entire simulation as shown in more detail in Figures \ref{fig.e2-profiles} and \ref{fig.e2-profiles-alg2}, where the cross-sections along the plane $y=0.5$ are depicted.

Figures \ref{fig.e2-maximums} and \ref{fig.e2-maximums-alg2} (left) show the time evolution of the $L^\infty(\Omega)$-norm for the cell and chemoattractant densities. The first time where the largest value, $2.5\cdot10^{6}$, of the cell density in the $L^\infty(\Omega)$-norm is taken is around $t=3\cdot10^{-4}$ for Algorithm $1$ and, slightly larger, at $t=3.5\cdot10^{-4}$  for Algorithm $2$. After that time a steady state seems to be reached for the cell density, where such a value is maintained until the simulation final time $t=4\cdot10^{-4}$. We further emphasize that the $L^\infty(\Omega)$-norm of the chemoattractant density moves roughly from $420$ to $120$ and that a substantial decay is produced once the cell density reaches its largest value for the first time. In Figures \ref{fig.e2-maximums} and \ref{fig.e2-maximums-alg2} (right), the mass progression suggests that the $L^1(\Omega)$-norm is preserved over time for both densities.

As we know from \eqref{E-blowup}, the energy functional $\mathcal{E}_h(u_h, v_h)$ should go to $-\infty$ as time goes to the blouwp one, but rather it stagnates at $-513$ for $t>0.1$. The reason behind this might be that the cell density is mostly supported on the macroelement corresponding to a single node. More precisely, since the $L^1(\Omega)$-norm is conserved, the $L^\infty(\Omega)$-norm is bounded from above when the cell density is only supported on one macroelement. In such a case, for the used uniform 2D Cartesian macroelement occurring the blowup, $\|u^n_h\|_{L^1(\Omega)} = \int_\Omega u^n_{\boldsymbol{a}} \varphi_{\bf a} = u^n_{\boldsymbol{a}} h_{\rm min}^2$. Therefore the $L^\infty(\Omega)$-norm is bounded as $\max_{n\in\{0, \cdots, N\}} \|u^n_h\|_{L^\infty(\Omega)} \le h_{\rm min}^{-2}\|u_0\|_{L^1(\Omega)}$. In particular, $\max_{n\in\{0, \cdots, N\}} \|u^n_h\|_{L^\infty(\Omega)}\lesssim 2.78439\cdot10^6$, which is the largest value that the cell density can take. Perhaps this \textit{locking} effect occurring in the cell growth causes the chemoattractant does not grow locally but diffuses in disagreement with \cite[Prop. 2]{Horstmann_2001}.
 \begin{figure}
    \begin{subfigure}[b]{0.3\textwidth}
        \centering
        \includegraphics[width=1.0\textwidth]{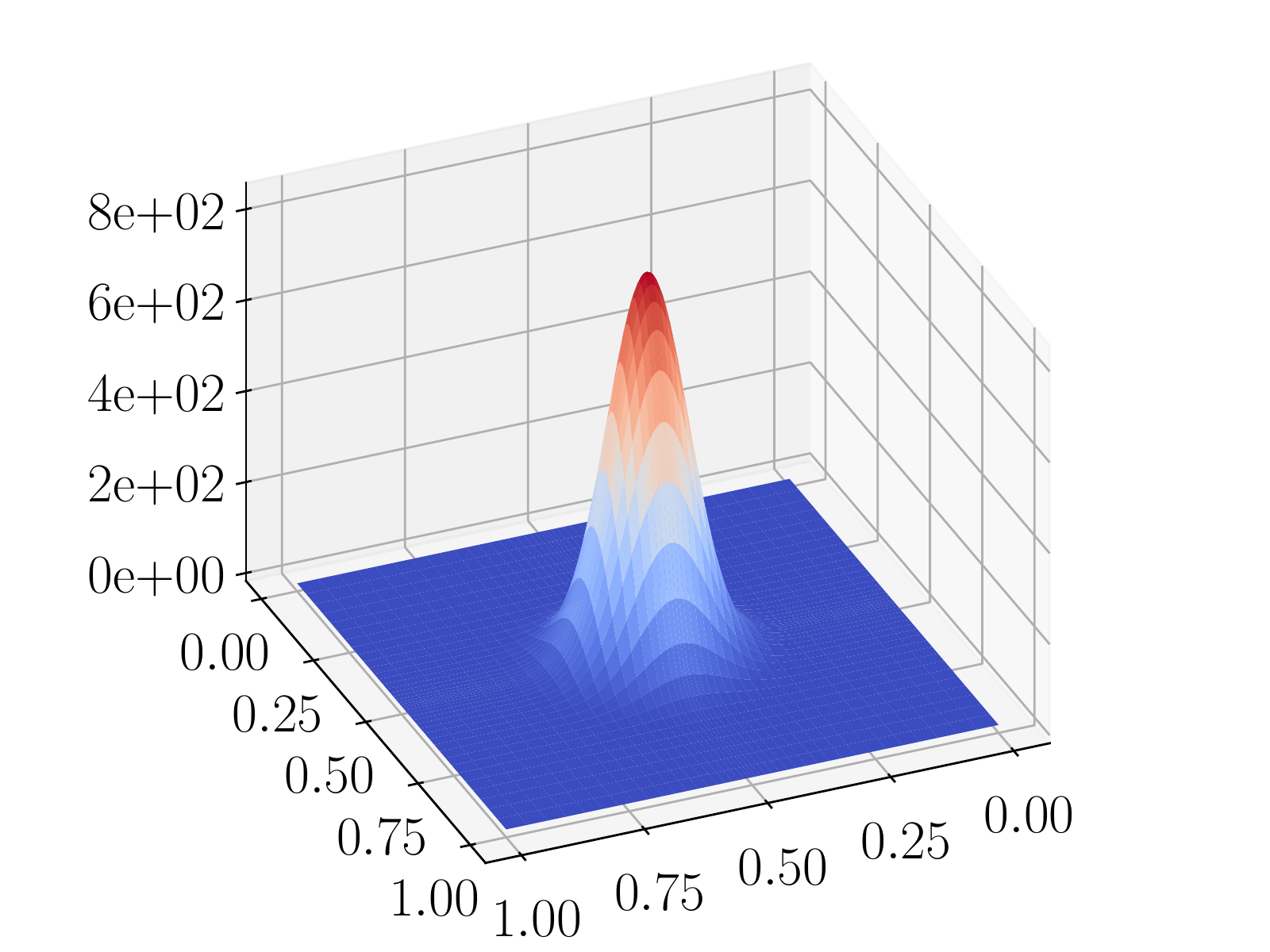}
    \end{subfigure}
    \begin{subfigure}[b]{0.3\textwidth}
        \centering
        \includegraphics[width=1.0\textwidth]{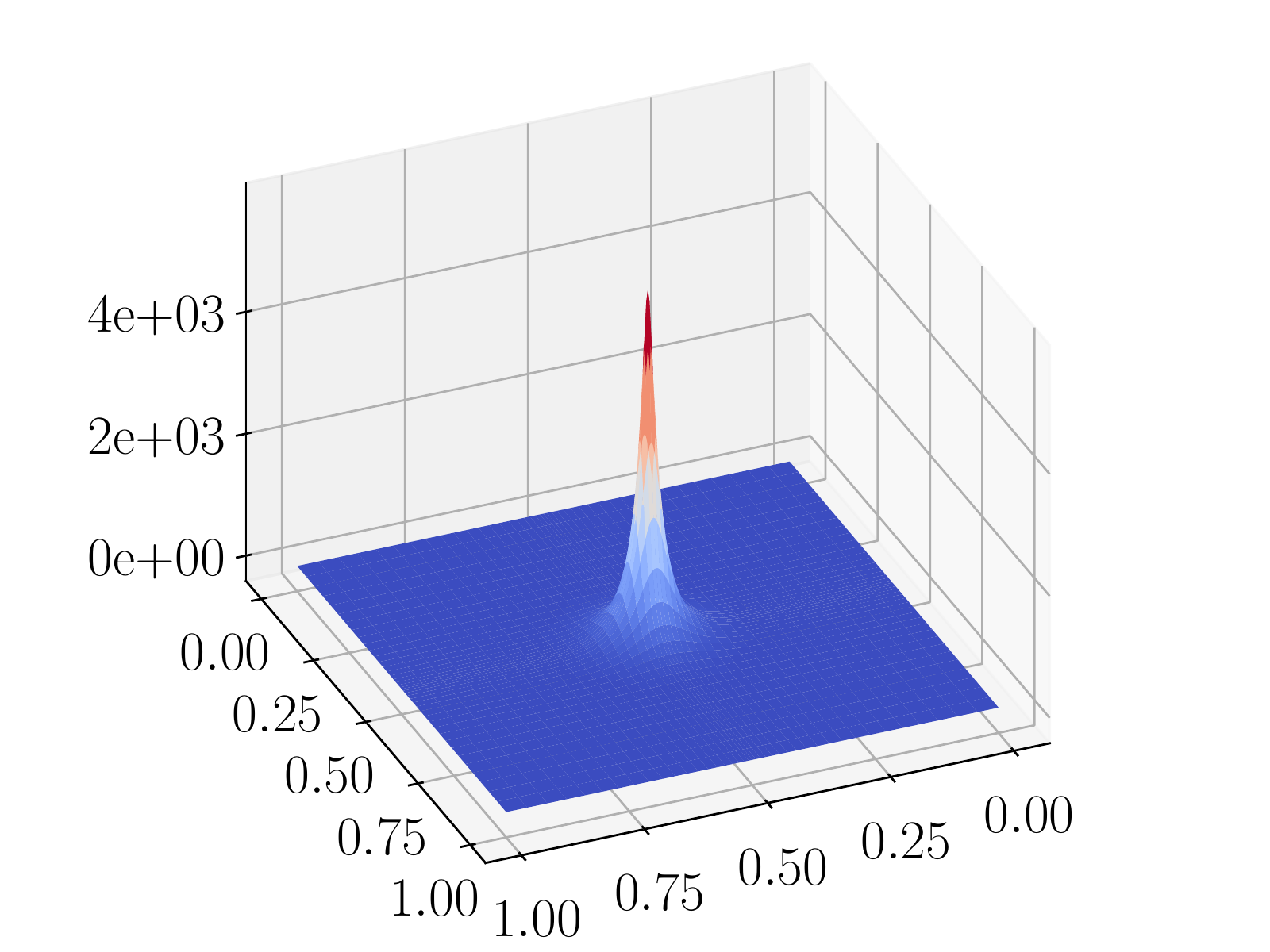}
    \end{subfigure}
    \begin{subfigure}[b]{0.3\textwidth}
        \centering
        \includegraphics[width=1.0\textwidth]{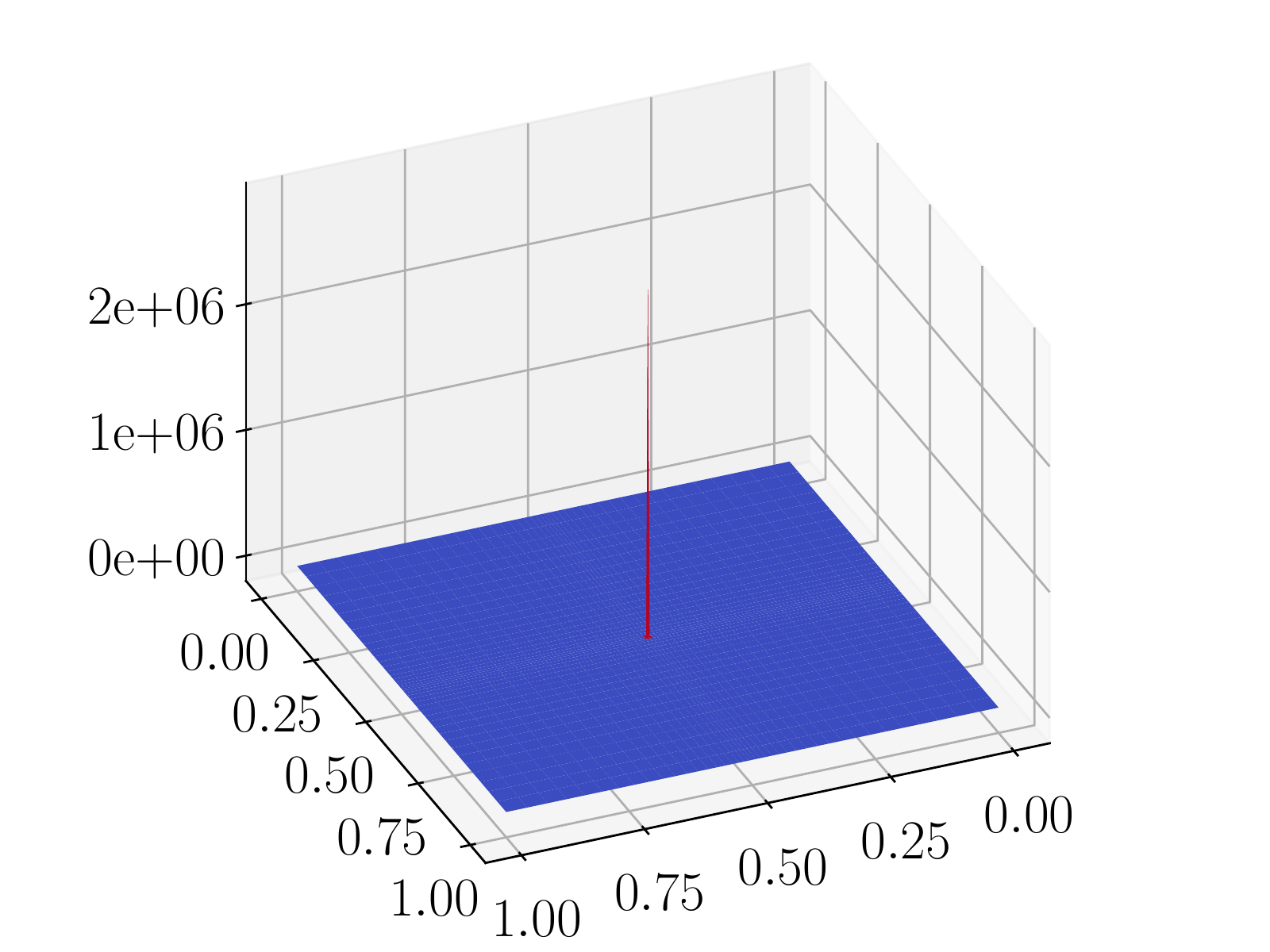}
    \end{subfigure}
        \begin{subfigure}[b]{0.3\textwidth}
        \centering
        \includegraphics[width=1.0\textwidth]{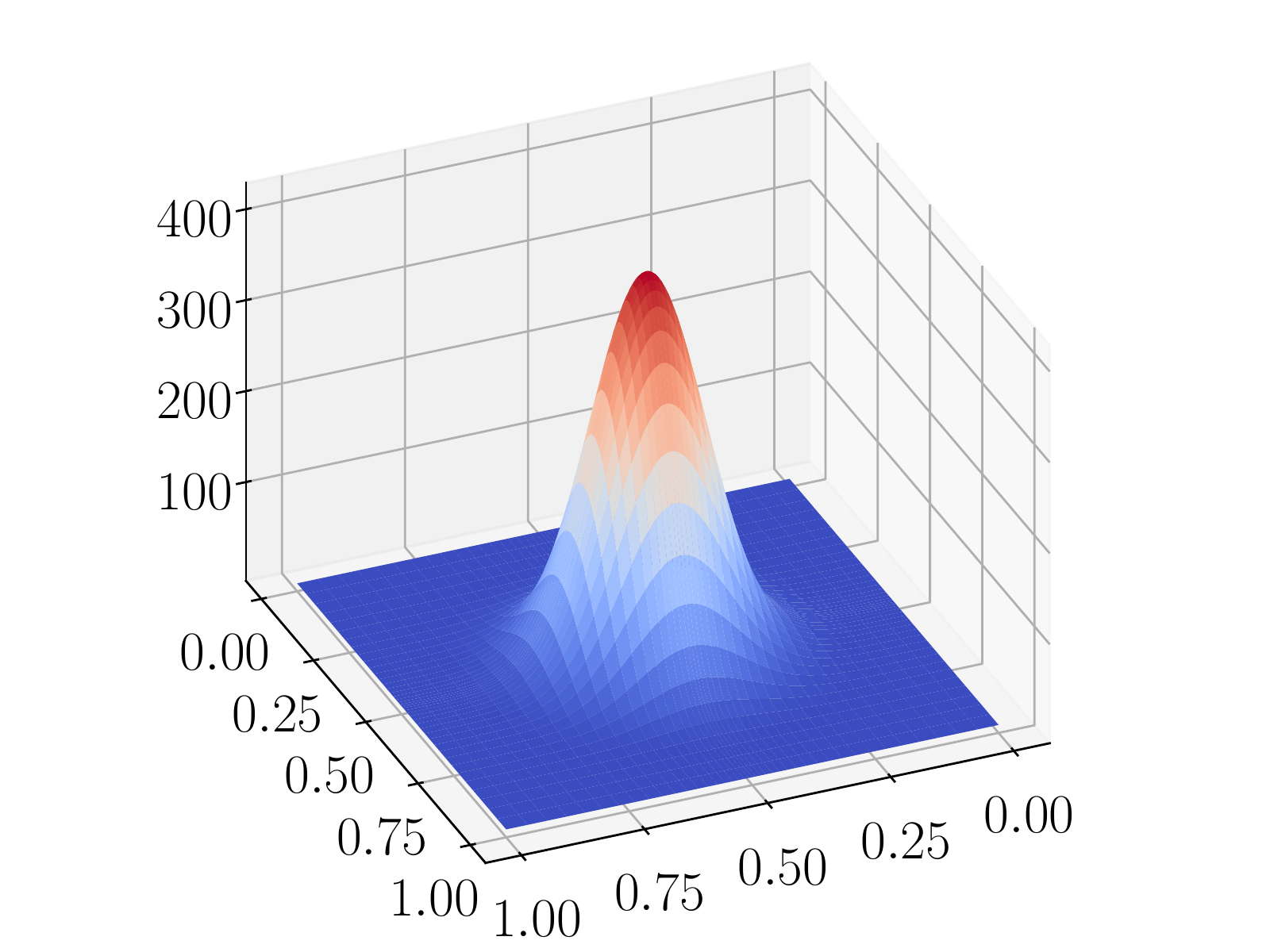}
    \end{subfigure}
    \begin{subfigure}[b]{0.3\textwidth}
        \centering
        \includegraphics[width=1.0\textwidth]{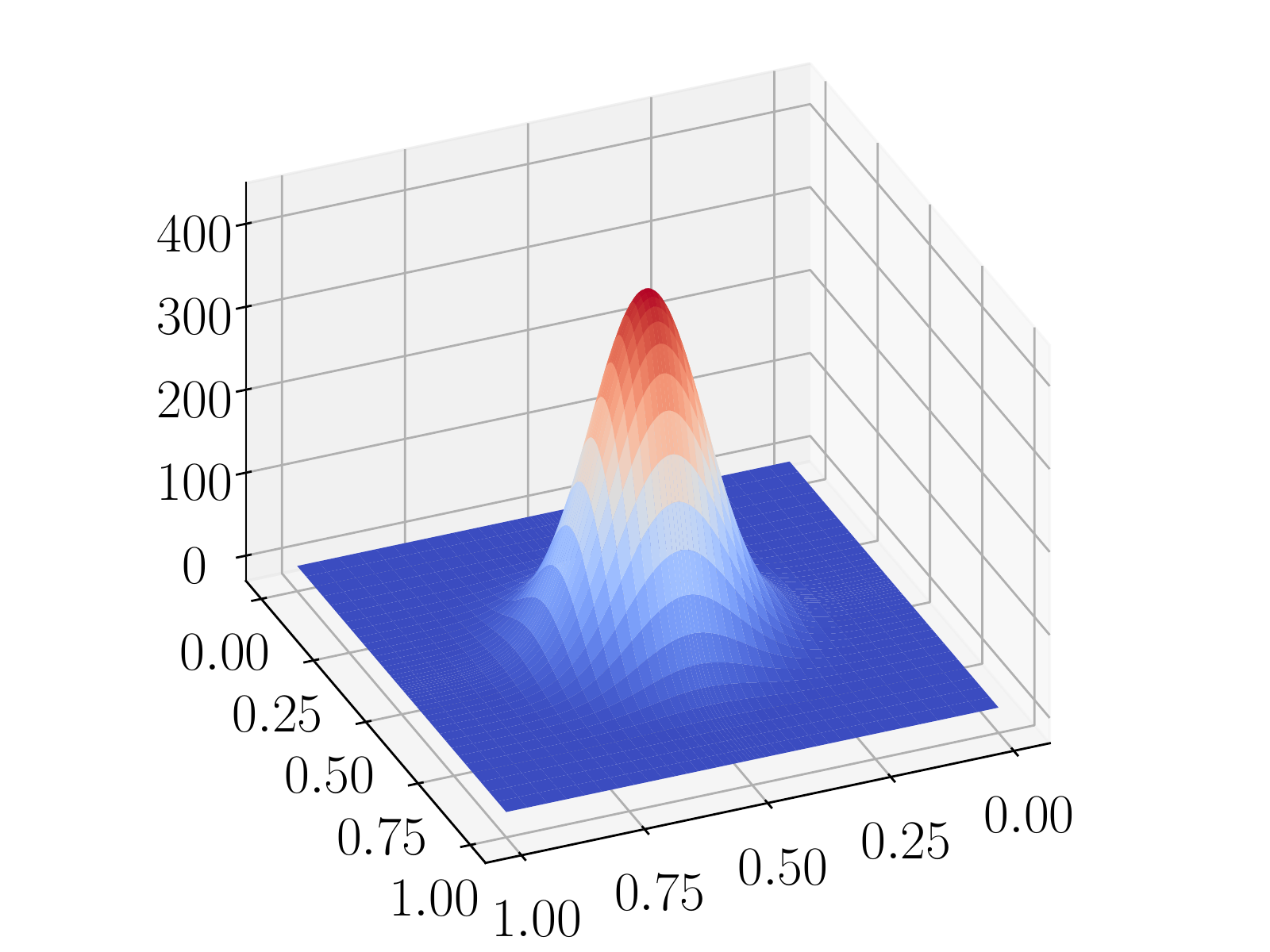}
    \end{subfigure}
    \begin{subfigure}[b]{0.3\textwidth}
        \centering
        \includegraphics[width=1.0\textwidth]{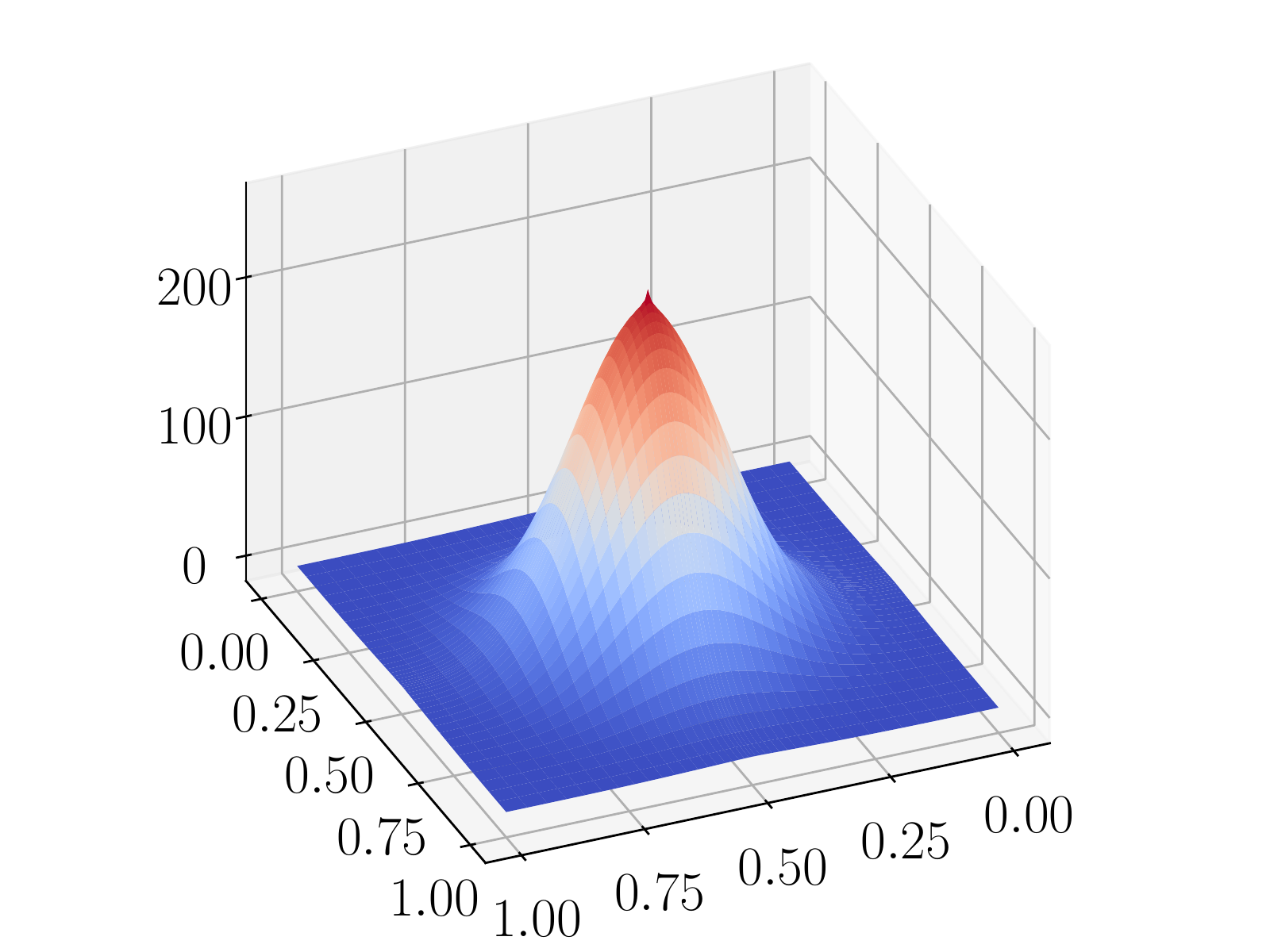}
    \end{subfigure}
    \caption{Algorithm 1: Evolution of $u_h$ (top) and $v_h$ (bottom) at times $t=0$, $2\cdot 10^{-5}$ and $5\cdot 10^{-3}$.}\label{fig.e2-evolution}
\end{figure}
\begin{figure}
    \begin{subfigure}[b]{0.3\textwidth}
        \centering
        \includegraphics[width=1.0\textwidth]{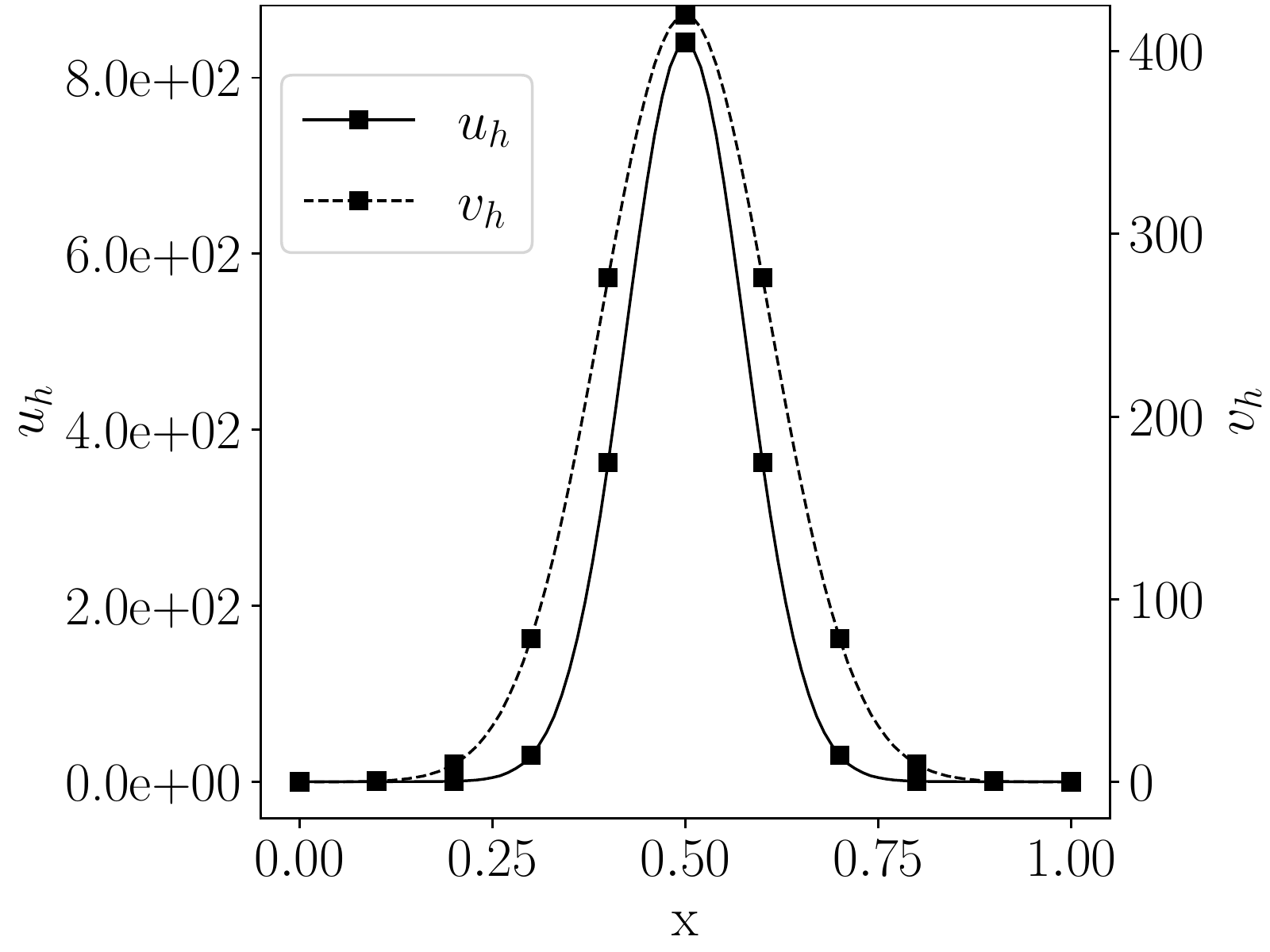}
    \end{subfigure}
    \begin{subfigure}[b]{0.3\textwidth}
        \centering
        \includegraphics[width=1.0\textwidth]{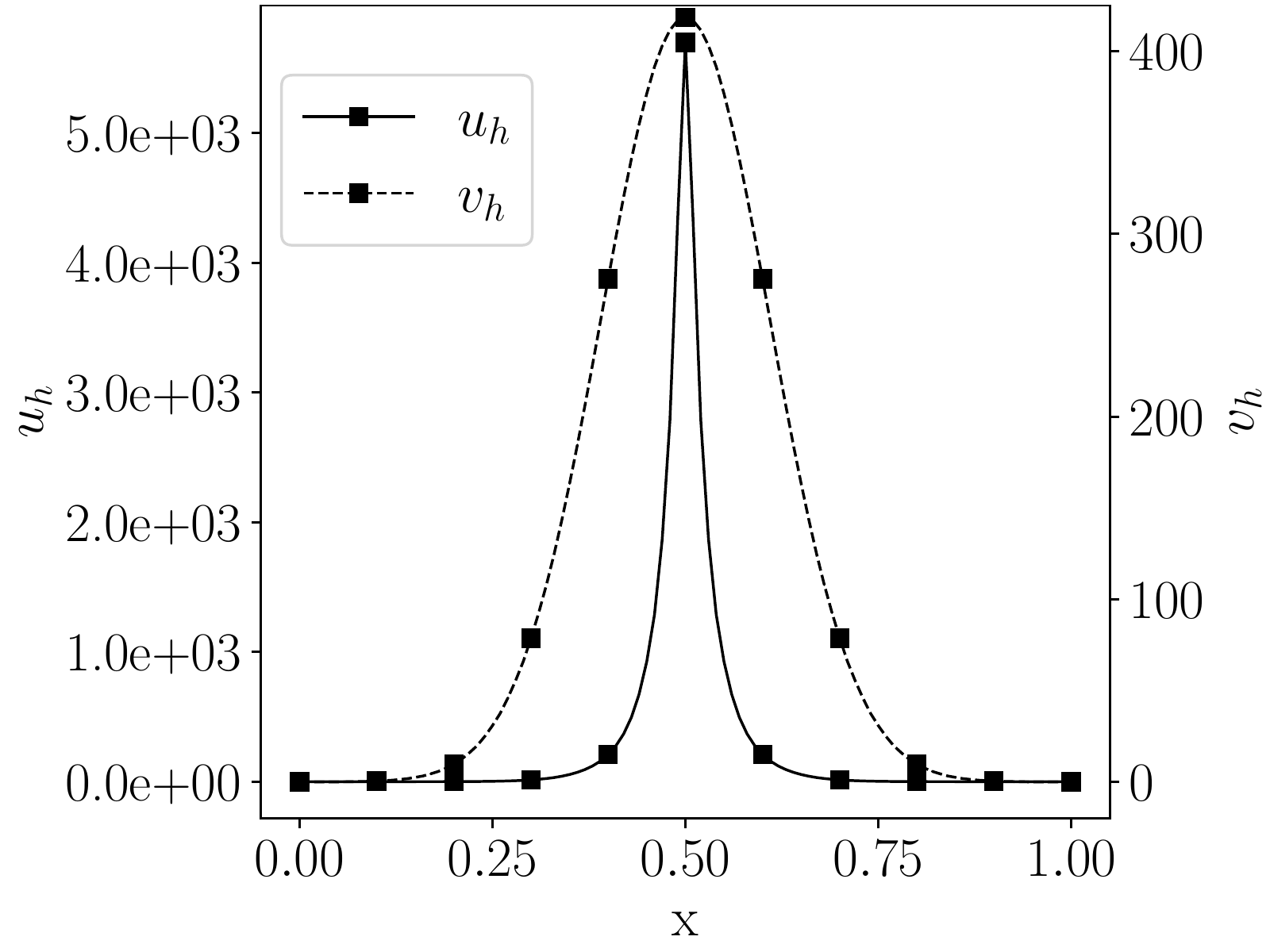}
    \end{subfigure}
    \begin{subfigure}[b]{0.3\textwidth}
        \centering
        \includegraphics[width=1.0\textwidth]{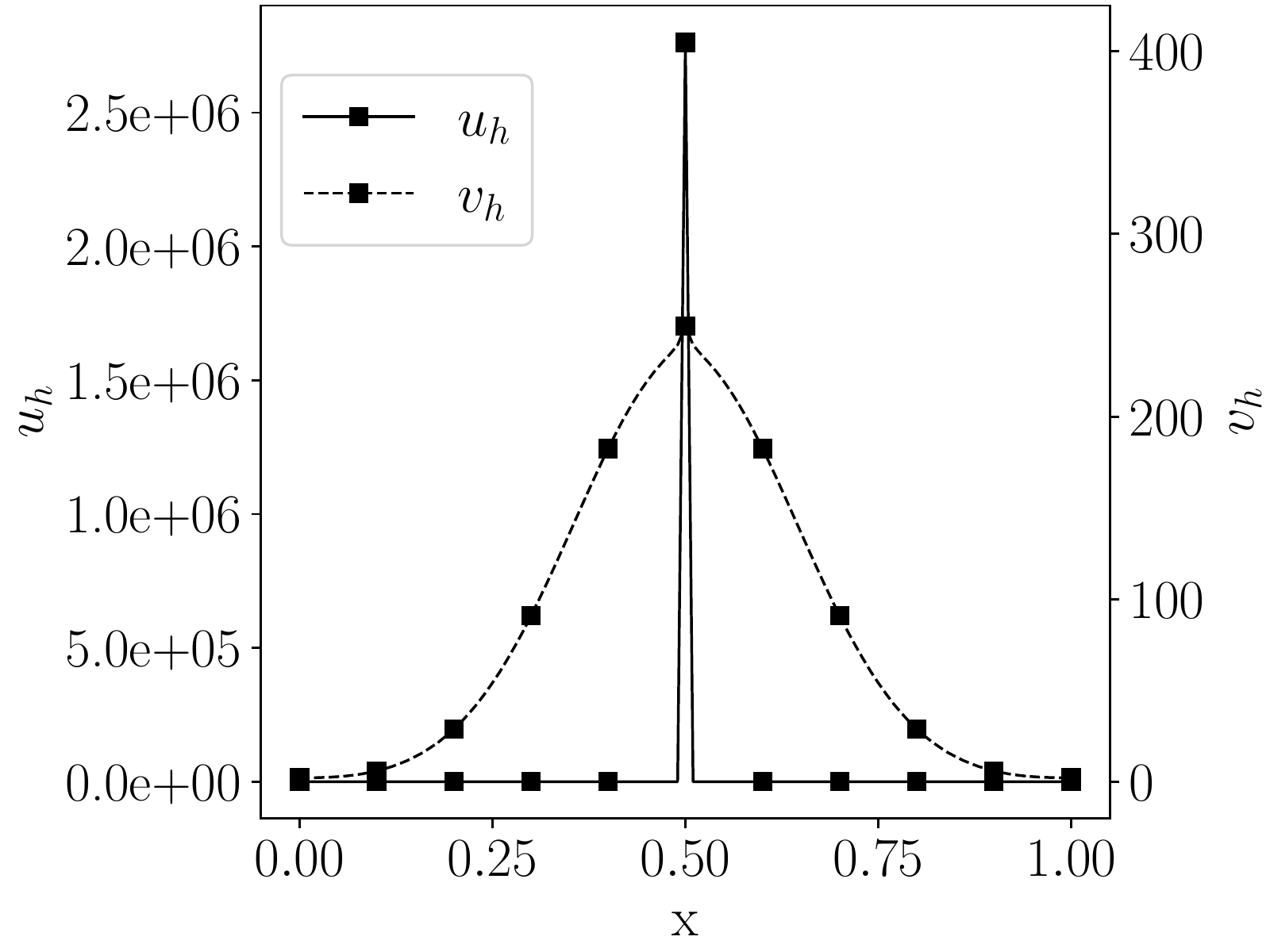}
    \end{subfigure}
    \caption{Algorthim 1: Profiles of $u_h$ and $v_h$ along the plane $y=0.5$ at times $t=0$, $2\cdot 10^{-5}$ and $5\cdot 10^{-3}$.}\label{fig.e2-profiles}
\end{figure}
\begin{figure}
    \centering
    \includegraphics[width=0.4\textwidth]{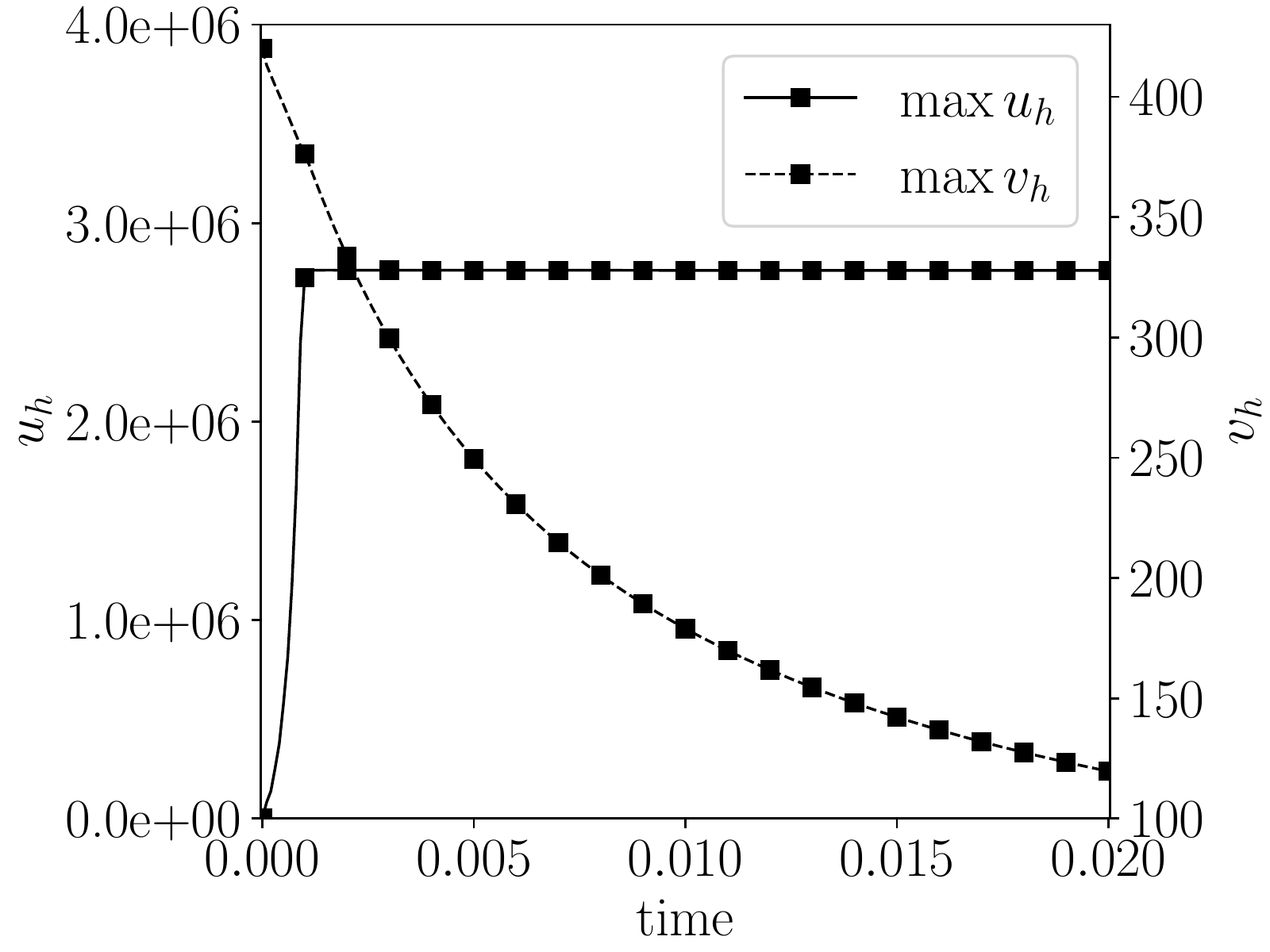}
    \includegraphics[width=0.4\textwidth]{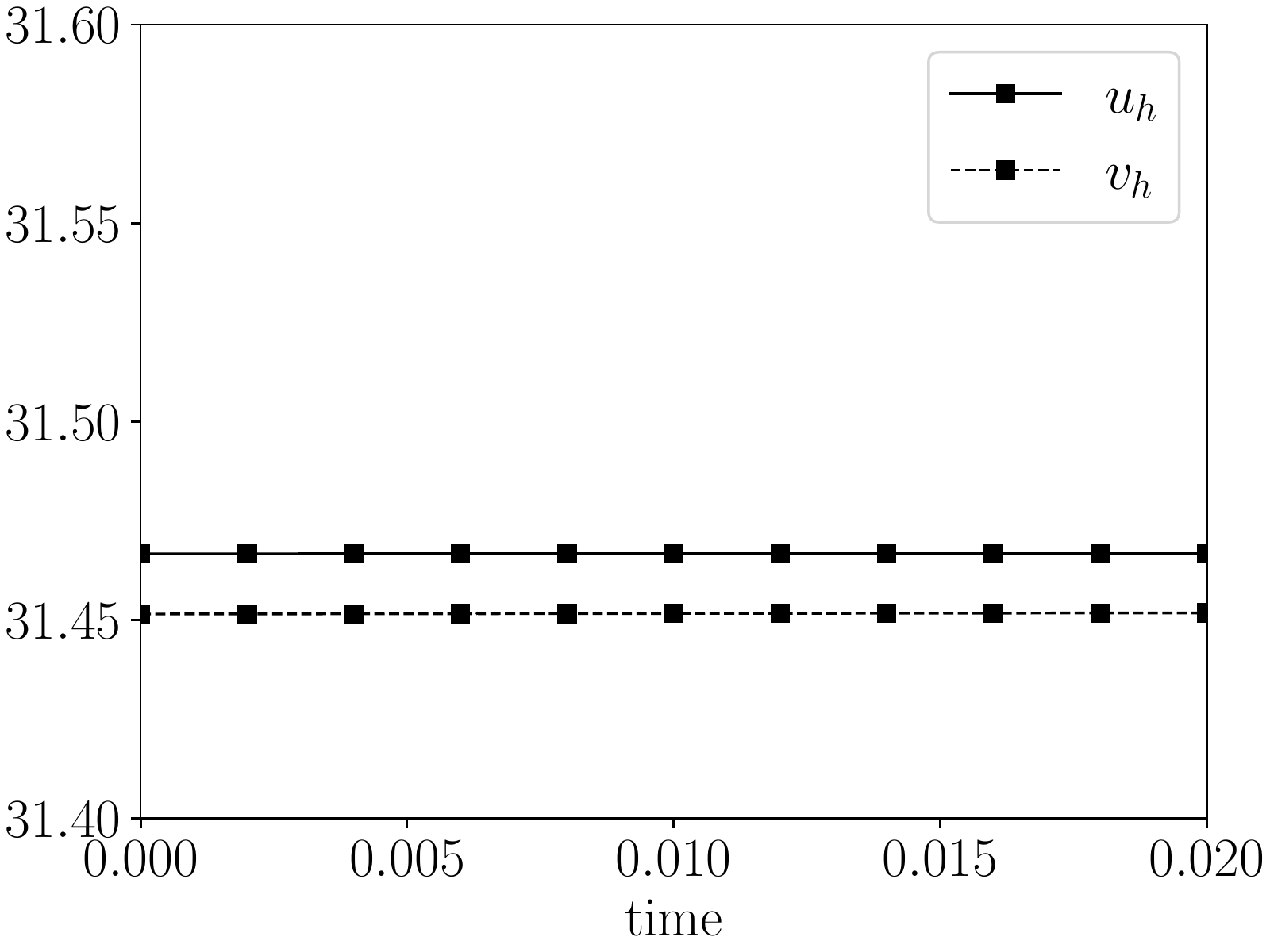}
    \caption{Algorithm 1: Evolution of $\|u_h\|_{L^\infty(\Omega)}$ and $\|v_h\|_{L^\infty(\Omega)}$ (left), and $\|u_h\|_{L^1(\Omega)}$ and $\|v_h\|_{L^1(\Omega)}$ (right).}
    \label{fig.e2-maximums}
\end{figure}
\begin{figure}
    \begin{subfigure}[b]{0.32\textwidth}
        \centering
        \includegraphics[width=1.0\textwidth]{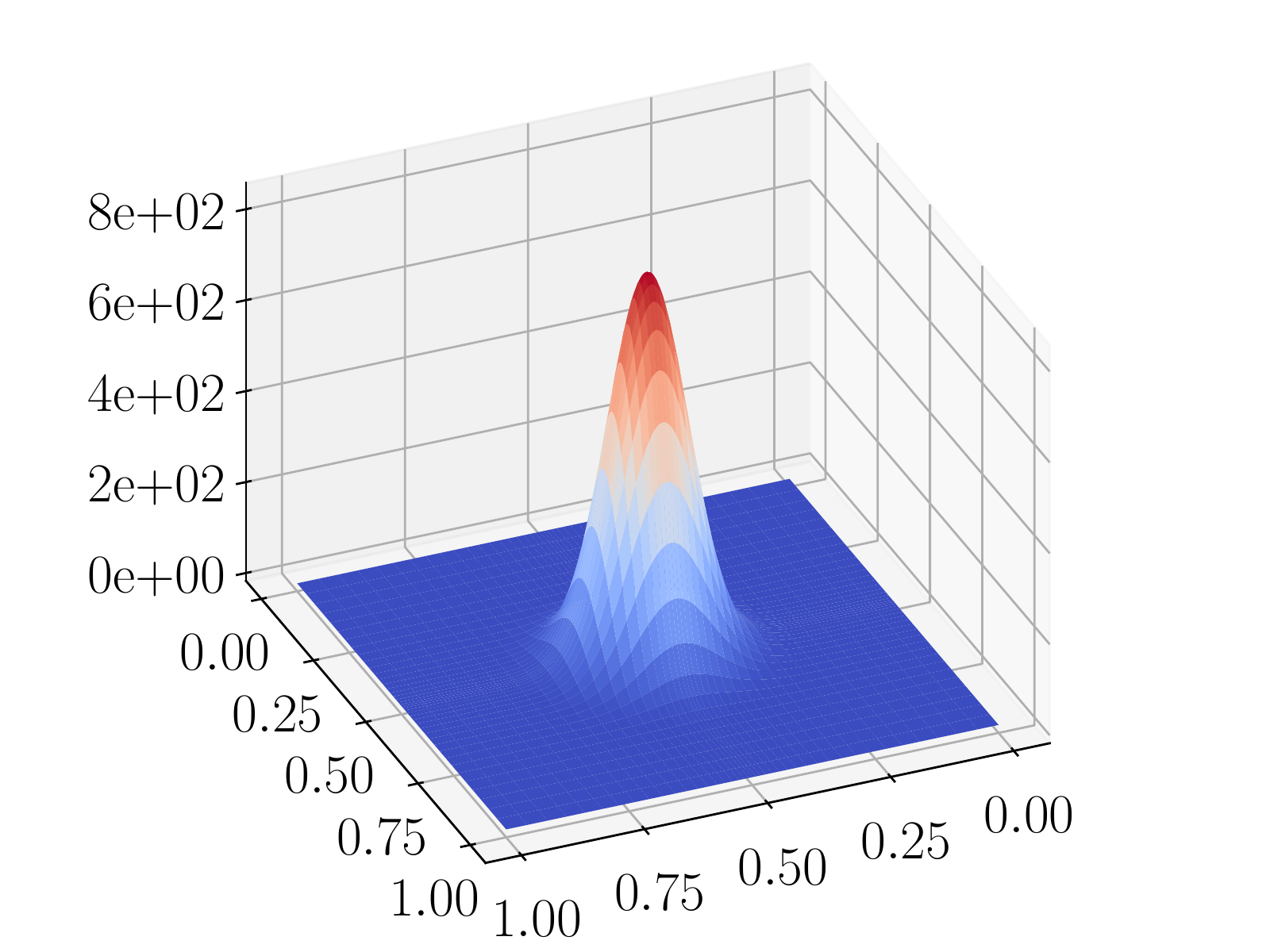}
    \end{subfigure}
    \begin{subfigure}[b]{0.32\textwidth}
        \centering
        \includegraphics[width=1.0\textwidth]{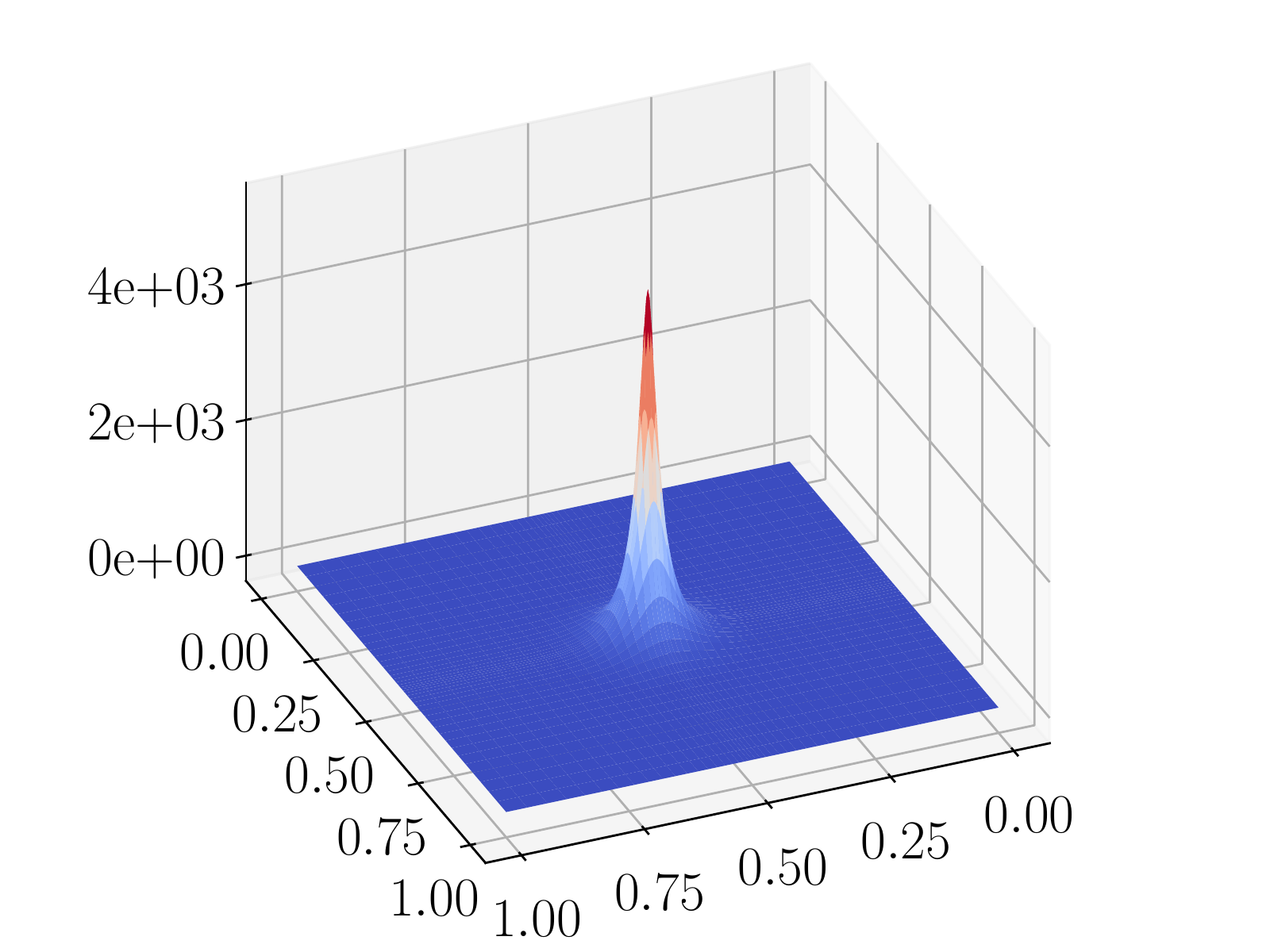}
    \end{subfigure}
    \begin{subfigure}[b]{0.32\textwidth}
        \centering
        \includegraphics[width=1.0\textwidth]{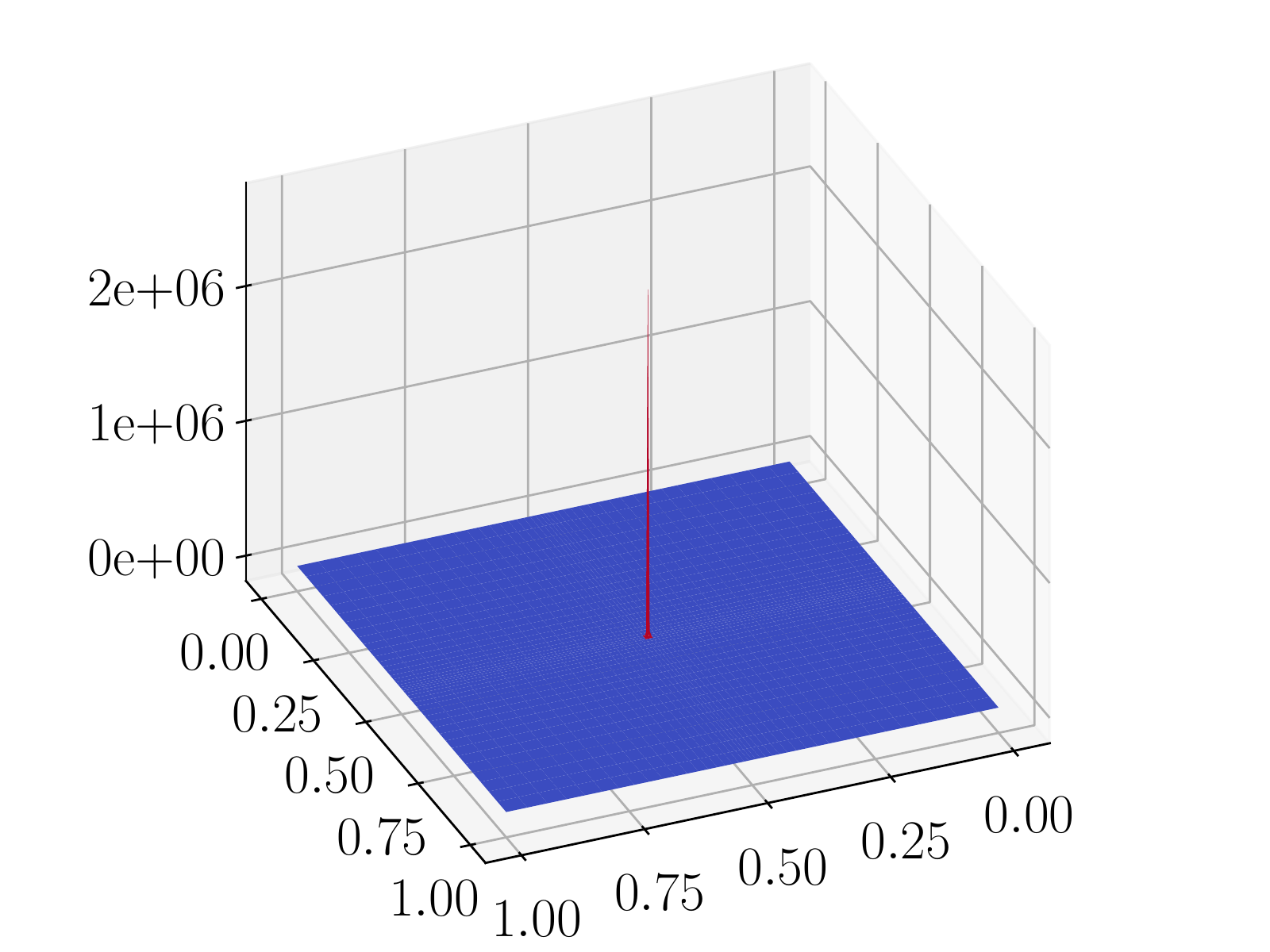}
    \end{subfigure}
    \begin{subfigure}[b]{0.32\textwidth}
        \centering
        \includegraphics[width=1.0\textwidth]{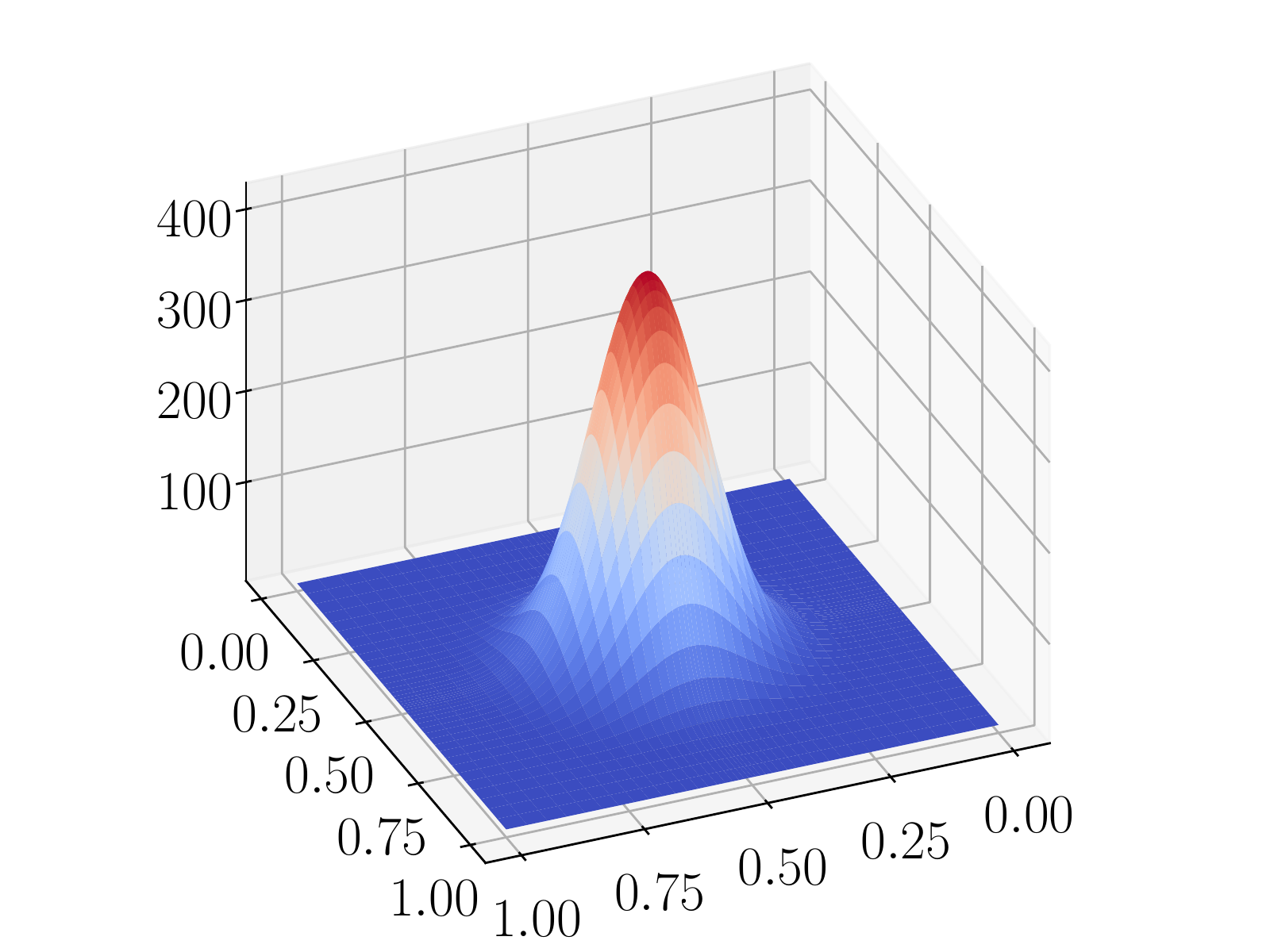}
    \end{subfigure}
    \begin{subfigure}[b]{0.32\textwidth}
        \centering
        \includegraphics[width=1.0\textwidth]{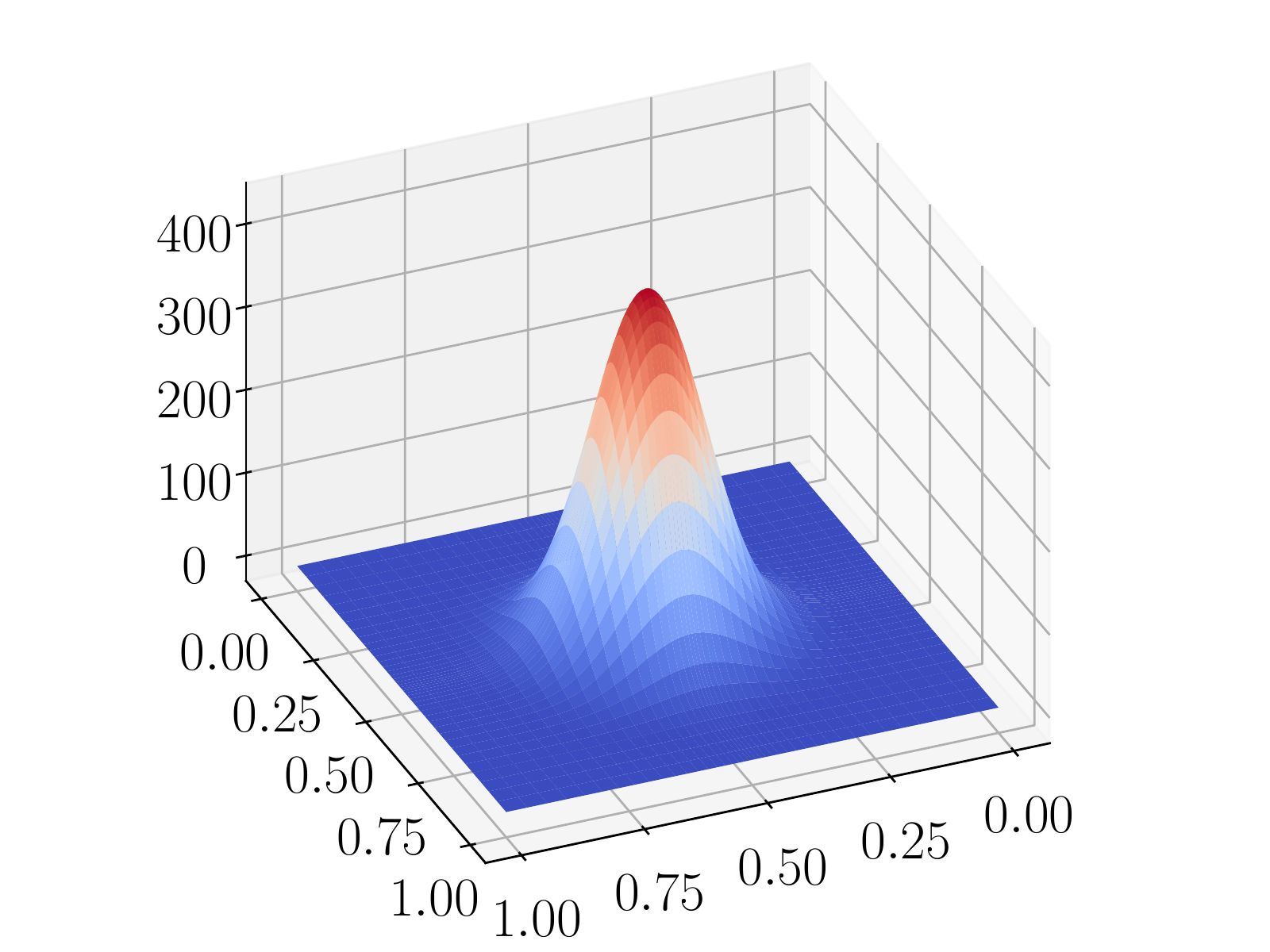}
    \end{subfigure}
    \begin{subfigure}[b]{0.32\textwidth}
        \centering
        \includegraphics[width=1.0\textwidth]{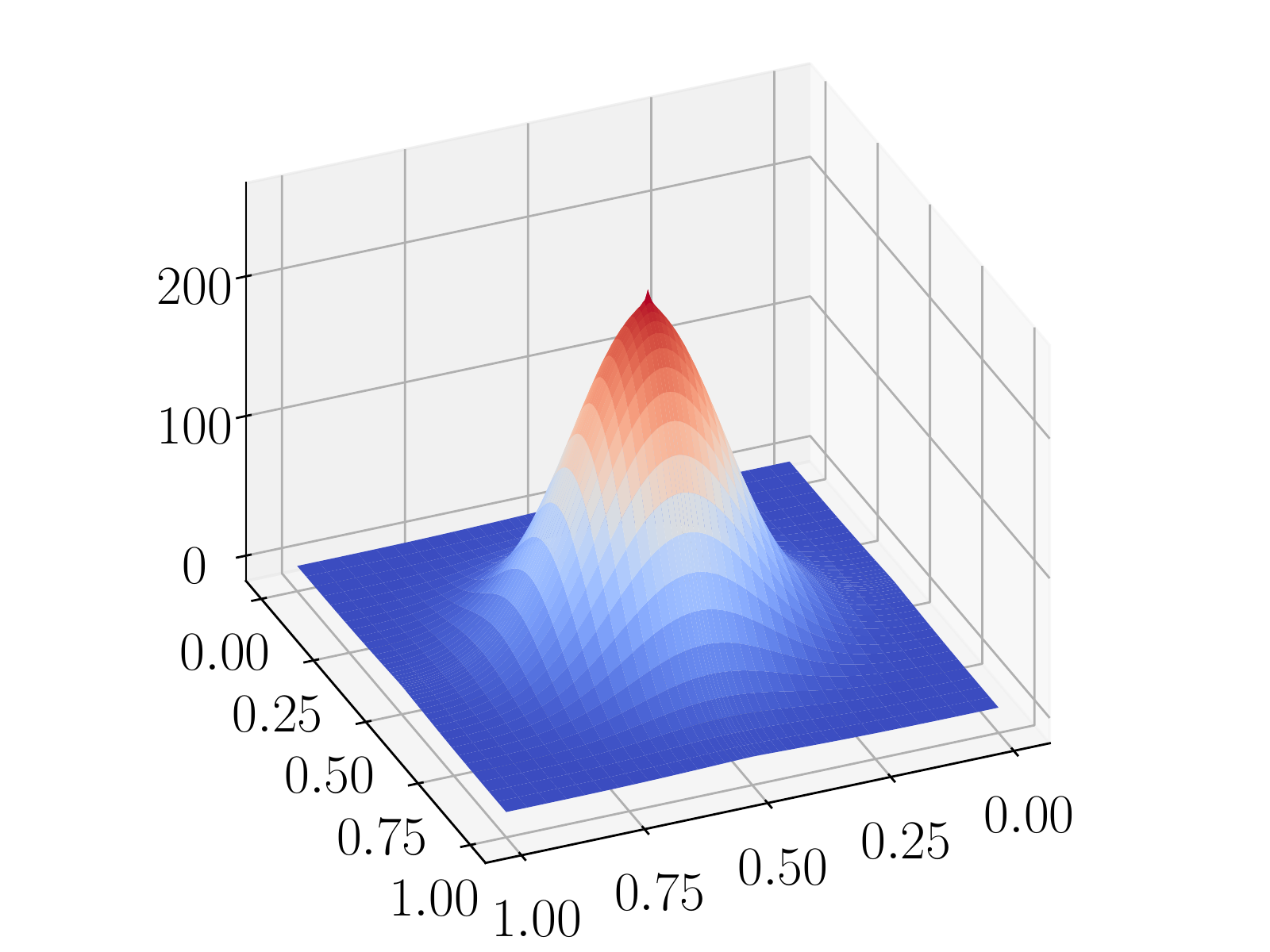}
    \end{subfigure}
    \caption{Algorithm 2: Evolution of $u_h$ and $v_h$ at times $t=0$, $2\cdot 10^{-5}$ and $5\cdot 10^{-3}$.}\label{fig.e2-evolution-alg2}
\end{figure}
\begin{figure}
    \begin{subfigure}[b]{0.32\textwidth}
        \centering
        \includegraphics[width=1.0\textwidth]{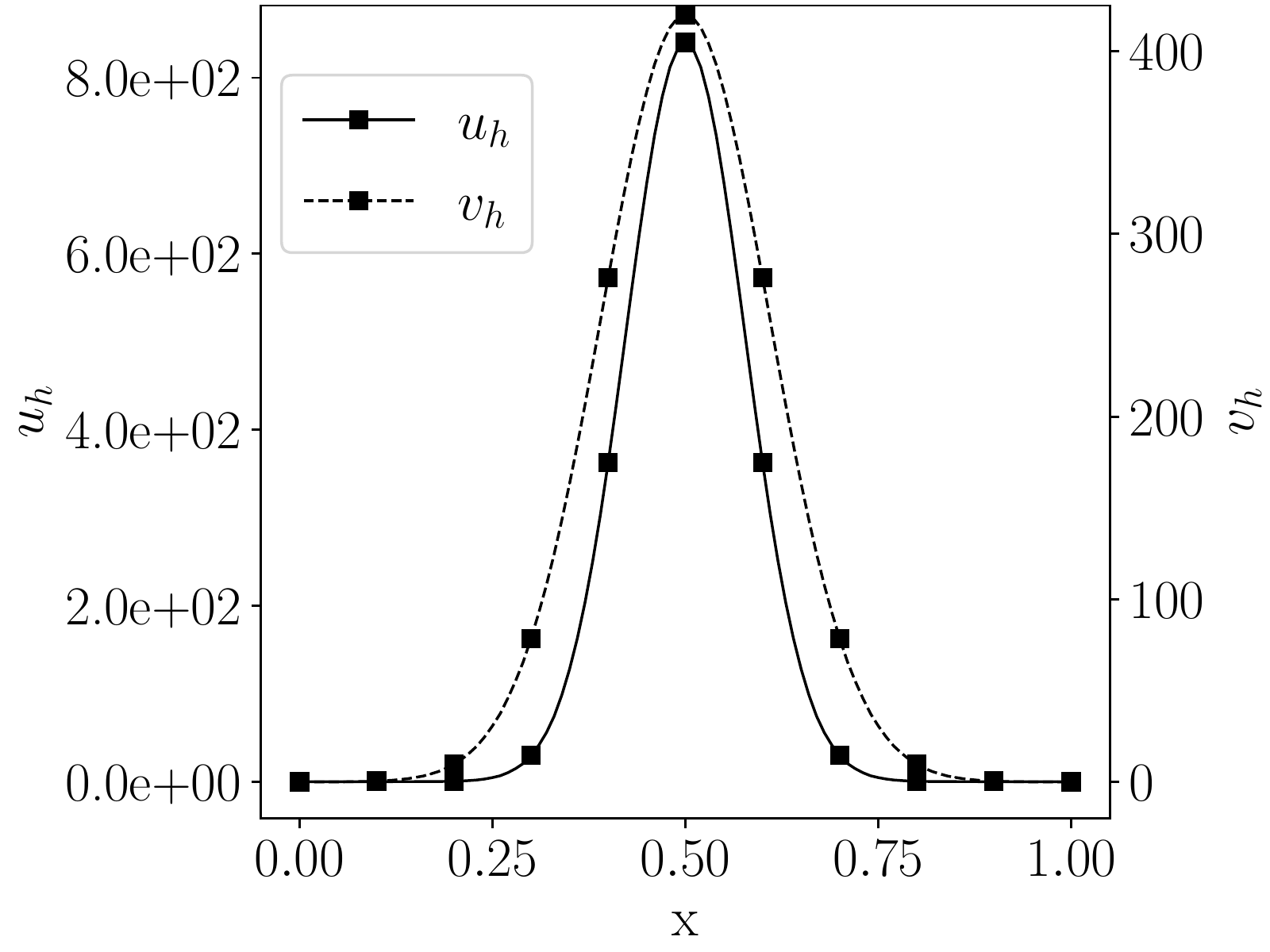}
    \end{subfigure}
    \begin{subfigure}[b]{0.32\textwidth}
        \centering
        \includegraphics[width=1.0\textwidth]{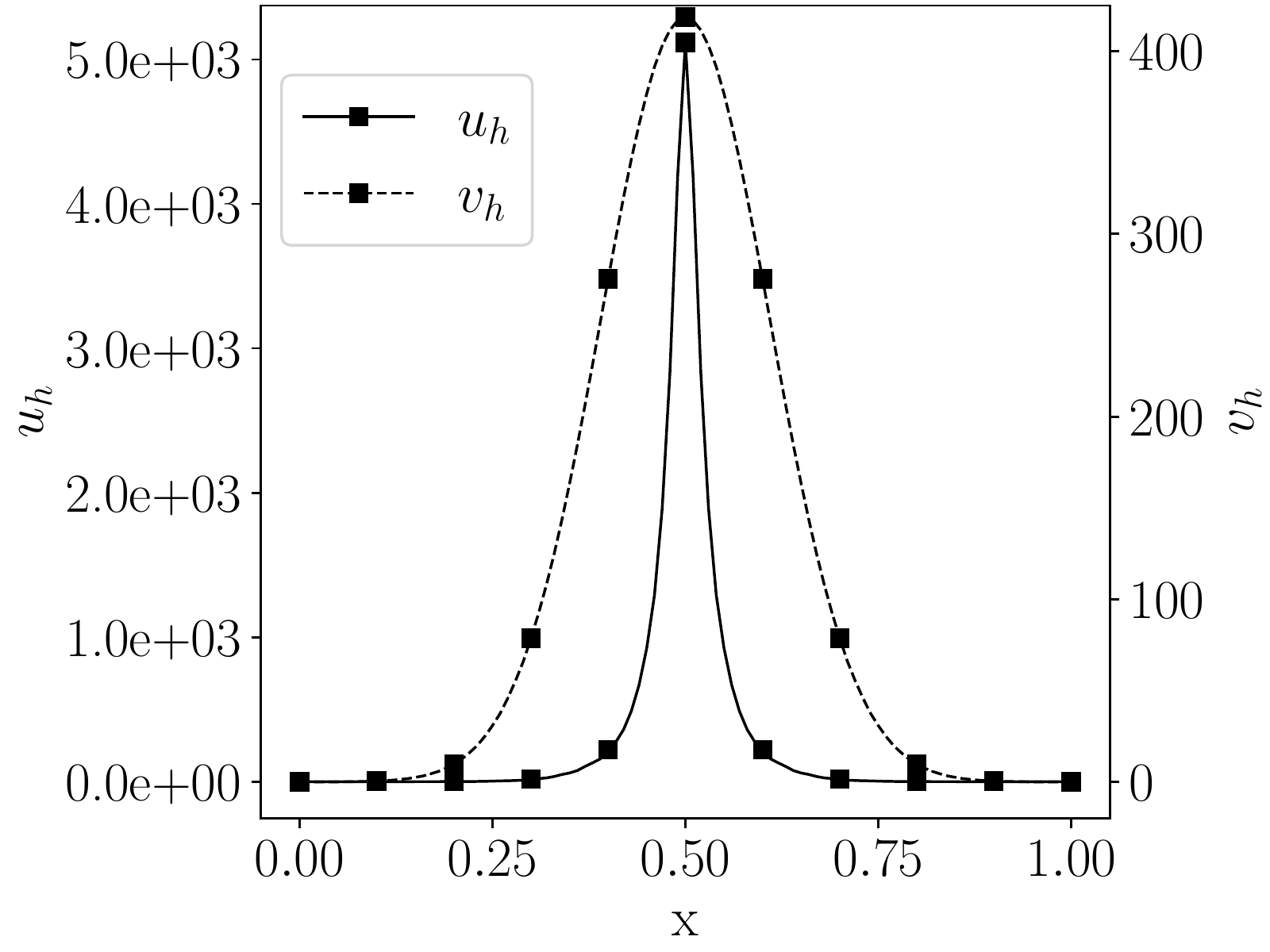}
    \end{subfigure}
    \begin{subfigure}[b]{0.32\textwidth}
        \centering
        \includegraphics[width=1.0\textwidth]{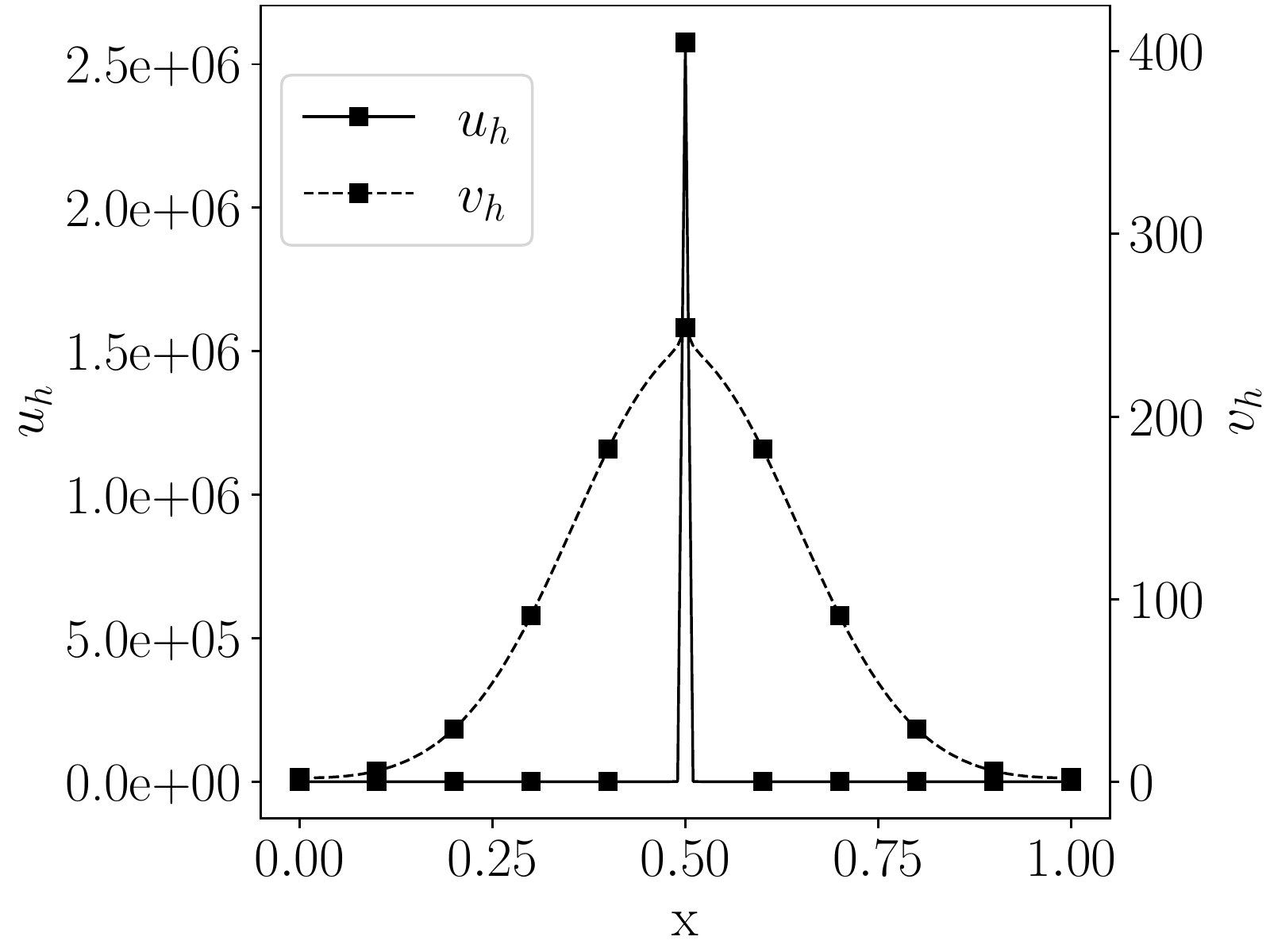}
    \end{subfigure}
    \caption{Algorithm 2: Profiles of $u_h$ and $v_h$ at $y=0.5$ at times $t=0$, $2\cdot 10^{-5}$ and $5\cdot 10^{-3}$.}\label{fig.e2-profiles-alg2}
\end{figure}
\begin{figure}
    \centering
    \includegraphics[width=0.4\textwidth]{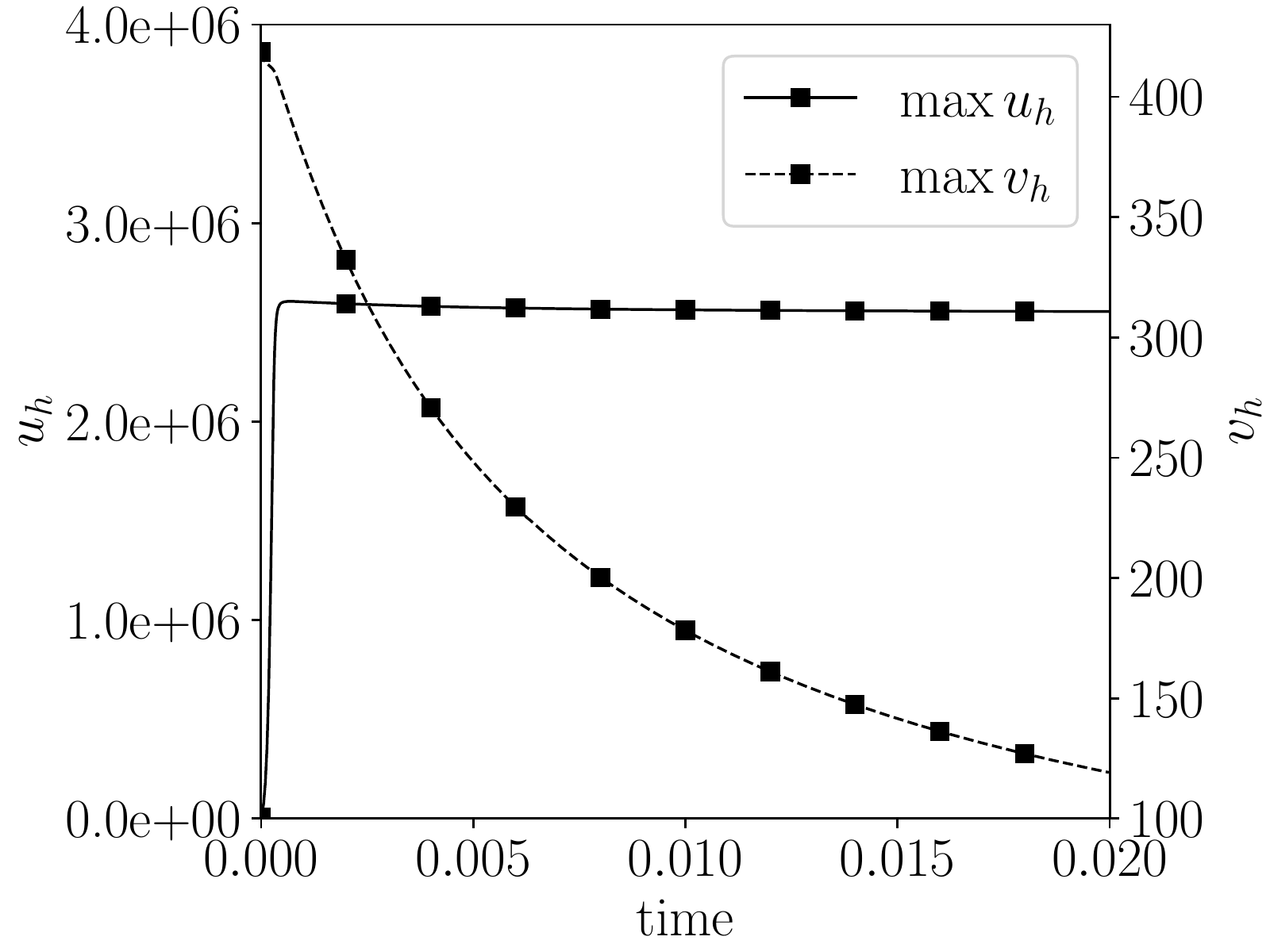}
    \includegraphics[width=0.4\textwidth]{centered_blow-up_alg2/conservation.pdf}
    \caption{Algorithm 2: Evolution of $\|u_h\|_{L^\infty(\Omega)}$ and $\|v_h\|_{L^\infty(\Omega)}$ (left), and $\|u_h\|_{L^1(\Omega)}$ and $\|v_h\|_{L^1(\Omega)}$ (right).}
    \label{fig.e2-maximums-alg2}
\end{figure}
\begin{figure}
    \centering
    \includegraphics[width=0.4\textwidth]{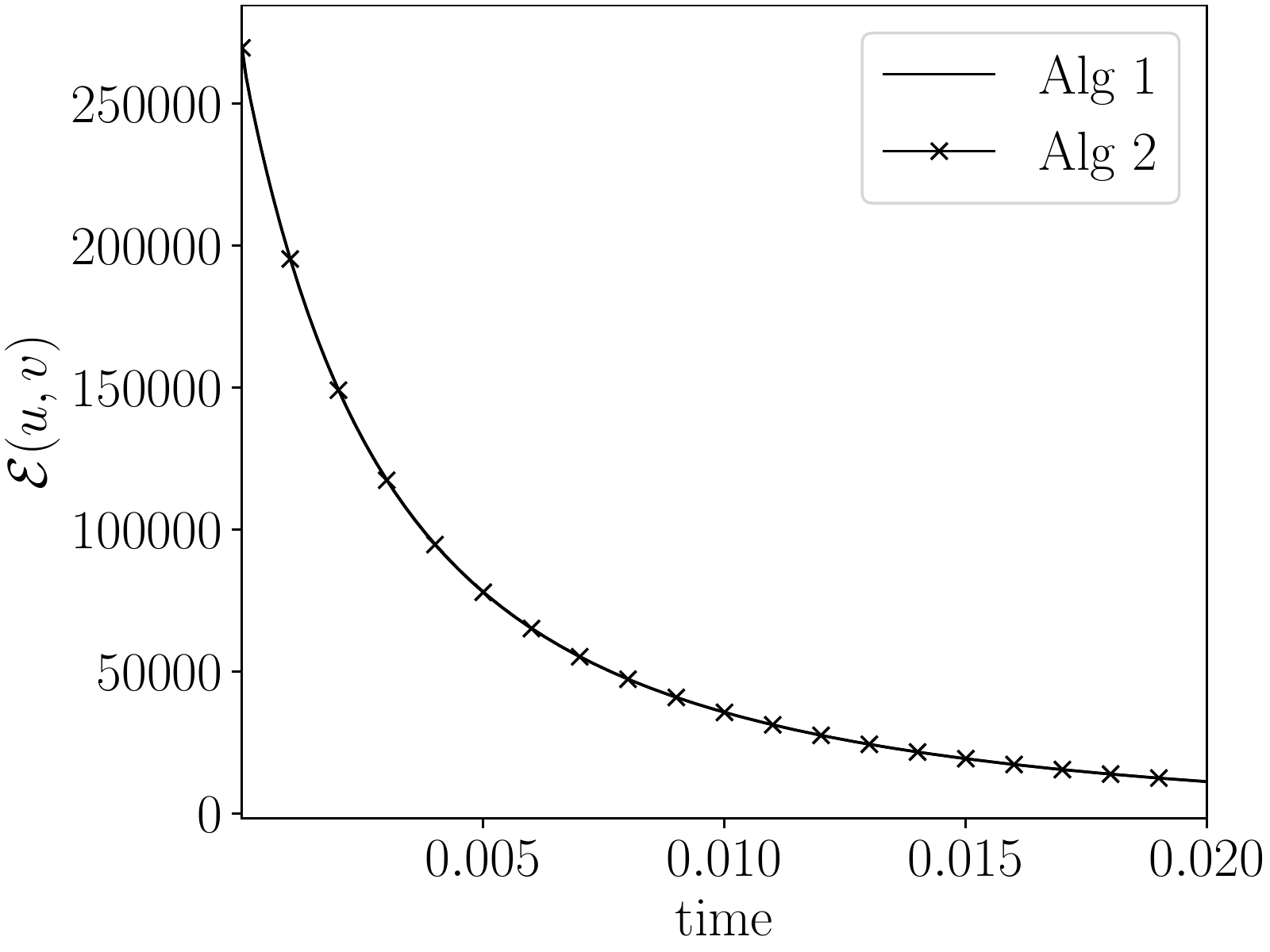}
    \caption{Evolution of $\mathcal{E}_h(u_h,v_h)$.}
    \label{fig.e2-energy}
\end{figure}

\subsubsection{Blow-up at the boundary} This final example is taken from \cite[Section 3.2]{Strehl_Sokolov_Kuzmin_Turek_2010}. It is assumed that  $\Omega=\textrm{B}((0, 0.1);1))$ is a ball of diameter $1$ centered at $(0,0.1)$, and the initial conditions $u_0$ and $v_0$ are
\begin{equation}
    u_0(x, y) = 1000 e^{-100(x^2+y^2)} \quad \text{and} \quad v_0(x, y) = 0.
\end{equation}
Here $\int_\Omega u_0(\x)\,\dx\approx 31.44722>4\pi$, which results in a continuous solution, which should have a singularity as well, but on this occasion occurs on the boundary. 

From a numerical point of view, this example is more demanding than the previous one since the maximum location moves over time. 

We use a time step of $k=10^{-4}$ and an unstructured mesh consisting of $5000$ triangles with a minimum mesh size $h_{\rm min}\approx 0.0087$ (see Figure \ref{fig.e3-mesh}). As in the previous example, we also solve the problem by using both algorithms and comparing their results. 
\begin{figure}
    \centering
    \includegraphics[width=0.4\textwidth,trim={30cm 0 35cm 0},clip]{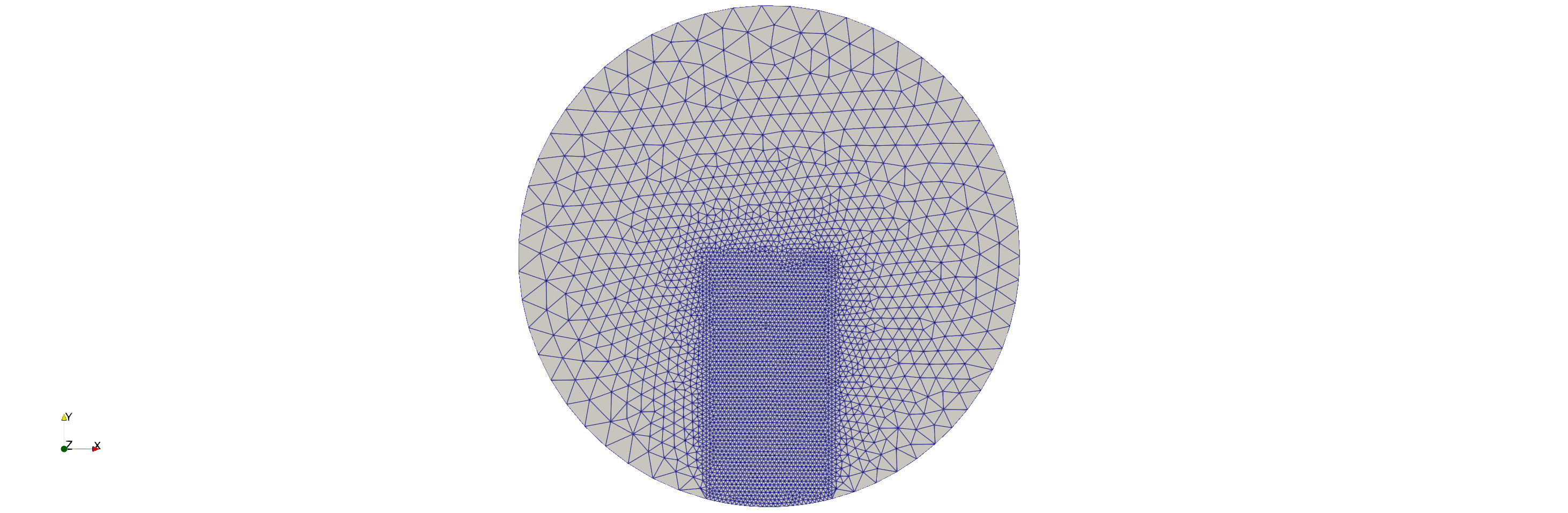}
    \caption{Mesh used to discretize $\Omega=\textrm{B}((0,0.1); 1)$.}
    \label{fig.e3-mesh}
\end{figure}

Figures  \ref{fig.e3-evolution} and \ref{fig.e3-evolution-alg2} show the cell and chemoattractant densities at $3$ different times for Algorithms $1$ and $2$, respectively; the initial condition, which has $\|u_0\|_{L^\infty(\Omega)}=10^{3}$ and $\|v_0\|_{L^\infty(\Omega)}=0$ as indicated in Figures \ref{fig.e3-profiles} and \ref{fig.e3-profiles-alg2}; the discrete solutions at $t=0.15$, which clearly shows how the point where the maximum is attained is found in the midway between the points $(0,0)$ and $(0, -0.4)$; and the discrete solution at $t=2$ just as the maximum is reached at the boundary whose support is one macroelement consisting of three triangles.       
\begin{figure}
    \begin{subfigure}[b]{0.32\textwidth}
        \centering
        \includegraphics[width=1.0\textwidth]{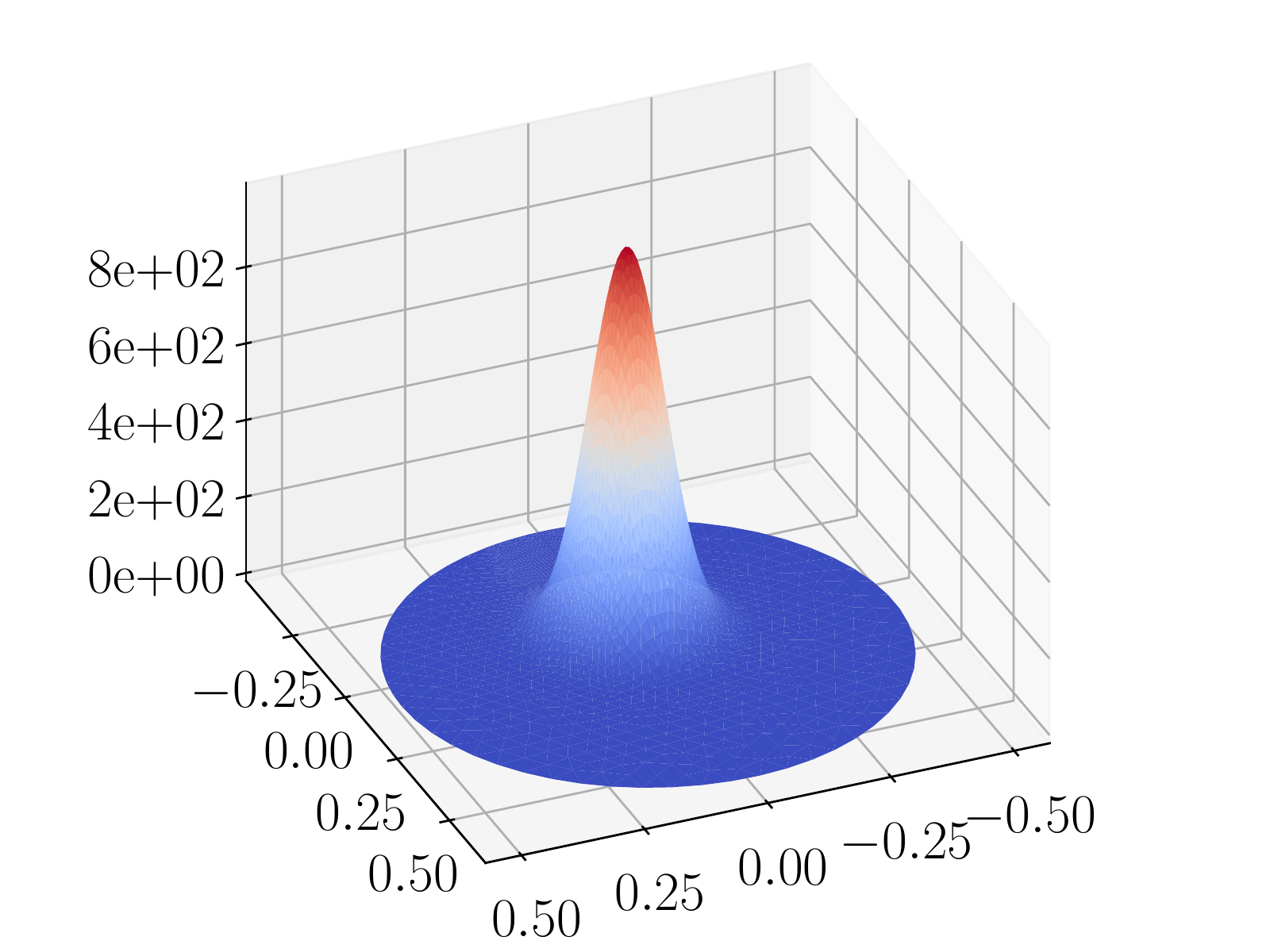}
    \end{subfigure}
    \begin{subfigure}[b]{0.32\textwidth}
        \centering
        \includegraphics[width=1.0\textwidth]{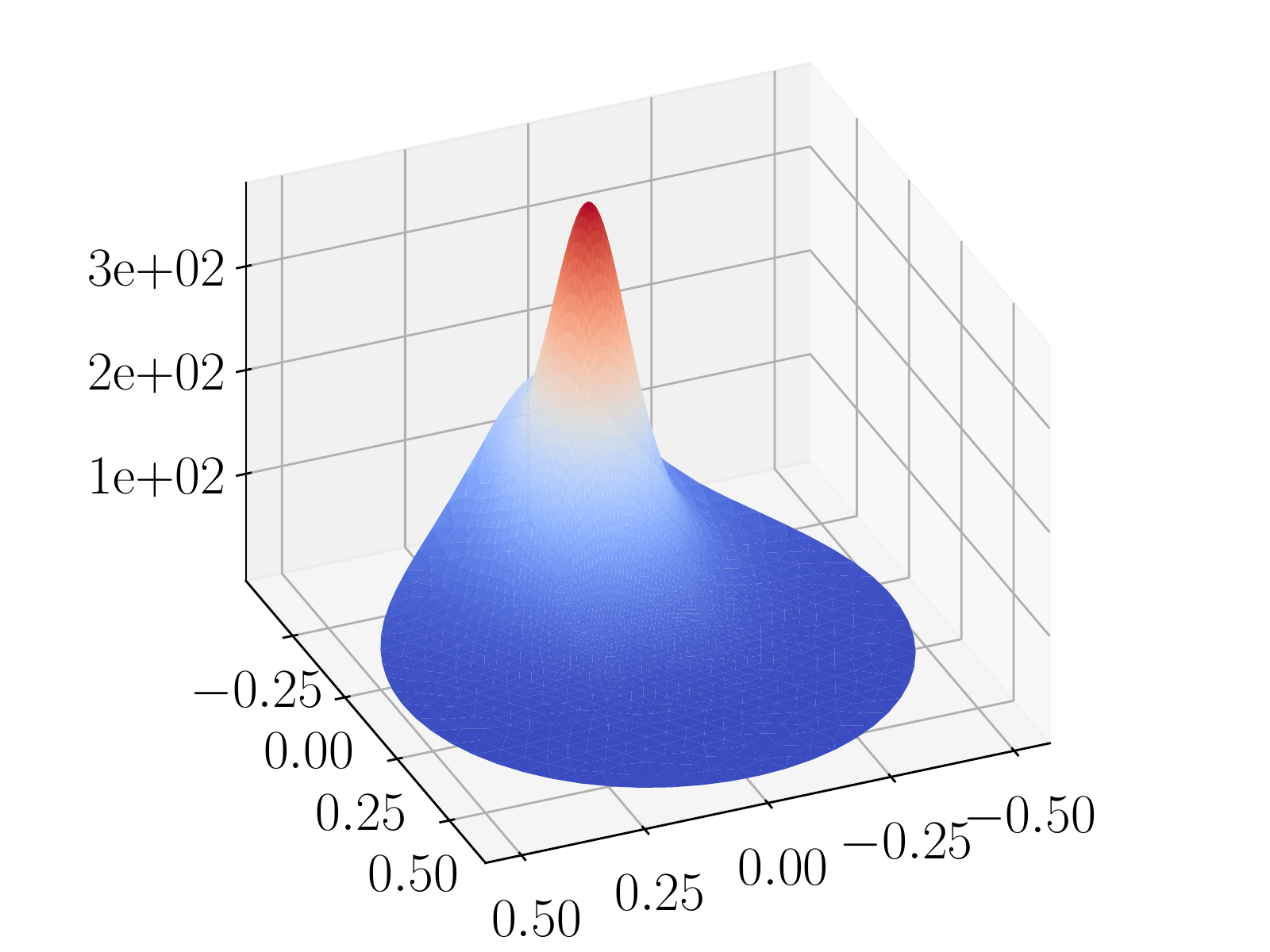}
    \end{subfigure}
    \begin{subfigure}[b]{0.32\textwidth}
        \centering
        \includegraphics[width=1.0\textwidth]{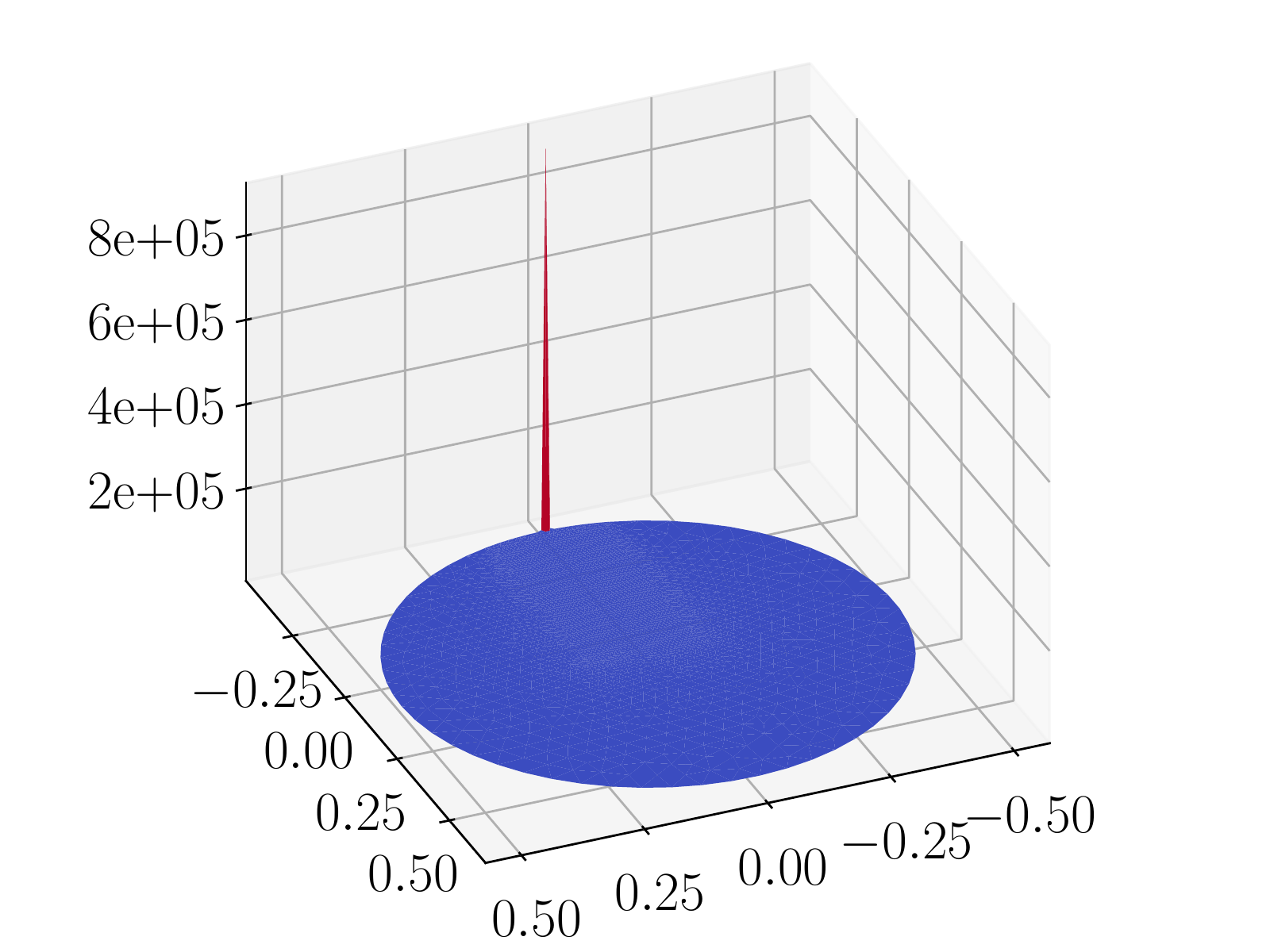}
    \end{subfigure}
    \begin{subfigure}[b]{0.32\textwidth}
        \centering
        \includegraphics[width=1.0\textwidth]{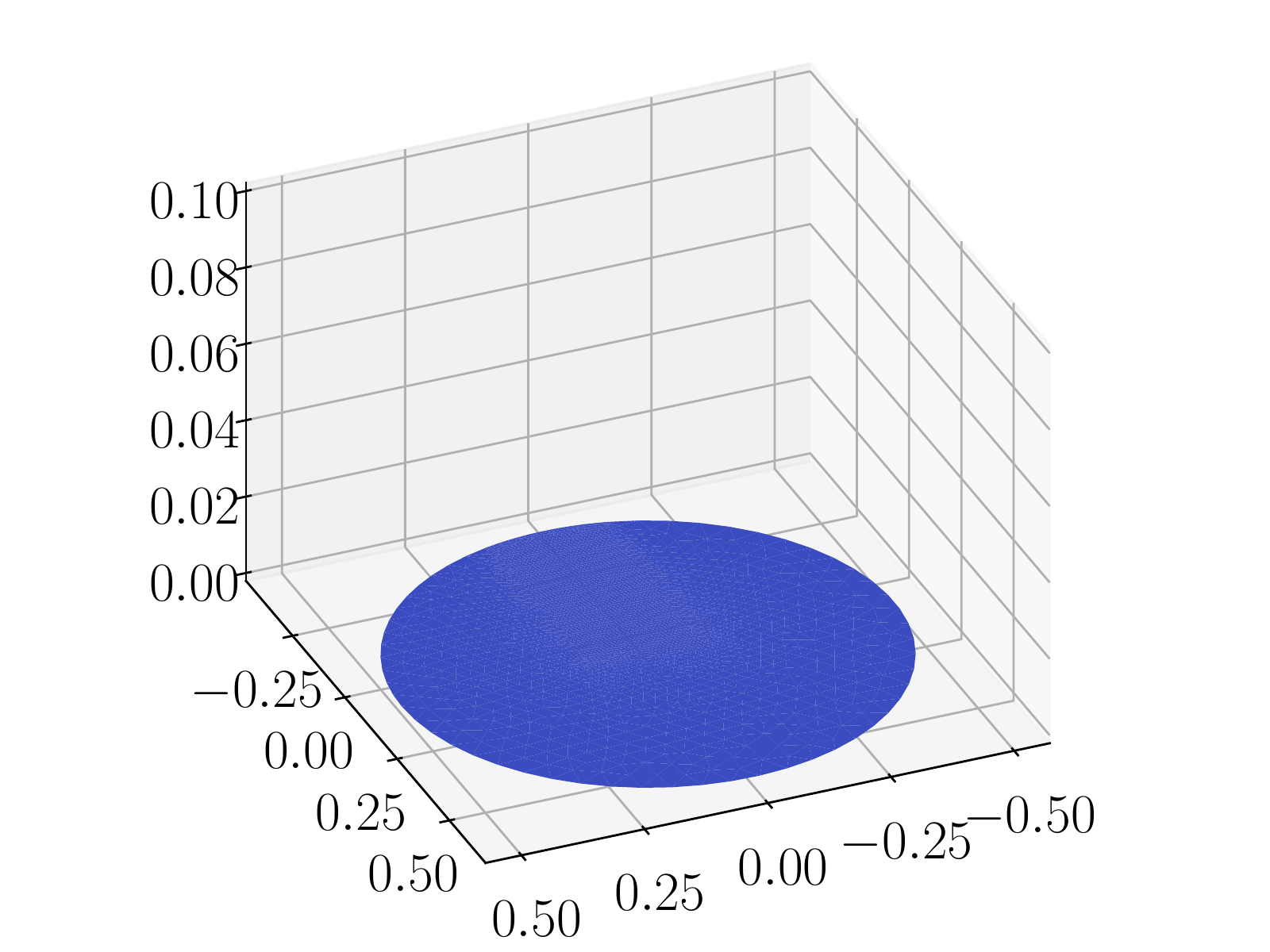}
    \end{subfigure}
    \begin{subfigure}[b]{0.32\textwidth}
        \centering
        \includegraphics[width=1.0\textwidth]{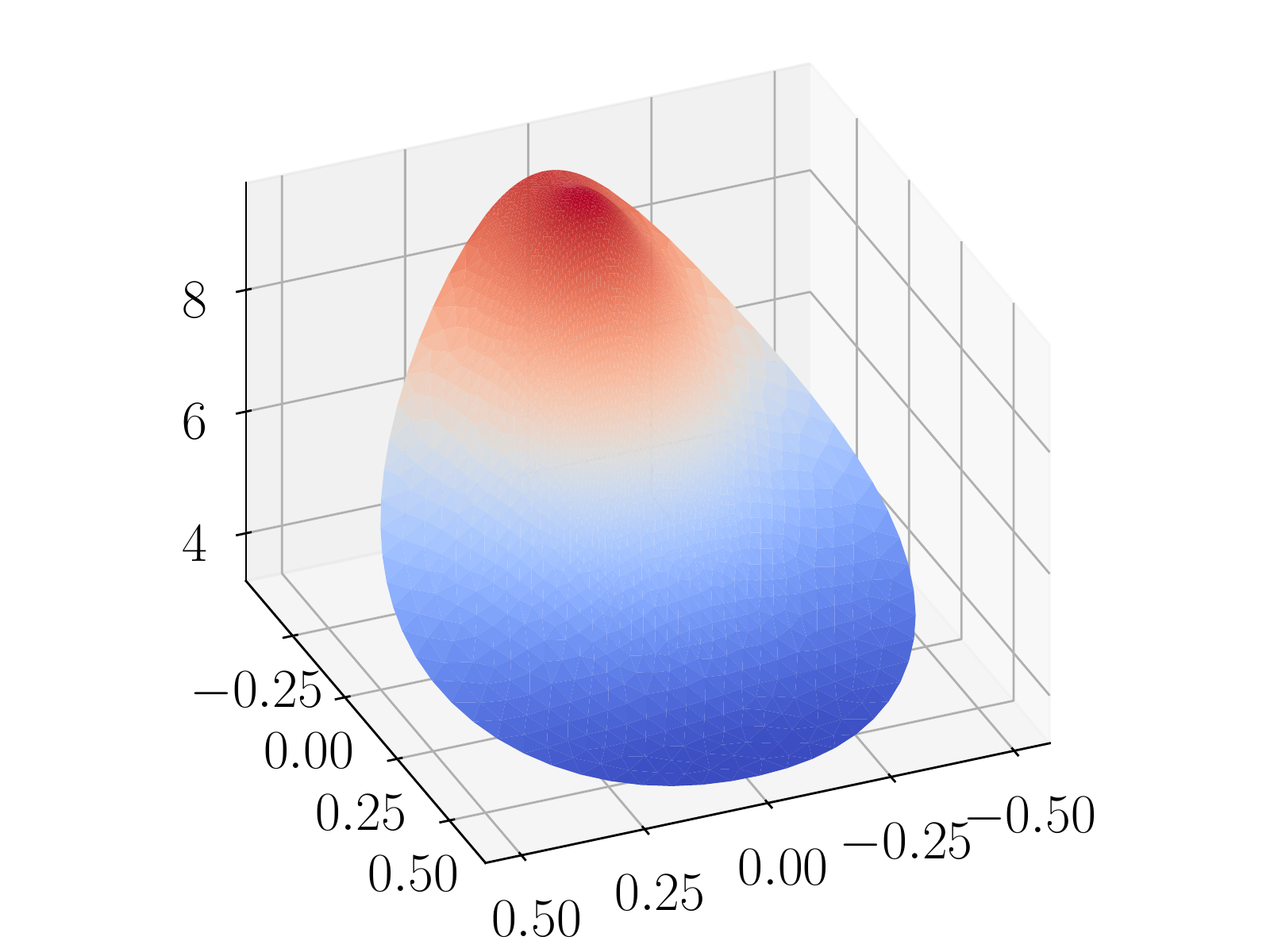}
    \end{subfigure}
    \begin{subfigure}[b]{0.32\textwidth}
        \centering
        \includegraphics[width=1.0\textwidth]{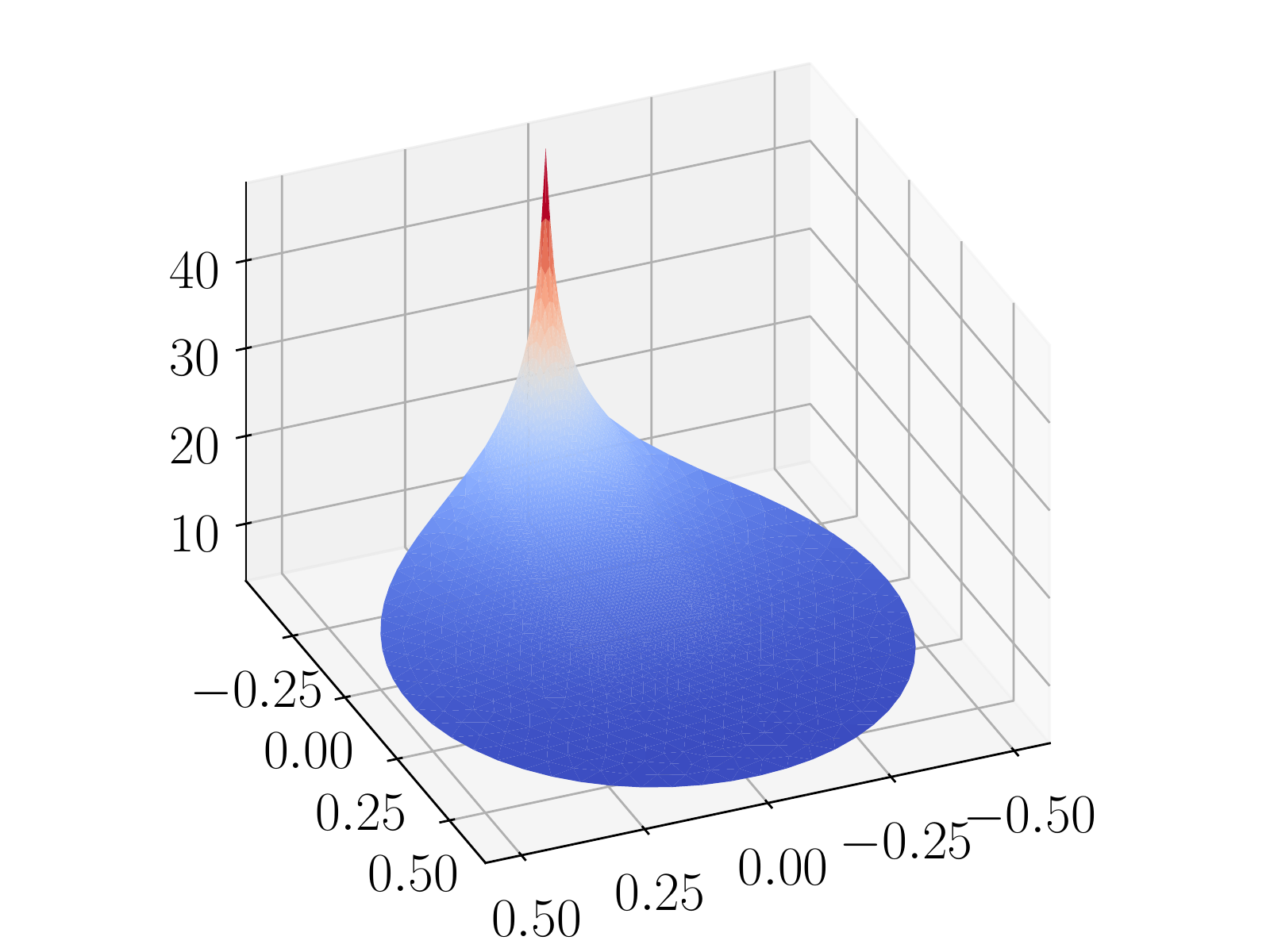}
    \end{subfigure}
    \caption{Algorithm 1: Evolution of $u_h$ (top) and $v_h$ (bottom) at times $t=0$, $0.15$ and $0.2$.}\label{fig.e3-evolution}
\end{figure}
\begin{figure}
    \begin{subfigure}[b]{0.32\textwidth}
        \centering
        \includegraphics[width=1.0\textwidth]{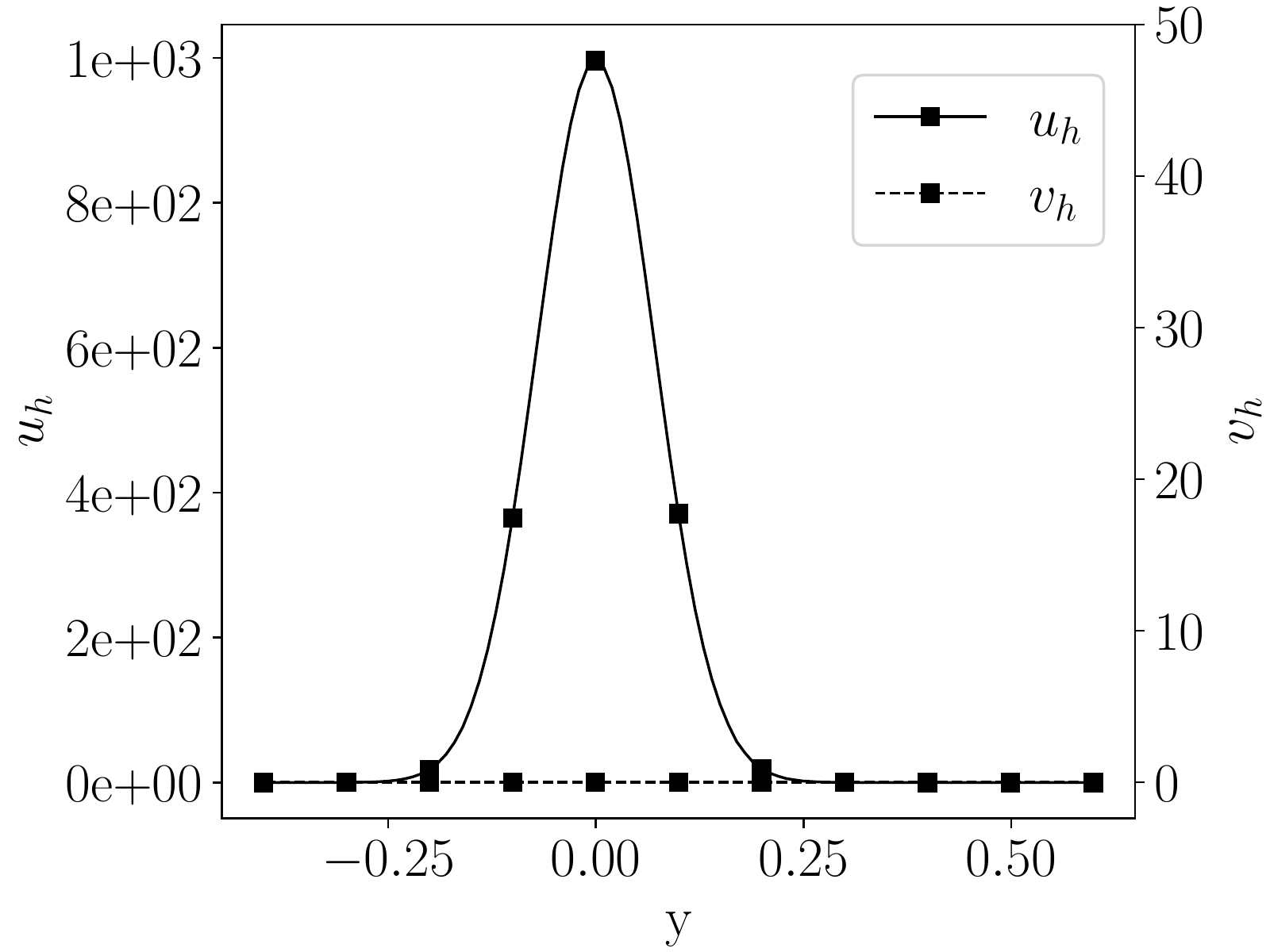}
    \end{subfigure}
    \begin{subfigure}[b]{0.32\textwidth}
        \centering
        \includegraphics[width=1.0\textwidth]{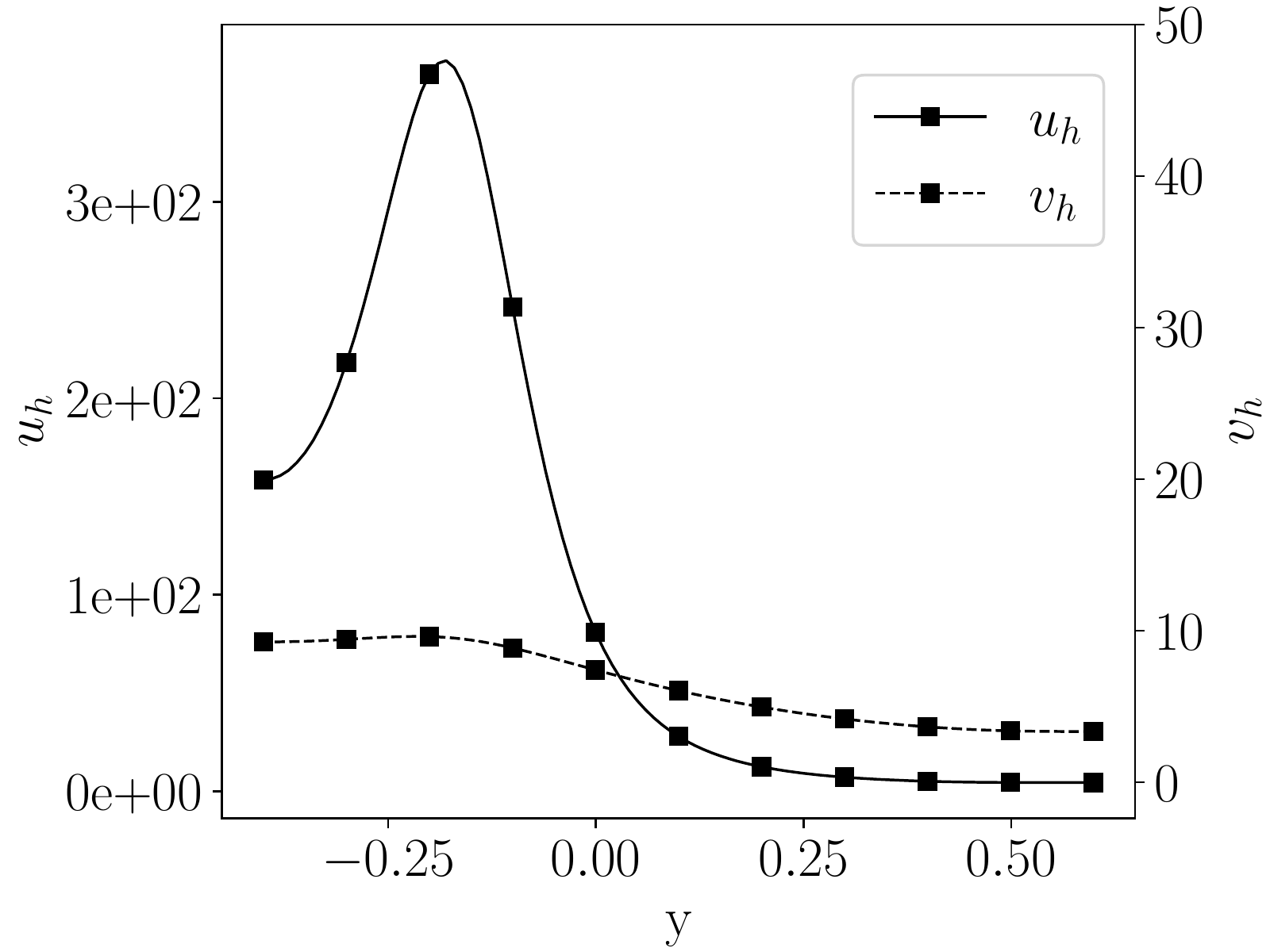}
    \end{subfigure}
    \begin{subfigure}[b]{0.32\textwidth}
        \centering
        \includegraphics[width=1.0\textwidth]{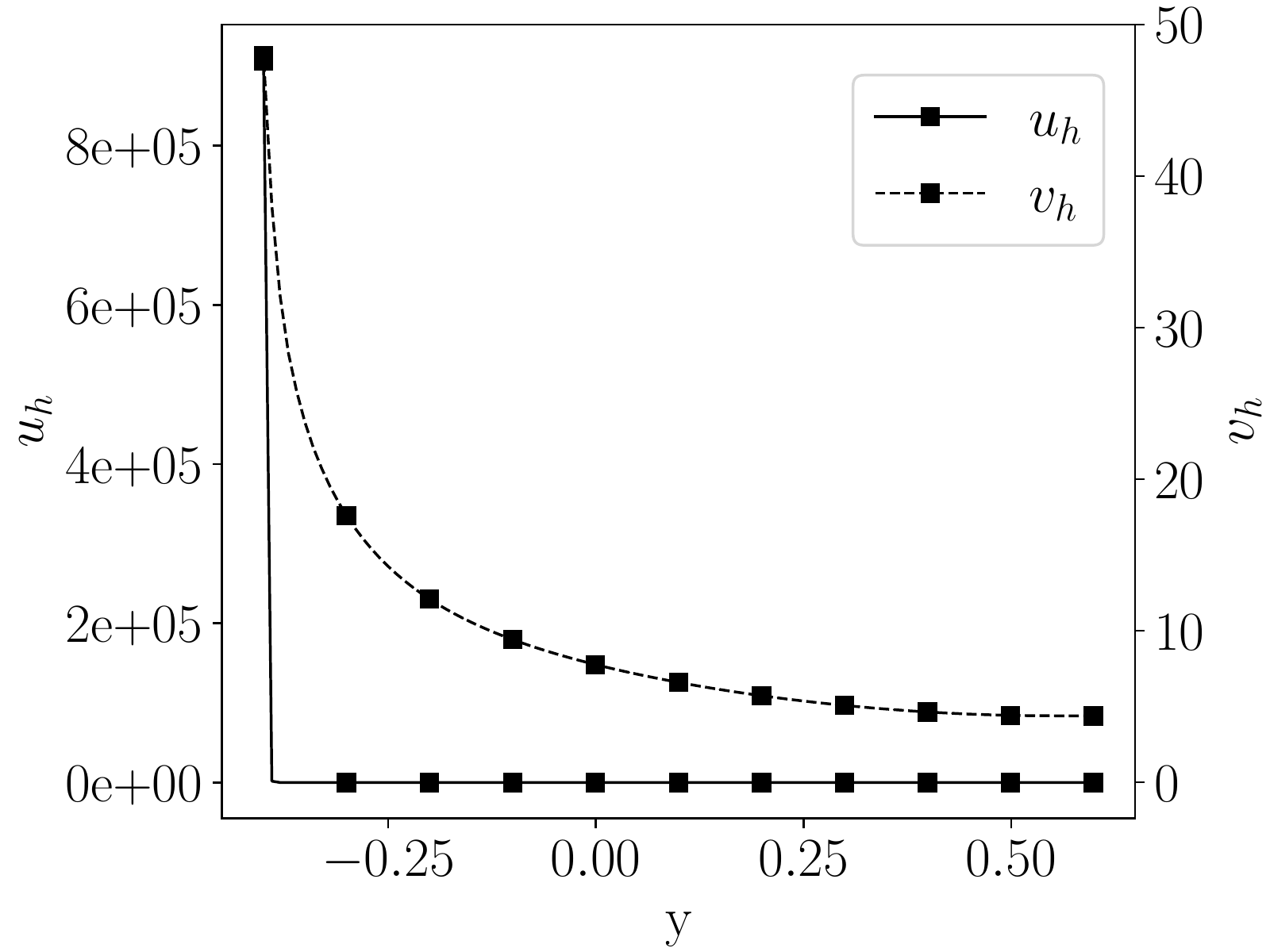}
    \end{subfigure}
    \caption{Algorithm 1: Profiles of $u_h$ and $v_h$ along the plane $x=0$ at times $t=0$, $0.15$ and $0.2$.}\label{fig.e3-profiles}
\end{figure}
In Figures \ref{fig.e3-maximums} and \ref{fig.e3-maximums-alg2} (left), we depict the evolution of the $L^\infty(\Omega)$-norm of the cell and chemoattractant densities whose maxima overgrow simultaneously on the boundary as seen in Figures \ref{fig.e3-profiles} and \ref{fig.e3-maximums-alg2}. It is observed that the blowup time is about $0.19$, reaching a cell-density value of approximately $9.1\cdot 10^{5}$ for Algorithm $1$ and $7.2\cdot 10^{5}$ for  Algorithm~$2$, which arrives at the locking stage earlier. For the mesh used in this experiment, the locking cell density corresponds to $\max_{n\in\{0, \cdots, N\}}\|u^n_h\|_{L^\infty(\Omega)}\leq \frac{4}{h_{\rm min}^2\sqrt{3}} \|u_0\|_{L^1(\Omega)}$ $\approx9.6\cdot 10^5$. Here the $L^\infty(\Omega)$-norm of the chemoattractant density undergoes a rapid growth at the singular point as well, which does not occur in the center-blowup setting. The dynamics of the $L^1(\Omega)$-norm plotted in Figures \ref{fig.e3-maximums} and \ref{fig.e3-maximums-alg2} (right) indicates that the chemoattractant mass increases; this contrasts with Figures \ref{fig.e2-maximums} and \ref{fig.e2-maximums-alg2} (right), where the $L^1(\Omega)$-norm keeps constant practically. As for the energy functional $\mathcal{E}_h(u_h, v_h)$, a considerably pronounced drop in magnitude arises before the computational blowup time.    
\begin{figure}
    \centering
    \includegraphics[width=0.4\textwidth]{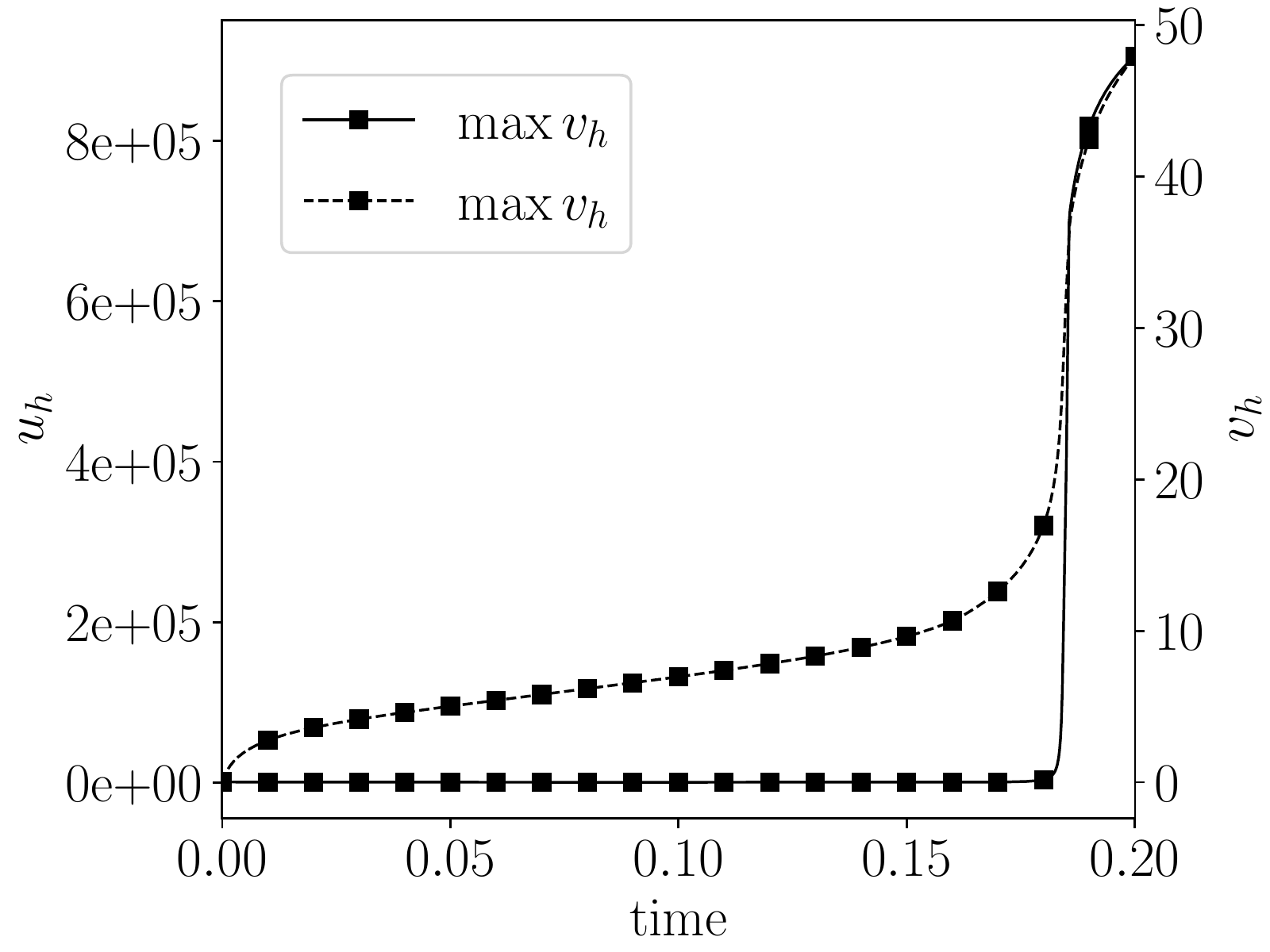}
    \includegraphics[width=0.4\textwidth]{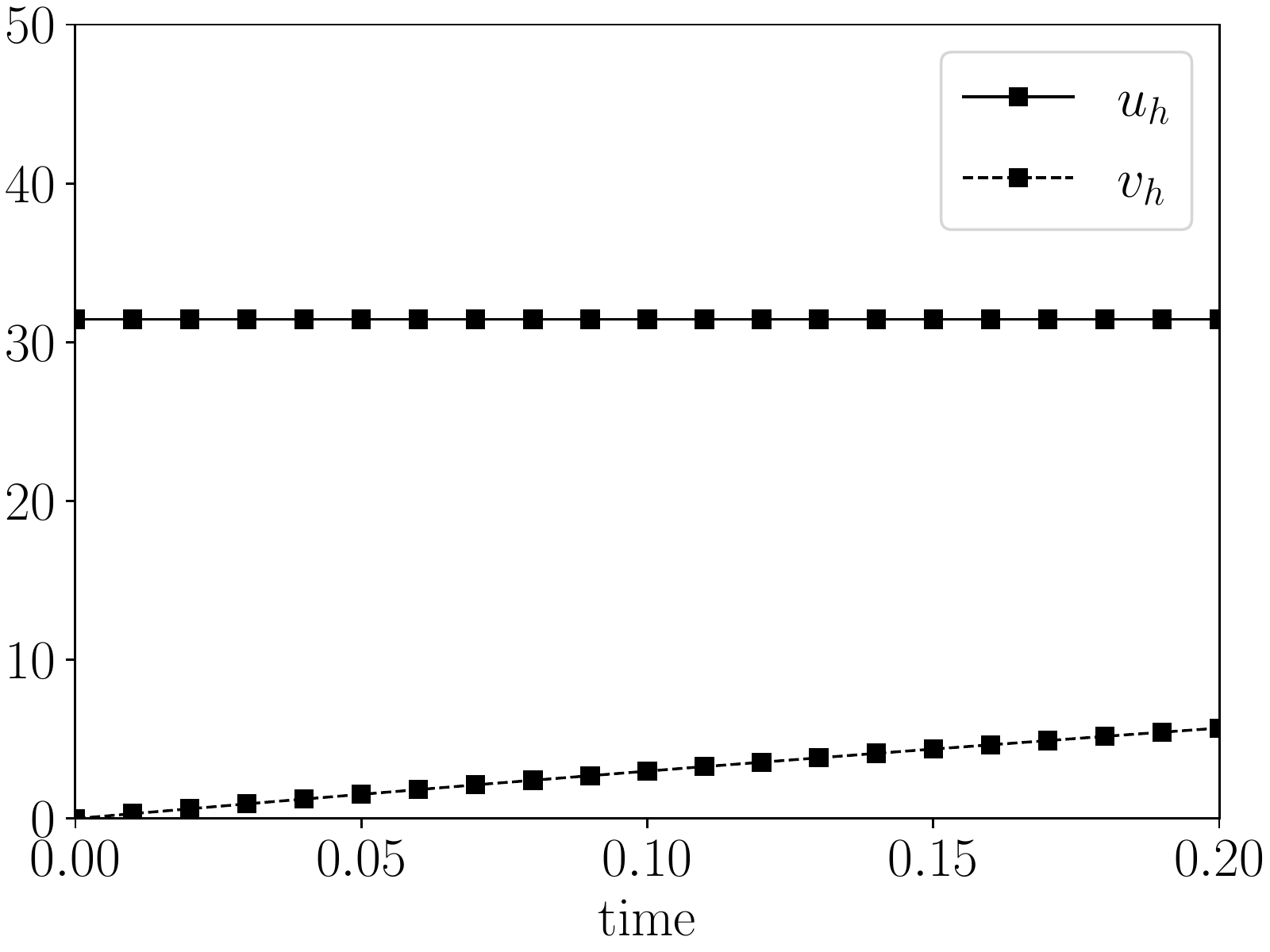}
    \caption{Algorithm 1: Evolution of $\|u_h\|_{L^\infty(\Omega)}$ and $\|v_h\|_{L^\infty(\Omega)}$ (left), and $\|u_h\|_{L^1(\Omega)}$ and $\|v_h\|_{L^1(\Omega)}$ (right).}
    \label{fig.e3-maximums}
\end{figure}
\begin{figure}
    \begin{subfigure}[b]{0.32\textwidth}
        \centering
        \includegraphics[width=1.0\textwidth]{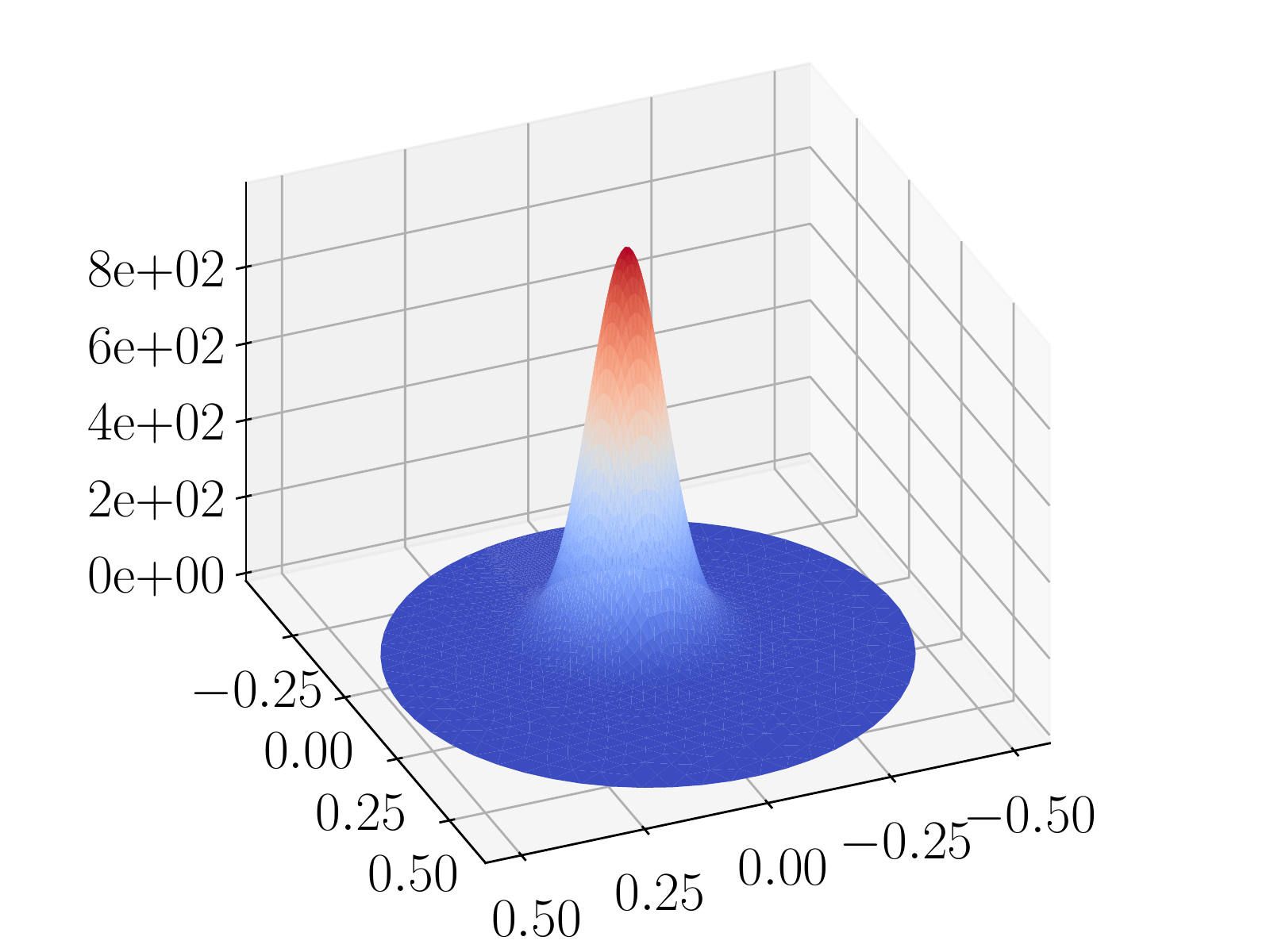}
    \end{subfigure}
    \begin{subfigure}[b]{0.32\textwidth}
        \centering
        \includegraphics[width=1.0\textwidth]{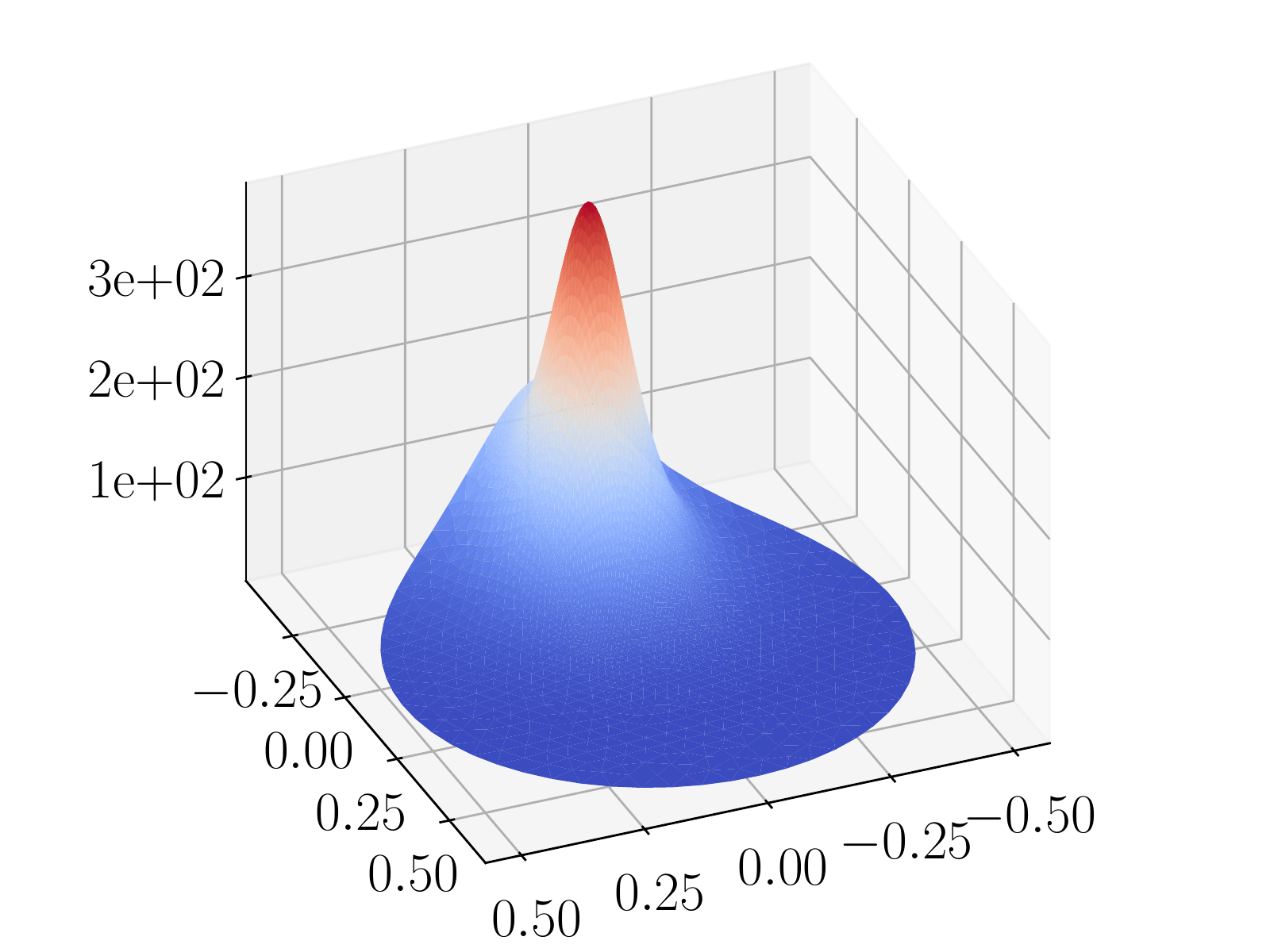}
    \end{subfigure}
    \begin{subfigure}[b]{0.32\textwidth}
        \centering
        \includegraphics[width=1.0\textwidth]{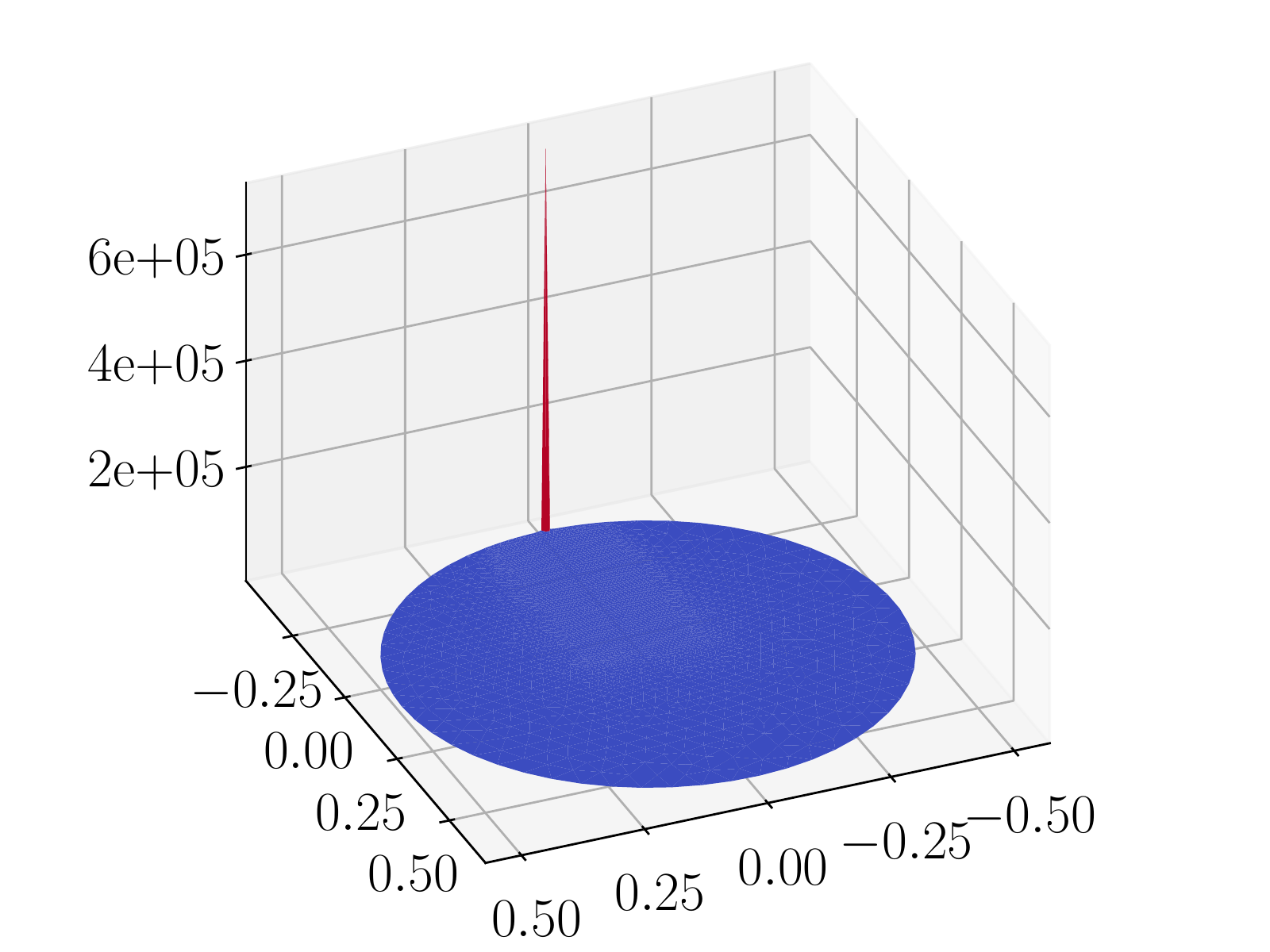}
    \end{subfigure}
    \begin{subfigure}[b]{0.32\textwidth}
        \centering
        \includegraphics[width=1.0\textwidth]{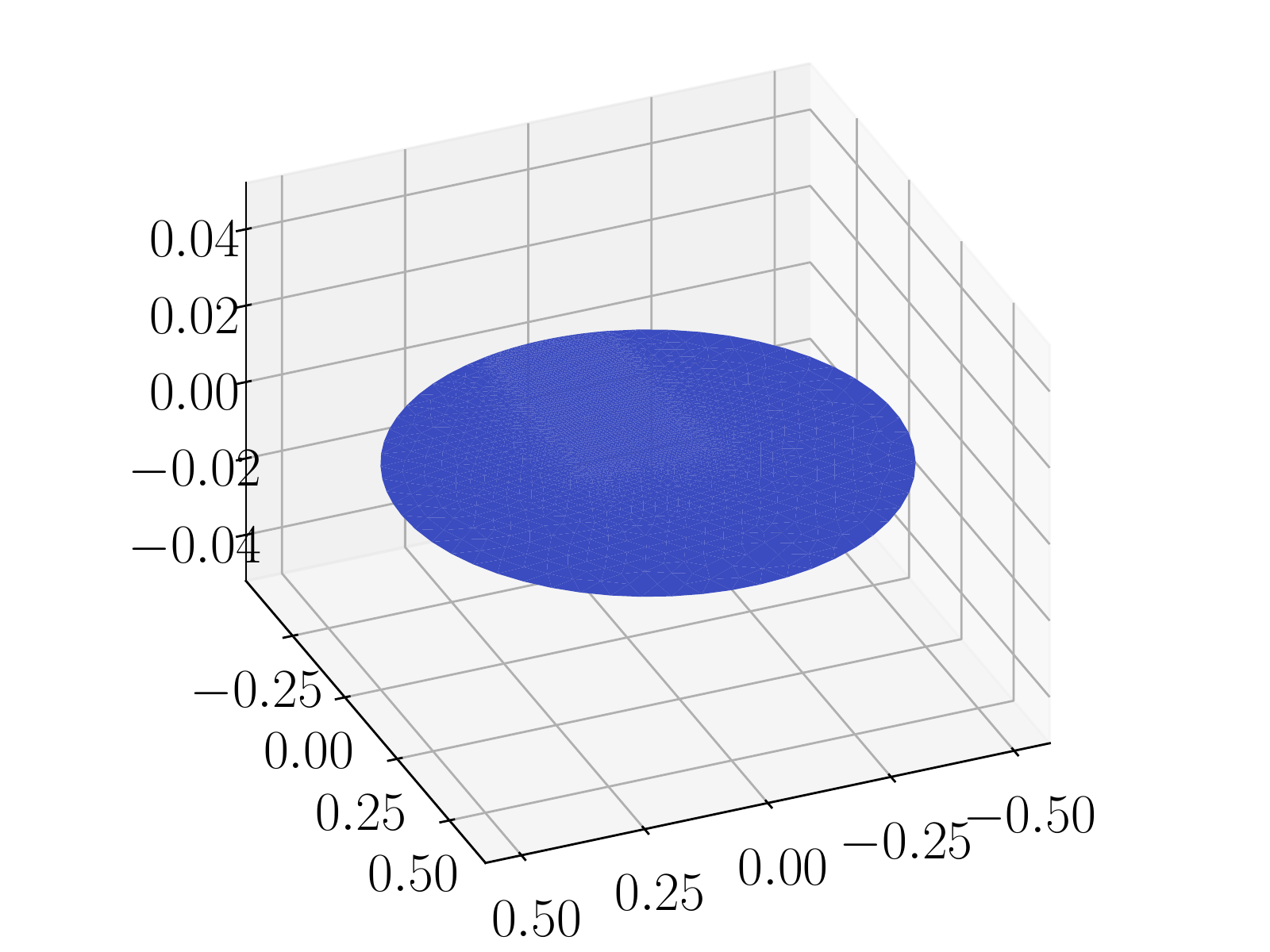}
    \end{subfigure}
    \begin{subfigure}[b]{0.32\textwidth}
        \centering
        \includegraphics[width=1.0\textwidth]{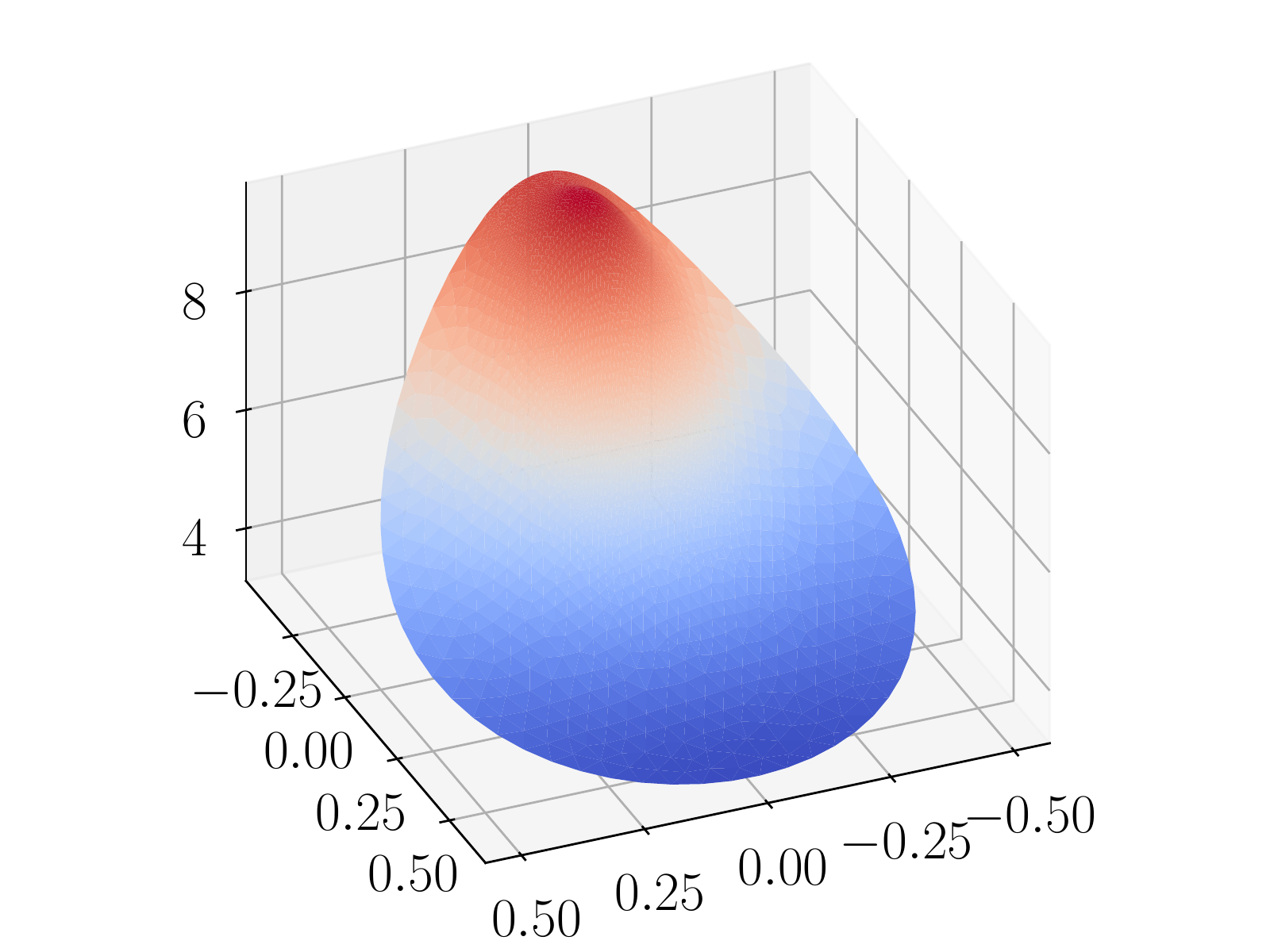}
    \end{subfigure}
    \begin{subfigure}[b]{0.32\textwidth}
        \centering
        \includegraphics[width=1.0\textwidth]{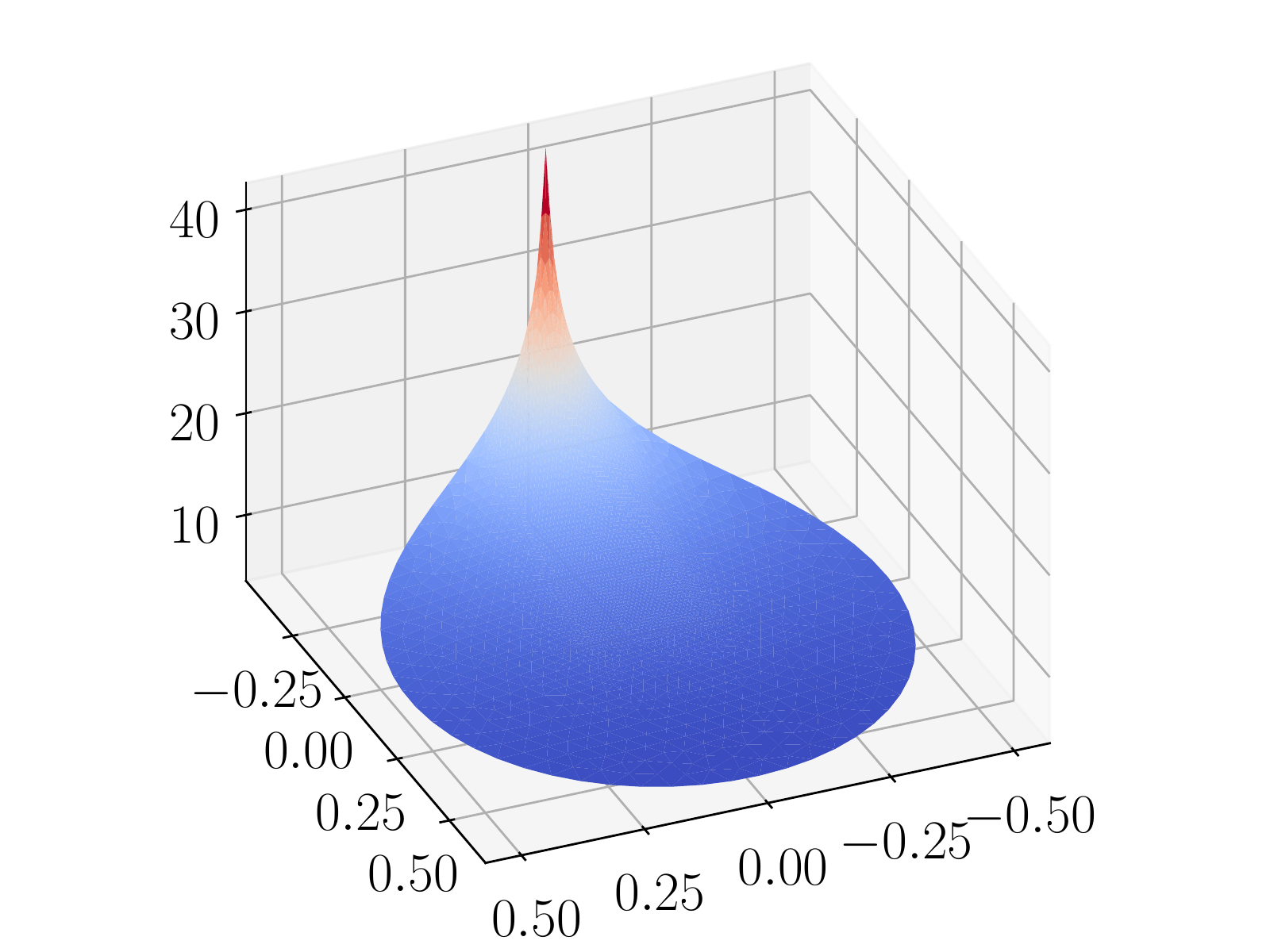}
    \end{subfigure}
    \caption{Algorithm 2: Evolution of $u_h$ (top) and $v_h$ (bottom) at times $t=0$, $0.15$ and $0.2$.}\label{fig.e3-evolution-alg2}
\end{figure}
\begin{figure}
    \begin{subfigure}[b]{0.32\textwidth}
        \centering
        \includegraphics[width=1.0\textwidth]{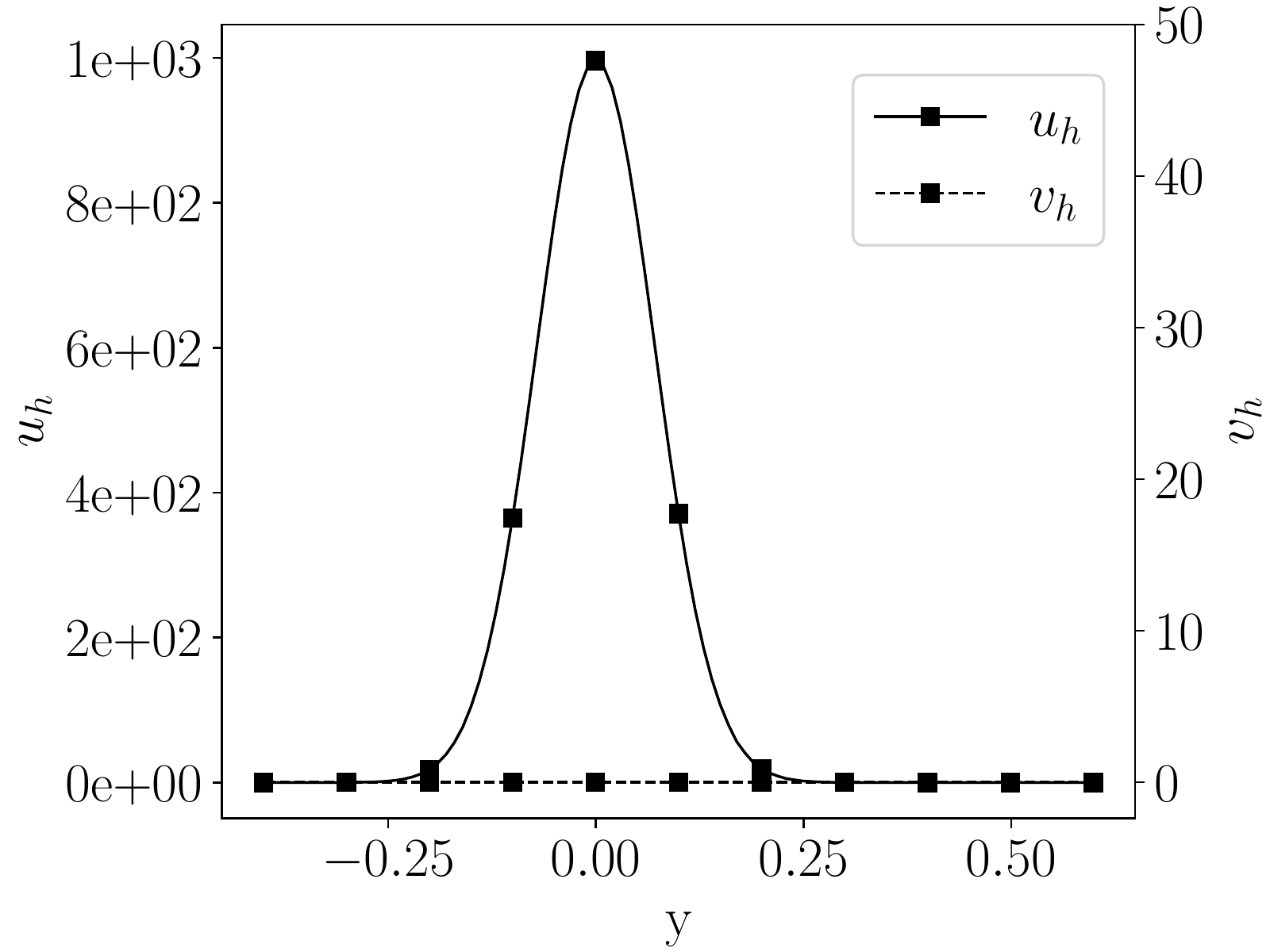}
    \end{subfigure}
    \begin{subfigure}[b]{0.32\textwidth}
        \centering
        \includegraphics[width=1.0\textwidth]{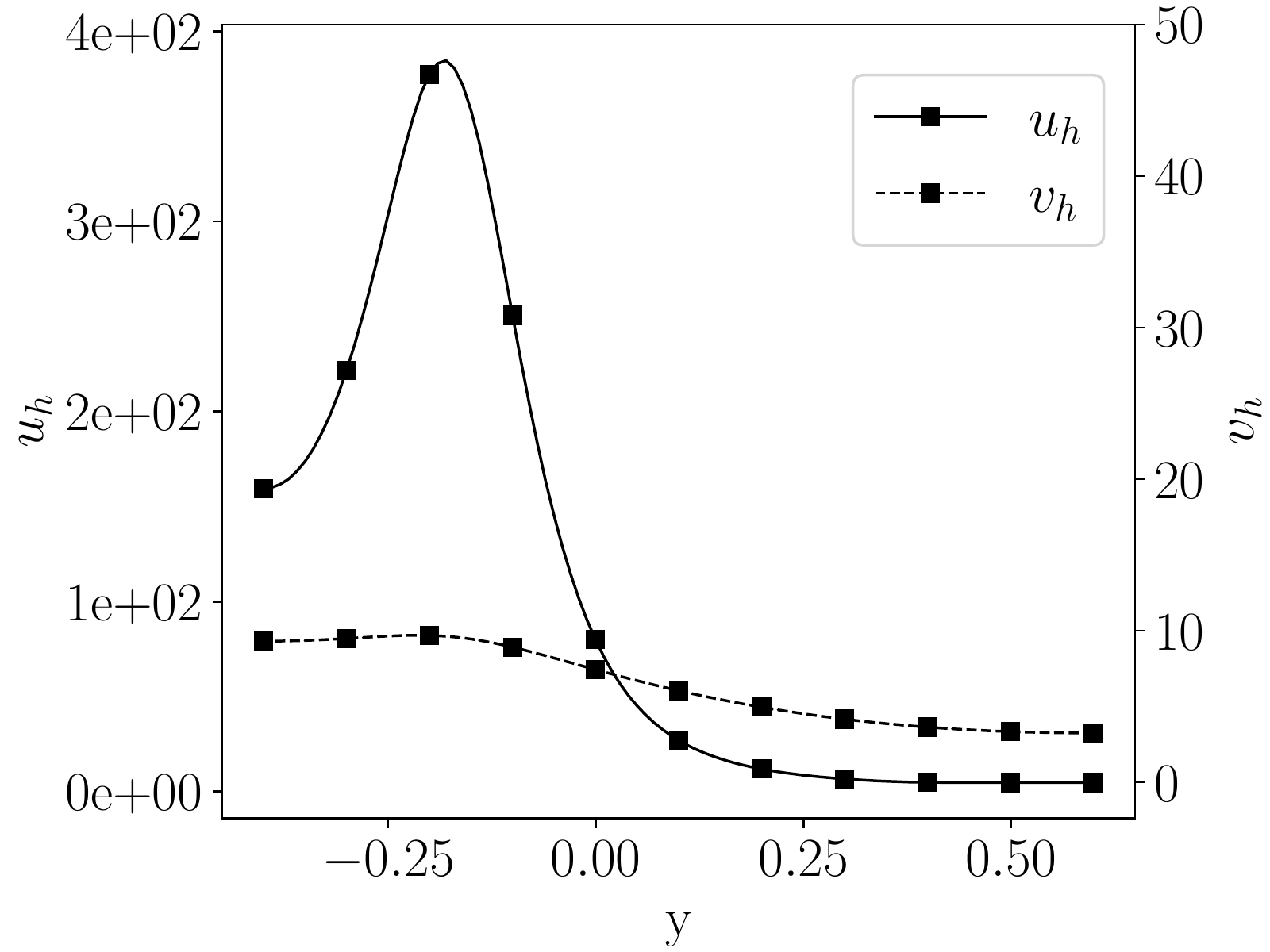}
    \end{subfigure}
    \begin{subfigure}[b]{0.32\textwidth}
        \centering
        \includegraphics[width=1.0\textwidth]{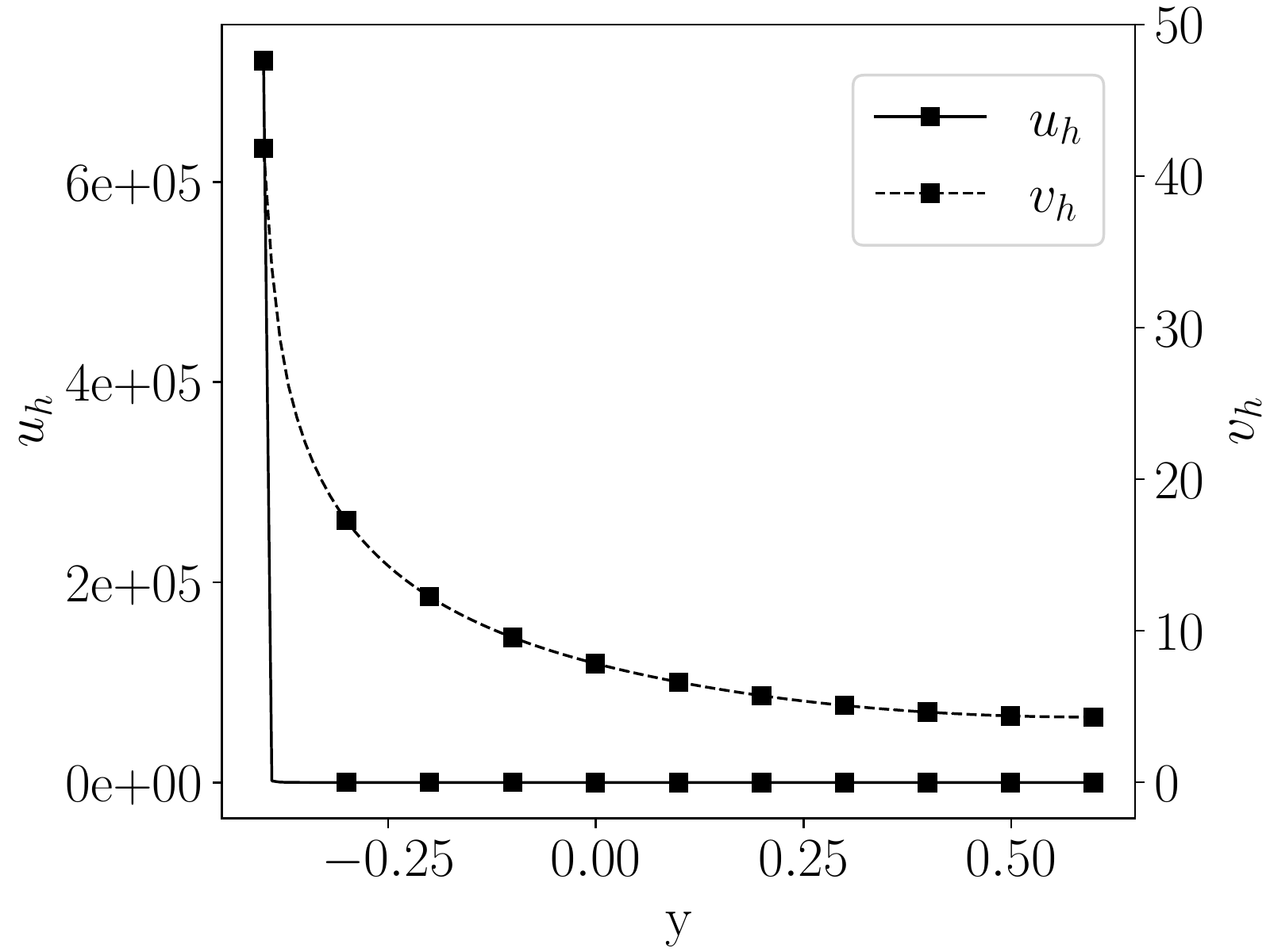}
    \end{subfigure}
    \caption{Algorithm 2: Profiles of $u_h$ and $v_h$ along the plane $x=0$ at times $t=0$, $0.15$ and $0.2$.}\label{fig.e3-profiles-alg2}
\end{figure}
\begin{figure}
    \centering
    \includegraphics[width=0.4\textwidth]{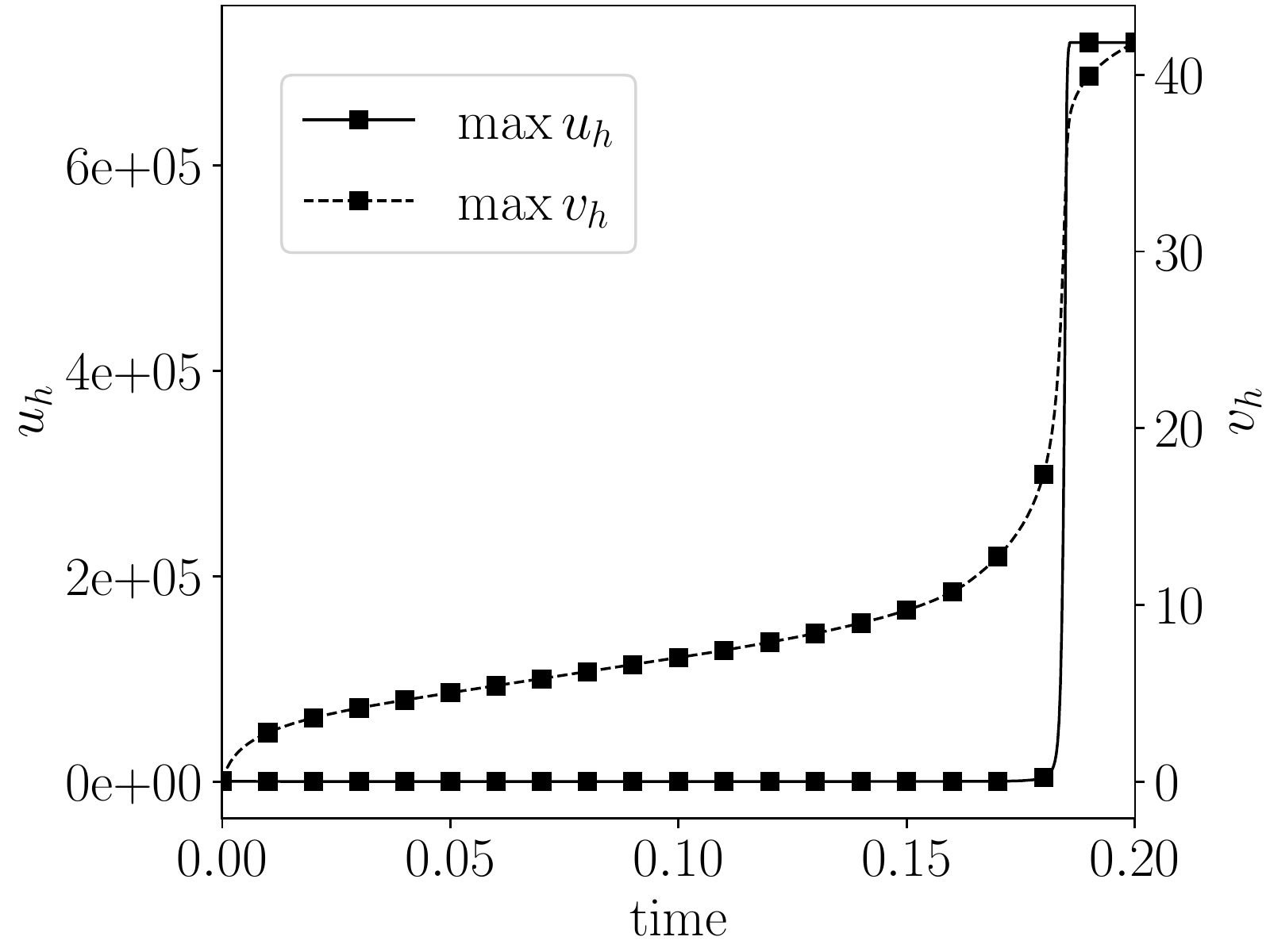}
    \includegraphics[width=0.4\textwidth]{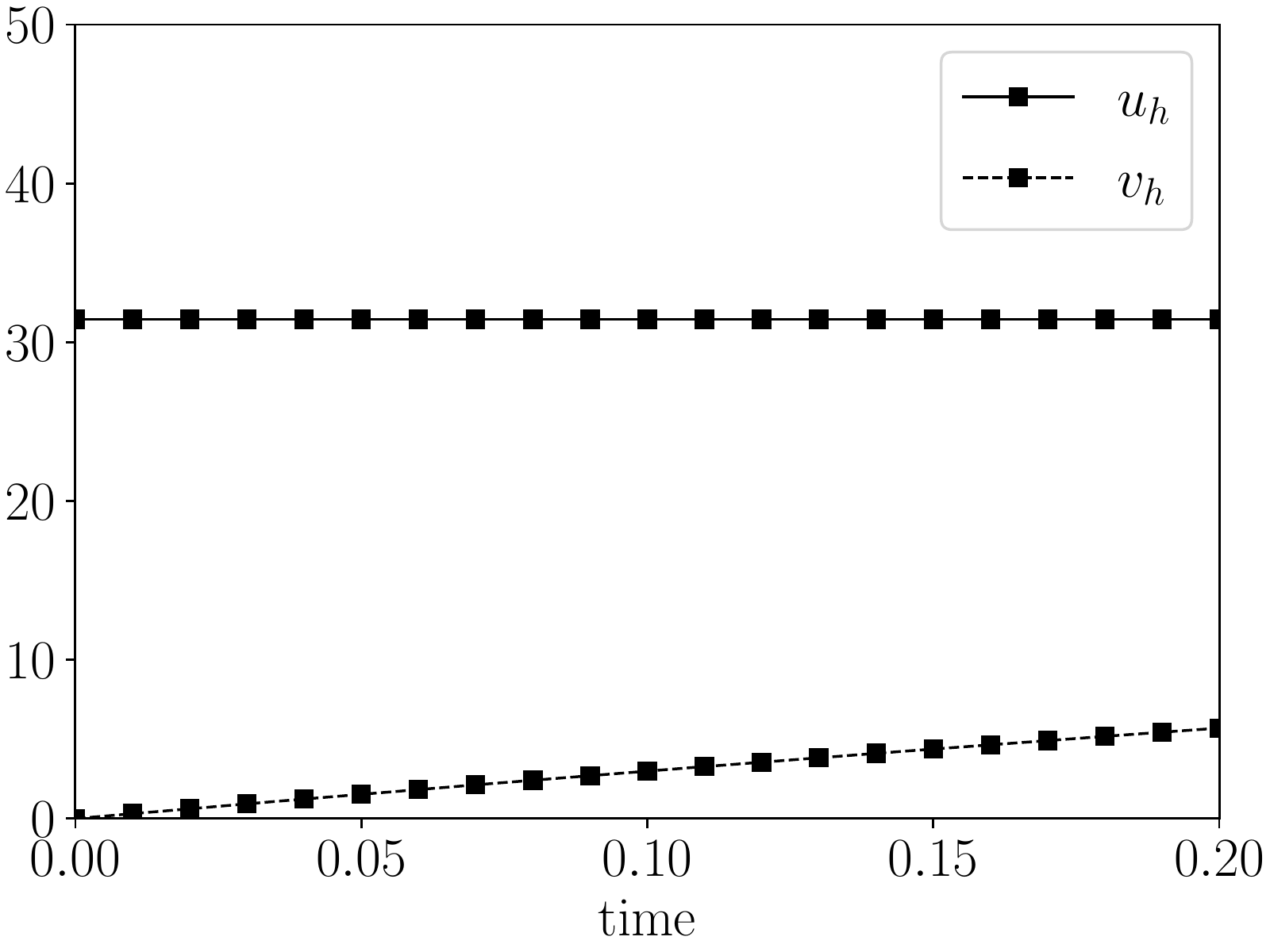}
    \caption{Algorithm 2: Evolution of $\|u_h\|_{L^\infty(\Omega)}$  and $\|v_h\|_{L^\infty(\Omega)}$, and $\|u_h\|_{L^1(\Omega)}$ and $\|v_h\|_{L^1(\Omega)}$.}
    \label{fig.e3-maximums-alg2}
\end{figure}
\begin{figure}
    \centering
    \includegraphics[width=0.4\textwidth]{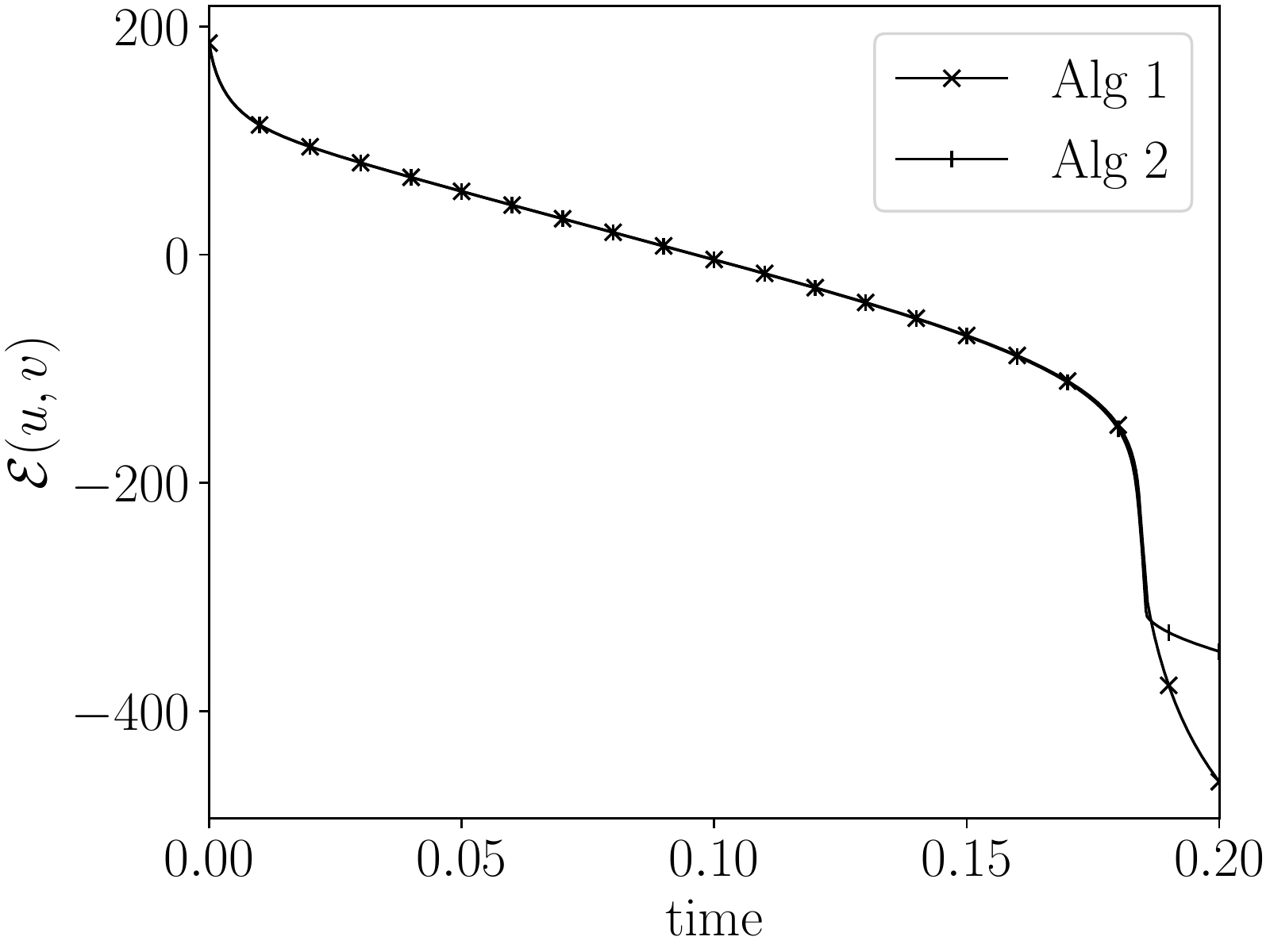}
    \caption{Algorithm 2: Evolution of $\mathcal{E}(u_h,v_h)$.}
    \label{fig.e3-energy}
\end{figure}

\newpage

\end{document}